\documentclass[a4paper,reqno]{amsart}

\usepackage[utf8]{inputenc}
\usepackage{amssymb}
\usepackage{enumitem}
\usepackage{mathrsfs}
\usepackage{mathtools}
\usepackage{amsmath}
\usepackage{slashed}
\usepackage{xcolor}
\usepackage[margin=1in]{geometry}
\usepackage[colorlinks=true]{hyperref}
\hypersetup{linkcolor=violet,urlcolor=cyan,citecolor=cyan}
\usepackage{tikz}
\usepackage[normalem]{ulem}

\definecolor{greeen}{rgb}{0.0, 0.4, 0.0}

\usepackage{titletoc} 

\setlist[enumerate]{label=\emph{(\roman*)}}

\usepackage{environ}

\newtheorem{theorem}{Theorem}[section]

\newtheorem{lemma}[theorem]{Lemma}
\newtheorem{proposition}[theorem]{Proposition}

\theoremstyle{definition}
\newtheorem{definition}[theorem]{Definition}
\newtheorem{remark}[theorem]{Remark}
\numberwithin{equation}{section}

\def \C {{\mathbb{C}}}
\def \d {{\rm{d}}}

\def \Hcalf{{\mathcal{H}_\tau^f}}
\def \Hcali{{\mathcal{H}_\tau^i}}
\def \mainfield{\Phi}
\begin{document}

	\title[Supercritical wave in a large data regime]
	{Global dynamics of a supercritical wave equation \\  in a large data regime}
	
		\author[S.~Dong]{Shijie Dong}
	\address{Southern University of Science and Technology, Shenzhen International Center for Mathematics, and Department of Mathematics, 518055 Shenzhen, China.}
\email{dongsj@sustech.edu.cn, shijiedong1991@hotmail.com}
	
		\author[Z. Wyatt]{Zoe Wyatt}
	\address{Department of Pure Mathematics and Mathematical Statistics, University of Cambridge,
		Wilberforce Road, CB3 0WB, Cambridge, UK}
	\email{zoe.wyatt@maths.cam.ac.uk}

	\author[J. Zhao]{Jingya Zhao}
	\address{Southern University of Science and Technology, Department of Mathematics, 518055 Shenzhen, China.}
	\email{12231282@mail.sustech.edu.cn}
	\begin{abstract}
	We prove  the existence of global solutions to the nonlinear wave equation in $\mathbb{R}^{1+3}$ $$\Phi_{tt} - \Delta \Phi \pm \Phi|\Phi|^{p-1} = 0$$ in the energy-supercritical regime $p>5$, for a class of large initial data. Our initial data can be decomposed into two pieces, one which is dispersed in the sense of having large $L^2$ norm, while the other piece takes a localised short-pulse form. Consequently, we can obtain global existence for a class of initial data which is  large  in every homogeneous Sobolev norm $\dot{H}^s_x$ with $s \geq 0$. 
	\end{abstract}
	\maketitle
\section{Introduction}
We consider the semilinear wave equation in $\mathbb{R}^{1+3}$
\begin{subequations}\label{intro-eq} 
\begin{equation}\label{eq:model-sup}
\mainfield_{tt} - \Delta \mainfield  \pm \mainfield|\mainfield|^{p-1} =0, \qquad p>5,
\end{equation}
with real-valued solutions. 
The equation is called focusing or defocusing according to whether the sign of the nonlinearity is negative or positive. 
The initial conditions are prescribed at $t_0 = 1$ by 
\begin{align}\label{eq:ID-sup}
(\mainfield, \partial_t \mainfield)(t_0) = (\mainfield_0, \mainfield_1).
\end{align}
\end{subequations}

Our goal in the present article is to prove global existence for \eqref{intro-eq} for a class of data which is large in every homogeneous Sobolev space $\dot{H}^s_x$ with $s \geq 0$. This is best interpreted through the scaling symmetry
\begin{equation}\label{eq:scaling}
\mainfield(t,x) \mapsto \lambda^{2\over p-1} \mainfield(\lambda t, \lambda x),
\end{equation}  
which preserves the space of solutions to \eqref{intro-eq}. In three space dimensions,  the homogeneous Sobolev norm $\dot{H}_x^{s_c} \times \dot{H}_x^{s_c-1}$, where $s_c =\frac32 - \frac2{p-1}$, is called \textit{critical} since it is invariant under the rescaling \eqref{eq:scaling}. 
Consequently, the equation \eqref{intro-eq} is called energy-supercritical since the
solution cannot be controlled in the energy norm as $1<s_c<\frac32$. Indeed all conservation laws and monotonicity formula have scaling below the critical regularity and so cannot be used in this supercritical setting. Moreover, equation \eqref{intro-eq}  can be regarded as a model problem relevant to the Navier-Stokes equations in the sense that the nonlinearities in both equations are supercritical compared with their conserved quantity. 

The class of initial data that we consider in the current article can be decomposed as
\begin{align*}
 (\mainfield_0, \mainfield_1)
 =
 (u_0, u_1) + (w_0, w_1) \,.
\end{align*}
Our intuition is that the first piece $(u_0, u_1)$ is `dispersed' in the sense of having large $L^2$ norm. An example of such data is achieved by supposing that 
\begin{align*}
(u_0, u_1) \sim 
(\epsilon^{1\over 2} g_0(\epsilon x), \epsilon^{{3\over 2}} g_1(\epsilon x)),
\end{align*}
where $g_0, g_1$ are smooth functions decaying sufficiently fast at infinity. This ensures that the norms $\|(u_0, u_1)\|_{\dot{H}^s}$ are large for any $s 
\in [0,1]$. Our next assumption is that the second piece $(w_0, w_1)$ is  localised short-pulse data of the form
\begin{equation}\label{eq:shortpulse} \begin{split}
(w_0, w_1) &= \big(\epsilon^{1\over 2} h_0(\tfrac{1-r}{\epsilon}, \omega), \epsilon^{-{1\over 2}} h_1(\tfrac{1-r}{\epsilon}, \omega)\big),
\qquad w_1+\partial_r w_0 \sim \mathcal{O}(\epsilon^{1\over 2}),
\end{split}\end{equation}
where $h_0(\rho, \omega), h_1(\rho, \omega)$ are smooth  functions compactly supported away from the origin with respect to their first argument $\rho$.
Such data is inspired by Christodoulou’s characteristic initial data in his work  on the formation of black holes \cite{Christodoulou09} and shock formation for the irrotational relativistic Euler equations \cite{Christodoulou07}. The choice \eqref{eq:shortpulse} ensures that the norms $\|(w_0, w_1)\|_{\dot{H}^s}$ are large with $s 
\geq 1$. 

We can now state a rough version of our main result. The precise statement  is given in Theorem \ref{thm:main2}.

\begin{theorem}
    For a class of initial data that are large in any $\dot{H}^s_x(\mathbb{R}^3)$ with $s\geq 0$, the energy supercritical nonlinear wave equation \eqref{intro-eq} admits a classical global solution.
\end{theorem}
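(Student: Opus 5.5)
We construct the solution as $\Phi = u + w$ and treat each piece with the method suited to it, controlling the interaction through the nonlinearity. For the dispersed piece we use the scaling \eqref{eq:scaling} in reverse. Set $u$ to be the solution of \eqref{intro-eq} with data $(u_0,u_1)$. The rescaled function $\tilde u(t,x) = \epsilon^{-2/(p-1)}u(t/\epsilon, x/\epsilon)$ is again a solution, and its data have size $\epsilon^{1/2-2/(p-1)}$ in the profiles $g_0,g_1$. Since $p>5$, we have $1/2 - 2/(p-1) > 0$, so this is small smooth localized data. Standard small-data global existence for $p>5$ in $\mathbb{R}^{1+3}$ then applies (for instance, a vector-field/Klainerman–Sobolev argument on hyperboloids, or weighted Strichartz estimates, since $p>1+\sqrt2$). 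It gives a global $\tilde u$ with pointwise decay $|\tilde u| \lesssim \epsilon^{1/2-2/(p-1)}\langle t+r\rangle^{-1}\langle t-r\rangle^{-1/2}$ and analogous bounds for vector-field derivatives. Undoing the scaling yields a global $u$ with explicit decay and size bounds. In particular $|u|\lesssim \epsilon^{1/2}\langle t\rangle^{-1}$ near the cone $t\approx r$ for $t\ge 1$, with similar bounds on $\partial u$ of size $\epsilon^{3/2}$.

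Next we solve for the short-pulse piece. Write $w = \Phi - u$, which satisfies
\[
w_{tt}-\Delta w \pm \big(|u+w|^{p-1}(u+w) - |u|^{p-1}u\big)=0,\qquad (w,\partial_t w)(1)=(w_0,w_1).
\]
Finite speed of propagation confines $w$ to a neighbourhood of the outgoing cone region $\{t - r \in [0, C\epsilon]\}$, at least in the leading-order sense (the data are supported in $1-r\in[c\epsilon,C\epsilon]$). Following Christodoulou's short-pulse hierarchy, we introduce null frame derivatives $L=\partial_t+\partial_r$, $\underline L=\partial_t-\partial_r$ and the rotations $\Omega$. We then set up a bootstrap with $\epsilon$-weighted energies: $\|w\|\lesssim \epsilon^{1/2}t^{-1}$, $|Lw|,|\Omega w|/t\lesssim\epsilon^{1/2}t^{-2}$, while $|\underline L w|\lesssim \epsilon^{-1/2}t^{-1}$. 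These are encoded as energy fluxes along outgoing/incoming null hypersurfaces, with higher-order commutations by $L$, $\Omega$ and $\epsilon\underline L$. The condition $w_1+\partial_r w_0=\mathcal O(\epsilon^{1/2})$ in \eqref{eq:shortpulse} is exactly what makes $Lw$ small initially.

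The main obstacle is closing the energy estimate for the nonlinear term. The term is bounded by $(|u|+|w|)^{p-1}|w|$ and must be paired with $\underline L w$ and $Lw$ in the multiplier estimate. Since $|w|+|u|\lesssim \epsilon^{1/2}t^{-1}$ in the pulse region, the potential coefficient is $\lesssim \epsilon^{(p-1)/2} t^{-(p-1)}$. The slab has width $\epsilon$, so the spacetime integral of (potential)$\times|w|\times|\underline L w|$ over $\{t\ge1\}$ is of order
\[
\int_1^\infty \epsilon^{(p-1)/2}t^{-(p-1)}\cdot\epsilon^{1/2}t^{-1}\cdot\epsilon^{-1/2}t^{-1}\cdot \epsilon\, t^2\,dt \lesssim \epsilon^{(p+1)/2},
\]
which is far smaller than the energy scale ($\sim\epsilon^{0}$ for $\underline L w$ flux times $\epsilon$). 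The $t$-integrability requires $p-1>1$, so there is ample room. The delicate point is the pairing with the weak component $\underline L w$ in higher-order energies and the commutators $[\epsilon \underline L,\cdot]$. I would first try the multiplier $\underline L$ on outgoing null slabs, applying the Sobolev inequality on spheres with $\Omega$'s to recover $L^\infty$ bounds.

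Finally, we outside the pulse region $w\equiv 0$ exactly for $r\ge t+C\epsilon$ and $t-r\ge 1$ by finite speed plus the zero data there, so $\Phi=u$ there. Closing the bootstrap for $\epsilon$ small gives global existence, and hence a classical solution via persistence of regularity. Largeness in every $\dot H^s$ follows because $(u_0,u_1)$ is large for $s\in[0,1]$ and $(w_0,w_1)$ has $\dot H^s$ norm $\sim\epsilon^{1-s}$ for $s\ge1$. Their supports and frequency scales are separated, so the sum stays large.
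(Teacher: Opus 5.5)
\textbf{The gap: the interior region.} Your proof rests on the claim that finite speed of propagation confines $w$ to $\{t-r\in[0,C\epsilon]\}$ and that $w\equiv 0$ for $t-r\ge 1$. This is false. Finite speed only gives $w=0$ in $\{t-r<\epsilon\}$.

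\begin{itemize}
\item Even for the free evolution, the data are only approximately outgoing. You have $Lw=O(\epsilon^{1/2})$, whereas an exactly outgoing wave in $\mathbb{R}^{1+3}$ needs $L(rw)=0$. So a small incoming component passes through the origin and re-emerges near $t-r\approx 2$.
\item More importantly, the source $\mathcal N$ is nonzero on the whole pulse, which is a spacetime region. The backward light cone of every point with $t-r>C\epsilon$ and $t+r>2$ crosses the strip $\{\epsilon\le s-|y|\le C\epsilon\}$. By Duhamel's formula, $w$ is therefore generically nonzero in the entire interior $\{r\le t-C\epsilon\}$.
\end{itemize}

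So your slab argument, even if carried out in full, covers only what the paper calls $\mathcal D^{ex}$. That region is indeed its own domain of dependence, but the proposal says nothing about the interior, and global existence does not follow.

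\textbf{Why the interior is not routine.} It cannot be dismissed as a standard small-data problem.

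\begin{itemize}
\item The width-$\epsilon$ gains you rely on are unavailable there. These are Hardy in the form $\|w\|\lesssim\epsilon\|\partial_r w\|$, and the Sobolev gain $|w|\lesssim\epsilon^{1/2}r^{-1}\|\partial_r\Omega^{\le2}w\|$.
\item The data on $\{t=t_1\}\cap\mathcal D^{in}$ and the flux across $\mathcal B_{4\epsilon}$ are small only at low order; for instance $\|\partial\Omega^3\phi\|$ is already $O(1)$.
\item Consequently a Klainerman--Sobolev bootstrap that carries two or three extra derivatives loses the smallness needed to control $|\psi+\phi|^{p-1}$.
\end{itemize}

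The paper handles this in Section \ref{sec:interior} with three ingredients:
\begin{itemize}
\item a hyperboloidal foliation of $\{r\le t-4\epsilon\}$;
\item natural and conformal hyperboloidal energies, whose boundary fluxes on $\mathcal B_{4\epsilon}$ are fed by exterior conformal energies and by the refined bound $|L^2\phi|\lesssim\epsilon^{1/2}\langle t+r\rangle^{-3}$;
\item an $L^q$ Klainerman--Sobolev estimate ($q>6$) that needs only $\Omega^{\le 1}$, so the bootstrap closes one derivative above energy with $\epsilon^{1/2}$ smallness.
\end{itemize}
Classical regularity is then recovered by a separate argument (Section \ref{sec:classical}), not by a generic persistence-of-regularity remark.

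\textbf{The dispersed piece.} Your rescaling argument is a legitimate shortcut to the paper's Part I for the model profiles $\epsilon^{1/2}g_0(\epsilon x)$, $\epsilon^{3/2}g_1(\epsilon x)$, since $\tfrac12-\tfrac{2}{p-1}>0$. However, undoing the scaling gives $|u|\lesssim\epsilon^{1/2}\langle\epsilon t\rangle^{-1}$ near the cone, not $\epsilon^{1/2}\langle t\rangle^{-1}$. This costs a factor $\epsilon^{-1}$ in your slab integral, leaving $\epsilon^{(p-1)/2}$, which is still small.
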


\subsection{Previous work}
The energy subcritical and energy critical versions of equation \eqref{intro-eq} (when $p \leq 5$ in 3D) have been extensively studied in the dispersive PDE community. Situations of global regularity or finite time blow-up can occur depending on whether the nonlinearity is focusing or defocusing. 
We refer to \cite{Taonotes, Yang2} and references within for further information on the large body of work in this direction. 

Our focus, however, in the present article is on the energy supercritical setting, and so we now review progress in this direction. On the one hand, global smooth solutions are known for small data by John and Lindblad--Sogge in \cite{John79, LS05}, and global weak solutions (including for large data) are constructed by Segal in \cite{Segal}. 
The works by Kenig--Merle and Killip--Visan in \cite{KiVi, KenigMerle}, analyse the nature of blow-up in the sense that a solution to \eqref{intro-eq} can only blow-up
in finite time, or be global but fail to scatter, if its critical Sobolev norm diverges. See also a related blow-up result for a supercritical nonlinear wave system with a particular  defocusing smooth potential by Tao in \cite{Tao}. 
In terms of large data results for the energy supercritical problem, much less is known. In both \cite{KriegerSchlag, BeSo}, equation \eqref{intro-eq} is studied under radial symmetry. In the first work \cite{KriegerSchlag} by Krieger and Schlag, the authors show global existence and restricted stability for a class of solutions with infinite critical Sobolev norm. In the latter work  \cite{BeSo} by Beceanu and Soffer, the authors establish global existence for  a class of outgoing radial data with infinite critical Sobolev norm. 

In the present article, we take the alternative perspective of using ``short pulse data” as introduced by Christodoulou in \cite{Christodoulou07, Christodoulou09}. The ``pulse'' terminology is because the data has large amplitude relative to its support. Generally speaking in the short pulse method, smallness restrictions are still imposed on the data along directions tangential to the outgoing light cone
${t - r=\text{constant}}$, however certain orthogonal directions involving $\underline{L} = \partial_t - \partial_r$ are allowed to be large. Similar to the product of ``good'' and ``bad'' derivatives in the null condition, the short pulse profile is assumed to be large in certain components in such a way that in the nonlinear interactions, large
components are always coupled with small components so that the terms are overall controllable.  

Motivated by this milestone method of Christodoulou, many works have since established global existence for nonlinear wave equations subject to short pulse initial data. We start with results in 3D, where  results showing global existence  given short pulse data have been given for a nonlinear wave equation satisfying the null condition by Miao--Pei--Yu in \cite{MPY}; see also \cite{Ding6} by Ding--Lu--Yin. Complementary to these global regularity results, we mention also the shock formation result by Miao and Yu in \cite{MiaoYu}. Note also several global existence results for
3D nonlinear hyperbolic equations with large data that is not of short pulse form in \cite{Yang, LuOhYa, LuOh, DoLiZh}. 
Results showing global existence for 2D nonlinear wave equations given short pulse data have been established for the relativistic membrane equations by Wang and Wei in \cite{Wang}, the isentropic and irrotational Euler equations for  Chaplygin fluids by Ding--Xin--Yin \cite{Ding3}, and a null form nonlinearity by the first author and Ding--Xu in \cite{DDX}. 

In the case where  $\mainfield$ is allowed to be complex valued, Shao--Wei--Zhang \cite{ShWeZh} have shown that the defocusing equation \eqref{intro-eq} admits finite time blow-up solutions arising from
smooth initial data. This extended ideas from the breakthrough  \cite{FRRS}, which proved the existence of  blow-up solutions for the energy-supercritical defocusing nonlinear Schr\"odinger, to the setting of the defocusing equation \eqref{intro-eq}. 

Finally, we recall some existence results for energy-supercritical wave-type equations with large data. 
In \cite{LiDuke}, Li developed an original non-local energy bootstrap strategy and established large data global well-posedness of hedgehog solutions for the Skyrme model which is an energy-supercritical problem. More recently, Wang in \cite{WangXuecheng} proved global existence for the supercritical Vlasov-Maxwell system with cylindrical large data.

\subsection{Main result and the proof}
The precise version of our main theorem is as follows. 
\begin{theorem}\label{thm:main2}
    Let $p>5$ and consider the Cauchy problem \eqref{intro-eq} with initial data at $t=t_0$ of the form 
    \begin{align*}
 (\Phi_0, \Phi_1)
 =
 (u_0, u_1) + (w_0, w_1) \end{align*}
 where 
     \begin{align*} w_0(r, \omega)= \epsilon^{1\over 2} h_0(\tfrac{1-r}{\epsilon}, \omega), \quad w_1(r, \omega)= \epsilon^{-{1\over 2}} h_1(\tfrac{1-r}{\epsilon}, \omega)\,.
\end{align*}
Then, there exists an $\epsilon_0 >0$ such that for all $0<\epsilon<\epsilon_0$ and for all functions $u_0(x), u_1(x)$ satisfying for $k=0,1,2$:
    \begin{align}
       \sum_{|I|\leq 6-|J|, \, |J|= k}&\| \langle x\rangle^{|I|} \nabla^{I}\nabla^{J} u_0 \| \notag
        \\& +\sum_{|I|\leq 5-\max\{|J|-1,0\}, \, |J|= k}\| \langle x\rangle^{|I|+\max\{1-|J|,0\}} \nabla^{I}\nabla^{\max\{|J|-1,0\}} u_1 \|
        \leq
        \epsilon^{k-1} \,, \label{eq:ID-disp}
    \end{align}
    and all functions $h_0(x), h_1(x)$ supported in $\{ 1\leq r \leq 2 \}$ satisfying 
    \begin{subequations}\label{eq:ID-shortp-intro}
     \begin{align}
        \sum_{|I|\leq 6}|\nabla^I h_0(x)| 
        + \sum_{|J|\leq 5}|\nabla^J h_1(x)|
        &\leq 1,\label{eq:ID-shortp1}
        \\
        |w_1 + \partial_r w_0| 
        &\leq \epsilon^{1\over 2}, \label{eq:ID-shortp2}
    \end{align}       
    \end{subequations}
    there exists a classical global solution to the Cauchy problem \eqref{intro-eq} satisfying 
    \begin{align}
        |\Phi(t,x)|&\lesssim \epsilon^{-1}\langle t+r\rangle^{-1}\langle t-r\rangle^{-1/2}.
     \end{align}
\end{theorem}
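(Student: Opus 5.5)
The plan is to split the solution as $\Phi = u + w$. Here $w$ solves the \emph{linear} wave equation with the short-pulse data $(w_0,w_1)$. The remainder $u$ then solves
\begin{equation*}
-\Box u = \mp (u+w)|u+w|^{p-1}
\end{equation*}
with the dispersed data $(u_0,u_1)$. We close a bootstrap for $u$ in a weighted energy/vector-field framework.

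\textbf{Step 1: the short pulse.} We first establish pointwise bounds for $w$. By the strong Huygens principle, $w$ is supported in the shell $\{ t-1-2\epsilon \le r \le t - 1 + 2\epsilon\}$ up to the precise support $t-r\in[\,t_0-2\epsilon\cdot?\,,\,t_0-\epsilon]$, i.e.\ a region of width $O(\epsilon)$ around the cone $t-r=0$. Using the Kirchhoff formula, or Christodoulou-type energy estimates with the good derivatives $L=\partial_t+\partial_r$ and $r^{-1}\Omega$ relative to $\underline L$, we aim for
\begin{align*}
|\Gamma^I w| \lesssim \epsilon^{1/2-|I_{\underline L}|}\, t^{-1}, \qquad |L w| + |r^{-1}\Omega w| \lesssim \epsilon^{1/2} t^{-1},
\end{align*}
where $|I_{\underline L}|$ counts the bad derivatives. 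Condition \eqref{eq:ID-shortp2} is exactly what gives the smallness of $Lw$ initially. The key consequence is that $|w|\lesssim \epsilon^{1/2}t^{-1}$ is small pointwise, while the bad derivatives are large.

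\textbf{Step 2: bootstrap for $u$.} Write the nonlinearity as a sum of terms $u^a w^b$ with $a+b=p$. Because $p>5$, each term contains at least $p-1>4$ factors that decay like $t^{-1}$. We use the hyperboloidal or conformal-type energy (the Morawetz $K_0$ multiplier) with commuting fields $\partial,\Omega,S,L_i$. These norms match \eqref{eq:ID-disp}, which has the weights $\langle x\rangle^{|I|}$ and carries the $\epsilon^{k-1}$ scaling. A dispersed solution has natural size $\epsilon^{-1/2}$ in $L^2$-based norms, since $g_0$ is rescaled by $\epsilon$, but its $k$-th derivatives improve by $\epsilon^{k}$. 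The bootstrap assumptions are
\begin{equation*}
E_{\text{conf}}(\Gamma^{\le k} u)\le C\epsilon^{-1}, \qquad |u|\lesssim \epsilon^{-1}\langle t+r\rangle^{-1}\langle t-r\rangle^{-1/2},
\end{equation*}
the latter via Klainerman--Sobolev. The time integral of the source must then be controlled. Since $|u|^{p-1}\lesssim \epsilon^{-(p-1)}t^{-(p-1)}$ is large, smallness cannot come from amplitude. Instead I would exploit the spatial scale: the dispersed part effectively lives at frequency $\epsilon$. Rescaling $u(t,x)=\epsilon^{1/2}U(\epsilon t,\epsilon x)$ turns the equation into one with coefficient $\epsilon^{(p-1)/2-2}$ in front of the nonlinearity. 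This coefficient is small precisely when $p>5$, which is where supercriticality helps. So I will run the estimates in rescaled variables, where the data are $O(1)$ and the nonlinearity carries the small factor $\epsilon^{(p-5)/2}$. The pulse, after rescaling, becomes a pulse of width $\epsilon^2$ located at radius about $\epsilon$. Its bad derivatives are large and must be paired carefully.

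\textbf{Step 3: the main obstacle.} The hardest terms are the cross terms $u^{p-1-j}w^{j}\,\Gamma^I w$ in which all commutators fall on $w$ as $\underline L$-type derivatives, producing factors $\epsilon^{-|I|}$. To handle them, I would use that $w$ is supported in a strip of width $\epsilon$. The $L^2$ norm of such a term over the strip gains $\epsilon^{1/2}$ per unit of volume width, and the weight $\langle t-r\rangle$ is $O(1)$ there. I would also choose commutators adapted to the pulse, preferring $S,\Omega,L$ (whose action on $w$ involves the good derivatives) over bare $\partial$. If the counting still loses, the fallback is to use the extra decay from $p>5$ (at least $t^{-4}$ from the other factors) together with lower-order energies at degenerate $\epsilon$-weights. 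The bound then closes by integrability in $t$ with at most a $\epsilon^{(p-5)/2}$ loss absorbed. Finally, a continuity argument and local well-posedness give the global classical solution, and the pointwise bound on $\Phi$ follows by adding the estimates for $u$ and $w$.
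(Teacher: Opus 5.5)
There is a genuine gap, and it lies where the paper does most of its work.

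\textbf{The support of the pulse.} Steps 1 and 3 rest on the claim that the short pulse stays in an $O(\epsilon)$-neighbourhood of the outgoing light cone. This is false. The data live on the annulus $1-2\epsilon\le r\le 1-\epsilon$, so part of the wave travels inward, passes through the origin around $t\approx 2$ and comes back out. Huygens' principle only confines the support of $w$ to $\{\epsilon\le t-r\le 2-\epsilon,\ t+r\ge 2-2\epsilon\}$, and in general $w$ does not vanish there. Already for radial data, $w=c/r$ between the two thin shells, with $c=\frac12\int\rho\,w_1(\rho)\,d\rho$. Condition \eqref{eq:ID-shortp2} makes this incoming part smaller but does not remove it.

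In this interior region there is no $\epsilon$-wide strip. So there is no $\epsilon^{1/2}$ gain from the width of the support, and no Hardy inequality with $t-r\sim\epsilon$; that is what your Step 3 relies on. The other suggestions there (adapted commutators, a fallback to extra decay) do not amount to an argument.

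The paper handles this by splitting spacetime into the slab $[t_0,t_1]$, the exterior strip $\{t-5\epsilon\le r\le t-\epsilon\}$ and the interior $\{r\le t-4\epsilon\}$. In the interior it runs a hyperboloidal bootstrap on the natural and conformal energies of only $\Omega^{\le 1}\phi$, $\partial\phi$, $L_a\phi$. The boundary fluxes through $\mathcal B_{4\epsilon}$ come from exterior conformal bounds and from hidden smallness obtained by integrating the null-frame equation in $u$ or $v$. Examples are $|L^2\phi|,|\underline L^2\phi|\lesssim\epsilon^{1/2}$ on the relevant parts of $\{t=t_1\}$, and $|L^2\phi|\lesssim\epsilon^{1/2}\langle t+r\rangle^{-3}$ in the exterior. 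The scheme closes at this low regularity only because of the $L^q$ Klainerman--Sobolev inequality ($q>6$) on $\mathcal H_\tau\cap\{r\ge t/8\}$, which needs just $\Omega^{\le 1}$ on the right. None of this appears in your plan.

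\textbf{The bootstrap for $u$.} You put all of $F(u+w)-F(u)$, with $F(z)=|z|^{p-1}z$, into $u$. So $u$ acquires a short-pulse component on top of its dispersed part, and one set of bounds cannot describe both.

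In original variables, $E_{conf}(\Gamma^{\le k}u)\le C\epsilon^{-1}$ only gives $|u|\lesssim\epsilon^{-1}t^{-1}$, which, as you note, is useless inside $|u|^{p-1}$. In your rescaled frame, $W(T,X)=\epsilon^{-1/2}w(T/\epsilon,X/\epsilon)$ has $\|\partial_X^2W\|_{L^2}\sim\epsilon^{-2}$. For $T\lesssim 1$, where $|U|$ is generically of size one, the forcing $\epsilon^{(p-5)/2}|U|^{p-1}\partial_X^2W$ therefore has size $\epsilon^{(p-9)/2}$ in $L^2$. The energy estimate for $\partial_X^2U$ then only yields a bound of that order, which is large for $5<p<9$.

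Also, for the data class in the theorem the rescaled data $U_0(X)=\epsilon^{-1/2}u_0(X/\epsilon)$ are $O(1)$ only up to two derivatives. Indeed \eqref{eq:ID-disp} gives only $\|\nabla^k u_0\|\le\epsilon$ for $2\le k\le 6$, so $\|\nabla^kU_0\|\lesssim\epsilon^{2-k}$.

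Your observation $\Box U=\pm\epsilon^{(p-5)/2}|U|^{p-1}U$ is correct. For the dispersed evolution alone it is a reasonable alternative to the paper's interpolation lemma, which gives $|\psi|\lesssim\min\{\epsilon^{1/2-\delta},t^{-1/2+\delta}\}$. But the paper keeps the two structures apart with a different splitting:
\begin{itemize}
\item $\psi$ is the full nonlinear evolution of $(u_0,u_1)$, with its own hierarchy $\epsilon^{-1},1,\epsilon$.
\item Every coupling term goes into $\phi$, whose source $\mathcal N(\psi,\phi)=F(\psi+\phi)-F(\psi)$ always contains a factor of $\phi$, so it inherits the support of $\phi$. $\phi$ carries the reversed hierarchy $\epsilon^{-|I|}$ in $\partial^I$.
\end{itemize}
Your single unknown $u$ would have to carry both hierarchies at once, and the proposal does not say how.

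A minor point: for non-integer $p$ the nonlinearity is not a finite sum of monomials $u^aw^b$. One works instead with bounds such as $|F(u+w)-F(u)|\lesssim|u|^{p-1}|w|+|w|^p$, together with the limited smoothness of $F$.
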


\begin{remark}
Note that in Theorem \ref{thm:main2} we do not need to assume that $L\mainfield|_{t=t_0}$ is small, where $L$ is the null derivative that is tangential to the forward Minkowski lightcone. We do however need to assume in \eqref{eq:ID-shortp2}  that the $L$ derivative of the short pulse component $(w_0, w_1)$ is small. 
\end{remark}

\begin{remark}
Note that the critical Lebesgue norm for \eqref{intro-eq} is $\mainfield_0\in L^{p_c}$ with $p_c = 3(p-1)/2$. While our data are small in this norm, and certain smallness conditions arise naturally in the short-pulse method,  our data are
large at the level of energy and its Sobolev norms cannot all be made uniformly small using the scaling invariance of the equation.  
\end{remark}
\paragraph{\textbf{Outline and novel ideas in the proof}}
The starting point in our proof of Theorem \ref{thm:main2} is to analyse  the PDE \eqref{eq:model-sup} through two auxiliary PDEs which arise from the decomposed nature of our initial data. \smallskip

\textit{Part I: Global existence for dispersed data.}
We first prove the global existence of large data solutions to the auxiliary PDE
\begin{equation}\begin{split}\label{eq:intro-dispeq}
\psi_{tt} - \Delta \psi &= \pm |\psi|^{p-1} \psi,
\\
(\psi, \partial_t \psi)(t_0)  &= (u_0, u_1),
\end{split}\end{equation}
with dispersed initial data in the sense of \eqref{eq:ID-disp}. We use a standard continuity argument relying on the natural and conformal $L^2(\mathbb{R}^3)$-based energy functionals:
\[
\mathcal{E}(t, \psi) = \frac12 \big(\| \partial_t \psi\|^2+ \| \partial_{\vec{x}} \psi\|^2\big), \quad 
\mathcal{E}_{con}(t, \psi)= \frac12 \big( \| L_0 \psi\|^2 + \sum_a\|L_a \psi \|^2 + \| \Omega \psi\|^2 + \| \psi\|^2 \big)\,.
\]
In the above  $L_a$ denotes the Lorentz boosts, $\Omega$ the rotations and $L_0$ denotes the conformal Killing vector field $L_0$, also called the scaling vector field. We use $\Gamma$ to denote the full Lorentz group of Minkowski Killing vector fields together with $L_0$. Precise notation is given in Section \ref{preliminaries}.

Due to the initial data assumptions in \eqref{eq:ID-disp}, at lowest order the solution behaves as $\|\psi\| \sim \epsilon^{-1}$, while higher derivatives of the data lead to increasing smallness: $\|\partial \psi\| \sim 1,\, \| \partial^{\geq 2} \psi \|\sim \epsilon$. This poses challenges to controlling the nonlinearity $|\psi|^{p-1} \psi$ where no derivatives appear. For this reason, we use a bootstrap which involves both the natural and conformal energies. 
For our pointwise estimates (at lowest order), a naive application of the Klainerman-Sobolev inequality to estimate $|\psi|$ requires control on $\mathcal{E}_{con}(t, \Gamma^{\leq 2}\psi)$, and all together produces decay like $|\psi| \sim \epsilon^{-1} t^{-1}$.   However by using the fundamental theorem of calculus, and the decay estimate derived from $\mathcal{E}(t, \Gamma^{\leq 2}\psi)$, we interpolate our decay estimates together to gain either a smallness coefficient $|\psi|\lesssim \epsilon^{\frac12-\delta}$ or a time decay factor $|\psi| \lesssim t^{-\frac12+\delta}$. Together with the Hardy inequality, this is sufficient to close the bootstrap assumption provided $\delta \ll p-5$.
\medskip

\textit{Part II: Global existence for short-pulse data.}
We next treat the second auxiliary PDE
\begin{equation}\begin{split}\label{eq:intro-shortpeq}
\phi_{tt} - \Delta \phi &= \pm\Big(|\psi+\phi|^{p-1} \phi + (|\psi+\phi|^{p-1} - |\psi|^{p-1})\psi \Big) =: \pm \, \mathcal{N},
\\
(\phi, \partial_t \phi)(t_0)  &= (w_0, w_1),
\end{split}\end{equation}
with short-pulse initial data in the sense of \eqref{eq:ID-shortp-intro}. Due to the initial data assumptions, the hierarchy is now reversed in the sense that higher derivatives generate largeness:
\[
	\|\phi\| \lesssim \epsilon, \quad \|\partial \phi\| \lesssim 1,\quad\|\partial^2\phi\|\lesssim\epsilon^{-1}.
	\]
To be more precise, smallness is present in certain directions (in particular the angular derivatives  $\Omega \phi$ and the null derivative $L \phi$), but not for full derivatives. For instance as in \cite{Ding3, DDX}, but unlike some other short-pulse works such as \cite{MPY}, we only need to assume control of one $L$ derivative of $w$ in \eqref{eq:shortpulse}.  

As before, one of the main issues is that we do not have smallness on $\phi$ using the Klainerman-Sobolev inequality since the estimate yields only $|\phi|\lesssim\|\Gamma^{\leq2}\phi\|\lesssim\epsilon^{-1}$. This again causes problems to control the nonlinearity.
To address this, we decompose spacetime into a short-time slab of size $O(\epsilon)$ and two global regions (an exterior $\mathcal{D}^{\text{ex}}$ and interior $\mathcal{D}^{\text{in}}$ region) separated by a boundary null hypersurface $t-r=4\epsilon$ denoted by $\mathcal{B}_{4\epsilon}$. See Figure \ref{fig:spacetime} for the precise decomposition.  The standard approach is to exploit the different dispersion mechanisms in these three different regions. \smallskip

\textit{(i) Local in time existence for short-pulse data.}
We first establish local existence on the time interval $[t_0=1, t_1=t_0+2 \epsilon]$. We make bootstrap assumptions for  natural and conformal energies which depend on the number of coordinate derivatives $\partial$  being taken:
\[
\mathcal{E}(t, \partial^I \Omega^J \phi)^{1/2} \sim \epsilon^{-|I|}\,, \quad \mathcal{E}_{con}(t, \partial^I \Omega^J\phi) ^{1/2}\sim \epsilon^{1-|I|} \,, \qquad |I|+|J|\leq 5\,.\]
In contrast to the standard Klainerman--Sobolev inequality (see \eqref{est:global_KS}) which fails to yield any smallness when estimating $\phi$, the smallness of the time interval in the local-in-time slab enables us to derive an improved local Sobolev inequality of the form $|\phi|\lesssim\epsilon^{\frac{1}{2}}\|\partial\Omega^{\leq2}\phi\|\lesssim\epsilon^{\frac{1}{2}}$. 
This refined inequality converts geometric smallness coming from the size of the support, into analytic smallness in the $L^\infty$ norm. 

Turning next to the nonlinearity appearing in \eqref{eq:intro-shortpeq}, we note that
\begin{align*}
\big|  |\psi+\phi|^{p-1} - |\psi|^{p-1} \big|
\lesssim
|\psi|^{p-2}|\phi| + |\phi|^{p-1}.
\end{align*}
A crucial observation is that  at least one factor of $\phi$ is always present in the terms on the RHS. This  ensures that the nonlinear terms inherit the compact support of $\phi$. This  key property facilitates our subsequent estimates by confining the influence of the nonlinear interactions to a bounded spacetime region, thereby avoiding unbounded contributions from the large derivatives discussed earlier.
For higher-order estimates on the nonlinearity, numerous terms arise from the Leibniz rule. For example, when taking one derivative we have
    \begin{align*} |\Gamma \mathcal{N}| &\lesssim |\phi|^{p-1}|\Gamma \phi| + |\phi|^{p-1}|\Gamma \psi| + |\psi|^{p-1}|\Gamma \phi| + |\phi||\psi|^{p-2}|\Gamma \psi|\,.
    \end{align*}
Some care is needed here when $\Gamma = \Omega$. While  $\Omega \phi$ is not large, the angular derivative exhibits unfavorable behavior on the dispersed part since $\|\Omega \psi\|\lesssim \epsilon^{-1}$. 
We counteract this large growth  via the identity $ \Omega\psi\sim r\partial\psi\sim\partial\psi$, where in the last step we used that $r\sim1$ in the local-in-time region. 
By combining the pointwise smallness of $\phi$, the compact support, and analysing the differentiated nonlinearities, we are able to control all such terms and close the bootstrap for the local-in-time evolution.

Finally, at the boundary of the local region $t=t_1$ we identify additional hidden smallness by expressing the wave equation \eqref{eq:intro-shortpeq} in a null frame and integrating in the $u$- or $v$-direction. Specifically, we find  
$|L^{\leq2}\phi|\lesssim \epsilon^{\frac{1}{2}}$ holds for values of $r$ which propagate into the exterior region. While this is one derivative more than we assumed in \eqref{eq:ID-shortp2}, this procedure relies on the fundamental theorem of calculus and the $\epsilon$-size support of the solution. Thus we have lost regularity here (i.e. we cannot estimate $L^2 \Gamma^{\geq 1} \phi$) due to this $L^\infty-L^\infty$ argument.  Similarly, we obtain an estimate on $|\underline{L}^{\leq2}\phi|\lesssim \epsilon^{\frac{1}{2}}$ for values of $r$ which  propagate into the interior region. This explicit quantitative smallness then yields suitable initial data for the global analysis.

\medskip

\textit{(ii) Global existence in the exterior region.} We next prove global existence in the exterior region $\mathcal{D}^{\text{ex}}$ for the PDE \eqref{eq:intro-shortpeq} with initial data posed at $t=t_1$. Due to the compact support of short-pulse data, the exterior region consists of the domain $\mathcal{D}^{\text{ex}}=\{(t,x): t\ge t_1,\ t-5\epsilon\le r\le t-\epsilon\}$ which we foliate using constant $t-$time slices $\Sigma^{ex}_{t}$. We rely on natural energies to control the solution and the results from \textit{(i)} inform our bootstrap which again depends on the number of coordinate derivatives $\partial$:
\[ \|\partial \partial^I \Omega^J \phi\|_{L^2(\Sigma^{ex}_{t})} \lesssim \epsilon^{-|I|}    , \qquad |I|+|J|\leq 5\,.
\]
For brevity here, we do not state also the bootstrap on the boundary  $\mathcal{B}_{4\epsilon}$.
We can gain an additional smallness factor on the field in $L^2$ by using a standard Hardy inequality  involving a weight depending on the distance from the lightcone
\[
\|(t-r)^{-1} \phi \|_{L^2(\Sigma^{ex}_{t})} \lesssim \|\partial_r \phi\|_{L^2(\Sigma^{ex}_{t})}\,.
\]
Since  $t-r \sim \epsilon$ in the exterior region,  we have $\|\phi\|_{L^2(\Sigma^{ex}_{t})} \lesssim \epsilon \| \partial_r \phi\|_{L^2(\Sigma^{ex}_{t})} \lesssim \epsilon$.
Notably, nonlinear terms without derivatives can be directly controlled by this inequality.
For the $L^\infty$ norm of $\phi$, we follow the same strategy as in the local case, obtaining from a standard Sobolev inequality the estimate $|\phi| \lesssim r^{-1} \epsilon^{\frac{1}{2}}$. 

Turning next to the nonlinear term $\mathcal{N}$, similar problematic terms as in \textit{(i)} occur.
For instance, using the estimates for the dispersed component, we obtain 
\[
\|\Omega^J\psi\|_{L^2(\Sigma^{ex}_{t})}\lesssim\epsilon^{-1},\qquad
|\Omega^J\psi|\lesssim\sum_{|J_1|=|J|-1}|r\partial\Omega^{J_1}\psi|.
\]
Unlike \textit{(i)} where we  used $r \sim 1$, we inevitably incur a loss here -- either a loss in  $\epsilon$ or a loss in the decay rate.
Therefore,  we must carefully balance these two types of losses to close the exterior bootstrap.

    Finally, as input for the interior analysis, we  derive additional estimates including conformal energies and boundary contributions arising from the interface $\mathcal{B}_{4\epsilon}$ with the exterior region, together with refined $L^\infty$ estimates.
\medskip

\textit{(iii) Global existence in the interior region.}
Finally we prove global existence  in the interior region $\mathcal{D}^{in}=\{(t,x): t\geq t_1,\ 0\leq r\leq t-4\epsilon\}$ which is significantly more challenging.  We employ the hyperboloidal vector field approach using a hierarchy of energy estimates along hyperboloidal hypersurfaces $\mathcal{H}_\tau$. In particular, we make use of both the standard hyperboloidal energy $\mathcal{E}^{in}(\tau,\varphi)$ and a conformal energy $\mathcal{E}_{\text{con}}^{in}(\tau,\varphi)$.
A key difficulty lies in obtaining suitable pointwise control without losing too many derivatives. In  the region $r \geq t/8$, the classical Klainerman--Sobolev inequality reads (see the precise estimate \eqref{est:KS_in3})
$$|\phi| \lesssim  \tau^{-1} r^{-\frac12} \| (\tau/t) \Omega^{\leq 2} \phi\|^{1/2}_{L^2(\mathcal{H}_\tau)} \| (\tau/t) \Omega^{\leq 2} Z \phi\|^{1/2}_{L^2(\mathcal{H}_\tau)} + \text{endpoint data}.$$
Note the last factor requires three derivatives on $\phi$. 
However, to control $\| (\tau/t)\Omega^2 Z \phi\|$
one is forced to commute the equation twice, which produces nonlinear terms of the form $|\phi|^{p-2}\Omega\phi\,\Omega\phi$. Controlling such terms requires $L^\infty$ bounds for $\Omega\phi$, and hence higher-order Sobolev norms such as $\|(\tau/t)Z\Omega^{3}\phi\|$. This in turn necessitates bootstrap assumptions at the level of three derivatives. As a consequence, certain energy terms cannot be closed; for instance, we cannot control a boundary term of the form
\[
\int_{B_{4\epsilon}} |L_a \Omega^3 \phi|\,dx,\qquad\text{where}\ \ |L_a \Omega^3 \phi| \lesssim r^{-1} \epsilon^{\frac{1}{2}} \|\partial_r\Omega^{\le 2}L_a \Omega^3 \phi\|_{L^2(\Sigma_{t}^{ex})},
\]
without invoking estimates for up to six derivatives, which is not compatible with the range $p>5$. 

To overcome this difficulty, we introduce a different strategy by combining $L^\infty$ bounds with $L^q$ estimates. Our crucial observation is that in the region $r \geq t/8$, we can prove an $L^q-L^2$ estimate that requires only two derivatives on the right hand side (see \eqref{est:KS_in2} for the precise statement), 
\begin{align*}
   \|\phi\|_{L^q(\mathcal{H}_{\tau}\cap\{r\geq t/8\})}
            		&\lesssim \tau^{-1}r^{-\frac{1}{2}+\frac{3}{q}}
            		\big\|(\tau/t) \Omega^{\leq 1}\phi\big\|_{L^2(\mathcal{H}_\tau)}^{\frac{1}{2}}
            		\big\|(\tau/t)Z\Omega^{\leq 1}\phi\big\|_{L^2(\mathcal{H}_\tau)}^{\frac{1}{2}}+\text{endpoint data}, \quad q>6.
\end{align*}
The condition $q>6$ is used to rewrite an integral involving  $L^q(\mathbb{S}^2)$ into one involving $L^2(\mathcal{H}_\tau)$, which is required after we invoke the Sobolev embedding $H^1(\mathbb{S}^2)\hookrightarrow L^q(\mathbb{S}^2)$. In turn, the condition $q>6$ is used in our proof to get enough decay when applying the above estimate.

Our $L^q$ estimate allows us to lower the regularity level in the bootstrap assumptions: instead of controlling three derivatives, it suffices to work at essentially one derivative above the energy level. By contrast other works for instance \cite{Ding3, MPY} require far more derivatives than we do here.  
In particular, we impose bootstrap bounds of the form
\begin{equation*}
\begin{aligned}
\mathcal{E}^{in}(\tau, \Omega^{\leq 1} \phi)^{\frac{1}{2}}+ \mathcal{E}^{in}_{con}(\tau, \Omega^{\leq 1}\phi)^{\frac{1}{2}} &\lesssim \epsilon^{\frac{1}{2}}, \quad
\mathcal{E}^{in}(\tau, \partial \phi)^{\frac{1}{2}}+\mathcal{E}^{in}(\tau, L_a \phi)^{\frac{1}{2}} + \mathcal{E}^{in}_{con}(\tau, L_a \phi)^{\frac{1}{2}} \lesssim 1. 
\end{aligned}
\end{equation*}
In this bootstrap it is also crucial that we can reduce the number of derivatives in order to preserve smallness in $\epsilon$. Indeed as already seen in \textit{(i, ii)} above, higher-order derivatives tend to lose any smallness coming from the initial data. For example, refined pointwise estimates at time $t_1$ do not yield any $\epsilon$-gain at the level of three derivatives, e.g.
\[
\|\partial \Omega^3 \phi\|_{L^2(\Sigma_{t_1}\cap\mathcal{D}^{in})} \lesssim O(1),
\]
which shows that no smallness is available at this level. In contrast, the use of $L^q$ estimates allows us to avoid such losses.
With these ingredients, we are able to close the bootstrap argument in the interior region. The proof relies on a delicate combination of the hyperboloidal energy estimates, the mixed $L^\infty$ and $L^q$ control, and the previously established results in the local and exterior regions, which together yield global existence in $\mathcal{D}^{in}$.



%

\subsection{Organization} In Section \ref{preliminaries} we present some preliminary notation and standard results from functional analysis. In Section \ref{dispersed} we prove global existence for the decoupled problem \eqref{eq:intro-dispeq} with dispersed data. In Section \ref{sp:local} we turn to the second decoupled problem \eqref{eq:intro-shortpeq} and prove local in time existence, followed by global existence in the exterior and interior regions in Sections \ref{sp:exterior} and \ref{sec:interior} respectively. Finally in Section \ref{sec:classical} we upgrade our global solution to a global classical one. 

\subsection*{Acknowledgement}
The author Shijie Dong would like to acknowledge the support from the National Natural Science Foundation of China (Grant Nos. 12401280 and 12431007) and Guangdong Basic and Applied Basic Research Foundation (Grant Nos. 2025A1515012652 and 2023A1515110944).
The author Jingya Zhao is grateful to Murray Edwards College, University of Cambridge, for hospitality as work was being conducted on this project. 

\section{Preliminaries}\label{preliminaries}

\subsection{Notation}
Our problem is posed on the Minkowski spacetime $\mathbb{R}^{1+3}$ equipped with the metric $\eta = \text{diag}(-1, 1, 1, 1)$. We denote by $\Box=-\partial_t\partial_t+\Delta$ the d'Alembert operator. We use Roman letters to denote spatial indices $a, b, \cdots \in \{1, 2, 3 \}$, and Greek letters to denote spacetime indices $\alpha, \beta, \cdots \in \{0, 1, 2, 3\}$. The indices are raised or lowered by the metric $\eta$, and the Einstein summation convention for repeated upper and lower indices is adopted.  For a spacetime point $(t, x^1, x^2, x^3)$,  $x^0$ denotes the time variable $t$, and $r$ denotes the spatial radius $\sqrt{(x^1)^2 + (x^2)^2 + (x^3)^2}$. 

We rely on the standard Minkowski vector fields: the translations $\{ \partial_\alpha = \partial_{x^\alpha}\}$, the rotations $\{\Omega_{ab} = x_a\partial_b - x_b\partial_a: a<b\} $, the Lorentz boosts $ \{L_a = t\partial_a + x_a\partial_t\}$ and the scaling vector field $L_0 = t\partial_t + x^a\partial_a$. We collect the vector fields as
\begin{equation*}
    \Gamma=(\Gamma_1,\cdots,\Gamma_{11})=(\partial,\Omega,L_1,L_2,L_3,L_0),
\end{equation*}
where we use the shorthand notation $\partial=(\partial_t,\nabla)=(\partial_t,\partial_1,\partial_2,\partial_3)$ and $\Omega=(\Omega_{12},\Omega_{13},\Omega_{23})$. We sometimes denote $Z=\{\partial_t, L_1, L_2,L_3\}.$

It is important to express these vector fields with respect to the natural null frame defined on Minkowski. 
First, we denote the null coordinates $v=t+r, u=t-r$, which leads to 
\begin{align*}
    L = \partial_t + \partial_r =2\partial_v,
    \qquad
    \underline{L} = \partial_t - \partial_r =2\partial_u,
\end{align*}
where $\partial_r=\omega^a\partial_a$ with $\omega_a=x_a/r$.
We also introduce a notion of `good' derivatives 
\begin{equation*}
    G_a=\omega_a\partial_t+\partial_a,\quad a=1,2,3,
\end{equation*}
and set $G=(G_1,G_2,G_3)$. 
These definitions lead to the following identities amongst the vector fields.
\begin{equation}\label{eq:VF-identities}
    \begin{aligned}
        &L=\omega^a G_a=\frac{L_0+\omega^aL_a}{t+r},\quad\partial_t=\frac{1}{2}(L+\underline{L}),\quad\partial_a=\frac{\omega_a}{2}(L-\underline{L})-\frac{\omega^b}{r}\Omega_{ab},\\
        &G_a=\frac{\omega_a}{t+r}L_0+\frac{1}{t}L_a-\frac{\omega_ax^b}{t(t+r)}L_b
        =\omega_aL-\frac{\omega^b}{r}\Omega_{ab},\\
        &L_a=\frac{\omega_a}{2}((t+r)L-(t-r)\underline{L})-\frac{t}{r}\omega^b\Omega_{ab}.
    \end{aligned}
\end{equation}

We use the standard notation
\begin{equation*}
    |\Gamma f| = \Big(\sum_{k=1}^{11} |\Gamma_k f|^2\Big)^{\frac{1}{2}},
\end{equation*}
and for any multi-index $I=(I_1,\cdots,I_{11})\in\mathbb{N}^{11}$, we write $\Gamma^I=\Gamma^{I_1}_1\cdots\Gamma^{I_{11}}_{11}$ and $|I|=\sum_{k=1}^{11}I_{k}$. We occasionally use the shorthand notation $\Gamma^{\leq k}$ to denote a sum of $\Gamma^I$ with $|I|\leq k$. 
Finally, we use the Japanese bracket $\langle b\rangle=(1+|b|^{2})^{\frac{1}{2}}$ for a scalar or vector $b$, and denote by $\|\cdot\|$ the $L^2$ norm. We write $A \lesssim B$ to indicate $A\leq C_0 B$ with $C_0$ a universal constant. We write $A\sim B$ if $A\lesssim B$ and $B\lesssim A$.

Finally, we will  need to write numerous summations involving terms such as $|I_1|+\ldots +|I_5|$. Since this becomes cumbersome to read, we introduce the following simplified (and slightly abused) notation
    \[ \sum_{\substack{|I_1|+\cdots+|I_k|=|I|,\\ \text{Another condition}}} = \sum_{I_{1\to k} = I} \sum_{\substack{\text{Another}\\\text{ condition}}}.\]
    Furthermore, we occasionally write $\partial^I \Omega^J$ as $\Gamma^K$ when it is not  necessary to emphasise the value of $I$, and associate $K=(I, J)$ with $|K|=|I|+|J|$.

\subsection{Standard results} We first collect together several classical results from functional analysis which we will make use of extensively in our proof. 
\begin{lemma}\label{lem:commutator1} 
    The following commutation identities hold
    \begin{align*}
        [-\Box,\Gamma_k]&=-c_k\Box,\quad c_k=\left\{\begin{aligned}
            &0,\quad k=1,\cdots,10,\\
            &2,\quad k=11,
        \end{aligned}\right. \\
    [L_0,\Omega_{ab}]&=0,\quad \mbox{for}\quad 1\leq a<b\leq3.
    \end{align*}
\end{lemma}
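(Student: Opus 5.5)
The plan is a direct computation, since both identities are algebraic facts about first-order differential operators with polynomial coefficients. Write $-\Box = \partial_t^2 - \Delta = -\eta^{\alpha\beta}\partial_\alpha\partial_\beta$. For any vector field $X = X^\gamma\partial_\gamma$ we have
\[
[\eta^{\alpha\beta}\partial_\alpha\partial_\beta, X]
= \eta^{\alpha\beta}\big(\partial_\alpha\partial_\beta X^\gamma\big)\partial_\gamma + 2\eta^{\alpha\beta}\big(\partial_\alpha X^\gamma\big)\partial_\beta\partial_\gamma .
\]
All of our vector fields have coefficients that are at most linear in $(t,x)$, so the first term vanishes. The second term reduces to the symmetric part of the matrix $\partial_\alpha X_\gamma$ (indices lowered with $\eta$), that is, half of the deformation tensor $\pi_{\beta\gamma} = \partial_\beta X_\gamma + \partial_\gamma X_\beta$. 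The commutator therefore equals $\pi^{\beta\gamma}\partial_\beta\partial_\gamma$.

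I would then check the cases $k=1,\dots,11$ in order.
\begin{enumerate}
\item For the translations $\partial_\alpha$, the coefficients are constant, so $\pi=0$.
\item For the rotations, $(\Omega_{ab})_\gamma$ has $\partial_\beta X_\gamma$ antisymmetric in $(\beta,\gamma)$, so $\pi=0$.
\item For the boosts $L_a = t\partial_a + x_a\partial_t$, lowering indices gives $X_0=-x_a$ and $X_a=t$. Then $\partial_0X_a + \partial_aX_0 = 1-1 = 0$, and all other entries vanish, so again $\pi=0$.
\end{enumerate}
This gives $c_k=0$ for $k\le 10$. For the scaling field $L_0 = x^\gamma\partial_\gamma$, we have $X_\gamma = \eta_{\gamma\delta}x^\delta$, so $\pi_{\beta\gamma} = 2\eta_{\beta\gamma}$. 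Hence $[\eta^{\alpha\beta}\partial_\alpha\partial_\beta, L_0] = 2\Box$, that is, $[\Box, L_0] = 2\Box$. Multiplying by $-1$ gives $[-\Box, L_0] = -2\Box$, so $c_{11}=2$.

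An equivalent and quicker check for $L_0$ is the homogeneity argument. Since $\Box$ is homogeneous of degree $-2$ under dilations and $L_0$ generates dilations, $\Box L_0 = (L_0+2)\Box$. This reproduces the same constant, and I would use it to confirm the sign convention.

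For $[L_0,\Omega_{ab}]=0$, I compute $[x^\gamma\partial_\gamma,\, x_a\partial_b - x_b\partial_a]$. Using $[x^\gamma\partial_\gamma, x_a\partial_b] = x_a\partial_b - x_a\partial_b = 0$, which holds because $x_a\partial_b$ is homogeneous of degree $0$, and similarly for the other term, the commutator vanishes. Equivalently, $\Omega_{ab}$ preserves homogeneity degree.

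The only real hazard is keeping the sign and index conventions consistent, since $\eta=\mathrm{diag}(-1,1,1,1)$, $x_a = x^a$ and $x_0 = -t$. I would verify the $L_0$ case directly on a test function such as $t^2$ or $|x|^2$ to make sure $c_{11} = +2$ rather than $-2$ with the stated sign of $-\Box$.
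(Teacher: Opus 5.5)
Your proposal is correct. For vector fields with linear coefficients you use $[\Box,X]=\pi^{\beta\gamma}\partial_\beta\partial_\gamma$, with $\pi=0$ for the translations, rotations and boosts and $\pi_{\beta\gamma}=2\eta_{\beta\gamma}$ for $L_0$, which gives $[-\Box,L_0]=-2\Box$; the homogeneity argument also correctly gives $[L_0,\Omega_{ab}]=0$. The paper states this lemma without proof as a standard fact, and your direct computation is the standard verification, so there is nothing to add.
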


\begin{lemma}[\cite{Sogge}]\label{lem:commutator2}
        For any smooth function $\varphi=\varphi(t,x)$,  the following commutator terms can be estimated as
		\begin{equation}\label{est:commutators}
                \sum_{\alpha=0}^{3}\left|[\partial_\alpha,\Gamma^I]\varphi\right|\lesssim\sum_{|J|<|I|}\sum_{\beta=0}^{3}|\partial_\beta\Gamma^J\varphi|,
                \qquad
                \sum_{a=1}^{3}\left|[L_a,\Omega^J]\varphi\right|\lesssim\sum_{|J|<|I|}\sum_{b=1}^{3}|L_b\Omega^J\varphi|.
		\end{equation}
	\end{lemma}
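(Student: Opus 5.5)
The plan is to prove both estimates of Lemma \ref{lem:commutator2} by induction on $|I|$, writing $[\partial_\alpha,\Gamma^I]$ as a telescoping sum. The base case is the first-order commutators, which are explicit. We have $[\partial_\alpha,\partial_\beta]=0$. For the rotations, $[\partial_c,\Omega_{ab}]=\delta_{ca}\partial_b-\delta_{cb}\partial_a$ and $[\partial_t,\Omega_{ab}]=0$. For the boosts, $[\partial_t,L_a]=\partial_a$ and $[\partial_b,L_a]=\delta_{ab}\partial_t$. For the scaling field, $[\partial_\alpha,L_0]=\partial_\alpha$. In every case $[\partial_\alpha,\Gamma_k]$ is a constant-coefficient linear combination of the translations $\partial_\beta$. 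Likewise, $[L_a,\Omega_{bc}]$ is a constant-coefficient combination of the boosts $L_d$, by the Lorentz algebra: $[L_a,\Omega_{bc}]=\delta_{ab}L_c-\delta_{ac}L_b$ up to sign conventions.

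For the inductive step, write $\Gamma^I=\Gamma_{k}\Gamma^{I'}$ with $|I'|=|I|-1$. Then
\[
[\partial_\alpha,\Gamma^I]\varphi=[\partial_\alpha,\Gamma_k]\Gamma^{I'}\varphi+\Gamma_k[\partial_\alpha,\Gamma^{I'}]\varphi.
\]
The first term is a constant combination of $\partial_\beta\Gamma^{I'}\varphi$, which is acceptable since $|I'|<|I|$. By the induction hypothesis, more precisely its algebraic form, the second term is $\Gamma_k$ applied to a constant-coefficient combination $\sum_{|J|<|I'|+1}c\,\partial_\beta\Gamma^J\varphi$ with $|J|\le|I|-2$. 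We then commute $\Gamma_k$ past $\partial_\beta$, which produces $\partial_\beta\Gamma_k\Gamma^J$ plus $[\Gamma_k,\partial_\beta]\Gamma^J$. Both are of the form $\partial\Gamma^{J'}$ with $|J'|\le|I|-1$.

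The key point is to carry the induction at the level of identities rather than inequalities. The statement to prove is that $[\partial_\alpha,\Gamma^I]=\sum_{|J|<|I|}\sum_\beta c^{I}_{\alpha\beta J}\,\partial_\beta\Gamma^J$ with constant coefficients. Here $\Gamma^J$ ranges over ordered products of the $\Gamma_k$. If the paper's multi-index convention requires a fixed ordering, we reorder using the closure of the Lie algebra spanned by $\Gamma$: commutators of $\Gamma$'s are constant combinations of $\Gamma$'s, with $L_0$ commuting with all boosts and rotations and $[\partial,L_0]=\partial$. Reordering only generates lower-order terms of the same form. Taking absolute values then gives the stated bound.

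The second estimate follows in the same way, with index set $J$ replacing $I$ and the sum on the right understood over $|J'|<|J|$. We have $[L_a,\Omega^J]=[L_a,\Omega_k]\Omega^{J'}+\Omega_k[L_a,\Omega^{J'}]$. The first piece is a constant combination of $L_b\Omega^{J'}$. Inductively, the second piece is $\Omega_k$ applied to constant combinations of $L_b\Omega^{J''}$. Commuting $\Omega_k$ through $L_b$ uses the rotation–boost relation again and stays within the span of $\{L_d\Omega^{\cdot}\}$. The only real obstacle is bookkeeping: confirming that commutators of $\Omega$ with $L$ never produce $\partial$ or $L_0$ terms (they do not, since $\mathrm{span}\{L_a\}$ is invariant under rotations), and handling the reordering of multi-indices via the closed Lie algebra structure.
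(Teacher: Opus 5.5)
Your proposal is correct and is essentially the standard argument behind this lemma (the paper gives no proof of its own, only citing Sogge): each first-order commutator $[\partial_\alpha,\Gamma_k]$, resp.\ $[L_a,\Omega_{bc}]$, is a constant-coefficient combination of the $\partial_\beta$, resp.\ the $L_b$, and an induction on $|I|$ carried out at the level of identities gives both bounds, with any reordering of products justified by the closure of the span of the $\Gamma$'s under brackets. The only slip is in the induction hypothesis, which should read $|J|<|I'|$ rather than $|J|<|I'|+1$; you do use the correct consequence $|J|\le|I|-2$ immediately afterwards.
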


\begin{lemma}[Klainerman-Sobolev inequality {\cite[Theorem 1.3]{Sogge}}] \label{lem:global_KS}
    Let $\varphi$ be a sufficiently smooth function that vanishes when $|x|$ is large. Then 
    \begin{equation}\label{est:global_KS}
        |\varphi(t,x)|\lesssim\langle t+r\rangle^{-1}\langle t-r\rangle^{-\frac{1}{2}}\sum_{|I|\leq2}\|\Gamma^I\varphi(t,x)\|.
    \end{equation}
\end{lemma}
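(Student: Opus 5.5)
The Klainerman--Sobolev inequality is cited from Sogge, so the proof I would give is the classical one. It splits into two regimes: the region $r \le \langle t\rangle/2$ (including $t\le 1$) and the region $r\ge \langle t\rangle/2$.

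\emph{Easy case: $t+r\lesssim 1$.} Here I apply the standard Sobolev embedding $H^2(\mathbb{R}^3)\hookrightarrow L^\infty$ on a unit ball, using only translations $\partial$. This gives $|\varphi(t,x)|\lesssim\sum_{|I|\le2}\|\partial^I\varphi\|$, and the weights are $\sim1$.

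\emph{Interior case: $r\le t/2$, $t\ge1$.} Here $t-r\sim t+r\sim t$, so the target is a $t^{-3/2}$ bound. I rescale, setting $\tilde\varphi(y)=\varphi(t,x+ty/2)$ for $|y|\le 1/2$, chosen so that $|x+ty/2|\le 3t/4$. Sobolev on the unit ball in $y$ then gives
\[
|\varphi(t,x)|^2\lesssim\sum_{|\alpha|\le2}\int_{|y|\le1/2}|(t\nabla)^\alpha\varphi(t,x+ty/2)|^2dy=t^{-3}\sum_{|\alpha|\le2}\int|(t\nabla)^\alpha\varphi|^2\,dz.
\]
It remains to replace $t\partial_a$ by the vector fields in $\Gamma$. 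Solving the identities for $L_a$ and $L_0$ gives
\[
(t^2-r^2)\partial_t=tL_0-x^aL_a,\qquad (t^2-r^2)\partial_a=tL_a-x_aL_0+x^b\Omega_{ab}.
\]
Since $t^2-r^2\gtrsim t^2$ in this region, we get $t|\partial\varphi|\lesssim|\Gamma\varphi|$. At second order the commutators with the coefficients $t/(t^2-r^2)$ and $x/(t^2-r^2)$ produce only lower-order terms, so $|(t\partial)^{\le2}\varphi|\lesssim|\Gamma^{\le2}\varphi|$.

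\emph{Exterior/wave zone case: $r\ge t/2$, $r\ge1$.} I use polar coordinates and treat the radial and angular directions separately.
\begin{itemize}
\item \emph{Angular directions.} Sobolev on $\mathbb{S}^2$ with $\Omega^{\le2}$ controls $|\varphi(t,r\omega)|^2\lesssim\sum_{|J|\le2}\int_{\mathbb{S}^2}|\Omega^J\varphi(t,r\theta)|^2d\theta$, and this step costs no weight.
\item \emph{Radial direction.} I apply a one-dimensional Sobolev inequality in $\rho$ on an interval of length $\sim\langle t-r\rangle$ around $r$. This gives $|f(r)|^2\lesssim\langle t-r\rangle^{-1}\int|f|^2d\rho+\langle t-r\rangle\int|\partial_\rho f|^2d\rho$, where the integrals run over $|\rho-r|\lesssim\langle t-r\rangle$. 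On that interval $\rho\sim r\sim t+r$ and $\langle t-\rho\rangle\sim\langle t-r\rangle$.
\item \emph{Converting to vector fields.} I use $(t-r)\partial_r$, which is a combination of $L_0$ and $\omega^aL_a$ by the identities \eqref{eq:VF-identities}: explicitly $(t^2-r^2)\partial_r=tω^aL_a-rL_0$, so $(t-r)\partial_r=\big(t\omega^aL_a-rL_0\big)/(t+r)$, with bounded coefficients.
\end{itemize}
Combining these, I multiply by $\rho^2\sim r^2$ to convert to a volume integral. This yields $|\varphi|^2\lesssim r^{-2}\langle t-r\rangle^{-1}\sum_{|I|\le2}\|\Gamma^I\varphi\|^2$, which is the claimed bound since $r\sim t+r$.

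\emph{Main obstacle.} The delicate points are two. First, one must handle $\langle t-r\rangle$ rather than $|t-r|$ near the cone, which the choice of interval length $\sim\langle t-r\rangle$ does. Second, the vector-field conversions must have coefficients that stay bounded after two commutations; this is checked using Lemma \ref{lem:commutator2}. The compact-support assumption in $x$ guarantees that all integrals are finite and that the boundary terms in the one-dimensional estimate vanish.
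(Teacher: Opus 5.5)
The paper gives no proof of this lemma; it only cites Sogge, so I am comparing your outline with the standard argument. Most of what you wrote is correct: the case split, the interior rescaling, and the identities $(t^2-r^2)\partial_t=tL_0-x^aL_a$, $(t^2-r^2)\partial_a=tL_a-x_aL_0+x^b\Omega_{ab}$, $(t^2-r^2)\partial_r=t\omega^aL_a-rL_0$. So is the choice of a radial interval of length $c\langle t-r\rangle$ with $c$ small, so that $\rho\sim r$ and $\langle t-\rho\rangle\sim\langle t-r\rangle$ there.

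The wave-zone case, however, has a genuine gap in the derivative count. You first use $H^2(\mathbb{S}^2)\hookrightarrow L^\infty(\mathbb{S}^2)$, then the one-dimensional $H^1$ Sobolev inequality on $f(\rho)=\Omega^J\varphi(t,\rho\theta)$ with $|J|\le2$. This produces $\langle t-r\rangle\int|\partial_\rho\Omega^J\varphi|^2\,d\rho$ with $|J|=2$. After your conversion that term is $\langle t-r\rangle^{-1}\int|\Gamma\Omega^2\varphi|^2\,dx$, i.e.\ three vector fields. So your argument proves the inequality with $\sum_{|I|\le3}$ on the right, and the sentence ``Combining these \dots yields \dots $\sum_{|I|\le2}$'' silently drops a derivative.

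This is not cosmetic. The content of the three-dimensional estimate is that two ($>3/2$) derivatives suffice, and the paper's bookkeeping uses exactly two, e.g.\ getting $|\Gamma^{\le4}\psi|$ from $\|\Gamma^{\le6}\psi\|$ in Lemma~\ref{lem:disp-psi}. Note that the paper's own local and hyperboloidal inequalities \eqref{est:point_local} and \eqref{est:KS_in3} do split the angular and radial directions, and accordingly they pay three derivatives.

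The missing idea is to use a single three-dimensional embedding $H^2\hookrightarrow L^\infty$ on the product domain $(-c,c)\times\mathbb{S}^2$. Its $H^2$ norm is $\sum_{j+|\alpha|\le2}\|\partial_s^j\Omega^\alpha v\|_{L^2}$, i.e.\ mixed derivatives of \emph{total} order at most two. Apply it to $v(s,\theta)=\varphi\bigl(t,(r+\langle t-r\rangle s)\theta\bigr)$. Since $\partial_s=\langle t-r\rangle\partial_\rho$, the change of variables contributes $\langle t-r\rangle^{-1}$, and $\rho^2\sim r^2$ contributes $r^{-2}$. This gives
\[
|\varphi(t,r\omega)|^2\lesssim r^{-2}\langle t-r\rangle^{-1}\sum_{j+|\alpha|\le2}\int_{A}\langle t-|x|\rangle^{2j}\,|\partial_r^j\Omega^\alpha\varphi(t,x)|^2\,dx,\qquad A=\bigl\{x:\ \bigl||x|-r\bigr|<c\langle t-r\rangle\bigr\}.
\]
To bound the integrand by $|\Gamma^{\le2}\varphi|^2$, use three facts:
\begin{itemize}
\item $[\Omega_{ab},\partial_r]=0$;
\item your formula $(t-r)\partial_r=(t\omega^aL_a-rL_0)/(t+r)$, whose coefficients $t/(t+r)$ and $r/(t+r)$ stay bounded after applying $(t-r)\partial_r$, with $\partial_r\omega^a=0$;
\item for $j=2$, the identity $(t-r)^2\partial_r^2=\bigl((t-r)\partial_r\bigr)^2+(t-r)\partial_r$, together with $\langle t-r\rangle\lesssim1+|t-r|$.
\end{itemize}
This is the standard route to the cited estimate, and it closes the gap without changing the rest of your outline.
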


\begin{lemma}[Sobolev embedding theorem on a ball \cite{LiZhou2017}]\label{lem:Sobo_embedding}
    Let $\rho>0$ and $B(\rho)\subset\mathbb{R}^3$ denote the open ball of radius $\rho$. There exists a constant $C>0$, independent of $\rho$, such that for all $f\in H^1(B(\rho))$,
    \begin{equation}\label{est:Sobo_embedding}
        \|f\|_{L^6(B(\rho))}\leq C\big(\rho^{-1}\|f\|_{L^2(B(\rho))}+\|\nabla f\|_{L^2(B(\rho))}\big).
    \end{equation}
 \end{lemma}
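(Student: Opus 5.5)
The estimate is scale invariant, so I would reduce to the unit ball and then rescale. The constant is independent of $\rho$ precisely because the factor $\rho^{-1}$ in front of $\|f\|_{L^2}$ has the correct scaling weight: if $f_\rho(y):=f(\rho y)$ for $y\in B(1)$, then
\begin{align*}
\|f\|_{L^6(B(\rho))} = \rho^{\frac12}\|f_\rho\|_{L^6(B(1))}, \quad \|f\|_{L^2(B(\rho))}=\rho^{\frac32}\|f_\rho\|_{L^2(B(1))}, \quad \|\nabla f\|_{L^2(B(\rho))}=\rho^{\frac12}\|\nabla f_\rho\|_{L^2(B(1))}.
\end{align*}
Hence \eqref{est:Sobo_embedding} for general $\rho$ is equivalent to the inequality
\[
\|g\|_{L^6(B(1))}\leq C\big(\|g\|_{L^2(B(1))}+\|\nabla g\|_{L^2(B(1))}\big)
\]
for $g=f_\rho$: multiply through by $\rho^{1/2}$ and note that $\rho^{1/2}\|g\|_{L^2(B(1))}=\rho^{-1}\|f\|_{L^2(B(\rho))}$. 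This is the first step, and it is purely a bookkeeping computation.

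For the unit-ball inequality, I would use an extension operator. The ball $B(1)$ is a Lipschitz (indeed smooth) domain, so there is a bounded linear extension $E:H^1(B(1))\to H^1(\mathbb{R}^3)$ with $\|Eg\|_{H^1(\mathbb{R}^3)}\leq C\|g\|_{H^1(B(1))}$. A concrete choice is reflection across the sphere, $g(y)\mapsto g(y/|y|^2)$ for $1<|y|<2$, multiplied by a cutoff. Then the Gagliardo--Nirenberg--Sobolev inequality $\|h\|_{L^6(\mathbb{R}^3)}\lesssim\|\nabla h\|_{L^2(\mathbb{R}^3)}$ applied to $h=Eg$ gives
\[
\|g\|_{L^6(B(1))}\leq\|Eg\|_{L^6}\lesssim \|Eg\|_{H^1}\lesssim \|g\|_{H^1(B(1))},
\]
which is the desired unit-ball estimate. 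By density it suffices to argue for smooth $g$ up to the boundary.

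The only point requiring any care is the construction of the extension operator with its $H^1$ bound. Inversion in the sphere makes this elementary, since the map is a smooth diffeomorphism with bounded Jacobian on $1\le|y|\le2$. Everything else is scaling, so I expect no real obstacle beyond that step.
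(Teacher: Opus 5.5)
Your proof is correct. The paper does not prove this lemma; it only cites it from Li and Zhou, so there is no in-paper argument to compare against.

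Your argument is the standard self-contained proof:
\begin{itemize}
\item The rescaling $f_\rho(y)=f(\rho y)$ reduces the estimate to the fixed domain $B(1)$. Your three scaling identities and the resulting equivalence are right, and this step is exactly where the $\rho$-independence of $C$ comes from. The weight $\rho^{-1}$ on $\|f\|_{L^2}$ is the scale-invariant choice.
\item On $B(1)$, you extend by inversion in the unit sphere times a cutoff. This works directly at the $H^1$ level: the inversion fixes $\{|y|=1\}$ pointwise, so the traces match across the sphere and the glued function lies in $H^1(\mathbb{R}^3)$. The density step is therefore not even needed.
\item Finally, $\|h\|_{L^6(\mathbb{R}^3)}\lesssim\|\nabla h\|_{L^2(\mathbb{R}^3)}$ applied to the extension gives the unit-ball inequality.
\end{itemize}
Compared with the citation, your route makes explicit why no $\rho$-dependent constant appears: any domain-dependent constant is fixed once and for all on $B(1)$.
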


\begin{lemma}[Interpolation in $\mathbb{R}^3$]\label{lem:inter_inequ}
    If $2<p'<6$ and $\theta=\frac{3}{p'}-\frac{1}{2}$, then
    \begin{equation}\label{est:inter_inequ}
        \|f\|_{L^{p'}}\leq \|f\|^\theta\|f\|_{L^6}^{1-\theta}.
    \end{equation}
\end{lemma}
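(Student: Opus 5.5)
The estimate \eqref{est:inter_inequ} is the standard Lebesgue interpolation (log-convexity of $L^q$ norms), so the plan is a single application of Hölder's inequality with the exponents chosen to match $\theta$.

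\emph{Step 1: fix the exponents.} Write $\frac{1}{p'}$ as a convex combination of $\frac12$ and $\frac16$:
\[
\frac{1}{p'}=\frac{\theta}{2}+\frac{1-\theta}{6}.
\]
Solving gives $\frac{1}{p'}-\frac16=\frac{\theta}{3}$, that is, $\theta=\frac{3}{p'}-\frac12$, which is exactly the value in the statement. The hypothesis $2<p'<6$ is used only here: it guarantees $\theta\in(0,1)$.

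\emph{Step 2: Hölder.} Split $|f|^{p'}=|f|^{\theta p'}\,|f|^{(1-\theta)p'}$. Apply Hölder's inequality on $\mathbb{R}^3$ with the conjugate pair
\[
a=\frac{2}{\theta p'},\qquad b=\frac{6}{(1-\theta)p'}.
\]
Step 1 gives $\frac1a+\frac1b=p'\big(\frac{\theta}{2}+\frac{1-\theta}{6}\big)=1$, so the pair is admissible. This yields
\[
\int|f|^{p'}\le \Big(\int|f|^2\Big)^{\theta p'/2}\Big(\int|f|^6\Big)^{(1-\theta)p'/6}.
\]

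\emph{Step 3: conclude.} Take the $p'$-th root to obtain $\|f\|_{L^{p'}}\le\|f\|^{\theta}\|f\|_{L^6}^{1-\theta}$, where $\|\cdot\|$ denotes the $L^2$ norm as in the paper's notation. The inequality holds with constant $1$. If either norm on the right is infinite there is nothing to prove.

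There is no real obstacle. The only care needed is the exponent bookkeeping in Step 1, namely checking that the prescribed $\theta$ is the convex-combination weight and that the Hölder exponents are conjugate.
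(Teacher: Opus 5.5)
Your proof is correct. It is the standard Hölder (log-convexity) argument, with the exponents and the value of $\theta$ checked correctly and constant $1$, and the paper simply quotes the lemma as a standard fact without giving a proof. Your argument uses nothing specific to $\mathbb{R}^3$ and holds on any measure space, which is what justifies the paper's later application of the lemma on subsets of the hyperboloids $\mathcal{H}_\tau$.
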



\begin{lemma}[Hardy estimate]\label{lem:hardy}
    Suppose that $f$ is a sufficiently smooth function and $f\in H^1(\mathbb{R}^3)$, then 
    \begin{equation}\label{est:hardy}
        \|r^{-1} f\|\lesssim\|\partial_r f\|.
    \end{equation}
\end{lemma}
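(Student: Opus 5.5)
The plan is to prove the one-dimensional Hardy inequality along radial lines and then integrate it over the sphere. Write $f(r\omega)$ in polar coordinates. Then
\[
\|r^{-1}f\|^2=\int_{\mathbb{S}^2}\int_0^\infty |f(r\omega)|^2\,\d r\,\d\omega .
\]
So it suffices to show, for each fixed $\omega$,
\[
\int_0^\infty |f|^2\,\d r\le 4\int_0^\infty r^2|\partial_r f|^2\,\d r .
\]
We first prove this for $f\in C_c^\infty(\mathbb{R}^3)$ and then extend to $H^1(\mathbb{R}^3)$ by density. The extension works because both sides are continuous in the $H^1$ norm: the left side is controlled by Fatou's lemma, or equivalently the estimate gives a Cauchy sequence in weighted $L^2$.

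For the radial inequality, integrate by parts using $1=\partial_r(r)$:
\[
\int_0^\infty |f|^2\,\d r=\big[r|f|^2\big]_0^\infty-2\int_0^\infty r f\,\partial_r f\,\d r .
\]
The boundary term vanishes. At $r=\infty$ this holds by compact support. At $r=0$ it holds since $f$ is bounded, so $r|f|^2\to0$.

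Cauchy--Schwarz then gives
\[
\int_0^\infty|f|^2\,\d r\le 2\Big(\int_0^\infty|f|^2\,\d r\Big)^{1/2}\Big(\int_0^\infty r^2|\partial_rf|^2\,\d r\Big)^{1/2}.
\]
Dividing by the square root of the left side (which is finite for smooth compactly supported $f$) yields the claim with constant $4$. Integrating over $\omega\in\mathbb{S}^2$ gives $\|r^{-1}f\|\le 2\|\partial_r f\|$. Since $|\partial_r f|\le|\nabla f|$, this is the stated estimate \eqref{est:hardy}.

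The only delicate step is the density argument, which needs two checks. First, $\partial_r f_n\to\partial_r f$ in $L^2$. This holds because $|\partial_r g|\le|\nabla g|$ pointwise, applied to $g=f_n-f$. Second, $r^{-1}f_n\to r^{-1}f$ in $L^2$. Applying the inequality to $f_n-f_m$ shows $(r^{-1}f_n)$ is Cauchy in $L^2$. Its limit agrees with $r^{-1}f$ because a subsequence converges almost everywhere. The boundary term at the origin is harmless in three dimensions, since the measure $r^2\,\d r$ absorbs the singularity.
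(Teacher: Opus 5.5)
Your proof is correct: the radial identity $\int_0^\infty |f|^2\,dr=[r|f|^2]_0^\infty-2\int_0^\infty rf\,\partial_rf\,dr$ followed by Cauchy--Schwarz gives the constant $2$, and your density step correctly extends the bound from $C_c^\infty$ to all of $H^1(\mathbb{R}^3)$. The paper quotes this lemma as a standard result without proof, but its proofs of the analogous Lemma~\ref{lem:hardy_ex} and Lemma~\ref{lem:hardy_in} use exactly this radial integration by parts (plus Young's inequality), so your argument is essentially the paper's approach.
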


\begin{lemma}[Gr\"onwall inequality]\label{lem:Gronwall}
    Let $f(s)$ be a nonnegative function that satisfies the integral inequality
    \begin{equation*}
        f(t)\leq c+\int_{t_0}^{t}(a(s)f(s)+b(s)f^{\frac{1}{2}}(s))\, ds,
    \end{equation*}
    where $a(t)$ and $b(t)$ are continuous nonnegative functions and $c\geq0$. Then we have
    \begin{equation}\label{est:Gronwall}
        f(t)\leq\Big(c^{\frac{1}{2}}\exp\Big(\frac{1}{2}\int_{t_0}^{t}a(s)\, ds\Big)+\frac{1}{2}\int_{t_0}^{t}b(s)\exp\Big(\frac{1}{2}\int_{s}^{t}a(\tilde{s})\, d\tilde{s}\Big)\, ds\Big)^{2}.
    \end{equation}
\end{lemma}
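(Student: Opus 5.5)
The statement to prove is the Gr\"onwall-type bound of Lemma~\ref{lem:Gronwall}. The plan is to work with the right-hand side of the integral inequality rather than with $f$ itself, and to reduce to a linear differential inequality for its square root. Set
\[
F(t) := c + \int_{t_0}^t \big(a(s) f(s) + b(s) f^{\frac12}(s)\big)\, ds ,
\]
so that $F(t_0)=c$, $f\leq F$, and $F$ is nondecreasing since $a,b\geq 0$. By continuity of the integrand, $F$ is absolutely continuous (indeed $C^1$ if $f$ is continuous), with
\[
F'(t) = a(t)f(t) + b(t) f^{\frac12}(t) \leq a(t)F(t) + b(t)F^{\frac12}(t).
\]
The goal then reduces to proving the bound for $F$, which immediately gives it for $f$.

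The key step is to pass to $G = F^{1/2}$. Consider first the case $c>0$. Then $F\geq c>0$, so $G$ is differentiable and
\[
G'(t) = \frac{F'(t)}{2G(t)} \leq \frac12 a(t) G(t) + \frac12 b(t).
\]
Multiply by the integrating factor $\exp\big(-\frac12\int_{t_0}^t a\big)$ and integrate from $t_0$ to $t$. This gives
\[
G(t) \leq c^{\frac12}\exp\Big(\tfrac12\int_{t_0}^t a\Big) + \frac12\int_{t_0}^t b(s)\exp\Big(\tfrac12\int_s^t a\Big)\, ds .
\]
Squaring both sides, which is legitimate because both sides are nonnegative, and using $f\leq F = G^2$ yields \eqref{est:Gronwall}.

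The only real obstacle is the degenerate case $c=0$, where $F$ may vanish and $G=F^{1/2}$ need not be differentiable. I would handle this by approximation. For any $c'>0$ the hypothesis still holds with $c$ replaced by $c'$, so the argument above applies and gives the bound with $c'$ in place of $c$. The right-hand side of that bound is continuous in $c'$, so letting $c'\downarrow 0$ gives the claim for $c=0$. With this approximation step in place, no further regularity of $f$ is needed beyond the integrability already implicit in the hypothesis.
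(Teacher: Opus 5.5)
Your argument is correct. The paper gives no proof of this lemma; it lists it among the standard results in Section~2 and uses it as a black box, so there is nothing to compare against. What you wrote is the standard square-root/integrating-factor proof.

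One point of precision. Since $f$ is not assumed continuous, ``by continuity of the integrand'' is not the right justification. What you actually use is local integrability of $af+bf^{\frac12}$, which is implicit in the hypothesis. This makes $F$ absolutely continuous with $F'=af+bf^{\frac12}\le aF+bF^{\frac12}$ almost everywhere.

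For $c>0$, the function $G=F^{\frac12}$ is again absolutely continuous, because $\sqrt{\cdot}$ is Lipschitz on $[c,\infty)$. So the a.e.\ inequality $\big(G(t)e^{-\frac12\int_{t_0}^t a}\big)'\le \frac12 b(t)\,e^{-\frac12\int_{t_0}^t a}$ can legitimately be integrated. Your limit $c'\downarrow 0$ then handles $c=0$ exactly as you describe.
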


\begin{definition}
    The standard natural and conformal energy functionals on constant $t$ slices read
        \[
\mathcal{E}(t, \varphi) = \frac12 \big(\| \partial_t \varphi\|^2+ \| \partial_{\vec{x}} \varphi\|^2\big), \quad 
\mathcal{E}_{con}(t, \varphi)= \frac12 \big( \| L_0 \varphi\|^2 + \sum_{a}\|L_a \varphi \|^2 + \| \Omega \varphi\|^2 + \| \varphi\|^2 \big)\,.
\]
\end{definition}

\begin{lemma}[Energy estimates for the wave equation \cite{Sogge, AlinhacBook}]\label{lem:energy_general}
    Let $\varphi=\varphi(t,x)$ be the solution to the Cauchy problem
    \begin{equation*}
        \left\{\begin{aligned}
            -\Box \varphi(t,x)&=G(t,x),\\
            (\varphi,\partial_t\varphi)|_{t=t_0}&=(\varphi_0,\varphi_1),
        \end{aligned}\right.
    \end{equation*}
    where $G(t,x)$ is a sufficiently smooth function. 
Then the following natural and conformal energy estimates hold:
    \begin{align}\label{est:energy_natural}
            \mathcal{E}^{\frac{1}{2}}(t,\varphi)&\lesssim\mathcal{E}^{\frac{1}{2}}(t_0,\varphi)+\int_{t_0}^{t}\|G(t,x)\|\, ds\,,
    \\ \label{est:energy_conformal}
            \mathcal{E}_{con}^{\frac{1}{2}}(t,\varphi)&\lesssim\mathcal{E}_{con}^{\frac{1}{2}}(t_0,\varphi)+\int_{t_0}^{t}\|\langle s+r\rangle G(t,x)\|\, ds.
        \end{align}
\end{lemma}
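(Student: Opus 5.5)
The natural energy estimate \eqref{est:energy_natural} will come from the standard multiplier argument with $\partial_t\varphi$, and the conformal estimate \eqref{est:energy_conformal} from the same argument with the Morawetz multiplier $K_0 = (t^2+r^2)\partial_t + 2tr\partial_r$. In both cases I will derive a differential inequality for the square root of the energy and then integrate in time.

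For the natural energy, multiply $-\Box\varphi = G$ by $\partial_t\varphi$. This gives the divergence identity
\[
\partial_t\Big(\tfrac12(|\partial_t\varphi|^2+|\nabla\varphi|^2)\Big) - \nabla\cdot(\partial_t\varphi\nabla\varphi) = G\,\partial_t\varphi .
\]
Integrate over $\mathbb{R}^3$; the spatial divergence drops out by the decay assumption, which is justified by finite speed of propagation for compactly supported data and by density in general. This yields $\frac{d}{dt}\mathcal{E}(t,\varphi) \le \|G\|\,\|\partial_t\varphi\| \le \sqrt2\,\|G\|\,\mathcal{E}^{1/2}$. Hence $\frac{d}{dt}\mathcal{E}^{1/2}\lesssim \|G\|$ wherever $\mathcal{E}>0$; to avoid dividing by zero, work with $(\mathcal{E}+\delta)^{1/2}$ and let $\delta\to0$. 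Integrating gives \eqref{est:energy_natural}. Equivalently, this is Lemma \ref{lem:Gronwall} with $a=0$ and $b=\sqrt2\|G\|$.

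For the conformal energy, use $K_0\varphi + 2t\varphi$ as the multiplier. The standard computation (Klainerman, cf.\ \cite{Sogge, AlinhacBook}) gives
\[
\partial_t e_0 + \nabla\cdot\vec{e} = (K_0\varphi+2t\varphi)\,G ,
\]
where the integrated density is, after integrating by parts the cross terms $t\varphi\partial_t\varphi$,
\[
\int e_0\,dx = \tfrac12\int\Big(|L_0\varphi+2\varphi|^2 + \textstyle\sum_a|L_a\varphi|^2 + |\Omega\varphi|^2 \ \text{(up to equivalent combinations)}\Big)dx .
\]
The key steps are as follows.
\begin{enumerate}
\item Verify the algebraic identity expressing $e_0$ via the null decomposition $\frac14\big((t+r)^2|L\varphi|^2+(t-r)^2|\underline{L}\varphi|^2\big)+\frac12(t^2+r^2)|\slashed\nabla\varphi|^2$ plus lower-order terms.
\item Show that $\int e_0\,dx$ is equivalent to $\mathcal{E}_{con}(t,\varphi)$ as defined. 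The zeroth-order term $\|\varphi\|^2$ is recovered by a Hardy-type argument, $\|\varphi\|\lesssim\|L_0\varphi\|+\|\Omega\varphi\|+\sum_a\|L_a\varphi\|$, or the estimate is carried out with the modified quantity $\|L_0\varphi+2\varphi\|$, which is comparable up to this lower-order control.
\item Bound the source: $|(K_0\varphi+2t\varphi)G| \lesssim \langle t+r\rangle\,|G|\,\big(|L_0\varphi|+\sum_a|L_a\varphi|+|\varphi|\big)$. This uses $K_0=\frac12\big((t+r)^2L+(t-r)^2\underline{L}\big)$, $(t+r)L=L_0+\omega^aL_a$, and $(t-r)\underline{L}=L_0-\omega^aL_a$, so that $K_0\varphi\lesssim (t+r)(|L_0\varphi|+|L_a\varphi|)$. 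Together with step (ii), this gives $\frac{d}{dt}\mathcal{E}_{con}\lesssim\|\langle t+r\rangle G\|\,\mathcal{E}_{con}^{1/2}$.
\item Integrate exactly as in the natural-energy case to obtain \eqref{est:energy_conformal}.
\end{enumerate}

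The main obstacle is steps (i)--(ii): the precise bookkeeping of the conformal identity, and in particular the equivalence of the positive conserved density with $\mathcal{E}_{con}$ including the undifferentiated $\|\varphi\|^2$ term. Near $r=0$ and near the light cone the $L_a$, $L_0$ combinations degenerate individually, so I would first try the null-frame expression together with $\int \varphi\,\partial_t\varphi\,t\,dx$ integrations by parts. Failing that, I would cite the classical computation directly, since this lemma is attributed to \cite{Sogge, AlinhacBook}.
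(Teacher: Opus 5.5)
Your overall route is the standard one and matches what the paper relies on: the lemma is only cited from \cite{Sogge, AlinhacBook}, and in Lemma~\ref{lem:energy_ex} the paper runs your argument with the multiplier $(t^2+r^2)\partial_t\varphi+2tr\partial_r\varphi+2t\varphi$. The natural-energy part is fine. The gap is in step (ii). You correctly flag it as the crux, but then resolve it with an inequality that is false.

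No integration by parts is needed for the density. Keeping $4\varphi^2+4r\varphi\partial_r\varphi$ in the time component gives the pointwise identity $2e_0=|L_0\varphi+2\varphi|^2+\sum_a|L_a\varphi|^2+|\Omega\varphi|^2$. So the conserved quantity is $E_0:=\frac12\big(\|L_0\varphi+2\varphi\|^2+\sum_a\|L_a\varphi\|^2+\|\Omega\varphi\|^2\big)$. Trivially $E_0\lesssim\mathcal{E}_{con}$; the converse needs $\|\varphi\|\lesssim E_0^{1/2}$. Your bound $\|\varphi\|\lesssim\|L_0\varphi\|+\|\Omega\varphi\|+\sum_a\|L_a\varphi\|$ would not give this anyway, since the cross term $4\langle\varphi,L_0\varphi\rangle$ can make $\|L_0\varphi+2\varphi\|\ll\|L_0\varphi\|$.

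It also fails outright. At a fixed time $t$ the pair $(\varphi,\partial_t\varphi)$ is arbitrary. Take radial $\varphi=f(r)$ with $f\equiv1$ on $[0,t]$, $f=0$ for $r\ge t+\eta$, $|f'|\lesssim\eta^{-1}$, and set $\partial_t\varphi=-\frac{2tr}{t^2+r^2}f'$. Then $\Omega\varphi=0$ and $|L_0\varphi|^2+\sum_a|L_a\varphi|^2=\frac{(r^2-t^2)^2}{r^2+t^2}|f'|^2\le2(r-t)^2|f'|^2$. Its integral is $O(t^2\eta)$, while $\|\varphi\|^2\ge\frac{4\pi}{3}t^3$.

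The missing idea is that the $+2\varphi$ in the density is exactly what gives coercivity. Since $r(L_0\varphi+2\varphi)-t\,\omega^aL_a\varphi=(r^2-t^2)\partial_r\varphi+2r\varphi$, the quantity $g:=(r-t)\partial_r\varphi+\frac{2r}{r+t}\varphi$ satisfies $|g|\le|L_0\varphi+2\varphi|+|\omega^aL_a\varphi|$. Integrate $\partial_r\big((r-t)r^2\varphi^2\big)$ over $r\in(0,\infty)$ and substitute for $(r-t)\partial_r\varphi$; this gives
\[
\int_0^\infty\frac{r(r^2-rt+2t^2)}{r+t}\,\varphi^2\,dr=2\int_0^\infty r^2\varphi\,g\,dr .
\]
Since $r^2-rt+2t^2\ge\frac12 r(r+t)$, integrating over $\mathbb{S}^2$ and using Cauchy--Schwarz yields $\|\varphi\|\le 4\|g\|\lesssim E_0^{1/2}$. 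Then $\|L_0\varphi\|\le\|L_0\varphi+2\varphi\|+2\|\varphi\|$ gives $\mathcal{E}_{con}\sim E_0$.

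For step (iii), write $K_0\varphi+2t\varphi=t(L_0\varphi+2\varphi)+r\,\omega^aL_a\varphi$. Then $\big|\frac{d}{dt}E_0\big|\lesssim\|\langle t+r\rangle G\|\,E_0^{1/2}$ with no lower-order control needed. Integrate this for $E_0^{1/2}$, not $\mathcal{E}_{con}^{1/2}$, since the multiplier identity controls $\frac{d}{dt}E_0$ and not $\frac{d}{dt}\mathcal{E}_{con}$. Finally use the equivalence $\mathcal{E}_{con}\sim E_0$ at both endpoints.
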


\section{Global existence for dispersed data}\label{dispersed}

\subsection{Global existence and decay}
We first consider the decoupled problem
\begin{equation}\label{eq:model-disp}
\begin{split}
    -\Box \psi =\pm |\psi|^{p-1} \psi,
    \\
    (\psi, \partial_t \psi)(t_0) = (u_0, u_1),
    \end{split}
\end{equation}
where the initial data will be \textit{dispersed} in the sense that the $L^2$ norm is large, i.e. $\|u_0\| \sim \epsilon^{-1}$, while higher derivatives of the data lead to increasing smallness, e.g. $\|\partial u_0\| \sim 1,\, \| \partial^2 u_0\|\sim \epsilon$.
Our goal in this section is to prove the following global existence result. 

\begin{theorem}\label{thm:main-disp}
    The Cauchy problem \eqref{eq:model-disp} with  initial data satisfying
        \begin{align}\label{eq:ID-disp-w}
        \sum_{|I|\leq 6-|J|, \, |J|= k}&\| \langle x\rangle^{|I|} \nabla^{I}\nabla^{J} u_0 \|
        \\& +\sum_{|I|\leq 5-\max\{|J|-1,0\}, \, |J|= k}\| \langle x\rangle^{|I|+\max\{1-|J|,0\}} \nabla^{I}\nabla^{\max\{|J|-1,0\}} u_1 \|
        \leq
        \epsilon^{k-1}, \quad k=0,1,2 \,,
    \end{align}
    (i.e. dispersed in the sense of \eqref{eq:ID-disp})
    admits a global solution.
\end{theorem}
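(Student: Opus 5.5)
We run a continuity (bootstrap) argument for the natural and conformal energies of commuted fields $\Gamma^K\psi$, with $|K|\le N$ ($N=5$ suffices given the six weighted derivatives in \eqref{eq:ID-disp-w}).

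\textbf{Initial energies.} The weighted assumptions \eqref{eq:ID-disp-w} give at $t=t_0$
\[
\mathcal{E}_{con}(t_0,\Gamma^K\psi)^{1/2}\lesssim \epsilon^{-1},\qquad \mathcal{E}(t_0,\Gamma^K\psi)^{1/2}\lesssim 1 .
\]
To see this, express $\Gamma^K\psi(t_0)$ and $\partial_t\Gamma^K\psi(t_0)$ through $\langle x\rangle^{|I|}\nabla^I$ of the data, using the equation to replace $\partial_t^2$. The nonlinear contributions here are harmless once $|u_0|\lesssim\epsilon^{1/2}$ pointwise. This pointwise bound follows from the Gagliardo--Nirenberg estimate $\|u_0\|_{L^\infty}\lesssim \|\nabla u_0\|^{1/2}\|\nabla^2u_0\|^{1/2}\lesssim \epsilon^{1/2}$.

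\textbf{Bootstrap and pointwise bounds.} On $[t_0,T)$ we assume
\[
\mathcal{E}_{con}(t,\Gamma^{\le N}\psi)^{1/2}\le C_1\epsilon^{-1}\langle t\rangle^{\delta},\qquad \mathcal{E}(t,\Gamma^{\le N}\psi)^{1/2}\le C_1,
\]
with $0<\delta\ll p-5$. We then derive two pointwise bounds.
\begin{itemize}
\item Lemma \ref{lem:global_KS} applied to $\Gamma^{\le N-2}\psi$ gives $|\Gamma^{\le N-2}\psi|\lesssim \epsilon^{-1}\langle t\rangle^{\delta}\langle t+r\rangle^{-1}\langle t-r\rangle^{-1/2}$.
\item Applying Lemma \ref{lem:global_KS} to $\partial\Gamma^{\le N-2}\psi$ gives $|\partial\Gamma^{\le N-3}\psi|\lesssim \langle t+r\rangle^{-1}\langle t-r\rangle^{-1/2}$. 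This uses the natural energy of $\Gamma^{\le N}\psi$ together with the commutator bounds of Lemma \ref{lem:commutator2}.
\end{itemize}
Integrating the second bound radially from spatial infinity (fundamental theorem of calculus) gives $|\psi|\lesssim \langle t+r\rangle^{-1}\langle t-r\rangle^{1/2}$, which is $O(1)$ and decays like $t^{-1/2}$ only near the cone. Interpolating geometrically with the first bound then yields
\[
|\Gamma^{\le N-3}\psi|\lesssim \epsilon^{-\theta}\langle t\rangle^{\theta\delta}\langle t+r\rangle^{-1}\langle t-r\rangle^{\frac12-\theta}.
\]
The choice $\theta\approx 1/2$ gives simultaneously an $\epsilon^{1/2-\delta}$-type gain at early times and a $t^{-1/2+\delta}$-type decay. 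Sobolev on $\Gamma^{\le N}$ also controls the low-order factors.

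\textbf{Closing the energies.} By Lemma \ref{lem:energy_general}, we must bound
\[
\int_{t_0}^t\big\|\Gamma^K(|\psi|^{p-1}\psi)\big\|\,ds\qquad\text{and}\qquad\int_{t_0}^t\big\|\langle s+r\rangle\Gamma^K(|\psi|^{p-1}\psi)\big\|\,ds .
\]
Since $|\psi|^{p-1}\psi$ is $C^{5}$ when $p>5$, the Leibniz/chain rule bounds $|\Gamma^K(|\psi|^{p-1}\psi)|$ by sums of products $|\psi|^{p-1-m}\prod_{j=1}^{m}|\Gamma^{K_j}\psi|$ with $\sum|K_j|=|K|$. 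At most one factor carries more than $N-3$ vector fields, and it is placed in $L^2$; every other factor goes in $L^\infty$. In the natural energy we put the top factor in $L^2$ via the Hardy inequality (Lemma \ref{lem:hardy}), $\|r^{-1}\Gamma^K\psi\|\lesssim\|\partial\Gamma^K\psi\|\lesssim C_1$, and absorb the weight $r$ into one $L^\infty$ factor. What remains is at least $p-2>3$ pointwise factors, and the resulting integrand is $\lesssim \epsilon^{a}\langle s\rangle^{-(p-2)(1/2-\delta)+1}$. Since $p>5$ this is integrable for small $\delta$, with a small coefficient. The conformal energy carries an extra weight $\langle s+r\rangle$, but its bootstrap allows $\epsilon^{-1}\langle t\rangle^{\delta}$ growth, so the same count closes. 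In both cases we obtain bounds of the form $C_0+C(C_1)\epsilon^{c}$, improving the bootstrap constants. Local well-posedness plus continuity then give $T=\infty$.

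\textbf{Main obstacle.} The delicate step is the exponent bookkeeping. We need the $(p-1)$ low-order factors to supply simultaneously enough $\epsilon$-smallness to beat the $\epsilon^{-1}$ in $\|\Gamma\psi\|$, and time decay better than $s^{-1}$ after the Hardy weight. Near the light cone $\langle t-r\rangle\sim1$ and only $t^{-1}$ (with $\epsilon^{-1}$) or $t^{-1/2}$ (with $O(1)$) decay is available, so the interpolation parameter $\theta$ must be chosen per factor. Our first attempt would use $\theta=1/2$ on all factors and check that $(p-1)/2-1>1$ and the $\epsilon$-powers are positive; the margin is exactly $p>5$.
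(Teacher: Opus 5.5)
Your argument has a genuine gap: the bootstrap you set up has no small parameter, so it cannot close.

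Your two assumptions are $\mathcal{E}_{con}(t,\Gamma^{\le N}\psi)^{1/2}\le C_1\epsilon^{-1}\langle t\rangle^{\delta}$ and $\mathcal{E}(t,\Gamma^{\le N}\psi)^{1/2}\le C_1$. They give only two pointwise bounds. The first is $|\Gamma\psi|\lesssim C_1\epsilon^{-1}\langle t\rangle^{\delta}\langle t+r\rangle^{-1}\langle t-r\rangle^{-1/2}$. The second comes from integrating $|\partial\Gamma\psi|\lesssim C_1\langle t+r\rangle^{-1}\langle t-r\rangle^{-1/2}$ radially, and is $|\Gamma\psi|\lesssim C_1\langle t+r\rangle^{-1/2}$. (The tail $\int_t^\infty$ contributes $t^{-1/2}$ everywhere, so your formula $\langle t+r\rangle^{-1}\langle t-r\rangle^{1/2}$ is not what the fundamental theorem of calculus gives; this slip is harmless.) A geometric mean of a bound carrying $\epsilon^{-1}$ and one carrying $\epsilon^{0}$ carries $\epsilon^{-\theta}$, as your own display shows. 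For $\theta\approx\frac12$ this is a loss $\epsilon^{-1/2}$, not the claimed gain $\epsilon^{1/2-\delta}$. The nonlinear estimates therefore give at best $\mathcal{E}^{1/2}\lesssim 1+CC_1^p$ and $\mathcal{E}_{con}^{1/2}\lesssim\epsilon^{-1}(1+CC_1^p)$. Here $C$ is not small, since the time integrals start at $t_0=1$, so these bounds do not improve $C_1$ and $C_1\epsilon^{-1}$.

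The missing ingredient, and what the paper does, is a third tier of the hierarchy. You must also propagate the $k=2$ part of \eqref{eq:ID-disp-w}, namely \eqref{eq:BA-disp03}: $\mathcal{E}(t,\partial\Gamma^{\le 4}\psi)^{1/2}\le C_1\epsilon$. The smallness then arises in four steps.
\begin{enumerate}
\item Klainerman--Sobolev gives $|\partial^2\Gamma^{\le 2}\psi|\lesssim C_1\epsilon\langle t+r\rangle^{-1}\langle t-r\rangle^{-1/2}$.
\item Radial integration gives $|\partial\Gamma^{\le2}\psi|\lesssim C_1\epsilon\langle t+r\rangle^{-1/2+\delta}$.
\item Interpolate this with $|\partial\Gamma^{\le 3}\psi|\lesssim C_1\langle t+r\rangle^{-1}\langle t-r\rangle^{-1/2}$. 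These are two bounds on $\partial\Gamma\psi$, with coefficients $\epsilon$ and $1$.
\item One more radial integration yields $|\Gamma^{\le 2}\psi|\lesssim C_1\epsilon^{1/2-\delta}$, as in \eqref{eq:disp-psi01}.
\end{enumerate}
This is the spacetime analogue of the Gagliardo--Nirenberg bound $\|u_0\|_{L^\infty}\lesssim\|\nabla u_0\|^{1/2}\|\nabla^2u_0\|^{1/2}$, which you used only at $t=t_0$.

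Spending half of $|\psi|^{p-5}$ on this smallness produces the factor $\epsilon^{(\frac12-\delta)\frac{p-5}{2}}$. That factor is what makes the improved bounds $1+C_1^p\epsilon^{\delta}$ and $\epsilon^{-1}+C_1^p\epsilon^{-1+\delta}$ possible. The third energy must then itself be closed, which requires $\|\partial\Gamma^{\le 4}(|\psi|^{p-1}\psi)\|\lesssim C_1^p\epsilon^{1+\delta}\langle t\rangle^{-1-\delta}$. This works because the extra $\partial$ always produces an $\epsilon$, in one of two ways:
\begin{itemize}
\item through Hardy, $\|r^{-1}\partial\Gamma^{I_1}\psi\|\lesssim\|\partial\partial\Gamma^{I_1}\psi\|\lesssim C_1\epsilon$;
\item or through the pointwise bound on $\partial\Gamma^{\le2}\psi$.
\end{itemize}

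A smaller bookkeeping point concerns the Hardy weight. Absorbing the whole weight $r$ into a single $L^\infty$ factor fails if done factor by factor: $r\langle t+r\rangle^{-1/2}$ is unbounded in $r$, and using the conformal Klainerman--Sobolev bound instead costs $\epsilon^{-1}$. Your exponent $-(p-2)(\frac12-\delta)+1$ would also require $p>6$ for integrability. Distributing $r^{1/4}$ over four factors, as the paper does, gives decay $\langle t\rangle^{-\frac{p-1}{4}+O(\delta)}$, which is integrable precisely for $p>5$.
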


Based on the boundedness and smallness assumptions in \eqref{eq:ID-disp-w}, we make the following bootstrap assumptions on $[t_0, T)$:
\begin{subequations}\label{eq:BA-disp}
\begin{align}
    \mathcal{E}_{con}^{1\over 2} (t, \Gamma^I \psi) 
    &\leq C_1 \epsilon^{-1},
    \qquad
    &|I|\leq 5, \label{eq:BA-disp01}
    \\
    \mathcal{E}^{1\over 2} (t, \Gamma^I \psi)
    &\leq C_1,
    \qquad
    &|I|\leq 5, \label{eq:BA-disp02}
    \\
    \mathcal{E}^{1\over 2} (t, \partial \Gamma^I \psi)
    &\leq C_1 \epsilon,
    \qquad
    &|I|\leq 4. \label{eq:BA-disp03}
\end{align}
\end{subequations}

In the above, $C_1 \gg 1$ is to be determined, $\epsilon\ll 1$ is the size of the data such that $C_1 < \epsilon^{-1}$, and 
$$
T:= \sup \{ s\in (t_0, +\infty): \eqref{eq:BA-disp} \text{ hold}  \}.
$$

In our next proposition we prove pointwise estimates using the bootstrap assumptions. We point out the key estimate for $|\Gamma^{\leq 2} \psi|$ in \eqref{eq:disp-psi01}: a naive application of the Klainerman-Sobolev inequality \eqref{est:global_KS} combined with \eqref{eq:BA-disp01} would suggest $|\Gamma^{\leq 4} \psi| \sim \epsilon^{-1}$. However, we gain a smallness coefficient of $\epsilon^{\frac12-}$ by working at lower order and applying an interpolation argument. 

\begin{lemma}[Preliminary estimates] \label{lem:disp-psi}
    Under the bootstrap assumptions \eqref{eq:BA-disp}, the following estimates are valid for $t\in [t_0, T)$.
    \begin{itemize}
        \item Estimates for $\psi$.
        \begin{equation}
    \begin{aligned}\label{eq:disp-psi01}
        |\Gamma^I \psi|
        &\lesssim
        C_1 \epsilon^{{1\over 2} - \delta},
        \qquad
        &|I|\leq 2,
        \\
        |\Gamma^I \psi|
        &\lesssim
        C_1 \langle t+r\rangle^{-{1\over 2} + \delta},
        \qquad
        &|I|\leq 3.
    \end{aligned}
    \end{equation}

 \item Estimates for $\partial\psi$.
        \begin{equation}\label{eq:disp-d-psi01}
    \begin{aligned}
        |\partial\Gamma^I \psi|
        &\lesssim
        C_1 \epsilon \langle t+r\rangle^{-{1\over 2} + \delta},
        \qquad
        &|I|\leq 2,
        \\
        |\partial\Gamma^I \psi|
        &\lesssim
        C_1 \langle t+r\rangle^{-1} \langle t-r\rangle^{-{1\over 2}},
        \qquad
        &|I|\leq 3.
    \end{aligned}
    \end{equation}

\item Estimates for $\partial\partial\psi$.
\begin{equation}
    \begin{aligned}\label{eq:disp-dd-psi01}
        |\partial \partial \Gamma^I \psi|
        &\lesssim
        C_1 \epsilon\langle t+r\rangle^{-1} \langle t-r\rangle^{-{1\over 2}},
        \qquad
        &|I|\leq 2.
    \end{aligned}
\end{equation}
    \end{itemize}
\end{lemma}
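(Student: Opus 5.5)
The plan is to combine the Klainerman--Sobolev inequality \eqref{est:global_KS} with the three bootstrap bounds \eqref{eq:BA-disp}. The derivative estimates are the easy part; the real work is the two bounds in \eqref{eq:disp-psi01}, which need an interpolation.

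\emph{Derivative estimates.} For \eqref{eq:disp-d-psi01}, second line, I apply \eqref{est:global_KS} to $\partial\Gamma^I\psi$ with $|I|\le 3$. By Lemma \ref{lem:commutator2} we have $\Gamma^{\le 2}\partial\Gamma^I\psi = \partial\Gamma^{\le 5}\psi$ plus lower-order terms, so the $L^2$ norms on the right are controlled by $\mathcal{E}^{1/2}(t,\Gamma^{\le 5}\psi)\le C_1$ from \eqref{eq:BA-disp02}. This gives $C_1\langle t+r\rangle^{-1}\langle t-r\rangle^{-1/2}$. Similarly, for $\partial\partial\Gamma^I\psi$ with $|I|\le 2$, the quantity $\Gamma^{\le2}\partial\partial\Gamma^I\psi$ is controlled by $\mathcal{E}^{1/2}(t,\partial\Gamma^{\le 4}\psi)\le C_1\epsilon$ from \eqref{eq:BA-disp03}, which yields \eqref{eq:disp-dd-psi01}.

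\emph{Interpolation for $\partial\psi$.} The first line of \eqref{eq:disp-d-psi01} should come from interpolating between two bounds on $|\partial\Gamma^I\psi|$:
\begin{itemize}
\item $\epsilon$-smallness without decay, obtained from $L^\infty$ Sobolev on $\mathbb{R}^3$ applied to $\partial\Gamma^{\le2}\psi$ using \eqref{eq:BA-disp03};
\item decay without smallness, namely the $\langle t+r\rangle^{-1}$ bound just proved.
\end{itemize}
On its own this gives only something like $\epsilon^{1/2}\langle t+r\rangle^{-1/2}$. To reach the stated $\epsilon\langle t+r\rangle^{-1/2+\delta}$, I would instead use a weighted Sobolev inequality: the radial form $r|f|^2\lesssim \|\partial_r f\|\,\|\Omega^{\le 2}f\|$, combined with $|\Omega f|\lesssim r|\partial f|$ and the conformal control of $\|r\partial_r\cdots\|$. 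If this turns out to be delicate, I would settle for a $C_1\epsilon^{1-}$ version, which should suffice downstream.

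\emph{Undifferentiated $\psi$ (main step).} For $|\Gamma^I\psi|$ with $|I|\le 3$, the plan is to use the fundamental theorem of calculus in $r$. Writing $\Gamma^I\psi(t,r\omega) = -\int_r^\infty \partial_r\Gamma^I\psi(t,\rho\omega)\,d\rho$ and using the decay bound for $\partial\Gamma^{\le 3}\psi$ from the previous paragraph gives $\int_r^\infty \langle t+\rho\rangle^{-1}\langle t-\rho\rangle^{-1/2}d\rho$. This integral is $\lesssim \langle t+r\rangle^{-1/2+\delta}$, with at most a logarithmic loss that is absorbed by $\delta$; here the boundedness of $\langle t-\rho\rangle^{-1/2}$ paired with $\langle t+\rho\rangle^{-1}$ is used for large $\rho$. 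Decay of the solution at spatial infinity, which justifies the FTC, follows from the weighted data and finite speed.

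For $|I|\le 2$, I run the same integral but interpolate: I take a $\theta$-power of the $\epsilon$-small bound $C_1\epsilon\langle t+\rho\rangle^{-1/2+\delta}$ and a $(1-\theta)$-power of the decaying bound. With $\theta$ close to $1/2$ this gives the integrand $\epsilon^{1/2-\delta}\langle t+\rho\rangle^{-1-\delta'}\langle t-\rho\rangle^{-1/4}$, which is integrable, so the result is $C_1\epsilon^{1/2-\delta}$. Tuning the exponents so that the $\rho$-integral converges while the $\epsilon$-power stays at $\tfrac12-\delta$ is the main obstacle. If the plain FTC interpolation is short of integrability, I would also bring in the conformal bound $\|\Gamma^{\le 5}\psi\|\lesssim C_1\epsilon^{-1}$ through Klainerman--Sobolev, which gives $\epsilon^{-1}t^{-1}$, and interpolate this against the $t^{-1/2+\delta}$ bound to recover $\epsilon$-smallness in the regime $t\gtrsim\epsilon^{-1}$.
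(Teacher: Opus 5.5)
Your proofs of the second line of \eqref{eq:disp-d-psi01}, of \eqref{eq:disp-dd-psi01}, and of the second line of \eqref{eq:disp-psi01} (radial integration of the decay bound) match the paper. So does the structure of your final interpolation. The gap is the first line of \eqref{eq:disp-d-psi01}, $|\partial\Gamma^I\psi|\lesssim C_1\epsilon\langle t+r\rangle^{-1/2+\delta}$ for $|I|\le 2$. It is part of the statement, and it is exactly the $\epsilon$-small bound you interpolate against to get $|\Gamma^{\le2}\psi|\lesssim C_1\epsilon^{1/2-\delta}$. None of your routes produces it.

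\begin{itemize}
\item Sobolev on $\mathbb{R}^3$ plus interpolation loses half a power of $\epsilon$, as you note. Even radially integrating $\min\{C_1\epsilon,\,C_1\langle t+\rho\rangle^{-1}\langle t-\rho\rangle^{-1/2}\}$ at $t\sim1$ gives only $\epsilon^{1/3}$.
\item The weighted inequality $r^2|f|^2\lesssim\|\Omega^{\le2}f\|\,\|\partial_r\Omega^{\le2}f\|$ with $f=\partial\Gamma^{\le2}\psi$ gives at best $C_1\epsilon^{1/2}r^{-1}$. The second factor is $O(C_1\epsilon)$ by \eqref{eq:BA-disp03}, but the first is only $O(C_1)$ by \eqref{eq:BA-disp02}. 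This cannot be improved: for data $\epsilon^{1/2}g_0(\epsilon x)$, $\Omega$ is scale invariant, so $\|\Omega^{\le2}\nabla u_0\|\sim1$. Replacing $\Omega$ by $r\partial$ would need an $\epsilon$-small bound on $\|r\,\partial\partial\Gamma^{\le3}\psi\|$, which \eqref{eq:BA-disp} does not provide (the conformal energies are $O(C_1\epsilon^{-1})$).
\item The conformal Klainerman--Sobolev fallback gives no smallness at $t\sim1$.
\item An unspecified ``$\epsilon^{1-}$ version'' is not the stated estimate.
\end{itemize}

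The missing step is your own radial fundamental-theorem-of-calculus argument applied one derivative higher, fed by \eqref{eq:disp-dd-psi01}, which you have already proved. For $|J|\le2$,
\[
|\partial\Gamma^J\psi(t,r\omega)|\le\int_r^\infty|\partial_\rho\partial\Gamma^J\psi(t,\rho\omega)|\,d\rho\lesssim C_1\epsilon\int_r^\infty\langle t+\rho\rangle^{-1}\langle t-\rho\rangle^{-\frac12}\,d\rho\lesssim C_1\epsilon\langle t+r\rangle^{-\frac12},
\]
which is exactly what the paper does. Then take the power $\tfrac12-\delta$ of this bound and $\tfrac12+\delta$ of $C_1\langle t+\rho\rangle^{-1}\langle t-\rho\rangle^{-1/2}$. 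The integrand is $C_1\epsilon^{\frac12-\delta}\langle t+\rho\rangle^{-\frac34-\delta^2}\langle t-\rho\rangle^{-\frac14-\frac{\delta}{2}}$, not $\langle t+\rho\rangle^{-1-\delta'}\langle t-\rho\rangle^{-1/4}$ as you wrote. Its $\rho$-integral is still bounded uniformly in $t$: the region $|\rho-t|\lesssim t$ contributes $O(1)$, and the tail decays like $\rho^{-1-\delta/2}$. This gives the first line of \eqref{eq:disp-psi01}.

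A minor point on the second line of \eqref{eq:disp-psi01}: integrability for $\rho\gg t$ comes from the decay of $\langle t-\rho\rangle^{-1/2}$, not its boundedness. Boundedness alone would leave $\int^\infty\rho^{-1}\,d\rho$. There is also no logarithmic loss.
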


\begin{proof}
By the energy bounds listed in \eqref{eq:BA-disp}, the Klainerman-Sobolev inequality \eqref{est:global_KS}, and the commutator estimates in \eqref{est:commutators}, we get
\begin{align}
    |\Gamma^I \psi|
    &\lesssim
    C_1 \epsilon^{-1} \langle t+r\rangle^{-1} \langle t-r\rangle^{-{1\over 2}},
    \qquad
    |I|\leq 4, \label{eq:disp-0001}
    \\
    |\partial\Gamma^I \psi|
    &\lesssim
    C_1 \langle t+r\rangle^{-1} \langle t-r\rangle^{-{1\over 2}},
    \qquad
    |I|\leq 3, \label{eq:disp-0002}
    \\
    |\partial\partial\Gamma^I \psi|
    &\lesssim
    C_1 \epsilon \langle t+r\rangle^{-1} \langle t-r\rangle^{-{1\over 2}},
    \qquad
    |I|\leq 2. \label{eq:disp-0003}
\end{align}
These prove the second equation in \eqref{eq:disp-d-psi01} and the bound \eqref{eq:disp-dd-psi01}.

Next, by the fundamental theorem of calculus and the bounds in \eqref{eq:disp-0002}, we derive for $|J|\leq 3$ that
\begin{align*}
    |\Gamma^J \psi|(t, x)
    \lesssim
    \int_{|x|}^{+\infty} |\partial_r \Gamma^J \psi | \, \d \rho
    \lesssim
    C_1 \langle t+|x|\rangle^{-{1\over 2} + {\delta\over 2}},
\end{align*}
which proves the second inequality stated in \eqref{eq:disp-psi01}.
Similarly, by \eqref{eq:disp-0003}, we infer for $|J|\leq 2$ that
\begin{align*}
    |\partial\Gamma^J \psi|(t, x)
    \lesssim
    \int_{|x|}^{+\infty} |\partial_r \partial \Gamma^J \psi | \, \d \rho
    \lesssim
    C_1 \epsilon \langle t+|x|\rangle^{-{1\over 2} + {\delta\over 2}},
\end{align*}
which verifies the first inequality stated in \eqref{eq:disp-d-psi01}.
We then interpolate \eqref{eq:disp-d-psi01} and \eqref{eq:disp-0002} in order to gain a coefficient involving $\epsilon^{1/2-}$. For $|K|\leq 2$, we find that
\begin{align*}
    |\partial \Gamma^K \psi|
    \lesssim
    C_1\epsilon^{{1\over 2}-\delta} \langle t+r\rangle^{(-{1\over 2}+ {\delta })({1\over 2}-\delta)-({1\over 2}+\delta)} \langle t-r\rangle^{-{1\over 2}({1\over 2}+\delta)},
\end{align*}
which further leads to
\begin{align*}
    |\Gamma^K \psi|(t, x)
    \lesssim
    \int_{|x|}^{+\infty} |\partial_r \Gamma^K \psi | \, \d \rho
    \lesssim
    C_1 \epsilon^{{1\over 2}-\delta}.
\end{align*}
This confirms the first bound in \eqref{eq:disp-psi01}.
\end{proof}

We next combine the estimates from Lemma \ref{lem:disp-psi} with the Hardy inequality from Lemma \ref{lem:hardy} to deduce bounds on the nonlinearities of the wave equation.

\begin{lemma}[Bounds on the nonlinearities]\label{lem:nonlinearities-psi}
    Under the assumptions \eqref{eq:BA-disp}, we have 
    \begin{align}
        \big\|\langle t+r\rangle\Gamma^I (|\psi|^{p-1} \psi) \big\|
        &\lesssim
        C_1^p \epsilon^{-1+\delta} \langle t\rangle^{-1-\delta},
        \qquad
        &|I|\leq 5, \label{eq:disp-002}
        \\
        \big\|\Gamma^I (|\psi|^{p-1} \psi) \big\|
        &\lesssim
        C_1^p \epsilon^\delta \langle t\rangle^{-1-\delta} ,
        \qquad
        &|I|\leq 5, \label{eq:disp-001}
        \\
        \big\|\partial\Gamma^I (|\psi|^{p-1} \psi) \big\|
        &\lesssim
        C_1^p \epsilon^{1+\delta} \langle t\rangle^{-1-\delta},
        \qquad
        &|I|\leq 4. \label{eq:disp-003}
    \end{align}
\end{lemma}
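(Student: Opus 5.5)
The plan is to distribute $\Gamma^I$ over the nonlinearity with the Leibniz rule, put all but at most two factors in $L^\infty$ using Lemma \ref{lem:disp-psi}, and put the remaining one or two factors in $L^2$ using the energy bootstrap \eqref{eq:BA-disp} together with the Hardy inequality.

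\textbf{Leibniz expansion.} Since $p>5$, the map $s\mapsto |s|^{p-1}s$ is $C^5$, with $k$-th derivative bounded by $|s|^{p-k}$. Hence for $|I|\le 5$ we have the pointwise bound
\[
|\Gamma^I(|\psi|^{p-1}\psi)|\lesssim \sum_{k=1}^{|I|}\ \sum_{|I_1|+\cdots+|I_k|=|I|} |\psi|^{p-k}\,|\Gamma^{I_1}\psi|\cdots|\Gamma^{I_k}\psi|,
\]
with the convention that for $|I|=0$ the sum is just $|\psi|^p$. In each product at most one index satisfies $|I_j|\ge 3$, since $|I|\le 5$. We keep that factor, or else the factor of highest order, in $L^2$. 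Every other factor $\Gamma^{I_j}\psi$ with $|I_j|\le 2$, and every power of $\psi$, goes in $L^\infty$.

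\textbf{$L^\infty$ factors.} The total number of $L^\infty$ factors is at least $p-1>4$. For these we use the two bounds in \eqref{eq:disp-psi01}, interpolated between each other:
\[
|\Gamma^{\le 2}\psi|\lesssim C_1\min\{\epsilon^{1/2-\delta},\ \langle t+r\rangle^{-1/2+\delta}\}.
\]
Call the number of such factors $m$, so $m\ge p-1$. We use all $m$ factors to produce a product bound of the form $C_1^{m}\epsilon^{a}\langle t+r\rangle^{-b}$. The exponents $a,b$ are chosen with $a(1/2-\delta)^{-1}+b(1/2-\delta)^{-1}\le m$, which is possible because $m>4$ leaves room.

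\textbf{$L^2$ factor.} For the single $L^2$ factor $\Gamma^{J}\psi$ there are two sources of control.
\begin{itemize}
\item In \eqref{eq:disp-002} the weight $\langle t+r\rangle$ is absorbed using the conformal energy \eqref{eq:BA-disp01}, which gives $\|\Gamma^{\le 5}\psi\|\lesssim C_1\epsilon^{-1}$.
\item Alternatively, the Hardy inequality gives $\|r^{-1}\Gamma^J\psi\|\lesssim\|\partial\Gamma^J\psi\|\lesssim C_1$ by \eqref{eq:BA-disp02}. We pair this with one $L^\infty$ factor $\Gamma^{\le 3}\psi$. That factor is bounded by \eqref{eq:disp-0001}, or via the fundamental theorem of calculus with an extra factor $r$.
\end{itemize}

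\textbf{Checking each estimate.}
\begin{itemize}
\item For \eqref{eq:disp-001}, use Hardy: one factor $r\,|\Gamma^{\le 2}\psi|$ is needed in $L^\infty$. From \eqref{eq:disp-0001} this gives $r|\Gamma\psi|\lesssim C_1\epsilon^{-1}\langle t-r\rangle^{-1/2}$. That is too large in $\epsilon$. So instead write $\Gamma^J\psi\,\psi^{m}=(r^{-1}\Gamma^J\psi)(r\psi)\psi^{m-1}$ and use $r|\psi|\lesssim r\langle t+r\rangle^{-1/2+\delta}\lesssim\langle t+r\rangle^{1/2+\delta}$. The remaining $m-1\ge p-2>3$ factors then supply $\epsilon^{\delta}\langle t\rangle^{-3/2-2\delta}$ via the min bound.
\item For \eqref{eq:disp-002}, the extra weight is handled by $L^2$ control via \eqref{eq:BA-disp01}. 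The $\epsilon^{-1}$ from that bound, multiplied by $\epsilon^{\delta}$ from the $L^\infty$ factors, gives $\epsilon^{-1+\delta}$. The $m\ge p-1>4$ factors give decay $\langle t\rangle^{-(p-1)(1/2-\delta)+\text{room}}$, which is at most $\langle t\rangle^{-1-\delta}$ once $\delta\ll p-5$.
\item For \eqref{eq:disp-003}, the extra $\partial$ lands on some factor. If it lands on the $L^2$ factor, use \eqref{eq:BA-disp03}, giving $\|\partial\Gamma^{\le4}\psi\|\lesssim C_1\epsilon$. If it lands on an $L^\infty$ factor, use $|\partial\Gamma^{\le2}\psi|\lesssim C_1\epsilon\langle t+r\rangle^{-1/2+\delta}$ from \eqref{eq:disp-d-psi01}, or \eqref{eq:disp-dd-psi01}. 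Either way the gained factor $\epsilon$ sits on top of the previous bounds.
\end{itemize}

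\textbf{Main obstacle.} The hard part is the exponent bookkeeping. The ${\langle t+r\rangle}$-weight and the $\epsilon^{-1}$ loss from the conformal energy must be compensated simultaneously by $p-1$ pointwise factors, each of which is only $\min\{\epsilon^{1/2-\delta},t^{-1/2+\delta}\}$. This works because $(p-1)/2>2$: splitting those factors as roughly $\epsilon^{\delta}$ times $t^{-1-\delta}$ times a weight $\langle t+r\rangle^{1+}$ is affordable precisely when $p>5$ and $\delta\ll p-5$.

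The second delicate case is the situation where two indices have $|I_j|=3$ or higher. That cannot happen when $|I|\le5$ and both are at least 3 (since $3+3>5$). Consequently at most one factor carries more than two vector fields, and the $|I|\le 3$ pointwise bounds in \eqref{eq:disp-psi01} cover all remaining configurations.
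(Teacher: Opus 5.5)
This is essentially the paper's argument. You expand by Leibniz, keep the highest-order factor in $L^2$ (controlled by Hardy plus \eqref{eq:BA-disp02}, or by the conformal bound \eqref{eq:BA-disp01} for the weighted estimate), and put the remaining $p-1$ factors in $L^\infty$ via the two bounds in \eqref{eq:disp-psi01}. Loading the whole weight $r$ onto one factor through $r|\psi|\lesssim C_1\langle t+r\rangle^{\frac12+\delta}$, rather than spreading $r^{1/4}$ over four factors as the paper does, is an equivalent bookkeeping choice.

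One slip needs fixing in \eqref{eq:disp-003}. \eqref{eq:BA-disp03} gives $\|\partial\partial\Gamma^{\le 4}\psi\|\lesssim C_1\epsilon$, not $\|\partial\Gamma^{\le 4}\psi\|\lesssim C_1\epsilon$; the latter is only $\lesssim C_1$ by \eqref{eq:BA-disp02}. So when the extra $\partial$ lands on the $L^2$ factor, you must again use Hardy, $\|r^{-1}\partial\Gamma^J\psi\|\lesssim\|\partial\partial\Gamma^J\psi\|\lesssim C_1\epsilon$, and absorb the $r$ on the $L^\infty$ side exactly as in \eqref{eq:disp-001}. This is what the paper does.
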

\begin{proof}
    We first note the simple but useful relations
    \begin{align}\label{eq:psi-Leib}
        \big|\Gamma^I \big( |\psi|^{p-1} \psi \big) \big|
        &\lesssim
        \sum_{I_{1\to 5} \leq |I|} |\psi|^{p-5} |\Gamma^{I_1}\psi| |\Gamma^{I_2}\psi| \cdots |\Gamma^{I_5}\psi|,
    \\ \label{eq:d-psi-Leib}
        \big|\partial\Gamma^I \big( |\psi|^{p-1} \psi \big) \big|
       & \lesssim
        \sum_{I_{1\to 5}\leq |I|} |\psi|^{p-5} |\partial\Gamma^{I_1}\psi| |\Gamma^{I_2}\psi| \cdots |\Gamma^{I_5}\psi|.
    \end{align}

$\bullet$ We first prove \eqref{eq:disp-001}.
Our starting point is taking \eqref{eq:psi-Leib} and inserting a factor of $r^{-1}$ onto the term with the most derivatives. 
\begin{align*}
    \|\Gamma^I \big( |\psi|^{p-1} \psi \big)\|
    \lesssim
    \sum_{\substack{I_{1\to 5}\leq |I|\\ |I_1|\geq \max\{|I_2|, \ldots, |I_5|\}}} \|\psi\|_{L^\infty}^{p-5} \|r^{-1}\Gamma^{I_1}\psi\| \|r^{1\over 4} \Gamma^{I_2}\psi\|_{L^\infty}  \cdots \|r^{1\over 4}\Gamma^{I_5}\psi\|_{L^\infty}.
\end{align*}
Applying the Hardy inequality \eqref{lem:hardy}, the energy bounds  \eqref{eq:BA-disp02} and the pointwise bounds \eqref{eq:disp-psi01} gives
\begin{align*}
    \|\Gamma^I \big( |\psi|^{p-1} \psi \big)\|
    &\lesssim
    \sum_{\substack{I_{1\to 5}\leq |I|\\ |I_1|\geq \max\{|I_2|, \ldots, |I_5|\}}} \|\psi\|_{L^\infty}^{p-5} \|\partial\Gamma^{I_1}\psi\| \|r^{1\over 4} \Gamma^{I_2}\psi\|_{L^\infty} \cdots \|r^{1\over 4}\Gamma^{I_5}\psi\|_{L^\infty}
    \\
    &\lesssim
     C_1^p \epsilon^{({1\over 2}-\delta){p-5\over 2}} \langle t\rangle^{-{p-1\over 4} + {p+3 \over 2}\delta}.
\end{align*}
Therefore, by assuming $\delta\ll {p-5}$, we obtain \eqref{eq:disp-001}.

$\bullet$ Next, we prove \eqref{eq:disp-002}. We distribute the spacetime weight across four of the factors appearing in \eqref{eq:psi-Leib}. By employing the energy bounds from  \eqref{eq:BA-disp01} and the pointwise bounds in \eqref{eq:disp-psi01}, we infer 
\begin{align*}
    \|\langle t+r\rangle\Gamma^I \big( |\psi|^{p-1} \psi \big)\|
    &\lesssim
    \sum_{\substack{I_{1\to 5}\leq |I|\\ |I_1|\geq \max\{|I_2|, \ldots, |I_5|\}}} \|\psi\|_{L^\infty}^{p-5} \|\Gamma^{I_1}\psi\| \|\langle t+r\rangle^{1\over 4} \Gamma^{I_2}\psi\|_{L^\infty} \cdots \|\langle t+r\rangle^{1\over 4}\Gamma^{I_5}\psi\|_{L^\infty}
    \\
    &\lesssim
     C_1^p \epsilon^{-1+({1\over 2}-\delta){p-5\over 2}} \langle t\rangle^{1+ (-{1\over 2}+\delta){p+3 \over 2}},
\end{align*}
which again, for small $\delta$, leads us to \eqref{eq:disp-002}.

$\bullet$ To conclude we prove \eqref{eq:disp-003}. Using now \eqref{eq:d-psi-Leib}, we distribute factors of $r$ depending on the number of vector fields falling on the term with the additional $\partial$ derivative:
\begin{align*}
    \|\partial\Gamma^I \big( |\psi|^{p-1} \psi \big)\|
    &\lesssim
    \sum_{I_{1\to 5} \leq |I|} \Big( \sum_{|I_1|\geq 3} \|\psi\|_{L^\infty}^{p-5} \|r^{-1}\partial\Gamma^{I_1}\psi\| \|r^{1\over 4} \Gamma^{I_2}\psi\|_{L^\infty}  \cdots \|r^{1\over 4}\Gamma^{I_5}\psi\|_{L^\infty}
    \\
    &\quad +
    \sum_{|I_1|\leq 2} \|\psi\|_{L^\infty}^{p-5} \|\partial\Gamma^{I_1}\psi\|_{L^\infty} \|r^{-1} \Gamma^{I_2}\psi\| \|r^{1\over 3}\Gamma^{I_3}\psi\|_{L^\infty} \cdots \|r^{1\over 3}\Gamma^{I_5}\psi\|_{L^\infty}\Big) 
    =: \mathcal{G}_1 + \mathcal{G}_2.
\end{align*}
We use the Hardy inequality \eqref{lem:hardy}, energy bounds \eqref{eq:BA-disp03}, and pointwise bounds \eqref{eq:disp-psi01} to show
\begin{align*}
    \mathcal{G}_1
    &\lesssim
    \sum_{\substack{I_{1\to 5}\leq |I|\\ |I_1|\geq 3}} \|\psi\|_{L^\infty}^{p-5} \|\partial\partial\Gamma^{I_1}\psi\| \|r^{1\over 4} \Gamma^{I_2}\psi\|_{L^\infty}  \cdots \|r^{1\over 4}\Gamma^{I_5}\psi\|_{L^\infty}
    \\
    &\lesssim
    C_1^p \epsilon^{1+({1\over 2}-\delta){p-5\over 2}} \langle t\rangle^{-1-({1\over 2}-\delta){p-5\over 2}+4\delta}.
\end{align*}
Similarly, using \eqref{lem:hardy}, \eqref{eq:BA-disp02}, and \eqref{eq:disp-psi01}--\eqref{eq:disp-d-psi01}, we derive
\begin{align*}
    \mathcal{G}_2
    &\lesssim
    \sum_{\substack{I_{1\to 5}\leq |I|\\ |I_1|\leq 2}} \|\psi\|_{L^\infty}^{p-5} \|\partial\Gamma^{I_1}\psi\|_{L^\infty} \|\partial \Gamma^{I_2}\psi\| \|r^{1\over 3}\Gamma^{I_3}\psi\|_{L^\infty} \cdots \|r^{1\over 3}\Gamma^{I_5}\psi\|_{L^\infty}
    \\
    &\lesssim
    C_1^p \epsilon^{1+({1\over 2}-\delta){p-5\over 2}} \langle t\rangle^{-1-({1\over 2}-\delta){p-5\over 2}+4\delta}.
\end{align*}

Therefore, all together we have
\begin{align*}
    \|\partial\Gamma^I \big( |\psi|^{p-1} \psi \big)\|
    \lesssim
    C_1^p \epsilon^{1+({1\over 2}-\delta){p-5\over 2}} \langle t\rangle^{-1-({1\over 2}-\delta){p-5\over 2}+4\delta},
\end{align*}
and the proof is complete by choosing $\delta$ sufficiently small relative to $p-5$.
\end{proof}

\begin{proposition}[Improved  energy bounds] 
    Under the assumptions \eqref{eq:BA-disp}, we have 
    \begin{align}
    \mathcal{E}_{con}^{1\over 2} (t, \Gamma^I \psi) 
    & \lesssim \epsilon^{-1} + C_1^p \epsilon^{-1+\delta},
    \qquad
    &|I|\leq 5, \label{eq:refin-disp01} 
    \\
       \mathcal{E}^{1\over 2} (t, \Gamma^I \psi)
    &\lesssim 1 + C_1^p \epsilon^\delta,
    \qquad
    &|I|\leq 5, \label{eq:refine-disp02}
    \\
    \mathcal{E}^{1\over 2} (t, \partial \Gamma^I \psi)
    &\lesssim  \epsilon + C_1^p \epsilon^{1+\delta},
    \qquad
    &|I|\leq 4. \label{eq:refine-disp03}
    \end{align}
\end{proposition}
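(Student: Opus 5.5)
The plan is to apply the energy estimates of Lemma \ref{lem:energy_general} to the equations satisfied by $\Gamma^I\psi$ and $\partial\Gamma^I\psi$, and to bound the resulting source integrals with Lemma \ref{lem:nonlinearities-psi}.

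First we derive the commuted equations. By Lemma \ref{lem:commutator1}, for any multi-index $I$ we have
\[
-\Box \Gamma^I\psi = \sum_{|I'|\leq |I|} c_{I,I'}\,\Gamma^{I'}\big(\pm|\psi|^{p-1}\psi\big),
\]
with constants $c_{I,I'}$ that come from the commutators with $L_0$. Since $\partial$ commutes with $\Box$, the same identity holds with $\partial\Gamma^I\psi$ on the left and $\partial\Gamma^{I'}(|\psi|^{p-1}\psi)$ on the right. The initial energies at $t_0=1$ are controlled by \eqref{eq:ID-disp-w}. At $t=1$, the vector fields $L_a$, $L_0$ and $\Omega$ are combinations of $\partial$ with coefficients of size $\langle x\rangle$. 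Iterating, $\Gamma^I\psi(t_0)$ is a sum of terms $\langle x\rangle^{|I_1|}\nabla^{I_1}$ applied to $u_0$ or $u_1$. Whenever a time derivative appears, it is replaced via the equation by $\Delta u_0 \pm |u_0|^{p-1}u_0$ or $u_1$; the nonlinear contributions are small and are handled by Sobolev embedding from the same weighted norms. The weight structure in \eqref{eq:ID-disp-w} is designed exactly so that this bookkeeping gives the three bounds
\[
\mathcal{E}_{con}^{1/2}(t_0,\Gamma^I\psi)\lesssim\epsilon^{-1},\qquad \mathcal{E}^{1/2}(t_0,\Gamma^I\psi)\lesssim 1,\qquad \mathcal{E}^{1/2}(t_0,\partial\Gamma^I\psi)\lesssim\epsilon.
\]
These correspond to $k=0,1,2$ respectively: the conformal energy involves $\psi$ itself and $\langle x\rangle\nabla\psi$, the natural energy involves one derivative, and the higher natural energy involves two.

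Next come the integral terms. For \eqref{eq:refin-disp01}, the conformal estimate \eqref{est:energy_conformal} together with \eqref{eq:disp-002} gives
\[
\int_{t_0}^t \|\langle s+r\rangle\Gamma^{I'}(|\psi|^{p-1}\psi)\|\,ds\lesssim C_1^p\epsilon^{-1+\delta}\int_1^\infty\langle s\rangle^{-1-\delta}\,ds\lesssim C_1^p\epsilon^{-1+\delta}.
\]
Similarly, \eqref{est:energy_natural} combined with \eqref{eq:disp-001} gives \eqref{eq:refine-disp02}, and combined with \eqref{eq:disp-003} gives \eqref{eq:refine-disp03}. In each case the integrability comes from the factor $\langle t\rangle^{-1-\delta}$ furnished by Lemma \ref{lem:nonlinearities-psi}, and the $\delta$-dependent constant from $\int\langle s\rangle^{-1-\delta}$ is absorbed into $\lesssim$.

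The main obstacle is the initial-data step, which requires careful bookkeeping. In particular, $L_0$ and $L_a$ at $t_0=1$ contain $t\partial_a$ and $t\partial_t$ terms. One must check that the $\partial_t$ components (that is, $u_1$, and $\partial_t^2\psi=\Delta u_0\pm|u_0|^{p-1}u_0$) carry the right weight and $\epsilon$-power. For this, the choice of the index $\max\{|J|-1,0\}$ in the $u_1$ part of \eqref{eq:ID-disp-w} should match the loss of one derivative. The nonlinear term at $t_0$ has to be estimated with the $L^\infty$ bound $|u_0|\lesssim\epsilon^{1/2}$; this bound comes from interpolating the $k=0$ and $k=2$ norms, mirroring the argument of Lemma \ref{lem:disp-psi}.
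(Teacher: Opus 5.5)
Your proposal is correct and matches the paper's proof: commute the equation, apply the natural and conformal estimates of Lemma~\ref{lem:energy_general}, and integrate the bounds \eqref{eq:disp-002}, \eqref{eq:disp-001} and \eqref{eq:disp-003} against $\langle s\rangle^{-1-\delta}$. The only difference is in the initial data: the paper takes the sizes $\epsilon^{-1}$, $1$, $\epsilon$ for granted from \eqref{eq:ID-disp-w}, whereas you sketch the weight bookkeeping, including replacing $\partial_t^2\psi$ via the equation and using $|u_0|\lesssim\epsilon^{1/2}$ for the nonlinear terms at $t_0$; that sketch is correct.
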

\begin{proof}

$\bullet$ We first apply the conformal energy estimate \eqref{est:energy_conformal} to the commuted equation \eqref{eq:high-order}. By injecting the estimate \eqref{eq:disp-002} on the weighted nonlinearity we obtain
\begin{align*}
    \mathcal{E}_{con}^{1\over 2}(t, \Gamma^I \psi)
    &\lesssim
    \mathcal{E}_{con}^{1\over 2}(t_0, \Gamma^I \psi)
    +
    \sum_{|J|\leq|I|}\int_{t_0}^t \big\|\langle s+r\rangle \Gamma^J \big( |\psi|^{p-1} \psi \big) \big\| \, \d s
    \\&
    \lesssim
    \epsilon^{-1} + C_1^p \epsilon^{-1+\delta} \int_{t_0}^t s^{-1-\delta} \, \d s
    \lesssim
    \epsilon^{-1} + C_1^p \epsilon^{-1+\delta}.
\end{align*}
$\bullet$ We next prove \eqref{eq:refine-disp02}. This simply comes from taking the commuted equation
    \begin{align}\label{eq:high-order}
        \Box \Gamma^I \psi = \sum_{|J|\leq |I|}c_{J} \Gamma^J \big( |\psi|^{p-1} \psi \big)
    \end{align}
with $c_{J}$ some constants, and applying  the canonical energy estimate \eqref{est:energy_natural}. Injecting also our nonlinear bounds \eqref{eq:disp-001} then yields
\begin{align*}
    \mathcal{E}^{1\over 2}(t, \Gamma^I \psi)
    \lesssim
    \mathcal{E}^{1\over 2}(t_0, \Gamma^I \psi)
    +
    \sum_{|J|\leq |I|}\int_{t_0}^t \big\| \Gamma^J \big( |\psi|^{p-1} \psi \big) \big\| \, \d s
    \lesssim
    1 + C_1^p \epsilon^\delta \int_{t_0}^t s^{-1-\delta} \, \d s
    \lesssim
    1 + C_1^p \epsilon^\delta.
\end{align*}

$\bullet$ We finally prove \eqref{eq:refine-disp03}. Again by considering the commuted equation
    \begin{align*}
        \Box \partial\Gamma^I \psi = \sum_{|J|\leq |I|}d_{J} \partial\Gamma^J \big( |\psi|^{p-1} \psi \big)
    \end{align*}
with $d_{J}$ some constants, we use the energy estimate \eqref{est:energy_natural} combined with the nonlinear bound \eqref{eq:disp-003} to find that
\begin{align*}
    \mathcal{E}^{1\over 2}(t, \partial\Gamma^I \psi)
    \lesssim
    \mathcal{E}^{1\over 2}(t_0, \partial\Gamma^I \psi)
    +
    \sum_{|J|\leq|I|}\int_{t_0}^t \big\| \partial\Gamma^J \big( |\psi|^{p-1} \psi \big) \big\| \, \d s
    \lesssim
    \epsilon + C_1^p \epsilon^{1+\delta} \int_{t_0}^t s^{-1-\delta} \, \d s
    \lesssim
    \epsilon + C_1^p \epsilon^{1+\delta}.
\end{align*}

\end{proof}


\section{Local existence for short-pulse data}\label{sp:local}

In this section we begin the construction of global solutions to the following PDE  
\begin{equation}\label{eq:model_short-pulse}
\begin{split}
    -\Box \phi &= \pm\Big(|\psi+\phi|^{p-1} \phi + (|\psi+\phi|^{p-1} - |\psi|^{p-1})\psi \Big) =: \pm \,\mathcal{N},
    \\
  (\phi, \partial_t \phi)(t_0) &= (w_0, w_1),
    \end{split}
\end{equation}
which is coupled to our previous solution $\psi$ and 
where the unknown $\phi$ will initially have short-pulse data:
\begin{align}\label{eq:ID-shortp}
(w_0(r, \omega), w_1(r, \omega)) = \big(\epsilon^{1\over 2} h_0(\tfrac{1-r}{\epsilon}, \omega), \epsilon^{-{1\over 2}} h_1(\tfrac{1-r}{\epsilon}, \omega)\big),
\qquad 
 |L\phi(t_0)| = |w_1+\partial_r w_0| \leq \epsilon^{1\over 2},
\end{align}
 where $h_0(x,y,z) = h_0(r, \omega), h_1(x,y,z)=h_1(r, \omega)$ are arbitrary functions supported in $\{ 1\leq r \leq 2 \}$ satisfying 
    \begin{align*}
        \sum_{|I|\leq 6}|\nabla^I h_0(x)| 
        + \sum_{|J|\leq 5}|\nabla^J h_1(x)|
        \leq 1.
    \end{align*}
Our argument is organized by a decomposition of spacetime
into a local-in-time slab and two global-in-time regions separated by a null hypersurface, as shown in Figure \ref{fig:spacetime}.

\begin{figure}[!htb]
\centering
\begin{tikzpicture}[scale=1.3,>=stealth]
\draw[->] (0,0) -- (9,0) node[right] {$r$};
\draw[->] (0,0) -- (0,5) node[above] {$t$};

\draw (-0.2,0) -- (7.8,0);
\draw (-0.2,2) -- (7.8,2);

\node[circle, fill, inner sep=1.5pt, label=left:{$t_1=1+2\epsilon\;$}] at (0,2) {};
\node[circle, fill, inner sep=1.5pt, label=left:{$t_0=1\;$}] at (0,0) {};

\node[below] at (3,0) {$1-2\epsilon$};
\node[below] at (4,0) {$1-\epsilon$};
\node[below] at (5,0) {$1$};

\node[below] at (4.3,1) {$D_R$};
\node[below] at (2.7,1) {$D_L$};

\node[circle, fill, inner sep=1.5pt, label=below right:$1+\epsilon$] at (6,2) {};
\node[circle, fill, inner sep=1.5pt, label=above right:$1-4\epsilon$] at (1,2) {};

\draw (3,0) -- (5,2) node[midway, above, sloped] {$t-r=2\epsilon$};
\draw (4,0) -- (6,2) node[midway, below, sloped] {$t-r=\epsilon$};
\draw (1,2) -- (3,0) node[midway, below, sloped] {$\;t+r=2-2\epsilon$};
\draw (2,2) -- (4,0);

\draw[black, very thick] (2.5,2.5) -- (6.5,2.5) node[above] at (5.5, 2.4) {$\Sigma_{t,5\epsilon}^{ex}$};

\def\C{53}
\draw[thick, blue, samples=200, domain=0:4.6]
  plot ({\x},{-5+ sqrt(\x*\x + \C)});
  \node[above, blue] at (1.5,2.4) {$\mathcal{H}_{\tau}$};

\def\A{60}
\draw[thick, blue, samples=200, domain=0:3]
  plot ({\x},{-6.3+ sqrt(\x*\x + \A)});
\node[above, blue] at (0.5,1.4) {$\mathcal{H}_{\tau_0}$};

\draw[green!70!black, very thick] (2,2) -- (5,5)  node[at end, above, sloped] {$t-r=5 \epsilon$};
\draw[green!70!black, very thick] (3,2) -- (6,5)  node[at end, above, sloped] {$t-r=4 \epsilon$};
\draw[green!70!black, very thick] (6,2) -- (8.5,4.5)  node[at end, above, sloped] {$t-r=\epsilon$};

\draw[red, very thick] (3,2) -- (4.6,3.6)  node[below] at (4, 3) {$\; \; \mathcal{B}_{4\epsilon}$};
\node[circle, red, fill, inner sep=1.5pt] at (3,2) {};
\node[circle, red, fill, inner sep=1.5pt] at (4.6,3.6) {};

\draw[red, very thick] (3,0) -- (4,0);

\fill (5,0) circle (2pt);
\fill (3,0) circle (2pt);
\fill (4,0) circle (2pt);

\node[circle, fill, blue, inner sep=1.5pt, label=left:{$t=\tau_0-1$}] at (0,1.45) {};

\end{tikzpicture}
\caption{\label{fig:spacetime} Local existence time interval $t \in [1, 1+2\epsilon]$ where the solution is supported in $ r \in [1-4\epsilon, 1+\epsilon] $. For the global existence, the exterior region $\mathcal{D}^{ex}$ is foliated by constant-time hypersurfaces $\Sigma_{t,5\epsilon}^{ex}$. The interior region $\mathcal{D}^{in}$ is foliated by constant-hyperbolic time hypersurfaces $\mathcal{H}_{\tau}$. The null boundary between these two regions is denoted $\mathcal{B}_{4\epsilon}$.}
\end{figure}

In the following Section \ref{subsec:local-existence}, we  first establish local existence on the short time interval $[t_0=1,t_1=1+2\epsilon]$  which provides controlled data on the slice $\{t=t_1\}$.
Then, in Section \ref{sp:exterior}, we propagate the solution globally in the exterior region
\[
\mathcal D^{ex}=\{(t,x): t\ge t_1,\ t-5\epsilon\le r\le t-\epsilon\}.
\]
In Section \ref{subsec:flux} we derive quantitative decay and flux bounds on the incoming boundary
$\mathcal B_{4\epsilon}=\{t-r=4\epsilon\}$.
These boundary estimates will serve as the interface input for the interior global
existence argument, given in Section \ref{sec:interior}.

\subsection{Local in time existence}\label{subsec:local-existence}

Our goal now is to prove the following:

    \begin{proposition}\label{prop:local_existence} There exists $\epsilon>0$ sufficiently small such that \eqref{eq:model_short-pulse} subject to the data \eqref{eq:ID-shortp} admits a local solution on $[t_0, t_1]$, where $t_1=1+2\epsilon$.
    \end{proposition}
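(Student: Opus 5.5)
The plan is a continuity (bootstrap) argument on $[t_0,T)\subset[t_0,t_1]$, following the outline in the introduction. By standard local well-posedness for smooth data, a solution exists on a short interval. It can be continued as long as the norms below stay bounded. Finite speed of propagation together with the support of $(w_0,w_1)$ in $\{1-2\epsilon\le r\le 1-\epsilon\}$ confines $\phi$ to $r\in[1-4\epsilon,1+\epsilon]$ for $t\le t_1$. Moreover $\mathcal N$ vanishes wherever $\phi$ does, since every term carries a factor of $\phi$. The bootstrap assumptions, for $|I|+|J|\le 5$, are
\begin{equation*}
\mathcal{E}^{1/2}(t,\partial^I\Omega^J\phi)\le C_2\,\epsilon^{-|I|},\qquad
\mathcal{E}_{con}^{1/2}(t,\partial^I\Omega^J\phi)\le C_2\,\epsilon^{1-|I|}.
\end{equation*}
The initial values follow from \eqref{eq:ID-shortp}. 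Each $\partial$ costs $\epsilon^{-1}$ and $\Omega$ costs nothing. The radial width $\epsilon$ gives $\|w_0\|\lesssim\epsilon$ and $\|\partial w\|\lesssim 1$. Since $t,r\sim1$ in the support, $\|L_a\phi\|+\|L_0\phi\|\lesssim\|\partial\phi\|$, so the conformal energy is comparable to $\|\partial\phi\|+\|\Omega\phi\|+\|\phi\|$ at this order.

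The key pointwise input is a local Sobolev inequality. Because $\phi(t,\cdot)$ vanishes for $r>1+\epsilon$, we have $|\phi(t,r,\omega)|\le\int_r^{1+\epsilon}|\partial_r\phi|\,d\rho$. By Cauchy--Schwarz over an interval of length $\lesssim\epsilon$, combined with Sobolev $H^2(\mathbb S^2)\hookrightarrow L^\infty$ and $r\sim1$, this gives
\begin{equation*}
|\partial^I\Omega^J\phi|\lesssim\epsilon^{1/2}\|\partial_r\Omega^{\le2}\partial^I\Omega^J\phi\|\lesssim C_2\,\epsilon^{1/2-|I|},\qquad |I|+|J|\le 3.
\end{equation*}
For $\psi$ we use Lemma~\ref{lem:disp-psi} and the energy bounds from Section~\ref{dispersed}. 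The dangerous terms $\Omega^J\psi$ are rewritten via $\Omega\sim r\partial$ with $r\sim1$, so in the support they behave like $\partial\Gamma^{\le |J|-1}\psi$, which is $O(1)$ in $L^2$ and $L^\infty$. Plain $\partial^I\psi$ is even better by \eqref{eq:BA-disp03} and \eqref{eq:disp-dd-psi01}. Hence on the support $\psi$ and all its relevant derivatives are $O(1)$ pointwise, and $|\psi|\lesssim C_1\epsilon^{1/2-\delta}$.

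Next come the nonlinear estimates. Apply $\partial^I\Omega^J$ to $\mathcal N$ and expand by Leibniz, using that $|\psi+\phi|^{p-1}$ is $C^{5}$ only when $p>6$. For $5<p\le 6$ one instead estimates fractional-type terms by keeping powers $|\psi|^{p-6}$-free groupings; I expect the bounds $|\partial^k(|x|^{p-1}x)|\lesssim|x|^{p-k}$ to suffice, since we only need $k\le5<p$. Each term is a product with at least one factor $\partial^{I_1}\Omega^{J_1}\phi$. Put the factor with the most derivatives in $L^2$ (at most $C_2\epsilon^{-|I_1|}$). Put the others in $L^\infty$: the $\phi$ factors are bounded by the pointwise bound above, and the $\psi$ factors are $O(1)$ by the previous step. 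The total $\epsilon^{-|I|}$ budget is respected because $\partial$-derivatives landing on $\psi$ produce no loss. The result is $\|\partial^I\Omega^J\mathcal N\|\lesssim C(C_1,C_2)\,\epsilon^{-|I|}$, and the same bound holds with weight $\langle s+r\rangle\sim1$. Integrating over an interval of length $2\epsilon$ in \eqref{est:energy_natural} and \eqref{est:energy_conformal} gains a factor $\epsilon$. This yields $\mathcal E^{1/2}\lesssim\epsilon^{-|I|}+C\epsilon^{1-|I|}$ and $\mathcal E^{1/2}_{con}\lesssim\epsilon^{1-|I|}+C\epsilon^{1-|I|}\cdot\epsilon$. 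The conformal bound needs $\|\partial^I\Omega^J\mathcal N\|\lesssim\epsilon^{-|I|}$, and that suffices. Choosing $C_2$ large and $\epsilon$ small closes the bootstrap, and the solution extends to $t_1$.

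The main obstacle is the top-order Leibniz terms with $|I|+|J|=5$. There several factors carry two or three derivatives that exceed the range of the pointwise bound ($|I|+|J|\le3$), and the non-smoothness of $|\cdot|^{p-1}$ matters for $p$ near $5$. For these I would use the Hardy-type gain $\|\phi\|_{L^2}\lesssim\epsilon\|\partial_r\phi\|$ from the $\epsilon$-thin support. I would also use the $L^6$ Sobolev bound of Lemma~\ref{lem:Sobo_embedding} on an $L^2\cdot L^6\cdot L^6$-type splitting for two mid-order factors, checking that the resulting $\epsilon$-powers still match $\epsilon^{-|I|}$.
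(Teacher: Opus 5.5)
Your architecture is the paper's: the same bootstrap \eqref{est:boot_local}, the same $\epsilon^{1/2}$ local Sobolev inequality from the $\epsilon$-thin support, the same rewriting $\Omega\psi\sim r\partial\psi$ with $r\sim1$, and energy estimates over an interval of length $2\epsilon$. The natural-energy half closes as you say, but the conformal half does not. With $\|\partial^I\Omega^J\mathcal N\|\lesssim C(C_1,C_2)\epsilon^{-|I|}$ and $\langle s+r\rangle\sim1$, \eqref{est:energy_conformal} gives
\[
\mathcal E_{con}^{1/2}(t,\partial^I\Omega^J\phi)\lesssim \epsilon^{1-|I|}+\int_{t_0}^{t_1}C(C_1,C_2)\,\epsilon^{-|I|}\,ds\lesssim \epsilon^{1-|I|}+C(C_1,C_2)\,\epsilon^{1-|I|},
\]
so the extra factor $\epsilon$ in your conformal line is an arithmetic slip. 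The error term is then of the same order as the assumed bound $C_2\epsilon^{1-|I|}$. Its constant grows at least linearly in $C_2$: with the $\psi$ factors taken as $O(1)$ and the $L^2$ factor at $C_2\epsilon^{-|I|}$, the term $|\psi|^{p-1}\partial^I\Omega^J\phi$ alone gives $C_1^{p-1}C_2\epsilon^{-|I|}$. So no choice of $C_2$ improves the bootstrap. What is missing is a positive power of $\epsilon$ in the nonlinear estimate. Lemma~\ref{lem:nonlinear_local} obtains $C_2^p\epsilon^{\frac{p-1}{2}-(p-1)\delta-|I|}$ in two ways. First, it measures the $L^2$ factor by $\|\partial^{I_1}\Omega^{J_1}\phi\|\lesssim C_2\epsilon^{1-|I_1|}$ from \eqref{est:conformal_local}, since the conformal energy contains $\|\varphi\|$. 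Second, it keeps the smallness of the remaining $p-1$ factors placed in $L^\infty$, namely $|\partial^{I_k}\Omega^{J_k}\phi|\lesssim C_2\epsilon^{\frac12-|I_k|}$ and $|\Gamma^{\le2}\psi|\lesssim C_1\epsilon^{\frac12-\delta}$ from \eqref{eq:disp-psi01}. You derived both kinds of bounds but then discarded exactly this smallness.

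There is a related error in your check of the initial data. The argument that $t,r\sim1$ gives $\|L_a\phi\|+\|L_0\phi\|\lesssim\|\partial\phi\|$ only yields $\mathcal E_{con}^{1/2}(t_0,\partial^I\Omega^J\phi)\lesssim\epsilon^{-|I|}$, one power of $\epsilon$ short of \eqref{est:boot_local2}. The conformal energy is not comparable to the natural one here; it measures the good derivative $L$. On the initial support $|t-r|\le2\epsilon$, and $L_0=tL+(r-t)\partial_r$, $L_a=\omega_a\big(tL+(r-t)\partial_t\big)-\frac tr\omega^b\Omega_{ab}$. Hence $|L_0\varphi|+|L_a\varphi|\lesssim|L\varphi|+\epsilon|\partial\varphi|+|\Omega\varphi|$, and the $\epsilon$-gain comes from the short-pulse condition $|L\phi(t_0)|\le\epsilon^{1/2}$ in \eqref{eq:ID-shortp}. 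For $|I|+|J|\ge1$ this needs the corresponding bounds $|\partial^I\Omega^J(w_1+\partial_rw_0)|\lesssim\epsilon^{\frac12-|I|}$. These are not literally part of \eqref{eq:ID-shortp}, and the paper also asserts the initial conformal bound without comment. Finally, the ``main obstacle'' you describe is not one. For $|I|+|J|\le5$, once the factor carrying the most vector fields is placed in $L^2$, every other factor carries at most two, which is within the range of the pointwise bounds, so no $L^2\cdot L^6\cdot L^6$ splitting is needed. Also, $|x|^{p-1}x$ is already $C^5$ for $p>5$.
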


Due to the properties of the support of the short pulse data \eqref{eq:ID-shortp}, we have that $\|\phi(t_0)\| \sim \epsilon$ and $\| \partial_t \phi(t_0)\| \sim 1$. Motivated by this, we make the following bootstrap assumptions on $[t_0,t_1]$:
\begin{subequations}\label{est:boot_local}
		\begin{align}
			\mathcal{E} (t,\partial^I\Omega^J\phi)^{\frac{1}{2}}&\leq C_2 \epsilon^{-|I|},
			\quad\quad |I|+|J|\leq 5, \label{est:boot_local1}\\
                \mathcal{E} _{con}(t,\partial^I\Omega^J\phi)^{\frac{1}{2}}&\leq C_2 \epsilon^{1-|I|},
			\quad\quad |I|+|J|\leq 5. \label{est:boot_local2}
		\end{align}
\end{subequations}
In the above, $C_2>C_1\gg1$ is to be determined, and $\epsilon$ is the size of the initial data.

We first prove a Sobolev inequality which, thanks to the smallness in the support of the function, yields a key smallness factor of $\epsilon^{\frac12}$. 

\begin{lemma}[Local Sobolev inequality] \label{lem:point_local}
Suppose that, for any fixed $t\in[t_0,t_1]$, $f(t,x)$ is supported in $2-2\epsilon-t \leq r \leq t-\epsilon$. Then we have
    \begin{equation}\label{est:point_local}
        |f(t,x)|\lesssim\epsilon^{\frac{1}{2}}\sum_{|J|\le2}\|\partial_r\Omega^Jf\|.
    \end{equation}
\end{lemma}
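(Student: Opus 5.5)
We prove \eqref{est:point_local} in two steps: a one-dimensional estimate in the radial variable, which uses the thin support to produce the factor $\epsilon^{\frac12}$, followed by a Sobolev embedding on the sphere to turn the angular $L^2$ norm into a pointwise bound. Throughout, $r$ is comparable to $1$ on the support. Indeed, for $t\in[t_0,t_1]=[1,1+2\epsilon]$ the support condition $2-2\epsilon-t\le r\le t-\epsilon$ places $r$ in $[1-4\epsilon,1+\epsilon]$. This is an interval of length at most $5\epsilon$, and on it $r\sim 1$ for $\epsilon$ small.

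\emph{Radial step.} Fix $t$ and $\omega\in\mathbb{S}^2$, and write $g(\rho)=f(t,\rho\omega)$. Since $f$ vanishes for $\rho>t-\epsilon$, the fundamental theorem of calculus gives $g(r)=-\int_r^{t-\epsilon}\partial_\rho g\,\d\rho$. The integration runs over an interval of length $\lesssim\epsilon$, so Cauchy--Schwarz gives
\begin{equation*}
|f(t,r\omega)|\le \Big(\int_{r}^{t-\epsilon}\d\rho\Big)^{\frac12}\Big(\int_{1-4\epsilon}^{1+\epsilon}|\partial_r f(t,\rho\omega)|^2\,\d\rho\Big)^{\frac12}\lesssim \epsilon^{\frac12}\Big(\int_0^\infty|\partial_r f(t,\rho\omega)|^2\rho^2\,\d\rho\Big)^{\frac12}.
\end{equation*}
In the last inequality we inserted $\rho^2\sim1$, which is allowed because the integrand is supported where $\rho\sim1$.

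\emph{Angular step.} Next we take the supremum over $\omega$. The same identity applies with $f$ replaced by $\Omega^J f$, since rotations preserve radial supports. Applying the Sobolev embedding $H^2(\mathbb{S}^2)\hookrightarrow L^\infty(\mathbb{S}^2)$ to the function $\omega\mapsto f(t,r\omega)$ gives
\begin{equation*}
|f(t,r\omega)|\lesssim\sum_{|J|\le2}\|\Omega^J f(t,r\,\cdot)\|_{L^2(\mathbb{S}^2)}.
\end{equation*}
Here we use that the rotation fields $\Omega_{ab}$ span the tangent space of the sphere, so $\sum_{|J|\le 2}\|\Omega^J\cdot\|_{L^2(\mathbb{S}^2)}$ controls the $H^2(\mathbb{S}^2)$ norm.

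\emph{Combination.} For each $J$ we apply the radial step pointwise in $\omega$, square, integrate over $\mathbb{S}^2$, and use Fubini:
\begin{equation*}
\|\Omega^J f(t,r\,\cdot)\|_{L^2(\mathbb{S}^2)}^2\lesssim\epsilon\int_{\mathbb{S}^2}\int_0^\infty|\partial_r\Omega^J f|^2\rho^2\,\d\rho\,\d\omega=\epsilon\|\partial_r\Omega^J f\|^2.
\end{equation*}
This is legitimate because $\partial_r$ and $\Omega$ both act at fixed $\omega$ or fixed $r$ respectively, and $[\partial_r,\Omega_{ab}]=0$. Summing over $|J|\le2$ and taking square roots yields \eqref{est:point_local}.

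The only delicate point is to apply the sphere Sobolev embedding \emph{before} integrating in $\rho$. This ordering is what produces $\partial_r\Omega^{\le2}$ on the right-hand side instead of the full $\partial\Gamma^{\le2}$. The commutation $[\partial_r,\Omega]=0$ and the bound $\rho\sim1$ on the support then make the weights harmless, so we do not expect any step to be a genuine obstacle.
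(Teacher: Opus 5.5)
Your proof is correct and is essentially the paper's argument. Both use the fundamental theorem of calculus in $\rho$ over the $O(\epsilon)$-long radial support, Cauchy--Schwarz, $\rho\sim 1$, and $H^2(\mathbb{S}^2)\hookrightarrow L^\infty(\mathbb{S}^2)$; the only difference is the order. You apply the sphere embedding to $f$ at radius $r$ and then the radial estimate to each $\Omega^J f$, while the paper integrates radially first and applies the sphere embedding to $\partial_\rho f$ inside the $\rho$-integral.

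Your closing remark therefore overstates the role of the ordering. The paper's order works equally well: it yields $\|\Omega^J\partial_r f\|=\|\partial_r\Omega^J f\|$ via $[\partial_r,\Omega_{ab}]=0$, a commutation your own ordering does not actually need.
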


\begin{proof}
    First, recall the Sobolev embedding theorem on a unit sphere
		\begin{equation}\label{est:Sobolev_unit}
			|f(t,x)| \lesssim \sum_{|J|\le2}\|\Omega^{J} f(t,r\omega)\|_{L^2(\mathbb{S}^2)},
		\end{equation}
	    where $(t,x)=(t,r\omega)$ with $r=|x|$ and $\omega\in\mathbb{S}^2$. From the fundamental theorem of calculus, integrating $\partial_r f$ from $r$ to $t-\epsilon$ with $\omega$ fixed, we obtain
		\begin{equation*}
			\begin{aligned}
				|f(t,rw)| &\lesssim \int_r^{t-\epsilon} |\partial_\rho f(t,\rho\omega)|\, \d\rho 
				= \int_r^{t-\epsilon} |\partial_\rho f| \rho^{-1} \rho\, \d\rho 
				\lesssim r^{-1} \int_r^{t-\epsilon} |\partial_\rho f|\, \rho\, \d\rho\\
				&\lesssim r^{-1} \left( \int_r^{t-\epsilon} 1\, \d\rho \right)^{1/2}
				\left( \int_r^{t-\epsilon} |\partial_\rho f|^2 \rho^2\, \d\rho \right)^{1/2}
                \lesssim r^{-1} \epsilon^{\frac{1}{2}} \left( \int_r^{t-\epsilon}  \sum_{|J|\leq2}\|\Omega^{J} \partial_\rho f\|_{L^2(\mathbb{S}^2)}^2\rho^2\, d\rho \right)^{1/2}\\
				&\lesssim r^{-1} \epsilon^{\frac{1}{2}} \sum_{|J|\leq2}\|\partial_r\Omega^{J}f\|,
			\end{aligned}
		\end{equation*}
        where in the penultimate step we used~\eqref{est:Sobolev_unit} as well as $r\sim1$.
\end{proof}

\begin{proposition}[Energy and pointwise estimates] \label{prop:estimates_local1}
    Under the assumptions \eqref{est:boot_local}, the following estimates hold  for $t\in[t_0,t_1]$: the energy bounds
        \begin{align}
            \|\partial\partial^I\Omega^J\phi\|&\lesssim C_2\epsilon^{-|I|},\label{est:natural_local} & |I|+|J|\leq 5,\\
            \|L_0\partial^I\Omega^J\phi\|+\sum_{a=1}^{3}\|L_a\partial^I\Omega^J\phi\|
            +\|\Omega\partial^I\Omega^J\phi\|+\|\partial^I\Omega^J\phi\|&\lesssim C_2\epsilon^{1-|I|}, & |I|+|J|\leq 5,\label{est:conformal_local}
        \end{align}
as well as the pointwise bounds
        \begin{align}
		|\partial^I\Omega^J \phi| &\lesssim C_2\epsilon^{\frac{1}{2}-|I|}, &|I|+|J|\leq 3,\label{est:pointwise_local1}\\
		|L_0 \partial^I \Omega^J \phi|+\sum_{a=1}^{3}|L_a \partial^I \Omega^J \phi| &\lesssim  C_2\epsilon^{\frac{1}{2}-|I|}, & |I|+|J|\leq 2.\label{est:pointwise_local2}
	\end{align}
\end{proposition}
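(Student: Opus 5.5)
The energy bounds \eqref{est:natural_local}--\eqref{est:conformal_local} are essentially a restatement of the bootstrap assumptions \eqref{est:boot_local}. By definition, $\mathcal{E}(t,\varphi)^{1/2}$ controls $\|\partial\varphi\|$. Likewise, $\mathcal{E}_{con}(t,\varphi)^{1/2}$ controls $\|L_0\varphi\|$, $\|L_a\varphi\|$, $\|\Omega\varphi\|$ and $\|\varphi\|$. Applying these with $\varphi=\partial^I\Omega^J\phi$ for $|I|+|J|\le 5$ gives both estimates directly, with constant $C_2$. No commutation is needed at this stage, since the bootstrap is already phrased in terms of $\partial^I\Omega^J\phi$.

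For the pointwise bound \eqref{est:pointwise_local1}, I will apply the local Sobolev inequality of Lemma \ref{lem:point_local} to $f=\partial^I\Omega^J\phi$. First I check the support hypothesis: by finite speed of propagation, since the data (and hence the nonlinearity $\mathcal N$, which always contains a factor of $\phi$) are supported in $1-2\epsilon\le r\le 1-\epsilon$, the function $\phi(t,\cdot)$ is supported in $2-2\epsilon-t\le r\le t-\epsilon$ for $t\in[t_0,t_1]$, and so are all its derivatives. The lemma then gives
\[
|\partial^I\Omega^J\phi|\lesssim \epsilon^{1/2}\sum_{|J'|\le 2}\|\partial_r\Omega^{J'}\partial^I\Omega^J\phi\|.
\]
I commute $\Omega^{J'}$ past $\partial^I$. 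Since $[\Omega,\partial]$ is a linear combination of $\partial$'s, this produces terms $\partial^{I'}\Omega^{J''}\phi$ with $|I'|=|I|$ and $|I'|+|J''|\le|I|+|J|+2\le 5$. Writing $\partial_r=\omega^a\partial_a$ with bounded coefficients, the right-hand side is bounded by $\|\partial\partial^{I'}\Omega^{J''}\phi\|\lesssim C_2\epsilon^{-|I|}$ by \eqref{est:natural_local}. This yields $C_2\epsilon^{1/2-|I|}$ for $|I|+|J|\le 3$.

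For \eqref{est:pointwise_local2} I will not apply Sobolev to $L_a\partial^I\Omega^J\phi$, since that would require $\partial L_a$-type energies we do not bootstrap. Instead I use that on the slab $t\sim1$ and $r\sim 1$. Writing $L_a=t\partial_a+x_a\partial_t$ and $L_0=t\partial_t+x^a\partial_a$ with coefficients of size $O(1)$ on the support, we get
\[
|L\,\partial^I\Omega^J\phi|\lesssim |\partial\partial^I\Omega^J\phi|.
\]
Applying \eqref{est:pointwise_local1} with one extra $\partial$ (allowed since $|I|+1+|J|\le 3$) gives only $C_2\epsilon^{-1/2-|I|}$, which is too weak. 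So I instead apply Lemma \ref{lem:point_local} to $f=L\partial^I\Omega^J\phi$, which has the same support, obtaining $\epsilon^{1/2}\|\partial_r\Omega^{\le2}L\partial^I\Omega^J\phi\|$. I commute $\partial_r$ and $\Omega$ through $L_a,L_0$. The commutators $[\partial,L]$ and $[\Omega,L_a]$ produce only $\partial$'s or $L$'s, and $[L_0,\Omega]=0$. The result is bounded by terms $\|\partial L\partial^{I'}\Omega^{J'}\phi\|$ plus lower order. Then $\partial L\varphi=L\partial\varphi+O(\partial\varphi)$, and $\|L\partial^{I'+1}\Omega^{J'}\phi\|\lesssim C_2\epsilon^{1-(|I|+1)}=C_2\epsilon^{-|I|}$ by \eqref{est:conformal_local}, with total order $|I|+|J|+3\le5$. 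The lower-order $\|\partial\partial^{I'}\Omega^{J'}\phi\|\lesssim C_2\epsilon^{-|I|}$ terms behave the same way. This gives $C_2\epsilon^{1/2-|I|}$.

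The main obstacle is exactly this last step: verifying that every commutator term keeps the $\partial$-count at $|I|+1$ in a conformal norm, or at $|I|$ in a natural norm. In particular, the $\Omega$-rotations of $\partial_r$ and the identity $\partial_r=\omega^a\partial_a$ must not introduce extra $\partial$'s without compensating $\epsilon$-gains, and the total order must stay $\le5$.
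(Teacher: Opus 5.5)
Your proposal is correct and matches the paper's proof. The energy bounds are read off directly from \eqref{est:boot_local}. The bound \eqref{est:pointwise_local1} follows from Lemma~\ref{lem:point_local} applied to $\partial^I\Omega^J\phi$, together with $[\Omega,\partial]\in\mathrm{span}\{\partial\}$ and \eqref{est:natural_local}. The bound \eqref{est:pointwise_local2} follows from Lemma~\ref{lem:point_local} applied to $L_\alpha\partial^I\Omega^J\phi$: commuting gives $\|L_\alpha\partial\,\partial^{I'}\Omega^{K}\phi\|$ plus lower order, and \eqref{est:conformal_local} applies with $|I|+1+|K|\le 5$.

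Two cosmetic points only. Your remark that you will \emph{not} apply the Sobolev inequality to $L_a\partial^I\Omega^J\phi$ contradicts what you then correctly do. Also, writing $L$ for $L_0,L_a$ clashes with the paper's null vector field $L=\partial_t+\partial_r$.
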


\begin{proof}
The $L^2$ estimates \eqref{est:natural_local} and \eqref{est:conformal_local} easily follow from the bootstrap assumptions~\eqref{est:boot_local}.
To prove \eqref{est:pointwise_local1}, we combine the above Sobolev inequality \eqref{est:point_local}, the commutator estimates~\eqref{est:commutators}, and the natural energy bound ~\eqref{est:natural_local}, to obtain
    \begin{equation*}
        |\partial^I\Omega^J\phi|\lesssim\epsilon^{\frac{1}{2}}\sum_{|J_1|\leq2}\|\partial_r\Omega^{J_1}\partial^I\Omega^J\phi\|\lesssim\epsilon^{\frac{1}{2}}\sum_{|J_1|\leq2}\|\partial_r\partial^I\Omega^{J_1}\Omega^J\phi\|\lesssim C_2\epsilon^{\frac{1}{2}-|I|}, \qquad |I|+|J|\leq3 \,.
    \end{equation*}
    Similarly, combining Lemma~\ref{lem:point_local} with the $L^2$ estimates~\eqref{est:conformal_local}, we get
    \begin{align*}
        |L_\alpha\partial^I\Omega^J\phi|\lesssim C_2\epsilon^{\frac{1}{2}-|I|},\quad \alpha=0,1,2,3,\quad|I|+|J|\leq2.
    \end{align*}
\end{proof}

\begin{remark}[Differentiated nonlinearities and notation]\label{rem:diff_nonlinearity}
    In the next lemma and following subsections, we need to distribute vector fields $\Gamma \in \{\partial, \Omega\}$ across our nonlinear term
    \[\mathcal{N}(\psi,\phi) = |\psi+\phi|^{p-1}(\psi+\phi)-|\psi|^{p-1}\psi\,.
    \]
    When there are no vector fields acting, we can simply use the elementary inequality
    \begin{equation}
     |\mathcal{N}(\psi,\phi)| \lesssim |\phi|^p + |\psi|^{p-1}|\phi|   \,.\label{eq:elementary}
    \end{equation}
    
    For the differentiated nonlinearity, numerous terms arise from the Leibniz rule. For example
    \begin{align*} |\Gamma \mathcal{N}(\psi,\phi)| &\lesssim |\phi|^{p-1}|\Gamma \phi| + |\phi|^{p-1}|\Gamma \psi| + |\psi|^{p-1}|\Gamma \phi| + |\phi||\psi|^{p-2}|\Gamma \psi|\,,\\
    |\Gamma^2 \mathcal{N}(\psi,\phi)| &\lesssim |\phi|^{p-1}|\Gamma^2 \phi| + |\phi|^{p-1} |\Gamma^2\psi| + |\psi|^{p-1}|\Gamma^2\phi| + |\phi||\psi|^{p-2}|\Gamma^2 \psi| + |\phi|^{p-2}|\Gamma \phi|^2 \\&+ |\phi|^{p-2} |\Gamma \phi||\Gamma \psi|  + |\phi|^{p-2} |\Gamma \psi|^2 + |\psi|^{p-2} |\Gamma \phi|^2 + |\psi|^{p-2} |\Gamma \phi|\Gamma \psi| + |\phi||\psi|^{p-3}|\Gamma \psi|^2\,.
    \end{align*}
        
    We will typically extract from each nonlinear term a factor of $|\star|^{p-5}$, for $\star \in \{\phi, \psi\}$. So, looking at two example terms in $\Gamma^2 \mathcal{N}$ above, we may write
    \[ |\phi|^{p-1}|\Gamma^2 \phi| =|\phi|^{p-5} \cdot|\Gamma^2 \phi||\phi|^4 , \quad |\phi||\psi|^{p-3}|\Gamma \psi|^2 = |\psi|^{p-5}\cdot |\phi||\Gamma \psi|^2 |\psi|^2.
    \]
    More generally, there can then be at most 5 factors that remain to be controlled (i.e. the terms that appear after the $\cdot$ above). We can then estimate such terms by the following products 
    \begin{align*}
                \sum_{|I|+|J|=2}|\partial^I \Omega^J \mathcal{N}(\psi,\phi)|
                &\lesssim\sum_{\substack{I_{1\to 5}=|I|\\{J_{1\to 5}}=|J|}} \Big( \sum_{0\leq m\leq5} |\phi|^{p-5}\prod_{k=1}^{m}|\partial^{I_k}\Omega^{J_k}\phi|\prod_{l=m+1}^{5}|\partial^{I_l}\Omega^{J_l}\psi| \\& \qquad \qquad \qquad 
                +\sum_{1\leq m\leq5} |\psi|^{p-5}\prod_{k=1}^{m}|\partial^{I_k}\Omega^{J_k}\phi|\prod_{l=m+1}^{5}|\partial^{I_l}\Omega^{J_l}\psi|\Big).
     \end{align*}
     Note that the product $\prod$ is defined to be 1 if the top index is strictly smaller than the bottom index.
    These products introduce many new terms that do not actually appear in our nonlinearities. However this does not pose a problem, as key to our analysis is keeping track of the number of factors of $\phi, \psi$ and keeping track of where the derivatives fall, e.g. terms like $\partial \phi$ lead to factors of $\epsilon^{-1}$ while $\Omega \phi$ do not.  
    \end{remark}

\begin{lemma}[Local-in-time bounds on the nonlinearities]\label{lem:nonlinear_local}
 Under the assumptions \eqref{est:boot_local}, the following estimate holds for $t\in[t_0,t_1]$: 
        \begin{align*}
            \|\partial^I\Omega^J \mathcal{N}(\psi,\phi)\| + \|\langle t+r\rangle\partial^I\Omega^J \mathcal{N}(\psi,\phi)\| \lesssim C_2^p\epsilon^{\frac{p-1}{2}-(p-1)\delta-|I|}, & \quad|I|+|J|\leq5\,.
        \end{align*}
      
\end{lemma}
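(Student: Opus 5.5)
We use the product decomposition of Remark \ref{rem:diff_nonlinearity}, extended from two to $|I|+|J|\leq 5$ vector fields. First observe that $\phi$ is supported in $\{2-2\epsilon-t\le r\le t-\epsilon\}$ by finite speed of propagation, so on this region $r\sim 1$ and $\langle t+r\rangle\sim 1$. The weighted norm $\|\langle t+r\rangle\partial^I\Omega^J\mathcal{N}\|$ is therefore bounded by a constant times the unweighted one, and it suffices to treat $\|\partial^I\Omega^J\mathcal{N}\|$. Every term in the expansion contains at least one factor $\partial^{I_k}\Omega^{J_k}\phi$ (possibly with no derivatives), so it is supported in a spacetime region of width $O(\epsilon)$. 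Each term has the form $|\star|^{p-5}\prod_{k\le m}|\partial^{I_k}\Omega^{J_k}\phi|\prod_{l>m}|\partial^{I_l}\Omega^{J_l}\psi|$ with $m\ge 1$, or $m\geq 0$ when $\star=\phi$.

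\textbf{Inputs.} For $\phi$ we use the local pointwise bounds \eqref{est:pointwise_local1}, namely $|\partial^I\Omega^J\phi|\lesssim C_2\epsilon^{1/2-|I|}$ for $|I|+|J|\le3$, and the $L^2$ bounds \eqref{est:conformal_local}, namely $\|\partial^I\Omega^J\phi\|\lesssim C_2\epsilon^{1-|I|}$ for up to five vector fields. For $\psi$ we use Lemma \ref{lem:disp-psi}. Since $r\sim1$ on the support, we convert rotations via $|\Omega\psi|\lesssim r|\partial\psi|\lesssim|\partial\psi|$, and repeat this to express any $\partial^{I}\Omega^{J}\psi$ through $\partial\Gamma^{\le |I|+|J|-1}\psi$. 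This gives $|\partial^I\Omega^J\psi|\lesssim C_1\epsilon^{1/2-\delta}$ for few derivatives, and $\|\partial^I\Omega^J\psi\|_{L^2(\mathrm{supp}\,\phi)}\lesssim C_1$ from \eqref{eq:BA-disp02}--\eqref{eq:BA-disp03}. Note that $\psi$ gives no $\epsilon^{-1}$ losses; its higher derivatives are in fact small.

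\textbf{Distributing the norms.} In each term we place in $L^2$ the factor carrying the most vector fields, which has at most 5, and all other factors in $L^\infty$; each of these has at most 2 vector fields, so the pointwise bounds apply. If the $L^2$ factor is a $\phi$-factor, it contributes $C_2\epsilon^{1-|I_1|}$. If it is a $\psi$-factor, we must still gain from the small support, because some $\phi$ factor is present in $L^\infty$: the $L^2$ norm of $\partial^{I_1}\Omega^{J_1}\psi$ over the support is at most $\epsilon^{1/2}$ times its $L^\infty$ norm when $|I_1|+|J_1|\le 2$. In the general case we use $\|\psi\text{-factor}\|_{L^2}\lesssim C_1$ and absorb the difference into the powers of the remaining $\phi$ factors. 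The prefactor $|\star|^{p-5}$ contributes $(C\epsilon^{1/2-\delta})^{p-5}$ in either case. Each $\phi$ factor contributes $\epsilon^{1/2}$, and each $\psi$ factor in $L^\infty$ contributes $\epsilon^{1/2-\delta}$. The total number of non-derivative-counting factors is $p-1$ beyond the single $L^2$ factor, which together yields $\epsilon^{\frac{p-1}{2}-(p-1)\delta}$. The $\partial$-derivatives on $\phi$ cost exactly $\epsilon^{-|I_k|}$, and these sum to at most $\epsilon^{-|I|}$. Derivatives on $\psi$ cost nothing, which only helps.

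\textbf{Main obstacle.} The key step is the bookkeeping in the worst case, namely terms like $|\psi|^{p-1}\partial^I\Omega^J\phi$ or $|\phi||\psi|^{p-2}\partial^I\Omega^J\psi$, to check that one never loses the full $\epsilon^{(p-1)/2}$. For the first, $\epsilon^{(p-1)(1/2-\delta)}\cdot C_2\epsilon^{1-|I|}$ is even better than needed. For the second, the factor $|\phi|\lesssim\epsilon^{1/2}$, combined with the small-support gain $\epsilon^{1/2}$ when $\psi$ sits in $L^2$, must replace the missing factor. We would handle this term by putting $\phi$ in $L^2$ via \eqref{est:conformal_local}, which gives $\epsilon$, and $\partial^I\Omega^J\psi$ in $L^\infty$. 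This needs pointwise bounds on $\psi$ with up to 5 vector fields. We would obtain these from the Klainerman--Sobolev-type bound $\lesssim\epsilon^{-1}$ via \eqref{eq:disp-0001}, after converting $\Omega\to\partial$ so as to land on the $\epsilon$-small estimates \eqref{eq:disp-dd-psi01}. If that fails for the top order, we would instead use the local Sobolev Lemma \ref{lem:point_local} on a cutoff of $\psi$. Throughout, the product of constants is absorbed into $C_2^p$ since $C_1<C_2$.
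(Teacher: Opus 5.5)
Your core argument is the paper's proof. You drop the weight using $\langle t+r\rangle\sim1$ and expand via Remark \ref{rem:diff_nonlinearity}. You then put the factor with the most vector fields in $L^2$: a $\phi$-factor through \eqref{est:conformal_local}, a $\psi$-factor through \eqref{est:Omegad} and $\|\partial\Gamma^{\le 4}\psi\|\lesssim C_1$ from \eqref{eq:BA-disp02}. The remaining $p-1$ factors, each carrying at most two vector fields, go in $L^\infty$. Your count in the ``Distributing the norms'' paragraph already gives $\epsilon^{\frac{p-1}{2}-(p-1)\delta-|I|}$ in every case. Neither the small-support gain for $\psi$ nor any ``absorption'' is needed.

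The ``Main obstacle'' paragraph, however, is mistaken and should be dropped. For $|\phi||\psi|^{p-2}\partial^I\Omega^J\psi$ nothing is missing. Keeping $\partial^I\Omega^J\psi$ in $L^2$ with the $O(C_1)$ bound gives $C_2^p\epsilon^{\frac12+(p-2)(\frac12-\delta)}=C_2^p\epsilon^{\frac{p-1}{2}-(p-2)\delta}$, which is better than the claim. This is exactly how the paper treats the second sums in $\mathcal{R}_1^2$ and $\mathcal{R}_2^2$.

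The alternative you propose there would in fact fail at top order:
\begin{itemize}
\item The bootstrap \eqref{eq:BA-disp} gives pointwise control of $\psi$ with at most four vector fields. Estimate \eqref{eq:disp-0001} covers only $|I|\le4$, and even then with the bad factor $\epsilon^{-1}$. The $\epsilon$-small bounds for second derivatives $\partial\partial\Gamma^{I}\psi$ also stop at $|I|\le 2$.
\item Applying Lemma \ref{lem:point_local} to (a cutoff of) $\partial^I\Omega^J\psi$ with $|I|+|J|=5$ would require $\|\partial_r\Omega^{\le2}\partial^I\Omega^J\psi\|$. That is more vector fields on $\psi$ than the bootstrap controls.
\end{itemize}
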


\begin{proof}
     Since on the small interval on $[t_0,t_1]$ the weight $\langle t+r\rangle$ is comparable to a positive constant, we only need to study 
$$ \partial^I\Omega^J \mathcal{N}(\psi,\phi)= \partial^I\Omega^J\big(|\psi+\phi|^{p-1}(\psi+\phi)-|\psi|^{p-1}\psi\big) \,.$$
We apply Leibniz rule
and separately study two cases depending on the total number of derivatives $|K|=|I|+|J|$. The first case, when $|I|+|J|=0$, can be directly estimated by taking $\psi$ only in $L^\infty$. Using Proposition~\ref{prop:estimates_local1} and the pointwise estimates on $\psi$ given in \eqref{eq:disp-psi01}, we obtain
    \begin{equation*}
        \begin{aligned}
            \|\mathcal{N}(\psi,\phi) \|\lesssim\||\phi|^p\|+\||\psi|^{p-1}\phi\|
            \lesssim\|\phi\|\|\phi\|^{p-1}_{L^\infty}+\|\phi\|\|\psi\|^{p-1}_{L^\infty}
            \lesssim C_2^p\epsilon^{\frac{p+1}{2}-(p-1)\delta}.
        \end{aligned}
    \end{equation*}
    
The second case, where $1\leq|I|+|J|\leq5$, is more involved.  Following the discussion in Remark \ref{rem:diff_nonlinearity}, we express the nonlinear terms into two pieces distinguished by an overall factor of $|\phi|^{p-5}$ or $|\psi|^{p-5}$:
    \begin{equation*}
        \begin{aligned}
            \|\partial^I\Omega^J\mathcal{N}(\psi,\phi)\|
            &\lesssim\sum_{\substack{I_{1\to 5}=I\\J_{1\to5}=J}} \Big( \sum_{0\leq m\leq5}\Big\||\phi|^{p-5}\prod_{k=1}^{m}|\partial^{I_k}\Omega^{J_k}\phi|\prod_{l=m+1}^{5}|\partial^{I_l}\Omega^{J_l}\psi|\Big\|\\
            &+\sum_{1\leq m\leq5}\Big\||\psi|^{p-5}\prod_{k=1}^{m}|\partial^{I_k}\Omega^{J_k}\phi|\prod_{l=m+1}^{5}|\partial^{I_l}\Omega^{J_l}\psi|\Big\|\Big) =:\mathcal{R}_1+\mathcal{R}_2.
        \end{aligned}
    \end{equation*}
\emph{Estimate for $\mathcal{R}_1$.} We first break $\mathcal{R}_1$ into 3 pieces based on terms with just products of $\Gamma\phi$, those terms with mixed products of $\Gamma \phi$ and $\Gamma \psi$ (which are the `worst' terms in the sense of $\epsilon$ growth), and those terms just with products of $\Gamma \psi$. Each term will be estimated by placing the higher-order derivative in $L^2$ and controlling all remaining factors in $L^\infty$. 
    \begin{equation*}
        \begin{aligned}
            \mathcal{R}_1&\lesssim\sum_{\substack{I_{1\to 5}=I\\J_{1\to5}=J}}\Big( \Big\||\phi|^{p-5}\prod_{k=1}^{5}|\partial^{I_k}\Omega^{J_k}\phi|\Big\|
            +\sum_{1\leq m\leq4}\Big\||\phi|^{p-5}\prod_{k=1}^{m}|\partial^{I_k}\Omega^{J_k}\phi|\prod_{l=m+1}^{5}|\partial^{I_l}\Omega^{J_l}\psi|\Big\|\\
            &\qquad \qquad \qquad +\Big\||\phi|^{p-5}\prod_{l=1}^{5}|\partial^{I_l}\Omega^{J_l}\psi|\Big\|\Big) =:\mathcal{R}_1^1+\mathcal{R}_1^2+\mathcal{R}_1^3.
        \end{aligned}
    \end{equation*}
For $\mathcal{R}_1^1$, which involves only $\phi$ factors, we insert the estimates from Proposition~\ref{prop:estimates_local1}  to obtain
    \begin{equation*}
        \begin{aligned}
            \mathcal{R}_{1}^1&\lesssim\sum_{\substack{I_{1\to 5}=I\\J_{1\to5}=J}}\sum_{|K_1|\ge|K_2|, \ldots, |K_5|}\|\partial^{I_1}\Omega^{J_1}\phi\|\|\phi\|_{L^\infty}^{p-5}\prod_{k=2}^{5}\|\partial^{I_k}\Omega^{J_k}\phi\|_{L^\infty}
            \lesssim C_2^p\epsilon^{\frac{p+1}{2}-|I|}.
        \end{aligned}
    \end{equation*}
For the term $\mathcal{R}_{1}^2$, we also need to use the $L^\infty$ estimates on $\psi$ from \eqref{eq:disp-psi01}. We find
    \begin{equation*}
        \begin{aligned}
            \mathcal{R}_{1}^2&\lesssim\sum_{\substack{I_{1\to 5}=I\\J_{1\to5}=J}} \Big(\sum_{\substack{|K_2|, \ldots, |K_5|\leq2\\1\leq m\leq4}}\|\partial^{I_1}\Omega^{J_1}\phi\|\|\phi\|_{L^\infty}^{p-5}\prod_{k=2}^{m}\|\partial^{I_k}\Omega^{J_k}\phi\|_{L^\infty}\prod_{l=m+1}^{5}\|\partial^{I_l}\Omega^{J_l}\psi\|_{L^\infty}\\
            &\qquad \qquad \qquad +\sum_{\substack{|K_1|, \ldots, |K_4|\leq2\\|K_5'|=|K_5|-1\geq2\\1\leq m\leq4}}\|\partial\partial^{I'_5}\Omega^{J'_5}\psi\|\|\phi\|_{L^\infty}^{p-5}\prod_{k=1}^m\|\partial^{I_k}\Omega^{J_k}\phi\|_{L^\infty}\prod_{k=m+1}^{4}\|\partial^{I_k}\Omega^{J_k}\psi\|_{L^\infty}\Big)\\
            &\lesssim C_2^p\epsilon^{\frac{p-1}{2}-3\delta-|I|},
        \end{aligned}
    \end{equation*}
    where we used the facts that $\langle r\rangle\sim1$ and 
    \begin{equation}\label{est:Omegad}
        |\Omega^J\psi|\lesssim\sum_{|J'|=|J|-1}|\langle r\rangle\partial\Omega^{J'}\psi|.
    \end{equation}

    For $\mathcal{R}_1^3$, all derivatives act on $\psi$ and none on $\phi$. Consequently, using~\eqref{est:Omegad}, \eqref{eq:disp-psi01} and  Proposition~\ref{prop:estimates_local1}  we derive
    \begin{equation*}
        \begin{aligned}
            \mathcal{R}_{1}^3&\lesssim \sum_{\substack{I_{1\to 5}=I\\J_{1\to5}=J}} \sum_{\substack{|K_1'|=|K_1|-1\\|K_1|\ge|K_2|, \ldots, |K_5|}}\|\partial\partial^{I'_1}\Omega^{J'_1}\psi\|\|\phi\|_{L^\infty}^{p-5}\prod_{k=2}^{5}\|\partial^{I_k}\Omega^{J_k}\psi\|_{L^\infty}
            \lesssim C_2^p\epsilon^{\frac{p-1}{2}-4\delta}.
        \end{aligned}
    \end{equation*}
\emph{Estimate for $\mathcal{R}_2$.} We similarly  break $\mathcal{R}_2$ into two pieces -- one which just involves products of $\Gamma\phi$, and the other with mixed products of $\Gamma \phi$ and $\Gamma \psi$ (these latter terms being the worse to estimate).
    \begin{equation*}
        \begin{aligned}
            \mathcal{R}_2\lesssim\sum_{\substack{I_{1\to 5}=I\\J_{1\to5}=J}}\Big( \Big\||\psi|^{p-5}\prod_{k=1}^{5}|\partial^{I_k}\Omega^{J_k}\phi|\Big\|
            +\sum_{1\leq m\leq4}\Big\||\psi|^{p-5}\prod_{k=1}^{m}|\partial^{I_k}\Omega^{J_k}\phi|\prod_{l=m+1}^{5}|\partial^{I_l}\Omega^{J_l}\psi|\Big\|\Big) =:\mathcal{R}_2^1+\mathcal{R}_2^2.
        \end{aligned}
    \end{equation*}
    For $\mathcal{R}_2^1$, where no derivatives appear on the $\psi$ factor, we obtain
    \begin{equation*}
        \begin{aligned}
            \mathcal{R}_2^1&\lesssim\sum_{\substack{I_{1\to 5}=I\\J_{1\to5}=J}} \sum_{|K_1|\ge|K_2|, \ldots, |K_5|}\|\partial^{I_1}\Omega^{J_1}\phi\|\|\psi\|_{L^\infty}^{p-5}\prod_{k=2}^{5}\|\partial^{I_k}\Omega^{J_k}\phi\|_{L^\infty}
            \lesssim C_2^p\epsilon^{\frac{p+1}{2}-(p-5)\delta-|I|}.
        \end{aligned}
    \end{equation*}
For the term $\mathcal{R}_{2}^2$, we have mixed products of $\Gamma \phi$ and $\Gamma \psi$. We obtain 
    \begin{equation*}
        \begin{aligned}
            &\mathcal{R}_{2}^2\lesssim\sum_{\substack{I_{1\to 5}=I\\J_{1\to5}=J}} \Big( \sum_{\substack{|K_2|, \ldots, |K_5|\le2 \\1\leq m\leq 4}}\|\partial^{I_1}\Omega^{J_1}\phi\|\|\psi\|_{L^\infty}^{p-5}\prod_{k=2}^{m}\|\partial^{I_k}\Omega^{J_k}\phi\|_{L^\infty}\prod_{l=m+1}^{5}\|\partial^{I_l}\Omega^{J_l}\psi\|_{L^\infty}\\
            &+\sum_{\substack{|K_1|, \ldots, |K_4|\le2\\|K_5'|=|K_5|-1\geq2\\1\leq m\leq4}}\|\partial\partial^{I'_5}\Omega^{J'_5}\psi\|\|\psi\|_{L^\infty}^{p-5}\prod_{k=1}^m\|\partial^{I_k}\Omega^{J_k}\phi\|_{L^\infty}\prod_{l=m+1}^{4}\|\partial^{I_l}\Omega^{J_l}\psi\|_{L^\infty}\Big) \\
            &\lesssim C_2^p\epsilon^{\frac{p+1}{2}-(p-1)\delta-|I|}+C_2^p\epsilon^{\frac{p-1}{2}-(p-2)\delta-|I|}\lesssim C_2^p\epsilon^{\frac{p-1}{2}-(p-2)\delta-|I|}.
        \end{aligned}
    \end{equation*}
Combining the above estimates, we conclude that for $|I|+|J|\leq5$,
    \begin{equation*}
        \|\partial^I\Omega^J\mathcal{N}(\psi,\phi)\|\lesssim C_2^p\epsilon^{\frac{p-1}{2}-(p-1)\delta-|I|}.
    \end{equation*}
\end{proof}

    We now conclude the proof of Proposition \ref{prop:local_existence} by closing   the bootstrap assumptions on $[t_0,t_1]$. 
    
    \begin{proof}[Proof of Proposition \ref{prop:local_existence}]
    By considering the commuted equation
    \begin{equation*}
        -\Box\partial^I\Omega^J\phi=\partial^I\Omega^J\mathcal{N}(\psi,\phi)  = \partial^I\Omega^J\big(|\psi+\phi|^{p-1}(\psi+\phi)-|\psi|^{p-1}\psi\big),
    \end{equation*}
    we apply the energy estimate~\eqref{est:energy_natural}  and the nonlinear bounds from Lemma \ref{lem:nonlinear_local} to obtain
	\begin{equation*}
	    \begin{aligned}
	        \mathcal{E}^{\frac{1}{2}}(t,\partial^I\Omega^J\phi)&\lesssim\mathcal{E}^{\frac{1}{2}}(t_0,\partial^I\Omega^J\phi)+\int_{t_0}^t\|\partial^I\Omega^J\mathcal{N}(\psi,\phi)\|\, ds\\
            &\lesssim\epsilon^{-|I|}+\int_{t_0}^t C_2^p\epsilon^{\frac{p-1}{2}-(p-1)\delta-|I|}\, ds
            \lesssim\epsilon^{-|I|}+C_2^p\epsilon^{\frac{p+1}{2}-(p-1)\delta-|I|}.
	    \end{aligned}
	\end{equation*}

    Similarly, employing the conformal energy~\eqref{est:energy_conformal} and the nonlinear bounds from Lemma \ref{lem:nonlinear_local}, we have

    \begin{equation*}
	    \begin{aligned}
	        \mathcal{E}_{con}^{\frac{1}{2}}(t,\partial^I\Omega^J\phi)&\lesssim\mathcal{E}_{con}^{\frac{1}{2}}(t_0,\partial^I\Omega^J\phi)+\int_{t_0}^t\|\langle
             s+r\rangle\partial^I\Omega^J\mathcal{N}(\psi,\phi)\|\, ds\\
            &\lesssim\epsilon^{1-|I|}+\int_{t_0}^t C_2^p\epsilon^{\frac{p-1}{2}-(p-1)\delta-|I|}\, ds
            \lesssim\epsilon^{1-|I|}+C_2^p\epsilon^{\frac{p+1}{2}-(p-1)\delta-|I|}.
	    \end{aligned}
	\end{equation*}
	These estimates strictly improve the bootstrap assumptions~\eqref{est:boot_local} by taking $C_2^{p-1}\epsilon^{\frac{p-1}{2}-(p-1)\delta}\ll1$. Therefore, the bootstrap argument closes, and the solution $\phi$ exists on $[t_0,t_1].$
     \end{proof}

    Before proceeding to the analysis of the exterior and interior regions, we record the following refined $L^\infty$ estimates at time $t_1$. The proof involves writing the wave equation with respect to the null frame and integrating in $u$ or $v$ directions (for \eqref{eq:local_Lest_t1} and \eqref{eq:local_Lbarest_t1} respectively) using the fundamental theorem of calculus and the fact that the solution is supported on an interval of size $\epsilon$. We remark also a key smallness gain of $|L^2\phi|\sim \epsilon^{1/2}$ in \eqref{eq:local_Lest_t1} (compared to $\epsilon^{-1/2}$ from \eqref{est:pointwise_local2}) which will be used later on in the boundary flux estimates. 
    
	\begin{lemma}[Refined pointwise  estimates at $t_1$] \label{lem:local-estimates} The solution $\phi$ from Proposition \ref{prop:local_existence} obeys the following bounds for $L^{ \leq 2}\partial^I\Omega^J\phi$:
    \begin{equation}\label{eq:local_Lest_t1}\begin{split}
				|\partial^I\Omega^J\phi(t_1,x)|&\lesssim C_2\epsilon^{\frac{1}{2}-|I|},\quad r\in[1-4\epsilon,1+\epsilon],\quad |I|+|J|\leq 3,\\
				|L\partial^I\Omega^J\phi(t_1,x)|&\lesssim C_2\epsilon^{\frac{1}{2}-|I|},\quad r\in[1-4\epsilon,1+\epsilon],\quad |I|+|J|\leq 2,\\
				|L^2\phi(t_1,x)|&\lesssim C_2\epsilon^{\frac{1}{2}},\quad\quad\  r\in[1-3\epsilon,1+\epsilon],
			\end{split}\end{equation}
	the following estimates for $\underline{L}\underline{L}^{\leq 1}\partial^I\Omega^J\phi$:
			\begin{equation}\label{eq:local_Lbarest_t1}\begin{split}
            |\underline{L}\partial^I\Omega^J\phi(t_1,x)|&\lesssim C_2\epsilon^{\frac{3}{2}-|I|},\quad r\in[1-4\epsilon,1],\quad |I|+|J|\leq 1,\\
				|\underline{L}^2\phi(t_1,x)|&\lesssim C_2\epsilon^{\frac{1}{2}},\quad\quad\  r\in[1-4\epsilon,1],
			\end{split}\end{equation}
	and the following estimates for $\partial^I\Omega^J\phi$:
            \begin{equation}\label{eq:local_Omegaest_t1}\begin{split}
            |\Omega^J\phi(t_1,x)|&\lesssim C_2\epsilon^{\frac{1}{2}},\quad \qquad\ \ r\in[1-4\epsilon,1+\epsilon],\quad |J|\leq3,\\
            |\partial^I\Omega^J\phi(t_1,x)|&\lesssim C_2\epsilon^{\frac{3}{2}-|I|},\quad\quad\ r\in[1-4\epsilon,1], \quad |I|\geq1,\quad|I|+|J|\leq2.
			\end{split}\end{equation}
	
	\end{lemma}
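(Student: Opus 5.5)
The plan is to exploit two facts: the support of $\phi$ in the slab, and the null-frame form of the equation. By finite speed of propagation, $\phi(t,\cdot)$ is supported in $2-2\epsilon-t\le r\le t-\epsilon$ for $t\in[t_0,t_1]$, so $\phi$ and all its derivatives vanish on the outgoing boundary $\{t-r=\epsilon\}$ and on the ingoing boundary $\{t+r=2-2\epsilon\}$. I will use the null decomposition
\[
-\Box\varphi = L\underline{L}\varphi - \tfrac{1}{r}(L-\underline{L})\varphi - r^{-2}\Delta_{\mathbb{S}^2}\varphi,
\qquad
\underline{L}L(r\varphi)= r^{-1}\Delta_{\mathbb{S}^2}\varphi \pm r\,(-\Box\varphi),
\]
together with $[L,\underline{L}]=0$ and $\Delta_{\mathbb{S}^2}\sim\Omega^2$. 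Each estimate comes from integrating a $\underline{L}$- or $L$-derivative along a null segment of length $O(\epsilon)$ inside the slab.

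\emph{Lines with no gain.} The first line of \eqref{eq:local_Lest_t1} and the first line of \eqref{eq:local_Omegaest_t1} are exactly \eqref{est:pointwise_local1}. I will also note that the local Sobolev argument of Lemma \ref{lem:point_local}, fed with \eqref{est:natural_local} at order $5$, gives $|\partial^I\Omega^J\phi|\lesssim C_2\epsilon^{1/2-|I|}$ up to $|I|+|J|\le4$. This extension is needed because $\Delta_{\mathbb{S}^2}$ adds two angular derivatives.

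\emph{Bound for $L\partial^I\Omega^J\phi$.} I commute the second identity with $\partial^I\Omega^J$; $\Omega$ commutes with everything, and commuting $\partial$ past $r$ produces lower-order terms. Then I integrate $\underline{L}\big(L(r\partial^I\Omega^J\phi)\big)$ backwards in time along $v=\mathrm{const}$ from $(t_1,x)$. The segment has length $O(\epsilon)$ and ends either on $\{t-r=\epsilon\}$, where everything vanishes, or on $\{t=t_0\}$. At $t_0$, the assumption \eqref{eq:ID-shortp2} (differentiated, with $\partial_t^2$ replaced via the equation) gives $|L\partial^I\Omega^J\phi(t_0)|\lesssim\epsilon^{1/2-|I|}$.

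The integrand is handled as follows:
\begin{itemize}
\item the angular term $r^{-1}\Omega^2\partial^I\Omega^J\phi$ is $O(C_2\epsilon^{1/2-|I|})$ by the extended pointwise bounds;
\item the nonlinearity is pointwise $\lesssim C_2^p\epsilon^{(p-1)/2-|I|-}$, obtained as in Lemma \ref{lem:nonlinear_local} but in $L^\infty$, using \eqref{eq:disp-psi01} and \eqref{est:Omegad} with $r\sim1$.
\end{itemize}
Multiplying by the length $\epsilon$ gives $O(\epsilon^{3/2-|I|})$, and converting $L(r\varphi)=rL\varphi+\varphi$ costs only $|\partial^I\Omega^J\phi|\lesssim\epsilon^{1/2-|I|}$. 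This yields the second line.

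\emph{Bound for $L^2\phi$.} Applying $L$ gives $\underline{L}\big(L^2(r\phi)\big)=L\big(r^{-1}\Delta_{\mathbb{S}^2}\phi\big)\pm L(r\mathcal{N})$. Now I integrate backwards along $v=\mathrm{const}$ all the way to the outgoing boundary $t-r=\epsilon$, where all data vanish, so no assumption on $L^2\phi(t_0)$ (which is only $O(\epsilon^{-1/2})$) is needed. The restriction $r\ge1-3\epsilon$ is exactly what makes $v\ge2-\epsilon$, so that this boundary point has $t\ge t_0$ and the segment stays in the slab.

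For the source:
\begin{itemize}
\item $L\Omega^2\phi=O(\epsilon^{1/2})$ by the second line just proved;
\item $L\mathcal{N}$ is bounded as before: it contains at most one factor $\partial\phi\sim\epsilon^{-1/2}$ multiplied by powers $\epsilon^{(p-1)/2-}$.
\end{itemize}
Hence $|L^2(r\phi)|\lesssim\epsilon^{3/2}$. Then $L^2(r\phi)=rL^2\phi+2L\phi$ together with $|L\phi|\lesssim\epsilon^{1/2}$ gives $|L^2\phi|\lesssim C_2\epsilon^{1/2}$.

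\emph{Bounds for $\underline{L}$-derivatives and the last line.} Symmetrically, I use the first form, $L(\underline{L}\partial^I\Omega^J\phi)=\tfrac1r(L-\underline{L})\partial^I\Omega^J\phi+r^{-2}\Omega^2\partial^I\Omega^J\phi\pm\partial^I\Omega^J\mathcal{N}$. I integrate backwards along $u=\mathrm{const}$ to the ingoing boundary $t+r=2-2\epsilon$, which for $r\le1$ is reached inside the slab. With $|I|+|J|\le1$, the right-hand side is $O(\epsilon^{1/2-|I|})$ by the bounds above, plus a term $\tfrac1r\underline{L}\partial^I\Omega^J\phi$ absorbed by Gronwall over a segment of length $\epsilon$. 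This gives $\epsilon^{3/2-|I|}$.

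For $\underline{L}^2\phi$, I apply $\underline{L}$ and integrate the same way: the sources $\underline{L}L\phi$ and $\underline{L}\Omega^2\phi$ are at worst $\epsilon^{-1/2}$, which gives $\epsilon^{1/2}$ after integration.

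Finally, for the second line of \eqref{eq:local_Omegaest_t1}, I write $\partial_t=\tfrac12(L+\underline{L})$ and $\partial_a=\tfrac{\omega_a}2(L-\underline{L})-\tfrac{\omega^b}{r}\Omega_{ab}$. Using $|L\Omega^{\le1}\phi|,|\Omega^{\le2}\phi|\lesssim\epsilon^{1/2}$ and $|\underline{L}\Omega^{\le1}\phi|\lesssim\epsilon^{3/2}$ gives $|\partial\Omega^{\le1}\phi|\lesssim\epsilon^{1/2}$ for $r\in[1-4\epsilon,1]$. The case $|I|=2$ is already covered by \eqref{est:pointwise_local1}.

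The main obstacle is bookkeeping rather than ideas. First, I must verify that the pointwise bounds on $\partial^I\Omega^J\mathcal{N}$ and $L\mathcal{N}$ indeed carry $\epsilon^{(p-1)/2-}$, using the compact support and the fact that at most one large $\partial\phi$ factor appears. Second, I must check that each backward null segment stays within the slab, reaching either $t_0$ or a boundary where $\phi$ vanishes; this is what dictates the different $r$-ranges in the statement.
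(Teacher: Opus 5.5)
Two steps fail as written. Both can be repaired with results already in the paper.

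\textbf{The second line of \eqref{eq:local_Lest_t1}.} Your argument breaks at $|I|+|J|=2$. The integrand contains $r^{-1}\Delta_{\mathbb{S}^2}\partial^I\Omega^J\phi$, which is a four-derivative quantity. Your claimed extension of pointwise control to order four is off by one. Lemma \ref{lem:point_local} costs three derivatives ($\partial_r\Omega^{\le 2}$), and \eqref{est:natural_local} controls $\|\partial\partial^I\Omega^J\phi\|$ only for $|I|+|J|\le 5$. So pointwise bounds stop at order three, which is exactly \eqref{est:pointwise_local1}.

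This gap propagates, because your $L^2\phi$ argument feeds in $L\Omega^2\phi=O(\epsilon^{1/2})$, which is precisely the $|J|=2$ case. There is a second problem on the same step. For $r\in[1-4\epsilon,1-3\epsilon)$ your backward ray along $v=\mathrm{const}$ reaches $t=t_0$ inside the initial support. A bound on $L\partial^I\Omega^J\phi(t_0)$ cannot be obtained by `differentiating' \eqref{eq:ID-shortp2}, since that is only a pointwise bound on $w_1+\partial_r w_0$.

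None of this is needed. As in the paper, the identity $L=(t+r)^{-1}(L_0+\omega^aL_a)$ combined with \eqref{est:pointwise_local2} gives $|L\partial^I\Omega^J\phi|\lesssim C_2\epsilon^{1/2-|I|}$ for $|I|+|J|\le 2$ on the whole slab $[t_0,t_1]$, with no null integration.

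\textbf{The case $|I|=2$ of the last line of \eqref{eq:local_Omegaest_t1}.} This is not `already covered' by \eqref{est:pointwise_local1}. That estimate gives $|\partial^2\phi|\lesssim C_2\epsilon^{-3/2}$, while the statement asks for $C_2\epsilon^{3/2-2}=C_2\epsilon^{-1/2}$. You have to apply the null decomposition one level higher, as the paper does. Write $|\partial^2\phi|\lesssim|L\partial\phi|+|\underline{L}\partial\phi|+r^{-1}|\Omega\partial\phi|$. Then use $|L\partial\phi|,|\Omega\partial\phi|\lesssim C_2\epsilon^{-1/2}$, together with the $|I|=1$ case of the first line of \eqref{eq:local_Lbarest_t1}, namely $|\underline{L}\partial\phi|\lesssim C_2\epsilon^{1/2}$.

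A smaller point of the same kind arises in your $\underline{L}^2\phi$ step. The bound $\underline{L}L\phi=O(\epsilon^{-1/2})$ must come from the null form of the equation (with $[L,\underline{L}]=0$). It does not follow from \eqref{est:pointwise_local1}, which only gives $\epsilon^{-3/2}$.

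\textbf{What already works.} With these repairs the rest of your plan coincides with the paper's:
\begin{itemize}
\item for $L^2\phi$ on $r\ge 1-3\epsilon$, integrate along $v=\mathrm{const}$ to $\{t-r=\epsilon\}$;
\item for the $\underline{L}$ bounds on $r\le 1$, integrate along $u=\mathrm{const}$ to $\{t+r=2-2\epsilon\}$.
\end{itemize}
Your use of $L^2(r\phi)$ is a small improvement over the paper's version. It removes the $L\underline{L}\phi$ source term and the Gronwall step, leaving only the $O(\epsilon^{1/2})$ sources $L\Delta_{\mathbb{S}^2}\phi$ and $L(r\mathcal{N})$.
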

	
	\begin{proof}
     For this region, we  introduce the notation $D_R, D_L$ shown in Figure \ref{fig:spacetime} and defined by
    $$D_R=\{(t,x): t_0\leq t\leq t_1,\; 2-\epsilon-t\leq r\leq t-\epsilon\}, \quad D_L=\{(t,x): t_0\leq t\leq t_1,\; 2-2\epsilon-t\leq r\leq t-2\epsilon\}\,.$$
    
		$\bullet$ We start by proving \eqref{eq:local_Lest_t1}. Notice that $L=(t+r)^{-1}(L_0+\omega^aL_a)$. Therefore, the first two estimates follow directly from the $L^\infty$ bounds in Proposition~\ref{prop:estimates_local1}. Moreover, when $t_1=1+2\epsilon$, the support of $\phi$ is contained in the region $r\in[1-4\epsilon,\,1+\epsilon]$, so that $r\sim1$ on this time slice. 
        We next turn to the pointwise bound for $L^2\phi$ in \eqref{eq:local_Lest_t1}. First, we rewrite the equation for $\phi$ in the form
        \begin{equation}\label{equ:waveLL_}
			\underline{L}L\phi=\frac{1}{r}L\phi-\frac{1}{r}\underline{L}\phi+\frac{1}{r^2}\slashed{\Delta}_{\mathbb{S}^2}\phi\pm\mathcal{N}(\psi,\phi),
		\end{equation}
        where $\slashed{\Delta}_{\mathbb{S}^2}$ denotes the Laplace--Beltrami operator on the unit sphere $\mathbb{S}^2$.
		Acting the vector field $L$ on both sides of~\eqref{equ:waveLL_}, we have
		\begin{equation*}
			\begin{aligned}
				\underline{L}L^2\phi&=L\left(\frac{1}{r}L\phi-\frac{1}{r}\underline{L}\phi+\frac{1}{r^2}\slashed{\Delta}_{\mathbb{S}^2}\phi\pm\mathcal{N}(\psi,\phi)\right)\\
				&=\frac{1}{r}L^2\phi-\frac{1}{r}L\underline{L}\phi+\frac{1}{r^2}L\slashed{\Delta}_{\mathbb{S}^2}\phi
				-\frac{1}{r^2}L\phi+\frac{1}{r^2}\underline{L}\phi-\frac{2}{r^3}\slashed{\Delta}_{\mathbb{S}^2}\phi\pm L\mathcal{N}(\psi,\phi).
			\end{aligned}
		\end{equation*}
	    Using the pointwise estimates on $\psi$ in~\eqref{eq:disp-psi01}--\eqref{eq:disp-d-psi01}, and the pointwise bounds on $\phi$ from \eqref{est:pointwise_local1}--\eqref{est:pointwise_local2}, together with the fact that $r\sim1$ in the region under consideration, we deduce that
	    \begin{equation*}
	    	|\underline{L}L^2\phi|\lesssim|L^2\phi|+C_2\epsilon^{-\frac{1}{2}}.
	    \end{equation*}
	    Applying the fundamental theorem of calculus along the $u$-direction, we obtain
	    \begin{equation*}
	    	|L^2\phi(t,x)|\leq\int_{\epsilon}^{5\epsilon}|\partial_u L^2\phi|\, du\leq\frac{1}{2}\int_{\epsilon}^{5\epsilon}|\underline{L}L^2\phi|\, du\lesssim C_2\epsilon^{\frac{1}{2}}+\int_{\epsilon}^{5\epsilon}|L^2\phi|\, du,
	    \end{equation*}
	    for any $(t,x)\in D_R$. 
	    By Gronwall inequality, it follows that
	    \begin{equation*}
	    	|L^2 \phi(t,x)|\lesssim C_2\epsilon^{\frac{1}{2}},\quad \mbox{for}\quad (t,x)\in D_R.
	    \end{equation*}

        $\bullet$ We now prove \eqref{eq:local_Lbarest_t1} by studying the commuted equation
        \begin{equation*}
        	-\Box \partial^I\Omega^J\phi=\pm\partial^I\Omega^J\mathcal{N}(\psi,\phi),\quad |I|+|J|\leq 1.
        \end{equation*}
        Rewriting the wave operator in terms of the null frame, we obtain
        \begin{equation*}
        \begin{aligned}
            L\underline{L}\partial^I\Omega^J\phi&=\frac{1}{r}L\partial^I\Omega^J\phi-\frac{1}{r}\underline{L}\partial^I\Omega^J\phi+\frac{1}{r^2}\slashed{\Delta}_{\mathbb{S}^2}\partial^I\Omega^J\phi\pm\partial^I\Omega^J\mathcal{N}(\psi,\phi).
        \end{aligned}
        \end{equation*}
        Using the estimates on $\psi$ established in~\eqref{eq:disp-psi01}, and on $\phi$ in \eqref{est:pointwise_local1} and \eqref{est:pointwise_local2}, we deduce
        \begin{equation*}
        	|L\underline{L}\partial^I\Omega^J\phi|\lesssim C_2\epsilon^{\frac{1}{2}-|I|}+|\underline{L}\partial^I\Omega^J\phi|.
        \end{equation*}
        Applying the fundamental theorem of calculus along the $v$-direction, we obtain
        \begin{equation*}
        	\begin{aligned}
        	    |\underline{L}\partial^I\Omega^J\phi(t,x)|&\leq\int_{2-2\epsilon}^{2+2\epsilon}|\partial_v\underline{L}\partial^I\Omega^J\phi|\, dv\leq\frac{1}{2}\int_{2-2\epsilon}^{2+2\epsilon}|L\underline{L}\partial^I\Omega^J\phi|\, dv
                \lesssim C_2\epsilon^{\frac{3}{2}-|I|}+\int_{2-2\epsilon}^{2+2\epsilon}|\underline{L}\partial^I\Omega^J\phi|\, dv,
        	\end{aligned}
        \end{equation*}
        for any $(t,x)\in D_L$.
        Combining the above estimate with  Gronwall inequality, we have
        \begin{equation*}
        	|\underline{L}\partial^I\Omega^J\phi(t,x)|\lesssim C_2\epsilon^{\frac{3}{2}-|I|},\quad\mbox{for}\quad (t,x)\in D_L.
        \end{equation*}
        
        Next, acting $\underline{L}$ on both sides of the equation~\eqref{equ:waveLL_}, we have
        \begin{equation*}
        	\begin{aligned}
        		L\underline{L}^2\phi&=\underline{L}\left(\frac{1}{r}L\phi-\frac{1}{r}\underline{L}\phi+\frac{1}{r^2}\slashed{\Delta}_{\mathbb{S}^2}\phi\pm\mathcal{N}(\psi,\phi)\right)\\
        		&=\frac{1}{r}\underline{L}L\phi-\frac{1}{r}\underline{L}^2\phi+\frac{1}{r^2}\underline{L}\slashed{\Delta}_{\mathbb{S}^2}\phi
        		+\frac{1}{r^2}L\phi-\frac{1}{r^2}\underline{L}\phi+\frac{2}{r^3}\slashed{\Delta}_{\mathbb{S}^2}\phi\pm\underline{L}\mathcal{N}(\psi,\phi). 
                \end{aligned}
        \end{equation*}
        Using again the previously established bounds, we find
        \begin{equation*}
        	|L\underline{L}^2\phi|\lesssim C_2\epsilon^{-\frac{1}{2}}+|\underline{L}^2\phi|.
        \end{equation*}
        Applying the fundamental theorem of calculus along the $v$-direction, we apply Gronwall inequality to conclude that
        \begin{equation*}
            |\underline{L}^2\phi(t,x)|
            \lesssim C_2\epsilon^{\frac12},
            \qquad \mbox{for}\quad (t,x)\in D_L.
        \end{equation*}
        
        
        
        $\bullet$ We now prove \eqref{eq:local_Omegaest_t1}. The first inequality, where $|I|=0$, simply follows from \eqref{eq:local_Lest_t1}. For the second inequality, when $|I|\geq1$ and $|I|+|J|\leq2$, we write
        \begin{equation*}
        	\begin{aligned}
        		&|\partial^I\Omega^J\phi|\lesssim\sum_{|I_1|=|I|-1}|\partial\partial^{I_1}\Omega^J\phi|
                \lesssim\sum_{|I_1|=|I|-1}\left(|L\partial^{I_1}\Omega^J\phi|+|\underline{L}\partial^{I_1}\Omega^J\phi|+\left|\frac{1}{r}\Omega\partial^{I_1}\Omega^J\phi\right|\right).
        	\end{aligned}
        \end{equation*}
        Using the bounds established in \eqref{eq:local_Lest_t1} and \eqref{eq:local_Lbarest_t1}, we deduce
        \begin{equation*}
        	|\partial^I\Omega^J\phi(t_1,x)|\lesssim C_2\epsilon^{\frac{3}{2}-|I|},\quad\mbox{for}\quad r\in[1-4\epsilon,1].
        \end{equation*}
	\end{proof}

\section{Global  existence for short-pulse data in the exterior}\label{sp:exterior}

In this section, we establish the following global existence of the solution in the exterior region:

    \begin{proposition}\label{prop:global_ext_existence} There exists $\epsilon>0$ sufficiently small such that \eqref{eq:model_short-pulse} admits a global in time solution in $\mathcal{D}^{ex}=\{(t,x): t\geq t_1,t-5\epsilon\leq r\leq t-\epsilon\}.$ 
    \end{proposition}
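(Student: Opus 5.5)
The plan is a continuity argument on the exterior region, using the energy method on the truncated slices $\Sigma^{ex}_{t}=\{t-5\epsilon\le r\le t-\epsilon\}$. The data come from the slice $t=t_1$, controlled by Proposition~\ref{prop:local_existence} and Lemma~\ref{lem:local-estimates}.

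By finite speed of propagation, $\phi$ vanishes for $r\ge t-\epsilon$. The equation \eqref{eq:model_short-pulse} in $\mathcal{D}^{ex}$ therefore only sees data on $\{t=t_1,\ r\ge t_1-5\epsilon\}$ and the field on $\mathcal{D}^{ex}$. Moreover, every term of $\mathcal{N}$ carries at least one factor of $\phi$, so the nonlinearity is also supported in $r\le t-\epsilon$. I would make the bootstrap assumptions on $[t_1,T)$
\[
\|\partial\partial^I\Omega^J\phi\|_{L^2(\Sigma^{ex}_{t})}\le C_3\epsilon^{-|I|},\qquad |I|+|J|\le 5,
\]
together with the same bound for the flux through the incoming null boundary $\{t-r=5\epsilon\}$. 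The energy identity for $\partial_t$ on the domain bounded by $\Sigma^{ex}_{t_1}$, $\Sigma^{ex}_t$ and the null cone $t-r=5\epsilon$ gives a positive null flux on that cone, which can simply be dropped or retained. Hence
\[
\mathcal{E}^{1/2}(t,\partial^I\Omega^J\phi;\Sigma^{ex}_t)\lesssim \mathcal{E}^{1/2}(t_1,\cdot)+\int_{t_1}^t\|\partial^I\Omega^J\mathcal{N}\|_{L^2(\Sigma^{ex}_s)}\,ds,
\]
and the data term is $\lesssim C_2\epsilon^{-|I|}$ by \eqref{est:natural_local}.

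Next come the lower-order tools. The Hardy inequality in $u$, applied to $\phi$ vanishing at $t-r=\epsilon$, gives
\[
\|\partial^I\Omega^J\phi\|_{L^2(\Sigma^{ex}_t)}\lesssim\epsilon\,\|\partial_r\partial^I\Omega^J\phi\|_{L^2(\Sigma^{ex}_t)}.
\]
The argument of Lemma~\ref{lem:point_local}, integrating $\partial_r$ over an interval of length $5\epsilon$ and now keeping the factor $r^{-1}$ since $r\sim t$, gives
\[
|\partial^I\Omega^J\phi|\lesssim r^{-1}\epsilon^{1/2}\|\partial_r\Omega^{\le2}\partial^I\Omega^J\phi\|\lesssim C_3 t^{-1}\epsilon^{1/2-|I|},\qquad |I|+|J|\le3.
\]
Then I would redo Lemma~\ref{lem:nonlinear_local} in the exterior, using the $|\phi|^{p-5}$ / $|\psi|^{p-5}$ splitting of Remark~\ref{rem:diff_nonlinearity}: put the top-order factor in $L^2$ and all others in $L^\infty$. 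Factors of $\phi$ contribute $t^{-1}\epsilon^{1/2}$ each, and $\psi$ contributes via \eqref{eq:disp-psi01}, choosing between $\epsilon^{1/2-\delta}$ and $t^{-1/2+\delta}$. The goal is an integrable bound of the form
\[
\|\partial^I\Omega^J\mathcal{N}\|_{L^2(\Sigma^{ex}_t)}\lesssim C_3^p\epsilon^{\sigma-|I|}t^{-1-\delta'},\qquad \sigma>0.
\]
Then time integration closes the bootstrap, exactly as in the local proof, provided $\epsilon$ is small.

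The main obstacle is the angular derivatives falling on $\psi$. We have $\|\Omega^J\psi\|\lesssim\epsilon^{-1}$, and replacing $\Omega\psi$ by $r\partial\psi$ costs a factor $t$ that is no longer bounded. For the term $\|\phi\,|\psi|^{p-2}\Omega^{J}\psi\|$, I would use $|\Omega^J\psi|\lesssim t|\partial\Omega^{J-1}\psi|$. For the low-order $\Omega^J\psi$, I would take the $L^\infty$ bound $|\partial\Gamma^{\le2}\psi|\lesssim C_1\epsilon t^{-1/2+\delta}$ or $C_1 t^{-1}\langle t-r\rangle^{-1/2}$, interpolated as in Lemma~\ref{lem:disp-psi}. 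For the high-order one, I would take $\|\partial\Omega^{J-1}\psi\|$ in $L^2$ and put $\phi$ in $L^\infty$ with its $t^{-1}\epsilon^{1/2}$ decay. The leftover $|\psi|^{p-2}$ supplies the extra decay $t^{-(p-2)(1/2-\delta)}$, and since $p>5$ this absorbs the lost factor of $t$ while retaining a positive power of $\epsilon$. Balancing these exponents, with $\delta\ll p-5$, is the step I expect to require the most care.
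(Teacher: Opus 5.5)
Your proposal is correct and follows the paper's proof essentially step by step. The shared ingredients are: a bootstrap on the exterior natural energy (including the null flux through $\mathcal{B}_{5\epsilon}$); the Hardy inequality in $t-r$, giving $\|\partial^I\Omega^J\phi\|_{L^2(\Sigma^{ex}_{t,5\epsilon})}\lesssim C_3\epsilon^{1-|I|}$; the Sobolev bound $|\partial^I\Omega^J\phi|\lesssim C_3\epsilon^{\frac12-|I|}r^{-1}$; and top-order-in-$L^2$ estimates for $\mathcal{N}$, in which high-order $\Omega^J\psi$ is traded for $t\,\partial\Omega^{J-1}\psi$.

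One small point concerns the paper's weakest term, $C_3^p\epsilon^{\frac{p-5}{2}-|I|}t^{-p+4+4\delta}$. It comes from the $|\phi|^{p-5}\prod\Gamma\psi$ terms, not from a $|\psi|^{p-2}$ term. There the lost factor $t$ is absorbed by four $\psi$-factors bounded directly via \eqref{eq:disp-psi01}, and integrability then comes from $|\phi|^{p-5}$. So low-order $\Omega^J\psi$ should be bounded by \eqref{eq:disp-psi01} itself, rather than by $t|\partial\Omega^{J-1}\psi|$, which gives no decay.
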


\subsection{Notation and auxiliary estimates}\label{se:notation_ex}
We begin by introducing some notation and auxiliary estimates. 
In the exterior region, for any $\epsilon\leq \tilde{u}\leq5\epsilon$, we define 
	\begin{align*}
		\Sigma_{t,\tilde{u}}^{ex}&=\{(s,x)\in\mathcal{D}^{ex}: s=t,s-\tilde{u}\leq|x|\leq s-\epsilon\},\\
        \mathcal{D}^{ex}_{t,\tilde{u}}&=\{(s,x):t_1\leq s\leq t, s-\tilde{u}\leq|x|\leq s-\epsilon\},\\
		\mathcal{B}_{\tilde{u}}&=\{(s,x)\in\mathcal{D}^{ex}: s-|x|=\tilde{u}\}.
	\end{align*}
	The natural energy and the conformal energy are defined as follows
	\begin{align*}
		\mathcal{E}^{ex}(\tilde{u},t,\varphi)&=\int_{\Sigma_{t,\tilde{u}}^{ex}}\big(|\partial_t\varphi|^2+\sum_a|\partial_a\varphi|^2\big)\, dx,\\
		\mathcal{E}^{ex}_{con}(\tilde{u},t,\varphi)&=\int_{\Sigma_{t,\tilde{u}}^{ex}}\big(|L_0\varphi+2\varphi|^2+|\Omega\varphi|^2+\sum_a|L_a\varphi|^2\big)\, dx.
	\end{align*}
    It is also useful to define the following energies with the various boundary terms that will arise in the energy estimates:
    \begin{equation*}
        \begin{aligned}
            E^{ex}(\tilde{u},t,\varphi)&=\mathcal{E}^{ex}(\tilde{u},t,\varphi)+\int_{\mathcal{B}_{\tilde{u}}\cap \mathcal{D}^{ex}_{t,\tilde{u}}}|G\varphi|^2\, dx,\\
            E_{con}^{ex}(\tilde{u},t,\varphi)&=\mathcal{E}^{ex}_{con}(\tilde{u},t,\varphi)+\int_{\mathcal{B}_{\tilde{u}}\cap \mathcal{D}^{ex}_{t,\tilde{u}}}|(t+r) L\varphi+2\varphi|^2\, dx+\int_{\mathcal{B}_{\tilde{u}}\cap \mathcal{D}^{ex}_{t,\tilde{u}}}\tilde{u}^2\big|\frac{\Omega\varphi}{r}\big|^2\, dx.
        \end{aligned}
    \end{equation*}

\begin{lemma}[Exterior energy estimates]\label{lem:energy_ex}
		Let $\varphi$ be a solution to the wave equation vanishing on $\mathcal{B}_\epsilon$. Then, the following natural energy estimate holds
			\begin{equation}\label{est:natural_ex}
				\mathcal{E}^{ex}(\tilde{u},t,\varphi)^{1/2}+\left(\int_{\mathcal{B}_{\tilde{u}}\cap \mathcal{D}^{ex}_{t,\tilde{u}}}|G\varphi|^2\, dx\right)^{1/2}\lesssim\mathcal{E}^{ex}(\tilde{u},t_1,\varphi)^{1/2}+\int_{t_1}^{t}\|\Box\varphi\|_{L^2(\Sigma^{ex}_{s,\tilde{u}})}\, ds,
			\end{equation}
		as well as the conformal energy estimate \begin{equation}\label{est:conformal_ex}
					\begin{aligned}
						\mathcal{E}^{ex}_{con}(\tilde{u},t,\varphi)^{1/2}&+\left(\int_{\mathcal{B}_{\tilde{u}}\cap \mathcal{D}^{ex}_{t,\tilde{u}}}|(t+r) L\varphi+2\varphi|^2\, dx\right)^{1/2}+\left(\int_{\mathcal{B}_{\tilde{u}}\cap \mathcal{D}^{ex}_{t,\tilde{u}}}\tilde{u}^2\big|\frac{\Omega\varphi}{r}\big|^2\, dx\right)^{1/2}\\
						&\lesssim\mathcal{E}^{ex}_{con}(\tilde{u},t_1,\varphi)^{1/2}+\int_{t_1}^{t}\|(s+r)\Box\varphi\|_{L^2(\Sigma^{ex}_{s,\tilde{u}})}\, ds.
					\end{aligned}
			\end{equation}
	\end{lemma}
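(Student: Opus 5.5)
Both estimates are the divergence theorem applied to a suitable energy current on the truncated region $\mathcal{D}^{ex}_{t,\tilde u}$. Its boundary has four pieces: the bottom slice $\Sigma^{ex}_{t_1,\tilde u}$, the top slice $\Sigma^{ex}_{t,\tilde u}$, the outgoing null piece $\mathcal{B}_\epsilon$ (where $\varphi$ vanishes), and the null piece $\mathcal{B}_{\tilde u}$.

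\emph{Natural energy.} Multiply $\Box\varphi$ by $-2\partial_t\varphi$, which gives
\[
\partial_t\big(|\partial_t\varphi|^2+|\nabla\varphi|^2\big)-2\partial_a(\partial_t\varphi\,\partial^a\varphi)=-2\partial_t\varphi\,\Box\varphi .
\]
Integrate over $\mathcal{D}^{ex}_{t,\tilde u}$ and use Stokes. The top and bottom slices contribute $\mathcal{E}^{ex}(\tilde u,t,\varphi)$ and $-\mathcal{E}^{ex}(\tilde u,t_1,\varphi)$.

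On $\mathcal{B}_\epsilon=\{r=s-\epsilon\}$, $\varphi$ and hence its tangential derivatives vanish. The flux there is a perfect square of tangential derivatives with a favourable sign, so it can be dropped; in fact it vanishes identically, because the flux through a null hypersurface only involves tangential derivatives.

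On $\mathcal{B}_{\tilde u}=\{r=s-\tilde u\}$ the outward conormal is proportional to $(1,-\omega)$. The flux density is
\[
|\partial_t\varphi|^2+|\nabla\varphi|^2+2\omega^a\partial_a\varphi\,\partial_t\varphi=\sum_a|\omega_a\partial_t\varphi+\partial_a\varphi|^2=|G\varphi|^2 .
\]
This is exactly the boundary term in the statement, with measure comparable to $dx$ after parametrising by $x$. This gives
\[
\mathcal{E}^{ex}(t)+\int_{\mathcal{B}_{\tilde u}\cap\mathcal{D}^{ex}_{t,\tilde u}}|G\varphi|^2\,dx\le \mathcal{E}^{ex}(t_1)+2\int_{t_1}^t\|\partial_t\varphi\|_{L^2(\Sigma^{ex}_{s,\tilde u})}\|\Box\varphi\|_{L^2(\Sigma^{ex}_{s,\tilde u})}\,ds .
\]
Apply the same identity on $[t_1,s]$ for every $s\le t$ and take the supremum; the Gr\"onwall-type Lemma~\ref{lem:Gronwall} with $a=0$, or simply the elementary $A^2\le B^2+2\int\sqrt{A}\,f$ argument, then gives \eqref{est:natural_ex}.

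\emph{Conformal energy.} Use the Morawetz multiplier $K_0\varphi+2t\varphi$, where $K_0=(t^2+r^2)\partial_t+2tr\partial_r$, which is the standard derivation of the conformal energy \cite{Sogge, AlinhacBook}. The identity is
\[
-(K_0\varphi+2t\varphi)\Box\varphi=\partial_t e-\partial_a f^a ,
\]
with density $e$ and flux $f^a$, where the density satisfies
\[
e\,dx\sim\tfrac12\big(|L_0\varphi+2\varphi|^2+|\Omega\varphi|^2+\textstyle\sum_a|L_a\varphi|^2\big)
\]
up to a total spatial divergence. Writing it in the null frame,
\[
e=\tfrac14(t+r)^2\big|L\varphi+\tfrac{2\varphi}{t+r}\big|^2+\tfrac14(t-r)^2\big|\underline{L}\varphi+\tfrac{2\varphi}{t-r}\big|^2+\tfrac{t^2+r^2}{2r^2}|\Omega\varphi|^2 ,
\]
plus divergence terms. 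The boundary flux through $\{t-r=\tilde u\}$ only sees the $L$ and angular parts, namely $\tfrac12|(t+r)L\varphi+2\varphi|^2+\tilde u^2|\Omega\varphi/r|^2$ times a positive factor. This produces the two boundary integrals in \eqref{est:conformal_ex}. On $\mathcal{B}_\epsilon$ the flux again vanishes since $\varphi\equiv0$ there.

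The main obstacle is this bookkeeping. The divergence terms that convert $e$ into $\mathcal{E}^{ex}_{con}$ also generate boundary contributions on $\mathcal{B}_{\tilde u}$ and $\mathcal{B}_\epsilon$, and on the slices, because the region is truncated rather than all of $\mathbb{R}^3$. I would handle these by keeping the exact identity on each null piece. On $\mathcal{B}_\epsilon$ they vanish, and on $\mathcal{B}_{\tilde u}$ I would check that they recombine into the displayed nonnegative flux. On the slices the density must be comparable to $\mathcal{E}^{ex}_{con}$, which also needs $\varphi|_{\mathcal{B}_\epsilon}=0$ in order to integrate by parts in $r$ without boundary terms at $r=t-\epsilon$.

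Finally, bound the bulk term by Cauchy--Schwarz, using $|K_0\varphi+2t\varphi|\lesssim(t+r)\,\mathcal{E}^{ex}_{con}$-density$^{1/2}$, then argue exactly as in the natural case to get \eqref{est:conformal_ex}.
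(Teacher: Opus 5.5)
Your proposal is correct and follows the paper's route. The paper integrates the $\partial_t\varphi$ identity and the Morawetz-multiplier identity for $(t^2+r^2)\partial_t\varphi+2tr\partial_r\varphi+2t\varphi$ over $\mathcal{D}^{ex}_{t,\tilde u}$, reads off $|G\varphi|^2$, respectively $|(t+r)L\varphi+2\varphi|^2+\tilde u^2|\Omega\varphi/r|^2$, as the flux through $\mathcal{B}_{\tilde u}$, and closes with Cauchy--Schwarz and Gr\"onwall, exactly as you do.

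Two small corrections. First, the flux through $\mathcal{B}_\epsilon$ enters with the unfavourable sign: it is minus the analogous null quadratic form, not a term you could drop. The hypothesis $\varphi|_{\mathcal{B}_\epsilon}=0$ is therefore genuinely needed, not merely convenient. Second, with the paper's choice of current the slice density equals $\tfrac12\big(|L_0\varphi+2\varphi|^2+|\Omega\varphi|^2+\sum_a|L_a\varphi|^2\big)$ pointwise. So no integration by parts on the slices is required, and the null flux on $\mathcal{B}_{\tilde u}$ comes out directly in the stated form.
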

	\begin{proof}
		$\bullet$ We prove \eqref{est:natural_ex} by multiplying the wave equation by $\partial_t\varphi$ and integrating over the region $\mathcal{D}^{ex}_{t,\tilde{u}}$. This yields
		\begin{equation*}
			\begin{aligned}
				\int_{\Sigma_{t,\tilde{u}}^{ex}}|\partial\varphi|^2\, dx+\sum_{a}\int_{\mathcal{B}_{\tilde{u}}\cap \mathcal{D}^{ex}_{t,\tilde{u}}}|G_a\varphi|^2\, dx
				=\int_{\Sigma_{t_1,\tilde{u}}^{ex}}|\partial\varphi|^2\, dx+\int_{\mathcal{D}^{ex}_{t,\tilde{u}}}2(-\Box \varphi)\partial_t\varphi\, dxds,
			\end{aligned}
		\end{equation*}
        and the conclusion follows by applying the Cauchy–Schwarz inequality to the spacetime integral and then Gronwall inequality.
        
		$\bullet$ We next prove \eqref{est:conformal_ex}. Multiplying the wave operator by $(r^2+t^2)\partial_t\varphi+2rt\partial_r\varphi+2t\varphi$, we get
		\begin{equation*}
			\begin{aligned}
				-\Box\varphi\,\cdot&\left((r^2+t^2)\partial_t\varphi+2rt\partial_r\varphi+2t\varphi\right)
				=\frac{1}{2}\partial_t\left((r^2+t^2)|\partial\varphi|^2+4tr\partial_r\varphi\partial_t\varphi+4t\varphi\partial_t\varphi+4\varphi^2+4r\varphi\partial_r\varphi\right) \\&
                -\partial^a\left((r^2+t^2)\partial_a\varphi\partial_t\varphi+tx_a(|\partial_t\varphi|^2-\partial^b\varphi\partial_b\varphi)+2tr\partial_r\varphi\partial_a\varphi+2t\varphi\partial_a\varphi+2x_a\varphi\partial_t\varphi\right).
			\end{aligned}
		\end{equation*} 
		Integrating the above identity over the region $\mathcal{D}^{ex}_{t,\tilde{u}}$, we obtain
		\begin{equation*}
			\begin{aligned}
				&\int_{\Sigma_{t,\tilde{u}}^{ex}}\big(|L_0\varphi+2\varphi|^2+|\Omega\varphi|^2+\sum_a|L_a\varphi|^2\big)\, dx
				+\int_{\mathcal{B}_{\tilde{u}}\cap \mathcal{D}^{ex}_{t,\tilde{u}}}|(t+r) L\varphi+2\varphi|^2\, dx+\int_{\mathcal{B}_{\tilde{u}}\cap \mathcal{D}^{ex}_{t,\tilde{u}}}\tilde{u}^2\big|\frac{\Omega\varphi}{r}\big|^2\, dx\\
				=&\int_{\Sigma_{t_1,\tilde{u}}^{ex}}\big(|L_0\varphi+2\varphi|^2+|\Omega\varphi|^2+\sum_a|L_a\varphi|^2\big)\, dx
				+\int_{\mathcal{D}^{ex}_{t,\tilde{u}}}2(-\Box\varphi )[s(L_0\varphi+2\varphi)+x^aL_a\varphi]\, dxds.
			\end{aligned}
		\end{equation*}
        The conclusion follows by again applying Cauchy–Schwarz to the spacetime integral and then Gronwall inequality.
	\end{proof}

The next result, due to Lindblad and Rodnianski, is a type of spacetime Hardy inequality where the weight depends on the distance from the light cone $t=r$. 
    \begin{lemma}[Hardy inequality \cite{L-R-cmp, L-R-annals}]\label{lem:hardy_ex}
		Assume that $\varphi$ vanishes on $\mathcal{B}_\epsilon$. Then in the exterior region $\mathcal{D}^{ex}$, for any $\tilde{u}\in[\epsilon,5\epsilon]$, we have the following Hardy inequality
		\begin{equation}\label{est:hardy_ex}
			\left\|(t-r)^{-1}\varphi\right\|_{L^2(\Sigma_{t,\tilde{u}}^{ex})}\lesssim\|\partial_r\varphi\|_{L^2(\Sigma_{t,\tilde{u}}^{ex})}.
		\end{equation}
	\end{lemma}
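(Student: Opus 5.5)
The plan is to reduce \eqref{est:hardy_ex} to a one-dimensional weighted Hardy inequality along each radial ray, and then integrate over the sphere. Fix $t\geq t_1$ and $\omega\in\mathbb{S}^2$, and write $\varphi(r)=\varphi(t,r\omega)$ for $r\in[t-\tilde u,\,t-\epsilon]$. The hypothesis that $\varphi$ vanishes on $\mathcal{B}_\epsilon$ gives $\varphi(t-\epsilon)=0$. On $\Sigma^{ex}_{t,\tilde u}$ we have $r\sim t$, since $t-r\leq 5\epsilon\ll1\leq t$. Hence the volume factor $r^2$ may be treated as a constant $t^2$ up to harmless multiplicative constants. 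It therefore suffices to show, uniformly in $\omega$,
\begin{equation*}
\int_{t-\tilde u}^{t-\epsilon}\frac{|\varphi(r)|^2}{(t-r)^2}\,\d r\lesssim\int_{t-\tilde u}^{t-\epsilon}|\partial_r\varphi(r)|^2\,\d r,
\end{equation*}
after which multiplying by $t^2\sim r^2$ and integrating over $\omega$ gives the claim.

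For the one-dimensional inequality, change variables to $u=t-r\in[\epsilon,\tilde u]$, so that $f(u)=\varphi(t-u)$ satisfies $f(\epsilon)=0$. We need $\int_\epsilon^{\tilde u}u^{-2}|f|^2\,\d u\lesssim\int_\epsilon^{\tilde u}|f'|^2\,\d u$. Integrate by parts using $u^{-2}=-\partial_u(u^{-1})$:
\begin{equation*}
\int_\epsilon^{\tilde u}\frac{|f|^2}{u^2}\,\d u=\Big[-\frac{|f|^2}{u}\Big]_\epsilon^{\tilde u}+\int_\epsilon^{\tilde u}\frac{2ff'}{u}\,\d u\leq 2\Big(\int_\epsilon^{\tilde u}\frac{|f|^2}{u^2}\Big)^{1/2}\Big(\int_\epsilon^{\tilde u}|f'|^2\Big)^{1/2}.
\end{equation*}
The boundary term at $u=\epsilon$ vanishes because $f(\epsilon)=0$, and the term at $u=\tilde u$ has a favourable sign, so it can be dropped. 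Dividing through by the square root gives the bound with constant $4$. This is the classical Hardy argument, and the computation is routine.

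The only point needing care, which I regard as the main (mild) obstacle, is that the sign structure must be correct. The vanishing must occur at the endpoint where the weight $u^{-1}$ is evaluated with the bad sign, namely $u=\epsilon$. The free endpoint $u=\tilde u$, which lies on $\mathcal{B}_{\tilde u}$ and where $\varphi$ does not vanish, contributes $-|f(\tilde u)|^2/\tilde u\leq 0$. This is exactly why the hypothesis is imposed on $\mathcal{B}_\epsilon$ rather than on $\mathcal{B}_{\tilde u}$.

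We must also ensure the comparability $r\sim t$ with constants independent of $\epsilon$ and $t$. This holds since $t-r\leq5\epsilon$ and $t\geq t_1\geq1$, so $r\geq t/2$ for $\epsilon$ small. If one preferred to avoid that comparability, one could instead run the same integration by parts with the weight $r^2u^{-2}$. The resulting extra term $\partial_u(r^2)=-2r$ contributes $\int |f|^2 r/u\,\d u$, and since $u\leq 5\epsilon\leq r$ this is bounded by a small multiple of the left-hand side plus lower-order terms, so it can be absorbed.
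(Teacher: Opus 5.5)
Your proof is correct. Its core step matches the paper's one-dimensional argument: integrate against $\partial_r\big((t-r)^{-1}\big)$, let the vanishing on $\mathcal{B}_\epsilon$ kill the boundary term at $r=t-\epsilon$, drop the nonpositive term at $r=t-\tilde u$, and absorb by Cauchy--Schwarz.

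The difference is how the volume factor $r^2$ is handled. You freeze it, using the thin-shell comparability $t-5\epsilon\le r\le t$. The paper instead runs the Hardy argument on $\chi=r\varphi$ and then returns to $\varphi$ through the identity $r^2|\partial_r\varphi|^2=|\partial_r\chi|^2-\partial_r(\chi^2/r)$. The boundary contribution there, $-\chi^2(t-\tilde u)/(t-\tilde u)$, again has a good sign.

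Your route is slightly more elementary but relies on the geometry of $\mathcal{D}^{ex}$. It gives the constant $4(t-\epsilon)^2/(t-\tilde u)^2$, which is harmless here. The paper's substitution handles the measure exactly and yields constant $4$ on any annulus $\{t-\tilde u\le r\le t-\epsilon\}$ with $t-\tilde u>0$. It needs neither thinness of the shell nor smallness of $\epsilon$, which is why it is the standard form of the Lindblad--Rodnianski inequality.

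One small correction to your closing remark. If you keep $r^2$ inside the integration by parts, the extra term is $-2\int_\epsilon^{\tilde u} r|f|^2u^{-1}\,du\le 0$, because $\partial_u r=-1$. So it can simply be dropped; no absorption argument is needed.
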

    \begin{proof}
    	Let $t$ be fixed and $\omega\in\mathbb{S}^2$. Define $\chi(r)=r\varphi(t,r\omega)$ so that $\partial_r\chi=\varphi+r\partial_r\varphi$. Since $\varphi$ vanishes on $\mathcal{B}_\epsilon$, we have $\chi(t-\epsilon)=0$. We compute
    	\begin{equation*}
    		\begin{aligned}
    			\int_{t - \tilde{u}}^{t - \epsilon} \left|\frac{\chi}{t - r}\right|^2 \, dr
    			&= \int_{t -  \tilde{u}}^{t - \epsilon} \chi^2 \, d\left(\frac{1}{t - r}\right)
    			= \frac{\chi^2}{t - r}\Big|_{t -  \tilde{u}}^{t - \epsilon}
    			- 2\int_{t -  \tilde{u}}^{t - \epsilon} \frac{\chi}{t - r}\partial_r\chi \, dr\\
    			&\leq-\frac{\chi^2(t - \tilde{u})}{\tilde{u}} +\frac{1}{2}\int_{t - \tilde{u}}^{t - \epsilon} \left|\frac{\chi }{t - r}\right|^2 \, dr+2\int_{t - \tilde{u}}^{t - \epsilon} |\partial_r \chi|^2\, dr,
    		\end{aligned}
    	\end{equation*}
        where Young's inequality was used in the last step.	
        Since $\tilde{u}\in[\epsilon,5\epsilon]$, we obtain
        \begin{equation*}
        	\int_{t - \tilde{u}}^{t - \epsilon} \left|\frac{\chi}{t - r}\right|^2 \, dr\leq4\int_{t - \tilde{u}}^{t - \epsilon} |\partial_r \chi|^2\, dr.
        \end{equation*}
    	On the other hand, we observe that
    	\begin{equation*}
    		|\partial_r\varphi|^2r^2=\Big|\partial_r\chi-\frac{\chi}{r}\Big|^2=|\partial_r\chi|^2-\partial_r\Big(\frac{\chi^2}{r}\Big),
    	\end{equation*}
    	which implies
    	\begin{equation*}
    			\int_{t - \tilde{u}}^{t - \epsilon} |\partial_r \chi|^2\, dr=\frac{\chi^2}{r}\Big|_{t-\tilde{u}}^{t-\epsilon}+\int_{t - \tilde{u}}^{t - \epsilon} |\partial_r\varphi|^2r^2\, dr
    			\leq\int_{t - \tilde{u}}^{t - \epsilon} |\partial_r\varphi|^2r^2\, dr.
    	\end{equation*}
    	Combining the above inequalities, we conclude that
    	\begin{equation*}
    		\int_{t - \tilde{u}}^{t - \epsilon} \Big|\frac{\varphi}{t - r}\Big|^2r^2 \, dr\leq4\int_{t - \tilde{u}}^{t - \epsilon} |\partial_r \varphi|^2r^2\, dr.
    	\end{equation*}
    	Integrating over the unit sphere $\mathbb{S}^2$ yields the desired estimate~\eqref{est:hardy_ex}.
    \end{proof}

    We next record a Sobolev estimate in $\mathcal{D}^{ex}$, which can be shown adapting the proof of Lemma~\ref{lem:point_local} to the exterior region. Note the key factor of the smallness parameter $\epsilon^{1/2}$ that appears. Additionally, since we are in the exterior region, we also now gain decay in $r$ compared to Lemma \ref{lem:point_local}. 
    \begin{lemma}\label{lem:pointwise_ex}
		For any point $(t,x)\in\mathcal{D}^{ex}$, the following pointwise decay estimate holds:
		\begin{equation}\label{est:pointwise_ex}
			|f(t,x)| \lesssim r^{-1} \epsilon^{\frac{1}{2}} \|\partial_r\Omega^{\le 2}f\|_{L^2(\Sigma_{t,5\epsilon}^{ex})}.
		\end{equation}
	\end{lemma}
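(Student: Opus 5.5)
The plan is to adapt the proof of Lemma~\ref{lem:point_local} essentially verbatim, with two modifications: the integration in $r$ is now over an interval $[t-5\epsilon,t-\epsilon]$ of length $4\epsilon$ (rather than one of length $O(\epsilon)$ near $r\sim1$), and we no longer use $r\sim 1$ but instead keep track of powers of $r$ to gain the factor $r^{-1}$. Throughout, $f$ is understood (as in the applications) to vanish on the outer boundary $\mathcal{B}_\epsilon=\{r=t-\epsilon\}$. This holds for $\phi$ and its derivatives by finite speed of propagation, since the short-pulse data is supported in $r\le 1$ at $t_0=1$.

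First, fix $(t,x)=(t,r\omega)\in\mathcal{D}^{ex}$, so that $t-5\epsilon\le r\le t-\epsilon$. Since $f(t,(t-\epsilon)\omega)=0$, the fundamental theorem of calculus along the ray gives
\[
|g(t,r\omega)|\le \int_r^{t-\epsilon}|\partial_\rho g(t,\rho\omega)|\,\d\rho
\]
for $g=\Omega^J f$. Combining this with the Sobolev embedding on the unit sphere \eqref{est:Sobolev_unit}, applied to $f(t,r\,\cdot)$, we bound $|f(t,r\omega)|$ by $\sum_{|J|\le2}\|\Omega^Jf(t,r\,\cdot)\|_{L^2(\mathbb{S}^2)}$. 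Moving the $L^2(\mathbb{S}^2)$ norm inside the $\rho$-integral by Minkowski's inequality, and using $[\partial_r,\Omega]=0$, this yields
\[
|f(t,r\omega)|\lesssim \sum_{|J|\le2}\int_r^{t-\epsilon}\|\partial_\rho\Omega^Jf(t,\rho\,\cdot)\|_{L^2(\mathbb{S}^2)}\,\d\rho .
\]

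Second, insert the weight: write the integrand as $\rho^{-1}\cdot\rho\,\|\cdot\|_{L^2(\mathbb{S}^2)}$. On the interval of integration we have $\rho\ge r$, so $\rho^{-1}\le r^{-1}$. Cauchy--Schwarz in $\rho$ then gives a factor $(t-\epsilon-r)^{1/2}\le(4\epsilon)^{1/2}$ times
\[
\Big(\int_{t-5\epsilon}^{t-\epsilon}\|\partial_\rho\Omega^Jf\|^2_{L^2(\mathbb{S}^2)}\rho^2\,\d\rho\Big)^{1/2}=\|\partial_r\Omega^Jf\|_{L^2(\Sigma^{ex}_{t,5\epsilon})}.
\]
This produces $r^{-1}\epsilon^{1/2}\|\partial_r\Omega^{\le2}f\|_{L^2(\Sigma^{ex}_{t,5\epsilon})}$, which is \eqref{est:pointwise_ex}.

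There is no serious obstacle. The only points needing care are to justify the vanishing of $f$ on $\mathcal{B}_\epsilon$ (by support considerations, as noted above), and to observe that in contrast to Lemma~\ref{lem:point_local} we must not absorb $\rho$ into constants: the decay $r^{-1}$ comes precisely from pulling out $\rho^{-1}\le r^{-1}$ before applying Cauchy--Schwarz, while the $\rho^2$ volume weight is reabsorbed into the $L^2(\Sigma^{ex}_{t,5\epsilon})$ norm.
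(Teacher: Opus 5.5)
Your proof is correct and is essentially the paper's argument: the paper only says to adapt Lemma~\ref{lem:point_local}, i.e.\ integrate from the vanishing boundary $r=t-\epsilon$, pull out $\rho^{-1}\le r^{-1}$, apply Cauchy--Schwarz on an interval of length at most $4\epsilon$, and use Sobolev on $\mathbb{S}^2$. The only cosmetic difference is that you apply the sphere embedding before integrating in $\rho$ rather than after, and you rightly make explicit the implicit hypothesis that $f$ vanishes on $\mathcal{B}_\epsilon$. One small correction to your side remark: the short-pulse data is supported in $1-2\epsilon\le r\le 1-\epsilon$ (since $h_0,h_1$ live in $1\le (1-r)/\epsilon\le 2$), and it is this $\epsilon$-gap, together with $\mathcal{N}(\psi,0)=0$ and finite speed of propagation, that gives vanishing on $t-r=\epsilon$; support in $r\le 1$ would only give vanishing for $r\ge t$.
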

	

\subsection{Bootstrap and global existence argument}
    
	We now initiate the bootstrap argument by making the following assumptions in the exterior region:
	\begin{equation}\label{est:boots_ex}
		E^{ex}(5\epsilon,t, \partial^I\Omega^J \phi)^{\frac{1}{2}} \le C_3\epsilon^{-|I|}, \quad |I|+|J| \le 5,\quad\mbox{for}\ \ t\in[t_1,T^{*}).
	\end{equation}

	Here, $C_3>C_2\gg1$ is a bootstrap constant to be chosen later, and $\epsilon$ denotes the size of the initial data. We define
    \begin{equation*}
        T^{*}:=\sup\{s\in(t_1,+\infty):~\eqref{est:boots_ex}\  \mbox{holds on}\ [t_1,s]\}.
    \end{equation*}
    
	Before improving these assumptions, we prove several immediate consequences.

	\begin{proposition}\label{prop:estimates_ex1}
		Under the bootstrap assumptions~\eqref{est:boots_ex}, the following hold on $[t_1,T^{*})$:
			\begin{align}
				\|\partial\partial^I \Omega^J \phi\|_{L^2(\Sigma_{t,5\epsilon}^{ex})}+\left(\int_{\mathcal{B}_{5\epsilon}}|G\partial^I \Omega^J\phi|^2\, dx\right)^{1/2} &\lesssim C_3\epsilon^{-|I|}, \quad &|I|+|J| \le 5,\label{est:L2_ex1}\\
                    \|\partial^I \Omega^J \phi\|_{L^2(\Sigma_{t,5\epsilon}^{ex})} &\lesssim C_3\epsilon^{1-|I|}, \ \quad &|I|+|J| \le 5,\label{est:L2_ex2}\\
				|\partial^I\Omega^J \phi|&\lesssim C_3 \epsilon^{\frac{1}{2}-|I|} r^{-1}, \quad\ \quad\quad &|I|+|J| \le 3.\label{est:L2_ex3}
			\end{align}
	\end{proposition}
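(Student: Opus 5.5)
The three bounds follow directly from the bootstrap assumptions \eqref{est:boots_ex} combined with the exterior Hardy inequality (Lemma \ref{lem:hardy_ex}) and the exterior Sobolev estimate (Lemma \ref{lem:pointwise_ex}). The argument mirrors the proof of Proposition \ref{prop:estimates_local1}.

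\emph{Energy bound \eqref{est:L2_ex1}.} This is immediate from the definition of $E^{ex}(5\epsilon,t,\cdot)$. It controls $\|\partial_t\cdot\|^2+\sum_a\|\partial_a\cdot\|^2$ on $\Sigma^{ex}_{t,5\epsilon}$, together with the boundary flux $\int_{\mathcal{B}_{5\epsilon}\cap\mathcal{D}^{ex}_{t,5\epsilon}}|G\cdot|^2\,dx$. Taking square roots of \eqref{est:boots_ex} with $\varphi=\partial^I\Omega^J\phi$ gives the claim.

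\emph{Lower-order bound \eqref{est:L2_ex2}.} We first note that $\partial^I\Omega^J\phi$ vanishes on $\mathcal{B}_\epsilon$. Indeed, by finite speed of propagation $\phi$ is supported in $t-r\geq\epsilon$. This is because at $t=t_1$ the support lies in $r\le 1+\epsilon=t_1-\epsilon$, and the nonlinearity $\mathcal{N}$ always contains a factor of $\phi$. We may therefore apply Lemma \ref{lem:hardy_ex} with $\tilde u=5\epsilon$. Since $t-r\le 5\epsilon$ on $\Sigma^{ex}_{t,5\epsilon}$, we have $(t-r)^{-1}\ge (5\epsilon)^{-1}$, and hence
\[
\|\partial^I\Omega^J\phi\|_{L^2(\Sigma^{ex}_{t,5\epsilon})}\le 5\epsilon\,\|(t-r)^{-1}\partial^I\Omega^J\phi\|_{L^2(\Sigma^{ex}_{t,5\epsilon})}\lesssim \epsilon\,\|\partial_r\partial^I\Omega^J\phi\|_{L^2(\Sigma^{ex}_{t,5\epsilon})}\lesssim C_3\epsilon^{1-|I|},
\]
where in the last step we used \eqref{est:L2_ex1} and $|\partial_r f|\le|\partial f|$.

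\emph{Pointwise bound \eqref{est:L2_ex3}.} We apply Lemma \ref{lem:pointwise_ex} to $f=\partial^I\Omega^J\phi$ with $|I|+|J|\le3$, which gives
\[
|\partial^I\Omega^J\phi|\lesssim r^{-1}\epsilon^{1/2}\sum_{|J_1|\le 2}\|\partial_r\Omega^{J_1}\partial^I\Omega^J\phi\|_{L^2(\Sigma^{ex}_{t,5\epsilon})}.
\]
We then commute $\Omega^{J_1}$ past $\partial^I$ using the commutator estimate \eqref{est:commutators}. Commutators of $\Omega$ with $\partial$ produce only linear combinations of the form $\partial^{I'}\Omega^{J'}$ with $|I'|=|I|$ and $|J'|\le|J|+|J_1|$; in particular no extra $\partial$ is generated, so no additional factor $\epsilon^{-1}$ appears. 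The total order is $|I|+|J|+|J_1|\le 5$, which keeps us within the range of \eqref{est:L2_ex1}. Bounding $\partial_r$ by $\partial$ and inserting \eqref{est:L2_ex1}, we obtain $C_3\epsilon^{1/2-|I|}r^{-1}$.

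The only delicate point is justifying that $\phi$ vanishes on $\mathcal{B}_\epsilon$, which is needed both for the Hardy inequality and for the Sobolev estimate (through the fundamental theorem of calculus from $r=t-\epsilon$). This follows from the support property just described, so I do not expect a genuine obstacle. The remaining care is in the bookkeeping: checking that the $\Omega$–$\partial$ commutators preserve the count $|I|$ of translation derivatives, so that the $\epsilon$-power is exactly $\epsilon^{-|I|}$.
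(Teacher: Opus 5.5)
Your proposal is correct and is essentially the paper's proof: \eqref{est:L2_ex1} is read off from the bootstrap, \eqref{est:L2_ex2} follows from Lemma~\ref{lem:hardy_ex} together with $t-r\le 5\epsilon$, and \eqref{est:L2_ex3} follows from Lemma~\ref{lem:pointwise_ex} combined with \eqref{est:L2_ex1}. You also spell out two points the paper leaves implicit: that $\partial^I\Omega^J\phi$ vanishes on $\mathcal{B}_\epsilon$ (by finite speed of propagation, since $\mathcal{N}$ vanishes when $\phi$ does), and that commuting $\Omega$ past $\partial$ does not increase $|I|$, so the power $\epsilon^{-|I|}$ is preserved.
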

    \begin{proof}
        The estimate~\eqref{est:L2_ex1} follows directly from the bootstrap bounds~\eqref{est:boots_ex}.
        For~\eqref{est:L2_ex2}, we apply the Hardy inequality in Lemma~\ref{lem:hardy_ex} to $\partial^I\Omega^J\phi$ and obtain
        \begin{equation*}
            \|(t-r)^{-1}\partial^I\Omega^J\phi\|_{L^2(\Sigma_{t,5\epsilon}^{ex})}\lesssim\|\partial_r\partial^I\Omega^J\phi\|_{L^2(\Sigma_{t,5\epsilon}^{ex})}.
        \end{equation*}
        Since $\epsilon\le t-r\le 5\epsilon$ on the slice $\Sigma_{t,5\epsilon}^{ex}$, we have
        \begin{equation*}
            \|\partial^I\Omega^J\phi\|_{L^2(\Sigma_{t,5\epsilon}^{ex})}\lesssim\epsilon\| (t-r)^{-1}\partial^I\Omega^J\phi\|_{L^2(\Sigma_{t,5\epsilon}^{ex})}
            \lesssim C_3\epsilon^{1-|I|},
        \end{equation*}
        where we used~\eqref{est:L2_ex1} in the last step.
        Finally, the pointwise estimate~\eqref{est:L2_ex3} follows from combining~\eqref{est:L2_ex1}  with the Sobolev estimate \eqref{est:pointwise_ex}.
    \end{proof}

        \begin{lemma}[Exterior bounds on the nonlinearities] \label{lem:nonlinear_ex1}
        Under the bootstrap assumptions~\eqref{est:boots_ex},
		\begin{equation}\label{est:nonlinear_ex1}
            \begin{aligned}
                \|\partial^I\Omega^J\mathcal{N}(\psi,\phi)\|_{{L^2(\Sigma_{t,5\epsilon}^{ex})}}
                \lesssim C_3^p\epsilon^{\frac{p-5}{2}-|I|}t^{-p+4+4\delta}+C_3^p\epsilon^{1-\delta-|I|}t^{-\frac{p}{2}+\frac{3}{2}+(p-3)\delta}, \quad |I|+|J|\leq5.
            \end{aligned}
		\end{equation}
	\end{lemma}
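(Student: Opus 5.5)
We follow the template of Lemma~\ref{lem:nonlinear_local}, now on $\Sigma_{t,5\epsilon}^{ex}$, where $r\sim t$ and $t-r\sim\epsilon$. The ingredients are the pointwise bounds \eqref{est:L2_ex3}, the $L^2$ bounds \eqref{est:L2_ex1}--\eqref{est:L2_ex2} for $\phi$, and Lemma~\ref{lem:disp-psi} for $\psi$.

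\emph{Base case $|I|+|J|=0$.} Using \eqref{eq:elementary}, we bound
\[
\|\mathcal{N}\|\lesssim \|\phi\|_{L^2}\big(\|\phi\|_{L^\infty}^{p-1}+\|\psi\|_{L^\infty}^{p-1}\big).
\]
With $\|\phi\|\lesssim C_3\epsilon$ and $|\phi|\lesssim C_3\epsilon^{1/2}t^{-1}$, the first product gives $\epsilon^{(p+1)/2}t^{-p+1}$. For $|\psi|^{p-1}$ we interpolate the two bounds in \eqref{eq:disp-psi01}, namely $\epsilon^{1/2-\delta}$ and $t^{-1/2+\delta}$. This yields terms of the type appearing in the second summand of \eqref{est:nonlinear_ex1}.

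\emph{Case $1\le|I|+|J|\le5$.} We use the decomposition of Remark~\ref{rem:diff_nonlinearity} into $\mathcal{R}_1$ (with prefactor $|\phi|^{p-5}$) and $\mathcal{R}_2$ (with prefactor $|\psi|^{p-5}$), and split each further by how many factors are $\phi$ versus $\psi$. In every product, the factor carrying the most vector fields goes in $L^2$ and the rest in $L^\infty$. There are two placements.

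\begin{enumerate}
\item If that top factor is $\phi$, we use \eqref{est:L2_ex2}. Every remaining $\phi$ factor in $L^\infty$ gives $C_3\epsilon^{1/2-|I_k|}t^{-1}$ via \eqref{est:L2_ex3}. In the pure-$\phi$ terms, $p-1$ of the $p$ factors are in $L^\infty$, and they produce $\epsilon^{(p-1)/2}t^{-(p-1)}$ times $\epsilon^{1-|I|}$.
\item If the top factor is $\psi$, we must keep at least one $\phi$ factor in $L^\infty$, since this is what confines the integral to the exterior shell. The $\psi$ factor itself is controlled through \eqref{eq:BA-disp02} or \eqref{eq:BA-disp03}, restricted to the shell.
\end{enumerate}

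\emph{Main obstacle: angular derivatives on $\psi$.} For these we use \eqref{est:Omegad}, $|\Omega^J\psi|\lesssim\sum_{|J'|=|J|-1}|r\,\partial\Omega^{J'}\psi|$. Since $r\sim t$ in the exterior, each such conversion costs a factor of $t$ instead of $O(1)$. We plan to compensate as follows.
\begin{enumerate}
\item For low-order $\psi$ factors in $L^\infty$, we use \eqref{eq:disp-d-psi01}: $|r\,\partial\Gamma^{\le 2}\psi|\lesssim \epsilon\, t^{1/2+\delta}$, or $|r\,\partial\Gamma^{\le3}\psi|\lesssim\langle t-r\rangle^{-1/2}\lesssim 1$ in the exterior. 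The $t$-loss is then absorbed by the $t^{-1}$ decay of the $\phi$ factors from \eqref{est:L2_ex3}.
\item For the high-order $\psi$ factor in $L^2$, we bound $\|\Omega^J\psi\|\lesssim t\,\|\partial\Omega^{J'}\psi\|\lesssim C_1 t$. Alternatively, we use $\|\Gamma^{\le5}\psi\|\lesssim C_1\epsilon^{-1}$ from \eqref{eq:BA-disp01}, trading the $t$-loss for an $\epsilon^{-1}$ loss.
\end{enumerate}
We will choose between these two options term by term. The worst $\epsilon$-case is a single $\phi$ factor, which gives $\epsilon^{1/2-|I|}t^{-1}$, multiplied by $\psi$ factors of size $\epsilon^{1/2-\delta}$ each. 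This should produce the second summand $\epsilon^{1-\delta-|I|}t^{-p/2+3/2+(p-3)\delta}$, since $p-3$ factors of $\psi$ each contribute $t^{-1/2+\delta}$ up to the $t$-losses.

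Terms with at least four or five $\phi$ factors give the first summand $\epsilon^{(p-5)/2-|I|}t^{-p+4+4\delta}$. Here the $\epsilon^{-1}$ losses from high-order $\psi$ in $L^2$ and the $t^{\delta}$ losses from interpolation are tolerated because the many $\phi$ factors supply both decay and $\epsilon$ powers. The bookkeeping that requires care is ensuring that each term lands in one of these two summands. In particular, the mixed terms where a $\partial$ falls on $\psi$ (giving $\epsilon$-gain) versus on $\phi$ (giving $\epsilon^{-1}$ loss) must be tracked so that the total power of $\epsilon^{-1}$ never exceeds $|I|$.
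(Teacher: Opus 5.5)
Your overall scheme is the paper's: expand via Remark~\ref{rem:diff_nonlinearity}, put the top-order factor in $L^2(\Sigma^{ex}_{t,5\epsilon})$ and the rest in $L^\infty$ using Proposition~\ref{prop:estimates_ex1} and Lemma~\ref{lem:disp-psi}, with \eqref{est:Omegad} now costing $r\lesssim t$. However, your bookkeeping rests on a false premise and misses the term that actually forces the first summand.

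\emph{Not every term contains a full factor of $\phi$.} Consider the $m=0$ part of $\mathcal{R}_1$, namely $|\phi|^{p-5}\prod_{l=1}^{5}|\partial^{I_l}\Omega^{J_l}\psi|$. This is the exterior analogue of $\mathcal{R}_1^3$ in Lemma~\ref{lem:nonlinear_local}. For $5<p<6$ it genuinely occurs at order five, because the fifth derivative of $|s|^{p-1}s$ is only $(p-5)$-H\"older. Here $\phi$ enters only through the power $p-5$, which supplies merely $(C_3\epsilon^{1/2}t^{-1})^{p-5}$. So the worst case is not a single $\phi$ factor worth $\epsilon^{1/2}t^{-1}$; for $p<7$ this term is worse in $\epsilon$. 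It is this term that produces $\epsilon^{\frac{p-5}{2}}t^{-p+4+4\delta}$, not the terms with four or five $\phi$ factors you credit, which are harmless.

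\emph{Your step (i) fails on exactly this term.} You absorb the loss from writing a low-order $\Omega^{J}\psi$ in $L^\infty$ as $r\partial\Omega^{J'}\psi$ into the $t^{-1}$ of a $\phi$ factor. Here there is no such $t^{-1}$, and that rewriting only gives $\epsilon t^{1/2+\delta}$ or $O(1)$. With non-decaying bounds on the four $L^\infty$ factors, the term is of size at least $\epsilon^{\frac{p-5}{2}}t^{6-p}$, which is not integrable in time unless $p>7$.

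\emph{The fix.} Use \eqref{est:Omegad} only on the factor placed in $L^2$:
$\|\Omega^{J_1}\psi\|_{L^2(\Sigma^{ex}_{t,5\epsilon})}\lesssim t\sum_{|J'|=|J_1|-1}\|\partial\Omega^{J'}\psi\|\lesssim C_1t$.
Since the $L^2$ factor carries the most vector fields, the other four carry at most two. Bound them directly by \eqref{eq:disp-psi01}, $|\Gamma^{\le 2}\psi|\lesssim C_1t^{-1/2+\delta}$, with no conversion. This gives $\epsilon^{\frac{p-5}{2}}t^{-(p-5)}\cdot t\cdot t^{-2+4\delta}$, exactly the first summand.

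The same applies everywhere else: the per-factor decay $t^{-1/2+\delta}$ from Lemma~\ref{lem:disp-psi} is what both summands are built from. In your single-$\phi$ case the correct count is:
\begin{enumerate}
\item $\epsilon^{1/2-|I|}t^{-1}$ from the $\phi$ factor;
\item $t$ from the $L^2$ factor $\Omega^J\psi$;
\item one $\psi$ factor at $\epsilon^{1/2-\delta}$;
\item $p-3$ further $\psi$ factors at $t^{-1/2+\delta}$.
\end{enumerate}
This yields $\epsilon^{1-\delta-|I|}t^{-\frac p2+\frac32+(p-3)\delta}$. It does not come from taking all $\psi$ factors at $\epsilon^{1/2-\delta}$, as your write-up suggests.
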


        \begin{proof} $\bullet$ We first prove the bound when $|I|+|J|=0$. From the elementary inequality \eqref{eq:elementary}, we take $\psi$ in $L^\infty$ and apply Proposition~\ref{prop:estimates_ex1} and both the pointwise estimates in \eqref{eq:disp-psi01} to find 
        \begin{equation*}
            \begin{aligned}
                \|\mathcal{N}(\psi,\phi) \|_{L^2(\Sigma_{t,5\epsilon}^{ex})} \lesssim\|\phi\|_{L^2(\Sigma_{t,5\epsilon}^{ex})}\|\phi\|_{L^\infty(\Sigma_{t,5\epsilon}^{ex})}^{p-1}+\|\phi\|_{L^2(\Sigma_{t,5\epsilon}^{ex})}\|\psi\|_{L^\infty(\Sigma_{t,5\epsilon}^{ex})}^{p-1}
                \lesssim C_3^{p}\epsilon^{\frac{3}{2}-\delta}t^{-\frac{p}{2}+1+(p-2)\delta}.
            \end{aligned}
        \end{equation*}

    $\bullet$ We next consider the case when $1\leq|I|+|J|\leq5$. As in the local analysis and following Remark \ref{rem:diff_nonlinearity}, applying Leibniz rule yields
        \begin{equation*}
            \begin{aligned}
                \|\partial^I \Omega^J \mathcal{N}(\psi,\phi)\|_{L^2(\Sigma_{t,5\epsilon}^{ex})}
                &\lesssim\sum_{\substack{I_{1\to 5}=I\\J_{1\to5}=J}}\Big( \sum_{0\leq m\leq5}\Big\||\phi|^{p-5}\prod_{k=1}^{m}|\partial^{I_k}\Omega^{J_k}\phi|\prod_{l=m+1}^{5}|\partial^{I_l}\Omega^{J_l}\psi|\Big\|_{L^2(\Sigma_{t,5\epsilon}^{ex})}\\
                &\qquad \qquad \quad +\sum_{1\leq m\leq5}\Big\||\psi|^{p-5}\prod_{k=1}^{m}|\partial^{I_k}\Omega^{J_k}\phi|\prod_{l=m+1}^{5}|\partial^{I_l}\Omega^{J_l}\psi|\Big\|_{L^2(\Sigma_{t,5\epsilon}^{ex})}\Big).
            \end{aligned}
        \end{equation*}

        Each term is estimated analogously to the local case by placing the highest-order derivative in $L^2$ and controlling all remaining factors in $L^\infty$, using the estimates on $\phi$ in Proposition~\ref{prop:estimates_ex1}, the energy bound on $\psi$ from~\eqref{eq:BA-disp02}, and the pointwise bounds on $\psi$ from~\eqref{eq:disp-psi01}. In the exterior region, the pointwise estimates of $\psi$ and $\phi$ yield the additional time decay.
        Consequently, for $1\leq|I|+|J|\leq5$, we obtain
        \begin{equation*}
            \begin{aligned}
            \|\partial^I\Omega^J\mathcal{N}(\psi,\phi)\|_{L^2(\Sigma_{t,5\epsilon}^{ex})}
            \lesssim C_3^p\epsilon^{\frac{p-5}{2}-|I|}t^{-p+4+4\delta}+C_3^p\epsilon^{1-\delta-|I|}t^{-\frac{p}{2}+\frac{3}{2}+(p-3)\delta}.
            \end{aligned}
        \end{equation*}
	\end{proof}

	\begin{proof}[Proof of Proposition \ref{prop:global_ext_existence}]
	We are now in a position to close the bootstrap assumptions~\eqref{est:boots_ex}. First, by applying the $L^2$ estimates in Proposition~\ref{prop:estimates_local1}, we have
	\begin{equation*}
		\mathcal{E}^{ex}(5\epsilon,t_1, \partial^I\Omega^J \phi)^{\frac{1}{2}}
		\lesssim\|\partial\partial^I\Omega^J \phi\|_{{L^2(\Sigma_{t_1,5\epsilon}^{ex})}}\lesssim C_2\epsilon^{-|I|}.
	\end{equation*}
    Then by the energy estimate~\eqref{est:natural_ex} and Lemma~\ref{lem:nonlinear_ex1}, we deduce that
	\begin{equation}\label{eq:E-ex-partialOmega}
		\begin{aligned}
			E^{ex}(5\epsilon,t, \partial^I\Omega^J \phi)^{\frac{1}{2}} 
			&\lesssim 
			\mathcal{E}^{ex}(5\epsilon,t_1, \partial^I\Omega^J \phi)^{\frac{1}{2}}
            + \int_{t_1}^t \|\partial^I\Omega^J\mathcal{N}(\psi,\phi)\|_{{L^2(\Sigma_{s,5\epsilon}^{ex})}}\, ds\\
            &\lesssim C_2\epsilon^{-|I|}+\int_{t_1}^{t}C_3^p\epsilon^{\frac{p-5}{2}-|I|}s^{-p+4+4\delta}\, ds+\int_{t_1}^{t}C_3^p\epsilon^{1-\delta-|I|}s^{-\frac{p}{2}+\frac{3}{2}+(p-3)\delta}\, ds\\
            &\lesssim C_2\epsilon^{-|I|}+C_3^p\epsilon^{\frac{p-5}{2}-|I|}+C_3^p\epsilon^{1-\delta-|I|}.
		\end{aligned}
	\end{equation}
	This estimate strictly improves the bootstrap bounds in \eqref{est:boots_ex} provided that $C_2\ll C_3,\ C_3^{p-1}\epsilon^{\frac{p-5}{2}}\ll1,\ C_3^{p-1}\epsilon^{1-\delta}\ll1$. Consequently, the bootstrap argument closes, and the solution exists globally in the exterior region.
    Similar to~\eqref{eq:E-ex-partialOmega}, we also have the following energy bound 
    \begin{equation}\label{eq:E-ex-partialOmega2}
		\begin{aligned}
			E^{ex}(4\epsilon,t, \partial^I\Omega^J \phi)^{\frac{1}{2}} 
            \lesssim C_3\epsilon^{-|I|},
		\end{aligned}
	\end{equation}
    which we will use in the next section to control certain boundary flux term.
    \end{proof}
	
\subsection{Boundary flux estimates}\label{subsec:flux}	
	In order to prove global existence in the interior region, we need to control certain  flux terms along the null boundary $\mathcal{B}_{4\epsilon}$, shown in Figure \ref{fig:spacetime}, that we will obtain from the exterior estimates. Since in the interior region we need to use both the natural and conformal energy in our bootstrap argument (unlike the exterior region where we only required the natural energy), these boundary flux terms will involve both natural and conformal energy components. For this reason, in the following proposition we derive several conformal energy estimates in the exterior region.     
    The last estimate in \eqref{eq:E-con-Lpphi}, involving the natural energy, will actually be used later in Section \ref{sec:classical} when we establish regularity for our global solution.

	\begin{proposition}\label{prop:energy_ex}
    Suppose that 
    \begin{equation} \label{eq:C3peps_assumption}
        C_3^{p-1}\epsilon^{(\frac{1}{2}-\delta)\frac{p-5}{2}}<1,\quad C_3^{p-1}\epsilon^{1-\delta}<1, \quad 0<\delta<\tfrac{p-5}{2(p+3)}.
    \end{equation}
	Then in the exterior region $\mathcal{D}^{ex}$, for any $\tilde{u}\in[2\epsilon,5\epsilon],$ we have the following conformal energy estimates
				\begin{equation}\label{eq:E-con-pOphi}\begin{split}
					E_{con}^{ex}(\tilde{u}, t, \partial^I \Omega^J \phi)^{\frac{1}{2}}
					&\lesssim C_3\epsilon^{1-|I|}, \qquad |I|+|J|\le 3,\\
                    E_{con}^{ex}(\tilde{u}, t, \partial^I \Omega^J \phi)^{\frac{1}{2}}
					&\lesssim C_3\epsilon^{1-|I|}, \qquad |I|+|J|=4,\quad|I|\ge1,\\
					E_{con}^{ex}(\tilde{u}, t, \partial^I \Omega^J \phi)^{\frac{1}{2}}
					&\lesssim C_3\epsilon^{1-|I|}, \qquad |I|+|J|=5,\quad|I|\ge2,
				\end{split}\end{equation}
            as well as the following natural and conformal energy estimates for $L_\alpha\phi$
                \begin{equation}\label{eq:E-con-Lpphi}\begin{split}
                        E_{con}^{ex}(\tilde{u}, t, L_\alpha \phi)^{\frac{1}{2}}
					    &\lesssim C_3\epsilon,  \qquad\alpha=0,\cdots,3,\\
					    E_{con}^{ex}(\tilde{u}, t, L_a\partial^I\phi)^{\frac{1}{2}}
					    &\lesssim C_2\epsilon^{-|I|}+C_3, \qquad   a=1,2,3,\ |I|=1,2,\\
                        E^{ex}(\tilde{u}, t, L_a\partial^I\phi)^{\frac{1}{2}}
					      &\lesssim C_2\epsilon^{-|I|}+C_3,     \qquad   \ a=1,2,3,\ |I|=1,2.
                \end{split}\end{equation}
	\end{proposition}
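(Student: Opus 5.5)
The plan is to apply the exterior conformal energy estimate \eqref{est:conformal_ex} of Lemma~\ref{lem:energy_ex} on $\mathcal{D}^{ex}_{t,\tilde u}$ to the commuted equations
\[
-\Box\partial^I\Omega^J\phi=\pm\partial^I\Omega^J\mathcal{N}(\psi,\phi),\qquad -\Box L_\alpha\phi=\pm(L_\alpha+c_\alpha)\mathcal{N}(\psi,\phi),
\]
and similarly to $L_a\partial^I\phi$, using Lemma~\ref{lem:commutator1}. Since $\phi$ vanishes on $\mathcal B_\epsilon$, the estimate applies. It then suffices to bound two quantities: the initial conformal energy on $\Sigma^{ex}_{t_1,\tilde u}$, and the weighted source $\int_{t_1}^t\|(s+r)\Gamma\mathcal N\|_{L^2(\Sigma^{ex}_{s,\tilde u})}\,ds$. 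The natural energy bound in \eqref{eq:E-con-Lpphi} follows in the same way from \eqref{est:natural_ex}, and is in fact easier because no weight appears.

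\emph{Initial data at $t_1$.} For the $\partial^I\Omega^J\phi$ pieces, the initial conformal energy on $\Sigma^{ex}_{t_1,\tilde u}$ is controlled by \eqref{est:conformal_local}, giving $C_2\epsilon^{1-|I|}$. For $L_\alpha\phi$, note that $L_\alpha L_\beta\phi$ and $L_\alpha\Omega\phi$ are not covered by the local bootstrap. Here I would use the restriction $\tilde u\ge 2\epsilon$, which places $\Sigma^{ex}_{t_1,\tilde u}$ inside $r\in[1-4\epsilon,1+\epsilon]$ and hence in the region where Lemma~\ref{lem:local-estimates} holds. Writing $L_a$ and $L_0$ in the null frame via \eqref{eq:VF-identities}, with $t+r\sim 1$ and $t-r\lesssim\epsilon$, gives
\[
L_\alpha L_\beta\phi\sim L^2\phi+L\partial\phi+\epsilon(\cdots)+\Omega\text{-terms}.
\]
The refined bounds $|L^2\phi|\lesssim C_2\epsilon^{1/2}$ and $|L\partial^{\le1}\Omega^{\le1}\phi|\lesssim C_2\epsilon^{1/2-|I|}$ from \eqref{eq:local_Lest_t1} then make every term $O(\epsilon^{1/2})$ pointwise, except possibly the $(t-r)\underline L$ pieces, which cost $\epsilon\cdot\epsilon^{-1/2}$. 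Integrating over a region of volume $O(\epsilon)$ gives $L^2$ size $\lesssim C_2\epsilon$. For $L_a\partial^I\phi$, the same decomposition only gives $O(\epsilon^{-|I|})$, which matches the $C_2\epsilon^{-|I|}$ term in the statement. The top-order restrictions ($|I|\ge1$ at level 4, $|I|\ge2$ at level 5) ensure that the initial term and the commuted sources carry enough $\epsilon^{-1}$ to absorb these losses. Conformal energies of purely angular high-order terms are not claimed.

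\emph{Nonlinear source.} I would repeat the Leibniz decomposition of Remark~\ref{rem:diff_nonlinearity} and Lemma~\ref{lem:nonlinear_ex1}, now carrying the extra weight $(s+r)\sim s$. The top-order factor goes in $L^2$: for $\phi$-factors it is bounded using Proposition~\ref{prop:estimates_ex1} together with the Hardy gain \eqref{est:L2_ex2}, and for $\psi$-factors using \eqref{eq:BA-disp02}, with \eqref{est:Omegad} converting $\Omega\psi$ to $r\partial\psi$. The remaining factors go in $L^\infty$ via \eqref{est:L2_ex3} and \eqref{eq:disp-psi01}--\eqref{eq:disp-d-psi01}. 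Compared with \eqref{est:nonlinear_ex1}, the weight costs one power of $s$. The rates become $s^{-p+5+4\delta}$ and $s^{-p/2+5/2+(p-3)\delta}$, which are integrable precisely because $p>5$ and $\delta<\frac{p-5}{2(p+3)}$. The conditions \eqref{eq:C3peps_assumption} then make the resulting contributions $\lesssim C_3\epsilon^{1-|I|}$.

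\emph{Expected obstacle.} The main difficulty is the mixed term $|\psi|^{p-1}\Gamma\phi$ (and $|\phi||\psi|^{p-2}\Gamma\psi$) once it carries the weight $s$. Using only $|\psi|\lesssim \langle t+r\rangle^{-1/2+\delta}$ leaves $s\cdot s^{-(p-1)/2}$, which is borderline. The fix is to split the $\psi$-powers between the two estimates in \eqref{eq:disp-psi01}: use the $\epsilon^{1/2-\delta}$ bound on $p-5$ factors for smallness, and the decaying bound on the other four factors. This is exactly the balance encoded in \eqref{eq:C3peps_assumption}.

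For $L_\alpha\mathcal N$, a further point is needed. $L_\alpha\psi$ is not small pointwise, but it is bounded by $\langle t+r\rangle^{-1/2+\delta}$ from \eqref{eq:disp-psi01}. Meanwhile $L_\alpha\phi$ is placed in $L^2$ through the bootstrap quantity itself, so the $L_\alpha\phi$ contribution is handled with the Gronwall inequality of Lemma~\ref{lem:Gronwall} rather than closed directly.
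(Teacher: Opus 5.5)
\emph{The plan matches the paper's, but its key step fails as written: the weighted nonlinear source you propose is not integrable for $p$ near $5$.}

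Your overall architecture agrees with the paper's. You commute, apply \eqref{est:conformal_ex}, and bound the data on $\Sigma^{ex}_{t_1,\tilde u}$ by integrating the pointwise bounds of Lemma~\ref{lem:local-estimates} over a shell of thickness $O(\epsilon)$. The gap is in the step that carries the proposition: integrability in $s$ of the weighted source together with an $\epsilon$-gain.

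\textbf{The weighted rates are not integrable.} Multiplying the rates in \eqref{est:nonlinear_ex1} by $s$ gives $s^{-p+5+4\delta}$ and $s^{-\frac p2+\frac52+(p-3)\delta}$. These are integrable only for $p>6+4\delta$ and for roughly $p>7$, respectively, so they fail for $p$ near $5$. Your proposed fix makes this worse. Putting the bound $\epsilon^{\frac12-\delta}$ on $p-5$ factors of $\psi$ and the decay $\langle t+r\rangle^{-\frac12+\delta}$ on the other four gives $s|\psi|^{p-1}\lesssim\epsilon^{(\frac12-\delta)(p-5)}s^{-1+4\delta}$, which is not integrable. Also, decay alone is not borderline. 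Using it on all $p-1$ factors gives $s^{-\frac{p-3}{2}+(p-1)\delta}$, which is integrable for $p>5$ and small $\delta$. The split is needed only to produce the $\epsilon$-smallness that absorbs the factor $C_3^{p-1}$.

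\textbf{What the argument needs.} The correct balance, which is what \eqref{eq:C3peps_assumption} encodes, is $\frac{p-5}{2}$ factors with the smallness bound and $\frac{p+3}{2}$ factors with decay. This gives $t|\psi|^{p-1}\lesssim C_1^{p-1}\epsilon^{(\frac12-\delta)\frac{p-5}{2}}t^{-\frac{p-1}{4}+\frac{p+3}{2}\delta}$, which is integrable exactly when $\delta<\frac{p-5}{2(p+3)}$.

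The other terms also cannot be read off from Lemma~\ref{lem:nonlinear_ex1}; the weight must be assigned term by term, as the paper does in Cases 1--3:
\begin{itemize}
\item Onto $L^\infty$ factors of $\phi$ via \eqref{est:L2_ex3}, for example $\|t|\phi|^{p-3}\|_{L^\infty}\lesssim C_3^{p-3}\epsilon^{\frac{p-3}{2}}t^{-(p-4)}$.
\item Where a top-order $\psi$ factor must sit in $L^2$ with only the non-decaying bound \eqref{eq:BA-disp02}: onto the $\phi$ factor that every term of $\mathcal N$ contains, using $\|t\phi\|_{L^\infty}\lesssim C_3\epsilon^{\frac12}$.
\end{itemize}

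\textbf{The $L_\alpha\phi$ estimate.} You do not need Gronwall here, and $E^{ex}_{con}(\tilde u,t,L_\alpha\phi)$ does not contain $\|L_\alpha\phi\|$ anyway. The bound $\|L_\alpha\phi\|_{L^2(\Sigma^{ex}_{t,\tilde u})}\lesssim C_3\epsilon$ follows from the case $|I|=|J|=0$ of \eqref{eq:E-con-pOphi} together with \eqref{est:L2_ex2}. Also, $L_\alpha\psi$ is in fact pointwise small by \eqref{eq:disp-psi01}.
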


        \begin{proof}
		$\bullet$ We first prove \eqref{eq:E-con-pOphi} by considering the conformal energy estimates on $\partial^I\Omega^J\phi$ and dividing into cases depending on the number of vector fields. 
        
        \textbf{Case 1. $|I|+|J|\leq3$.} Note that in this case we can always estimate $\psi$ in $L^\infty$ using \eqref{eq:disp-psi01} and we can arrange for the term which remains in $L^2$ to be $\phi$. We break up the nonlinear terms as follows
        \begin{equation*}
            \begin{aligned}
                \|(t+r)&\partial^I\Omega^J\mathcal{N}(\psi,\phi)\|_{L^2(\Sigma_{t,\tilde{u}}^{ex})} \\
                &\lesssim \sum_{\substack{I_{1\to 3}=I\\J_{1\to3}=J}}\Big( \sum_{0\leq m\leq3}\Big\|t|\phi|^{p-3}\prod_{k=1}^{m}|\partial^{I_k}\Omega^{J_k}\phi|\prod_{l=m+1}^{3}|\partial^{I_l}\Omega^{J_l}\psi|\Big\|_{L^2(\Sigma_{t,\tilde{u}}^{ex})}\\
                &+\sum_{1\leq m\leq3}\Big\|t|\psi|^{p-3}\prod_{k=1}^{m}|\partial^{I_k}\Omega^{J_k}\phi|\prod_{l=m+1}^{3}|\partial^{I_l}\Omega^{J_l}\psi|\Big\|_{L^2(\Sigma_{t,\tilde{u}}^{ex})}+\Big\|t\phi|\psi|^{p-4}\prod_{l=1}^{3}|\partial^{I_l}\Omega^{J_l}\psi|\Big\|_{L^2(\Sigma_{t,\tilde{u}}^{ex})}\Big)
                \\&                =:\mathcal{A}_1+\mathcal{A}_2+\mathcal{A}_3.
            \end{aligned}
        \end{equation*}
		\emph{Estimate for $\mathcal{A}_1$.} The terms in $\mathcal{A}_1$ involve at least a factor of $|\phi|^{p-3}$. We break these up depending on whether they contain at least one $\Gamma \phi$ factor, or solely involve $\Gamma \psi$:
        \begin{equation*}
            \begin{aligned}
                \mathcal{A}_1&\lesssim\sum_{\substack{I_{1\to 3}=I\\J_{1\to3}=J}} \Big( \sum_{1\leq m\leq3}\Big\|t|\phi|^{p-3}\prod_{k=1}^{m}|\partial^{I_k}\Omega^{J_k}\phi|\prod_{l=m+1}^{3}|\partial^{I_l}\Omega^{J_l}\psi|\Big\|_{L^2(\Sigma_{t,\tilde{u}}^{ex})}
                +\Big\|t|\phi|^{p-3}\prod_{l=1}^{3}|\partial^{I_l}\Omega^{J_l}\psi|\Big\|_{L^2(\Sigma_{t,\tilde{u}}^{ex})}\Big)
                \\&
                =:\mathcal{A}_1^1+\mathcal{A}_1^2.
            \end{aligned}
        \end{equation*}
Using Proposition~\ref{prop:estimates_ex1} and~\eqref{eq:disp-psi01} on the terms in $\mathcal{A}_{1}$, we have
		\begin{equation*}
			\begin{aligned}
				\mathcal{A}_{1}^1&\lesssim\sum_{\substack{I_{1\to 3}=I\\J_{1\to3}=J}} \sum_{1\leq m\leq3}\|\partial^{I_1}\Omega^{J_1}\phi\|_{L^2(\Sigma_{t,\tilde{u}}^{ex})}\|t|\phi|^{p-3}\|_{L^\infty(\Sigma_{t,\tilde{u}}^{ex})}\cdot\prod_{k=2}^m\|\partial^{I_k}\Omega^{J_k}\phi\|_{L^\infty(\Sigma_{t,\tilde{u}}^{ex})}\prod_{l=m+1}^3\|\partial^{I_l}\Omega^{J_l}\psi\|_{L^\infty(\Sigma_{t,\tilde{u}}^{ex})}\\
                &\lesssim C_3^p\epsilon^{\frac{p}{2}-\delta-|I|}t^{-p+\frac{7}{2}+\delta}.
			\end{aligned}
		\end{equation*}
Similarly, we obtain
		\begin{equation*}
			\begin{aligned}
				\mathcal{A}_1^{2}&\lesssim\sum_{\substack{I_{1\to 3}=I\\J_{1\to3}=J}} \|\phi\|_{L^2(\Sigma_{t,\tilde{u}}^{ex})}\|t|\phi|^{p-4}\|_{L^\infty(\Sigma_{t,\tilde{u}}^{ex})}\prod_{l=1}^3\|\partial^{I_l}\Omega^{J_l}\psi\|_{L^\infty(\Sigma_{t,\tilde{u}}^{ex})}
                \lesssim C_3^p\epsilon^{\frac{p-1}{2}-\delta}t^{-p+4+2\delta}.
			\end{aligned}
		\end{equation*}
\emph{Estimate for $\mathcal{A}_2$.} The terms in $\mathcal{A}_2$ involve at least a factor of $|\psi|^{p-3}$. We again apply Proposition~\ref{prop:estimates_ex1} and~\eqref{eq:disp-psi01} to find
		\begin{equation*}
			\begin{aligned}
				\mathcal{A}_2
                &\lesssim\sum_{\substack{I_{1\to 3}=I\\J_{1\to3}=J}}  \sum_{1\leq m\leq 3}\|\partial^{I_1}\Omega^{J_1}\phi\|_{L^2(\Sigma_{t,\tilde{u}}^{ex})}\|t|\psi|^{p-3}\|_{L^\infty(\Sigma_{t,\tilde{u}}^{ex})}\cdot\prod_{k=2}^m\|\partial^{I_k}\Omega^{J_k}\phi\|_{L^\infty(\Sigma_{t,\tilde{u}}^{ex})}\prod_{l=m+1}^3\|\partial^{I_l}\Omega^{J_l}\psi\|_{L^\infty(\Sigma_{t,\tilde{u}}^{ex})}\\
                &\lesssim C_3^p\epsilon^{1+(\frac{1}{2}-\delta)\frac{p-5}{2}-|I|}t^{-\frac{p-1}{4}+\frac{p+3}{2}\delta}.
			\end{aligned}
		\end{equation*}
\emph{Estimate for $\mathcal{A}_3$.} Lastly the terms in $\mathcal{A}_3$ involve at least a factor of $|\psi|^{p-4}$ and no derivatives on $\phi$. 
        \begin{equation*}
            \begin{aligned}
                \mathcal{A}_3&\lesssim\sum_{\substack{I_{1\to 3}=I\\J_{1\to3}=J}} \|\phi\|_{L^2(\Sigma_{t,\tilde{u}}^{ex})}\|\psi\|_{L^\infty(\Sigma_{t,\tilde{u}}^{ex})}^{p-4}\prod_{l=1}^3\|t^{\frac{1}{3}}\partial^{I_l}\Omega^{J_l}\psi\|_{L^\infty(\Sigma_{t,\tilde{u}}^{ex})}
                \lesssim C_3^p\epsilon^{1+(\frac{1}{2}-\delta)\frac{p-5}{2}}t^{-\frac{p-1}{4}+\frac{p+3}{2}\delta}.
            \end{aligned}
        \end{equation*}
Thus, for $|I|+|J|\leq3$, we have
        \begin{equation*}
            \begin{aligned}
                \|(t+r)\partial^I\Omega^J\mathcal{N}(\psi,\phi)\|_{L^2(\Sigma_{t,\tilde{u}}^{ex})}\lesssim C_3^p\epsilon^{1+(\frac{1}{2}-\delta)\frac{p-5}{2}-|I|}t^{-\frac{p-1}{4}+\frac{p+3}{2}\delta},
            \end{aligned}
        \end{equation*}
        where $0<\delta<\frac{p-5}{2(p+3)}.$

        Applying the conformal energy estimate~\eqref{est:conformal_ex}, we conclude that for $|I|+|J|\leq3$
    	\begin{equation*}
    		\begin{aligned}
    			E^{ex}_{con}(\tilde{u},t, \partial^I \Omega^J \phi)^{\frac{1}{2}}&\lesssim\mathcal{E}^{ex}_{con}(\tilde{u},t_1, \partial^I \Omega^J \phi)^{\frac{1}{2}}+\int_{t_1}^{t}\|(s+r)\partial^I\Omega^J\mathcal{N}(\psi,\phi)\|_{L^2(\Sigma_{s,\tilde{u}}^{ex})}\d s\\
    			&\lesssim C_2\epsilon^{1-|I|}+C_3^p\epsilon^{1+(\frac{1}{2}-\delta)\frac{p-5}{2}-|I|},
    		\end{aligned}
    	\end{equation*}
        where we used Lemma~\ref{lem:local-estimates}.
		
		\textbf{Case 2. $|I|+|J|=4$ with $|I|\ge1$.} Note that, unlike the previous case, we can no longer always estimate $\psi$ in $L^\infty$. However, since $\|\Gamma^{\leq 5} \psi \|\sim \epsilon^{-1}$ and $\| \partial \Gamma^{\leq 5} \psi\| \sim 1$, we  need to take at least one derivative in $\partial$ as otherwise we  would lose the necessary powers of $\epsilon$. We first divide up our terms depending on the coefficients $|\phi|^{p-4}$ or $|\psi|^{p-4}$ and the mixture of products $\Gamma \phi, \Gamma \psi$ that appear. 
        \begin{equation*}
            \begin{aligned}
                \|&(t+r)\partial^I\Omega^J\mathcal{N}(\psi,\phi)\|_{L^2(\Sigma_{t,\tilde{u}}^{ex})}
                \\&\lesssim \sum_{\substack{I_{1\to 4}=I\\J_{1\to4}=J}}  \Big(\sum_{|K_1|, \ldots, |K_4|=1}\Big\|t|\phi|^{p-4}\prod_{l=1}^{4}|\partial^{I_l}\Omega^{J_l}\psi|\Big\|_{L^2(\Sigma_{t,\tilde{u}}^{ex})}
                +\sum_{1\leq m\leq4}\Big\|t|\phi|^{p-4}\prod_{k=1}^{m}|\partial^{I_k}\Omega^{J_k}\phi|\prod_{l=m+1}^{4}|\partial^{I_l}\Omega^{J_l}\psi|\Big\|_{L^2(\Sigma_{t,\tilde{u}}^{ex})}
                \\& +\sum_{1\leq m\leq 4}\Big\|t|\psi|^{p-4}\prod_{k=1}^{m}|\partial^{I_k}\Omega^{J_k}\phi|\prod_{l=m+1}^{4}|\partial^{I_l}\Omega^{J_l}\psi|\Big\|_{L^2(\Sigma_{t,\tilde{u}}^{ex})}
                +\Big\|t\phi|\psi|^{p-5}\prod_{l=1}^{4}|\partial^{I_l}\Omega^{J_l}\psi|\Big\|_{L^2(\Sigma_{t,\tilde{u}}^{ex})}\Big)
                \\&=:\mathcal{A}_1+\mathcal{A}_2+\mathcal{A}_3+\mathcal{A}_4.
            \end{aligned}
        \end{equation*}
        \emph{Estimate for $\mathcal{A}_1$.} Applying Proposition~\ref{prop:estimates_ex1} and~\eqref{eq:disp-psi01}, we deduce 
		\begin{equation*}
		    \begin{aligned}
		        \mathcal{A}_1&\lesssim\sum_{\substack{I_{2\to 4}=|I|-1\\J_{2\to 4}=J} }\sum_{|K_2|,\ldots, |K_4|=1}\|\partial\psi\|_{L^2(\Sigma_{t,\tilde{u}}^{ex})}\|t|\phi|^{p-4}\|_{L^\infty(\Sigma_{t,\tilde{u}}^{ex})}\prod_{l=2}^{4}\|\partial^{I_l}\Omega^{J_l}\psi\|_{L^\infty(\Sigma_{t,\tilde{u}}^{ex})}\\
                &\lesssim C_3^p\epsilon^{\frac{p-3}{2}-\delta}t^{-p+4+2\delta}.
		    \end{aligned}
		\end{equation*}
        \emph{Estimate for $\mathcal{A}_2$.} We can break up the estimate for $\mathcal{A}_2$ as follows:
        \begin{equation*}
            \begin{aligned}
                \mathcal{A}_2&\lesssim\sum_{\substack{I_{1\to 4}=I\\J_{1\to4}=J}} \Big( \Big\|t|\phi|^{p-4}\prod_{k=1}^{4}|\partial^{I_k}\Omega^{J_k}\phi|\Big\|_{L^2(\Sigma_{t,\tilde{u}}^{ex})}
                +\sum_{1\leq m\leq3}\Big\|t|\phi|^{p-4}\prod_{k=1}^{m}|\partial^{I_k}\Omega^{J_k}\phi|\prod_{l=m+1}^{4}|\partial^{I_l}\Omega^{J_l}\psi|\Big\|_{L^2(\Sigma_{t,\tilde{u}}^{ex})}\Big) \\&
                =:\mathcal{A}_2^1+\mathcal{A}_2^2.
            \end{aligned}
        \end{equation*}
Using Proposition~\ref{prop:estimates_ex1}, we have 
		\begin{equation*}
			\begin{aligned}
				\mathcal{A}_{2}^1&\lesssim
                \sum_{\substack{I_{1\to 4}=I\\J_{1\to4}=J}}\sum_{|K_1|\geq |K_2|, \ldots, |K_4|} \|\partial^{I_1}\Omega^{J_1}\phi\|_{L^2(\Sigma_{t,\tilde{u}}^{ex})}\|t|\phi|^{p-4}\|_{L^\infty(\Sigma_{t,\tilde{u}}^{ex})}
                \prod_{k=2}^4\|\partial^{I_k}\Omega^{J_k}\phi\|_{L^\infty(\Sigma_{t,\tilde{u}}^{ex})}
                \lesssim C_3^p\epsilon^{\frac{p+1}{2}-|I|}t^{-p+2}.
			\end{aligned}
		\end{equation*}
For the $\mathcal{A}_2^{2}$ term, we need to separately treat the nonlinearity involving $\partial \Gamma^3 \psi$, since this needs to be controlled in $L^2$. We have
		\begin{equation*}
			\begin{aligned}
				\mathcal{A}_2^{2}&\lesssim\sum_{\substack{I_{1\to 4}=I\\J_{1\to4}=J}} \sum_{\substack{|K_2|,\ldots, |K_4|\leq3\\1\leq m\leq 3}}\|\partial^{I_1}\Omega^{J_1}\phi\|_{L^2(\Sigma_{t,\tilde{u}}^{ex})}\|t|\phi|^{p-4}\|_{L^\infty(\Sigma_{t,\tilde{u}}^{ex})}\\&\qquad\qquad \cdot\prod_{k=2}^m\|\partial^{I_k}\Omega^{J_k}\phi\|_{L^\infty(\Sigma_{t,\tilde{u}}^{ex})}\prod_{l=m+1}^4\|\partial^{I_l}\Omega^{J_l}\psi\|_{L^\infty(\Sigma_{t,\tilde{u}}^{ex})}\\
                &+\sum_{\substack{|I|\geq1,\\ 1\leq m\leq3}}\|\partial^I\Omega^J\psi\|_{L^2(\Sigma_{t,\tilde{u}}^{ex})}\|t|\phi|^{p-4+m}\|_{L^\infty(\Sigma_{t,\tilde{u}}^{ex})}\|\psi\|^{3-m}_{L^\infty(\Sigma_{t,\tilde{u}}^{ex})}\\
                &\lesssim C_3^p\epsilon^{\frac{p-1}{2}-2\delta-|I|}t^{-p+4+2\delta}.
			\end{aligned}
		\end{equation*}
\emph{Estimate for $\mathcal{A}_3$.} The same argument applies to $\mathcal{A}_3$, yielding the following estimate
        \begin{equation*}
            \begin{aligned}
                \mathcal{A}_3&\lesssim\sum_{\substack{I_{1\to 4}=I\\J_{1\to4}=J}} \Big(\Big\|t|\psi|^{p-4}\prod_{k=1}^{4}|\partial^{I_k}\Omega^{J_k}\phi|\Big\|_{L^2(\Sigma_{t,\tilde{u}}^{ex})}
                +\sum_{1\leq m\leq 3}\Big\|t|\psi|^{p-4}\prod_{k=1}^{m}|\partial^{I_k}\Omega^{J_k}\phi|\prod_{l=m+1}^{4}|\partial^{I_l}\Omega^{J_l}\psi|\Big\|_{L^2(\Sigma_{t,\tilde{u}}^{ex})}\Big)\\
                &\lesssim C_3^p\epsilon^{1+(\frac{1}{2}-\delta)\frac{p-5}{2}-|I|}t^{-\frac{p-1}{4}+\frac{p+3}{2}\delta}+C_3^p\epsilon^{2-\delta-|I|}t^{-\frac{p}{2}+\frac{3}{2}+(p-3)\delta}.
            \end{aligned}
        \end{equation*}
\emph{Estimate for $\mathcal{A}_4$.} Applying Proposition~\ref{prop:estimates_ex1},~\eqref{eq:BA-disp02}, and~\eqref{eq:disp-psi01}, we obtain
        \begin{equation*}
            \begin{aligned}
                \mathcal{A}_4&\lesssim\|\partial^I\Omega^J\psi\|_{L^2(\Sigma_{t,\tilde{u}}^{ex})}\|t\phi\|_{L^\infty(\Sigma_{t,\tilde{u}}^{ex})}\|\psi\|_{L^\infty(\Sigma_{t,\tilde{u}}^{ex})}^{p-2}
                \\& +\sum_{\substack{I_{1\to 4}=I\\J_{1\to4}=J}} \sum_{|K_1|,\ldots, |K_4|\leq3}\|\phi\|_{L^2(\Sigma_{t,\tilde{u}}^{ex})}\|\psi\|_{L^\infty(\Sigma_{t,\tilde{u}}^{ex})}^{p-5}\prod_{l=1}^4\|t^{\frac{1}{4}}\partial^{I_l}\Omega^{J_l}\psi\|_{L^{\infty}(\Sigma_{t,\tilde{u}}^{ex})}\\
                &\lesssim C_3^p\epsilon^{1-\delta}t^{-\frac{p}{2}+\frac{3}{2}+(p-1)\delta}.
            \end{aligned}
        \end{equation*}

        Then for $|I|+|J|=4$ with $|I|\geq1$, applying the conformal energy estimate in~\eqref{est:conformal_ex}, we have
    	\begin{equation*}
    		\begin{aligned}
    			E^{ex}_{con}(\tilde{u},t, \partial^I \Omega^J \phi)^{\frac{1}{2}}&\lesssim\mathcal{E}^{ex}_{con}(\tilde{u},t_1, \partial^I \Omega^J \phi)^{\frac{1}{2}}+\int_{t_1}^{t}\|(s+r)\partial^I\Omega^J\mathcal{N}(\psi,\phi)\|_{L^2(\Sigma_{s,\tilde{u}}^{ex})}\d s\\
    			&\lesssim C_2\epsilon^{1-|I|}+C_3^p\epsilon^{2-\delta-|I|}+C_3^p\epsilon^{1+(\frac{1}{2}-\delta)\frac{p-5}{2}-|I|}.
    		\end{aligned}
    	\end{equation*}

        \textbf{Case 3. $|I|+|J|=5$ with $|I|\ge2$.}
        \begin{equation*}
            \begin{aligned}
                &\|(t+r)\partial^I\Omega^J\mathcal{N}(\psi,\phi)\|_{L^2(\Sigma_{t,\tilde{u}}^{ex})}\\
                &\lesssim \sum_{\substack{I_{1\to 5}=I\\J_{1\to5}=J}} \Big( \sum_{1\leq m\leq4}\Big\|t|\psi|^{p-5}\prod_{k=1}^{m}|\partial^{I_k}\Omega^{J_k}\phi|\prod_{l=m+1}^{5}|\partial^{I_l}\Omega^{J_l}\psi|\Big\|_{L^2(\Sigma_{t,\tilde{u}}^{ex})}\\
                &+\sum_{\substack{|I_l|+|J_l|=1\\ 1\leq l\leq5}}\left(\Big\|t|\psi|^{p-5}\prod_{l=1}^{5}|\partial^{I_l}\Omega^{J_l}\phi|\Big\|_{L^2(\Sigma_{t,\tilde{u}}^{ex})}+\Big\|t|\phi|^{p-5}\prod_{l=1}^{5}|\partial^{I_l}\Omega^{J_l}\psi|\Big\|_{L^2(\Sigma_{t,\tilde{u}}^{ex})}\right)\\
                &+\sum_{1\leq m\leq5}\Big\|t|\phi|^{p-5}\prod_{k=1}^{m}|\partial^{I_k}\Omega^{J_k}\phi|\prod_{l=m+1}^{5}|\partial^{I_l}\Omega^{J_l}\psi|\Big\|_{L^2(\Sigma_{t,\tilde{u}}^{ex})}\Big) =:\mathcal{A}_1+\mathcal{A}_2+\mathcal{A}_3+\mathcal{A}_4.
            \end{aligned}
        \end{equation*}
 \emph{Estimate for $\mathcal{A}_1$.} Applying Proposition~\ref{prop:estimates_ex1} and~\eqref{eq:disp-psi01}, we obtain
        \begin{equation*}
            \begin{aligned}
                \mathcal{A}_1&\lesssim\sum_{\substack{I_{1\to 5}=I\\J_{1\to5}=J}}  \Big( \sum_{\substack{|K_2|, \ldots, |K_5|\leq3\\ 1\leq m\leq4}}\|\partial^{I_1}\Omega^{J_1}\phi\|_{L^2(\Sigma_{t,\tilde{u}}^{ex})}\|t|\psi|^{p-5}\|_{L^\infty(\Sigma_{t,\tilde{u}}^{ex})}
                \cdot \\ & \qquad \qquad \cdot \prod_{k=2}^m\|\partial^{I_k}\Omega^{J_k}\phi\|_{L^\infty(\Sigma_{t,\tilde{u}}^{ex})}\prod_{l=m+1}^5\|\partial^{I_l}\Omega^{J_l}\psi\|_{L^\infty(\Sigma_{t,\tilde{u}}^{ex})}\\
                &+\sum_{\substack{|K_5|\geq|K|-1,\ \\1\leq m\leq4}}\|\partial^{I_5}\Omega^{J_5}\psi\|_{L^2(\Sigma_{t,\tilde{u}}^{ex})}\prod_{k=1}^m\|t^{\frac{1}{m}}\partial^{I_k}\Omega^{J_k}\phi\|_{L^\infty(\Sigma_{t,\tilde{u}}^{ex})}  \cdot\|\psi\|_{L^\infty(\Sigma_{t,\tilde{u}}^{ex})}^{p-5}\prod_{l=m+1}^4\|\partial^{I_l}\Omega^{J_l}\psi\|_{L^\infty(\Sigma_{t,\tilde{u}}^{ex})} \Big)\\
                &\lesssim C_3^p\epsilon^{1+(\frac{1}{2}-\delta)\frac{p-5}{2}-|I|}t^{-\frac{p-1}{4}+\frac{p+3}{2}\delta}+C_3^p\epsilon^{2-\delta-|I|}t^{-\frac{p}{2}+\frac{3}{2}+(p-3)\delta}.
            \end{aligned}
        \end{equation*}\
        \emph{Estimate for $\mathcal{A}_2$.}
        \begin{equation*}
            \begin{aligned}
                \mathcal{A}_2&\lesssim \sum_{\substack{I_{1\to 5}=I\\J_{1\to5}=J}} \sum_{|K_2|, \ldots, |K_5|=1}\|\partial^{I_1}\Omega^{J_1}\phi\|_{L^2(\Sigma_{t,\tilde{u}}^{ex})}\|\psi\|^{p-5}_{L^\infty(\Sigma_{t,\tilde{u}}^{ex})}
                \prod_{k=2}^5\|t^{\frac{1}{4}}\partial^{I_k}\Omega^{J_k}\phi\|_{L^\infty(\Sigma_{t,\tilde{u}}^{ex})}\\
                &\lesssim C_3^p\epsilon^{3-|I|}t^{-3}.
            \end{aligned}
        \end{equation*}
        \emph{Estimate for $\mathcal{A}_3$.} Applying Proposition~\ref{prop:estimates_ex1} and~\eqref{eq:disp-psi01}, we obtain
        \begin{equation*}
            \begin{aligned}
                \mathcal{A}_3& \lesssim\sum_{\substack{I_{2\to 5}=|I|-1 \geq 1 \\J_{2\to5}=J}} \sum_{|K_2|, \ldots, |K_5|=1}\|\partial\psi\|_{L^2(\Sigma_{t,\tilde{u}}^{ex})}\|\phi\|^{p-5}_{L^\infty(\Sigma_{t,\tilde{u}}^{ex})}\prod_{l=2}^5\|t^{\frac{1}{4}}\partial^{I_l}\Omega^{J_l}\psi\|_{L^\infty(\Sigma_{t,\tilde{u}}^{ex})}\\
                &\lesssim C_3^p\epsilon^{\frac{p-3}{2}}t^{-p+4+4\delta}.
            \end{aligned}
        \end{equation*}
        \emph{Estimate for $\mathcal{A}_4$.} The same argument for $\mathcal{A}_3$ applies here, yielding the following result
        \begin{equation*}
            \begin{aligned}
                \mathcal{A}_4
                &\lesssim C_3^p\epsilon^{\frac{p-3}{2}-|I|}t^{-p+4+4\delta}.
            \end{aligned}
        \end{equation*}
Then for $|I|+|J|=5$ with $|I|\geq2$, applying the conformal energy estimate in~\eqref{est:conformal_ex}, we have
    	\begin{equation*}
    		\begin{aligned}
    			E^{ex}_{con}(\tilde{u},t, \partial^I \Omega^J \phi)^{\frac{1}{2}}&\lesssim\mathcal{E}^{ex}_{con}(\tilde{u},t_1, \partial^I \Omega^J \phi)^{\frac{1}{2}}+\int_{t_1}^{t}\|(s+r)\partial^I\Omega^J\mathcal{N}(\psi,\phi)\|_{L^2(\Sigma_{s,\tilde{u}}^{ex})}\d s\\
    			&\lesssim C_2\epsilon^{1-|I|}+C_3^p\epsilon^{1+(\frac{1}{2}-\delta)\frac{p-5}{2}-|I|}+C_3^p\epsilon^{2-\delta-|I|}.
    		\end{aligned}
    	\end{equation*}

	    $\bullet$ We next derive \eqref{eq:E-con-Lpphi} from the conformal energy estimates on $L_\alpha\phi$, where $\alpha=0,1,2,3$. 
        From \eqref{eq:E-con-pOphi} and~\eqref{est:L2_ex2}, we obtain
        \begin{align*}
            \|L_0\phi\|_{L^2(\Sigma_{t,\tilde{u}}^{ex})}&\lesssim\mathcal{E}_{con}^{ex}(\tilde{u}, t,\phi)^{\frac{1}{2}}+\|\phi\|_{L^2(\Sigma_{t,5\epsilon}^{ex})}\lesssim C_3\epsilon,\\
            \|L_a\phi\|_{L^2(\Sigma_{t,\tilde{u}}^{ex})}&\lesssim\mathcal{E}_{con}^{ex}(\tilde{u}, t,\phi)^{\frac{1}{2}}\lesssim C_3\epsilon,\quad a=1,2,3.
        \end{align*}
        Applying the Leibniz rule, we deduce
        \begin{align*}
            |L_\alpha\mathcal{N}(\psi,\phi)|
            &\lesssim|\phi|^{p-1}|L_\alpha\phi|+|\phi|^{p-1}|L_\alpha\psi|+|\psi|^{p-1}|L_\alpha\phi|+|\phi||\psi|^{p-2}|L_\alpha\psi|.
        \end{align*}
        Employing the above results, it follows that
	    \begin{equation*}
	    	\begin{aligned}
	    		\|(t+r)L_\alpha\mathcal{N}(\psi,\phi)\|_{L^2(\Sigma_{t,\tilde{u}}^{ex})}&\lesssim\|L_\alpha\phi\|_{L^2(\Sigma_{t,\tilde{u}}^{ex})}\|t|\phi|^{p-1}\|_{L^\infty(\Sigma_{t,\tilde{u}}^{ex})}+\|\phi\|_{L^2(\Sigma_{t,\tilde{u}}^{ex})}\|t|\phi|^{p-2}L_\alpha\psi\|_{L^\infty(\Sigma_{t,\tilde{u}}^{ex})}\\
	    		&+\|L_\alpha\phi\|_{L^2(\Sigma_{t,\tilde{u}}^{ex})}\|t|\psi|^{p-1}\|_{L^\infty(\Sigma_{t,\tilde{u}}^{ex})}+\|\phi\|_{L^2(\Sigma_{t,\tilde{u}}^{ex})}\|t|\psi|^{p-2}L_\alpha\psi\|_{L^\infty(\Sigma_{t,\tilde{u}}^{ex})}\\
	    		&\lesssim C_3\epsilon t^{-\frac{p-1}{4}+\frac{p+3}{2}\delta},
	    	\end{aligned}
	    \end{equation*}
        where we used the first two conditions in \eqref{eq:C3peps_assumption}. 
        Then from the conformal energy  estimate~\eqref{est:conformal_ex} and the pointwise bounds at $t_1$ given in Lemma~\ref{lem:local-estimates}, we have
    	\begin{equation*}
    		\begin{aligned}
    			E^{ex}_{con}(\tilde{u},t, L_\alpha \phi)^{\frac{1}{2}}&\lesssim\mathcal{E}^{ex}_{con}(\tilde{u},t_1, L_\alpha \phi)^{\frac{1}{2}}+\int_{t_1}^{t}\|(s+r)L_\alpha\mathcal{N}(\psi,\phi)\|_{L^2(\Sigma_{s,\tilde{u}}^{ex})}\d s\lesssim C_2\epsilon+C_3\epsilon\lesssim C_3\epsilon.
    		\end{aligned}
    	\end{equation*}
        
        Next, we consider the conformal energy on $L_a\partial^I\phi$ with $|I|=1,2$. The  energy estimate~\eqref{est:conformal_ex} implies
        \begin{equation*}
    		\begin{aligned}
    			&E^{ex}_{con}(\tilde{u},t, L_a\partial^I \phi)^{\frac{1}{2}}\lesssim\mathcal{E}^{ex}_{con}(\tilde{u},t_1, L_a\partial^I \phi)^{\frac{1}{2}}+\int_{t_1}^{t}\|(s+r)L_a\partial^I\mathcal{N}(\psi,\phi)\|_{L^2(\Sigma_{s,\tilde{u}}^{ex})}\d s.
    		\end{aligned}
    	\end{equation*}
        The initial term is estimated from Proposition~\ref{prop:estimates_local1}, and so 
        \begin{equation*}
            \mathcal{E}^{ex}_{con}(\tilde{u},t_1,L_a\partial^I\phi)^{\frac{1}{2}}\lesssim C_2\epsilon^{-|I|}.
        \end{equation*}
         The spacetime integral can be treated with the same argument as in the proof of \eqref{eq:E-con-pOphi} , which implies
         \begin{equation*}
             E^{ex}_{con}(\tilde{u},t, L_a\partial^I \phi)^{\frac{1}{2}}\lesssim C_2\epsilon^{-|I|}+C_3.
         \end{equation*}
        The same argument applies to $E^{ex}(\tilde{u},t,L_a\partial^I\phi)$ and yields the same estimate. 
	\end{proof}

    Thanks to our previous conformal energy estimates, we obtain in the following proposition various  pointwise decay estimates for the Lorentz and scaling vector fields applied to $\phi$. These will be used in the analysis of the interior region.
	\begin{proposition}\label{prop:decay_ex}
		In the exterior region $\mathcal{D}^{ex}$, for $\alpha=0,1,2,3$, we have the following estimates:
		\begin{align*}
			|L_\alpha \partial^I \Omega^J \phi|
			&\lesssim C_3\epsilon^{\frac{1}{2}-|I|} r^{-1},
			\quad |I|+|J|\le 2,\ |J|\leq1,\\
            |L \partial^I \Omega^J \phi|
			&\lesssim C_3\epsilon^{\frac{1}{2}-|I|} r^{-2},
			\quad |I|+|J|\le 2,\ |J|\leq1.
		\end{align*}
	\end{proposition}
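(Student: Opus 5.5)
We would derive both bounds from the exterior Sobolev inequality of Lemma~\ref{lem:pointwise_ex}, applied to $f=L_\alpha\partial^I\Omega^J\phi$, combined with the conformal energy bounds of Proposition~\ref{prop:energy_ex}.

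\textbf{First bound.} Lemma~\ref{lem:pointwise_ex} gives
\[
|L_\alpha \partial^I\Omega^J\phi|\lesssim r^{-1}\epsilon^{\frac12}\|\partial_r\Omega^{\le 2}L_\alpha\partial^I\Omega^J\phi\|_{L^2(\Sigma^{ex}_{t,5\epsilon})}.
\]
We then commute $\partial_r\Omega^{\le2}$ past $L_\alpha$. By Lemma~\ref{lem:commutator1}, $L_0$ commutes with $\Omega$, and by Lemma~\ref{lem:commutator2} the commutators $[L_a,\Omega^J]$ and $[\partial,L_\alpha]$ produce only lower-order terms of the same type. After commuting, the main term is $\|L_\alpha\,\partial\,\partial^{I}\Omega^{\le 2+|J|}\phi\|$ up to lower-order terms. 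This is controlled by the conformal energy $E^{ex}_{con}(5\epsilon,t,\partial^{I'}\Omega^{J'}\phi)^{1/2}$ with $|I'|=|I|+1$ and $|I'|+|J'|\le |I|+|J|+3$. The restrictions $|I|+|J|\le2$ and $|J|\le1$ ensure exactly what Proposition~\ref{prop:energy_ex} needs: either the total order is at most $3$, or it is $4$ with $|I'|\ge1$, or it is $5$ with $|I'|\ge2$. Indeed $|J|\le1$ forces $|I'|\ge2$ whenever the total reaches $5$. Those bounds give $C_3\epsilon^{1-|I'|}=C_3\epsilon^{-|I|}$, and hence $|L_\alpha\partial^I\Omega^J\phi|\lesssim C_3\epsilon^{\frac12-|I|}r^{-1}$. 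For the $L_0$ component we must account for the $+2\varphi$ in the energy: $\|L_0\varphi\|\le\|L_0\varphi+2\varphi\|+2\|\varphi\|$, and the correction is handled by \eqref{est:L2_ex2} with the same $\epsilon$ power. Commutator terms of the form $\partial\Gamma^{\le}$ are bounded by \eqref{est:L2_ex1} times the appropriate weight, since $t\sim r$ and $\epsilon$-counting is preserved.

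\textbf{Second bound.} We use $L=(t+r)^{-1}(L_0+\omega^aL_a)$ from \eqref{eq:VF-identities}, so that
\[
|L\partial^I\Omega^J\phi|\lesssim (t+r)^{-1}\sum_\alpha|L_\alpha\partial^I\Omega^J\phi|.
\]
In $\mathcal{D}^{ex}$ we have $t+r\sim r$ because $t-r\le5\epsilon$ and $r\ge1-4\epsilon$. Inserting the first bound then gives $C_3\epsilon^{\frac12-|I|}r^{-2}$.

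\textbf{Main obstacle.} The hard step is the bookkeeping in the first bound: checking that every term produced by the Sobolev angular derivatives and the commutators falls within the index ranges where Proposition~\ref{prop:energy_ex} applies with the sharp $\epsilon^{1-|I'|}$. Top-order $\Omega$-heavy terms with $|I'|\le1$ at total order $4$ or $5$ are not covered, and excluding them is exactly the role of $|J|\le1$. A further difficulty is that the angular commutators $[\Omega,L_a]$ generate other boosts, which must be re-expressed so that they are still controlled by the conformal energy.
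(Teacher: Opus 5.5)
Your proposal is correct and follows the paper's own argument. The paper applies Lemma~\ref{lem:pointwise_ex} to $L_\alpha\partial^I\Omega^J\phi$, controls the commuted terms via Propositions~\ref{prop:estimates_ex1} and~\ref{prop:energy_ex}, and obtains the second bound from $L=(t+r)^{-1}(L_0+\omega^aL_a)$. Your index bookkeeping, namely that $|J|\le1$ keeps $|I'|\ge2$ at total order $5$, and your treatment of the $+2\varphi$ term in the $L_0$ component via \eqref{est:L2_ex2}, correctly supply details the paper leaves implicit.
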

        \begin{proof}
    	      Applying Lemma~\ref{lem:pointwise_ex} to $L_\alpha\partial^I\Omega^J\phi$, together with Propositions~\ref{prop:estimates_ex1} and~\ref{prop:energy_ex}, we obtain the first two estimates. For $L\partial^I\Omega^J\phi$, if follows from 
              \begin{equation*}
                  |L \partial^I \Omega^J \phi|\lesssim\langle t+r\rangle^{-1}|L_\alpha\partial^I \Omega^J \phi|.
              \end{equation*}
        \end{proof}

In order to establish global existence in the interior region, we need some refined pointwise estimates. In particular in \eqref{est:refined_ex3} we gain smallness and decay in $|L^2\phi|$ which we use later when proving the boundary flux estimates in Lemma \ref{lem:boundary}. 
	
	\begin{lemma}\label{lem:refined_ex}
		Under condition \eqref{eq:C3peps_assumption},  the following pointwise estimates hold on $\mathcal{B}_{\tilde u}$, with $2\epsilon\le\tilde u\le5\epsilon$,
            \begin{align}
                |\underline{L}\Omega^J\phi|\lesssim C_3\epsilon^{\frac{1}{2}}\langle t+r\rangle^{-1}, \quad |J|\leq1, \label{est:refined_ex1}\\
                |L\underline{L}\Omega^J\phi|\lesssim C_3\epsilon^{\frac{1}{2}}\langle t+r\rangle^{-2}, \quad |J|\leq1 .\label{est:refined_ex2}
            \end{align}
         The following pointwise estimate for $L^2\phi$ hold in $\mathcal{D}^{ex}$
            \begin{equation}\label{est:refined_ex3}
                |L^2\phi|\lesssim C_3\epsilon^{\frac{1}{2}}\langle t+r\rangle^{-3}.
            \end{equation}
	
	\end{lemma}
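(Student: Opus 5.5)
The plan follows the mechanism already used at time $t_1$ in Lemma~\ref{lem:local-estimates}. We write the equation in the null frame, commute with $L$ or $\Omega$, and integrate along null rays of length $O(\epsilon)$ using the fundamental theorem of calculus and Gronwall. This time we keep track of the decay in $r\sim t$ provided by the exterior estimates.

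\emph{Step 1: proof of \eqref{est:refined_ex3}.} We start from \eqref{equ:waveLL_} and apply $L$ once and then twice, obtaining
\[
\underline{L}(rL^2\phi)\;=\;\tfrac{1}{r}\cdot\big(\text{terms in } L^{\le 2}\phi,\ L\slashed{\Delta}_{\mathbb{S}^2}\phi,\ \slashed\Delta_{\mathbb{S}^2}\phi\big)+rL\mathcal{N}+\ldots .
\]
Using the weight $r$ (or $r^3$ after absorbing the linear terms) removes the $\frac1r L^2\phi$ term up to lower order. Then:
\begin{itemize}
\item Angular terms $r^{-2}L\Omega^2\phi$ are bounded by $C_3\epsilon^{1/2}r^{-4}$ via Proposition~\ref{prop:decay_ex} (using $|J|\le1$ twice through $\Omega$-commutation, or the Sobolev Lemma~\ref{lem:pointwise_ex} applied to $L_\alpha\Omega^2\phi$ with Proposition~\ref{prop:energy_ex}).
\item Terms $r^{-2}L\phi$ and $r^{-3}\Omega^2\phi$ are bounded by $C_3\epsilon^{1/2}r^{-4}$ by Propositions~\ref{prop:estimates_ex1} and~\ref{prop:decay_ex}.
\item The term $r^{-2}\underline{L}\phi$ is bounded by $C_3\epsilon^{-1/2}r^{-3}$.
\end{itemize}
The last term is the dangerous one. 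Since it is integrated over a $u$-interval of length at most $4\epsilon$, it contributes only $C_3\epsilon^{1/2}r^{-3}$, which is acceptable. The nonlinear term $L\mathcal{N}$ is handled by rewriting $L=(t+r)^{-1}(L_0+\omega^aL_a)$ and using the pointwise bounds \eqref{eq:disp-psi01}, \eqref{eq:disp-d-psi01} and Proposition~\ref{prop:decay_ex}. This gives a bound of order $C_3^p\epsilon^{1/2}t^{-3}$ with plenty of margin, since $p>5$ and $|\phi|\lesssim \epsilon^{1/2}r^{-1}$.

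We then integrate in $u$ from $\mathcal{B}_\epsilon$, where $\phi$ and all its tangential derivatives vanish, since $L$ is tangent to $\mathcal{B}_\epsilon$. This is not quite enough, so for points near $t_1$ we instead integrate from the slice $t=t_1$ using \eqref{eq:local_Lest_t1}. Gronwall over an interval of length $O(\epsilon)$ then gives $|L^2\phi|\lesssim C_3\epsilon^{1/2}\langle t+r\rangle^{-3}$.

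\emph{Step 2: proof of \eqref{est:refined_ex1}.} For \eqref{est:refined_ex1} we use the $\underline{L}$-form
\[
L(r\underline{L}\Omega^J\phi)=-L\Omega^J\phi+\tfrac1r\slashed\Delta_{\mathbb{S}^2}\Omega^J\phi\pm r\Omega^J\mathcal{N},
\]
and integrate along the outgoing null direction, i.e.\ in $v$ along $\mathcal{B}_{\tilde u}$, from $t_1$. The initial value at $t_1$ is controlled by \eqref{eq:local_Lbarest_t1}, which is $O(\epsilon^{1/2})$ once we use $r\sim 1$ together with the $\partial\Omega$ bound in \eqref{eq:local_Omegaest_t1}. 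The source is bounded by $C_3\epsilon^{1/2}r^{-2}$ via Proposition~\ref{prop:decay_ex} and \eqref{est:L2_ex3}. The nonlinearity is integrable, being $\lesssim \epsilon^{1/2}r^{-1}\cdot|\psi|^{p-1}$ together with the faster-decaying terms. Since $\int^\infty r^{-2}\,dv<\infty$, we get $r|\underline{L}\Omega^J\phi|\lesssim C_3\epsilon^{1/2}$.

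\emph{Step 3: proof of \eqref{est:refined_ex2}, and the main obstacle.} The bound \eqref{est:refined_ex2} then follows directly from the same identity: $L\underline{L}\Omega^J\phi$ equals $r^{-1}(L-\underline{L})\Omega^J\phi+r^{-2}\slashed\Delta\Omega^J\phi\pm\Omega^J\mathcal N$, and each term is bounded by $C_3\epsilon^{1/2}r^{-2}$ using Step 2 and Proposition~\ref{prop:decay_ex}. The main obstacle is the $\Omega^{2}$ or $\Omega^3$ angular terms when $|J|=1$. These need pointwise control on $\Omega^{\le 3}\phi$ and $L\Omega^{\le 3}\phi$ with the right $\epsilon^{1/2}$ and $r$ weights. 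I would obtain them from Lemma~\ref{lem:pointwise_ex} combined with the energy bounds \eqref{est:boots_ex} and \eqref{eq:E-con-pOphi}, where the derivative count stays within $|I|+|J|\le5$ because only $\Omega$ derivatives and one $L_\alpha$ appear. This uses condition \eqref{eq:C3peps_assumption} to absorb the $C_3^p$ factors coming from the nonlinearity.
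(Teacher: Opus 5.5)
Your proposal follows the paper's proof. You use the same identities for $L(r\underline{L}\Omega^J\phi)$ and $\underline{L}(rL^2\phi)$. You integrate in $v$ along $\mathcal{B}_{\tilde u}$ starting from $t_1$, using \eqref{eq:local_Lbarest_t1}, which is why $\tilde u\ge 2\epsilon$. You integrate in $u$ starting from either $t=t_1$ or $\mathcal{B}_\epsilon$. And you absorb the $\epsilon^{-1/2}$-size terms using the $O(\epsilon)$ length of the $u$-interval.

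A few small corrections:
\begin{enumerate}
\item The bound $|L\Omega^2\phi|\lesssim C_3\epsilon^{1/2}r^{-2}$ claimed in Step 1 does not follow from your cited tools. Proposition~\ref{prop:decay_ex} only covers $|J|\le 1$. Lemma~\ref{lem:pointwise_ex} applied to $L_\alpha\Omega^2\phi$ would need $E^{ex}_{con}(\tilde u,t,\partial\Omega^4\phi)$, which is not among the cases of \eqref{eq:E-con-pOphi}.
\item You do not need that bound. The crude estimate $|L\Omega^2\phi|\le|\partial\Omega^2\phi|\lesssim C_3\epsilon^{-1/2}r^{-1}$ from \eqref{est:L2_ex3} is enough. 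It is of the same order $C_3\epsilon^{-1/2}r^{-3}$ as the $r^{-2}\underline{L}\phi$ term.
\item Your Step 1 list omits the term $r^{-1}L\underline{L}\phi$. It is also of order $C_3\epsilon^{-1/2}r^{-3}$ (bound it via the equation), so it is handled the same way.
\item Your ``main obstacle'' does not arise. For $|J|\le 1$, only $L\Omega^{\le1}\phi$ (Proposition~\ref{prop:decay_ex}) and $\Omega^{\le 3}\phi$ (from \eqref{est:L2_ex3}) enter Steps 2--3. Pointwise control of $L\Omega^3\phi$ would actually exceed the available regularity.
\item The sign in front of $L\Omega^J\phi$ in your identity for $L(r\underline{L}\Omega^J\phi)$ should be $+$. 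This is harmless, since only absolute values are used.
\end{enumerate}
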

    \begin{proof}
    	$\bullet$ We first prove \eqref{est:refined_ex1}. First, acting $\Omega^J$ on both sides of the equations for $\phi$, and using the standard null frame identity, the PDE reads
    	\begin{equation}\label{equ:waveLL}
    		L\underline{L}\Omega^J\phi=\frac{1}{r}L\Omega^J\phi-\frac{1}{r}\underline{L}\Omega^J\phi+\frac{1}{r^2}\slashed{\Delta}_{\mathbb{S}^2}\Omega^J\phi\pm\Omega^J\mathcal{N}(\psi,\phi).
    	\end{equation}
    A direct computation yields
        \begin{equation*}
        \begin{aligned}
            L(r \underline{L} \Omega^J\phi)& = r L \underline{L} \Omega^J\phi + \underline{L} \Omega^J\phi
            = L \Omega^J\phi + \frac{1}{r}  \slashed{\Delta}_{\mathbb{S}^2}\Omega^J\phi \pm r \Omega^J\mathcal{N}(\psi,\phi).
        \end{aligned}
        \end{equation*}
    It follows from Proposition~\ref{prop:decay_ex} that
        \begin{equation*}
        	|L(r \underline{L} \Omega^J\phi)| \lesssim C_3 \epsilon^{\frac{1}{2}} (t+r)^{-2} + C_3^p \epsilon^{\frac{1}{2}+(\frac{1}{2}-\delta)\frac{p-5}{2}} (t+r)^{-\frac{p+3}{4}+\frac{p+3}{2}\delta}.
        \end{equation*}
        Applying the fundamental theorem of calculus along the $v$-direction, we deduce
        \begin{equation*}
            \begin{aligned}
                |r \underline{L} \Omega^J\phi| &\le |r \underline{L} \Omega^J\phi|_{\{t=t_1,\ t-r=\tilde{u}\}} + \int_{2+4\epsilon-\tilde{u}}^{\infty} |\partial_v(r \underline{L} \Omega^J\phi)|\, dv\lesssim C_2\epsilon^{\frac{1}{2}}+\int_{2+4\epsilon-\tilde{u}}^{\infty}|L(r\underline{L}\Omega^J)\phi|\, dv
        	\lesssim C_3 \epsilon^{\frac{1}{2}},
            \end{aligned}
        \end{equation*}
        where $C_3^{p-1}\epsilon^{(\frac{1}{2}-\delta)\frac{p-5}{2}}<1.$
       Combining the above inequality with Lemma~\ref{lem:local-estimates}, we have
       \begin{equation*}
       	|\underline{L} \Omega^J\phi| \lesssim C_3 \epsilon^{\frac{1}{2}} r^{-1}, 
       	\quad\mbox{on}\ \mathcal{B}_{\tilde{u}}, \quad 2\epsilon \le \tilde{u} \le 5\epsilon, \quad |J| \leq 1.
       \end{equation*}

      $\bullet$ We next prove \eqref{est:refined_ex2}. Injecting~\eqref{est:refined_ex1} in the equation~\eqref{equ:waveLL}, we obtain
       \begin{equation*}
           |L\underline{L}\Omega^J\phi|\lesssim C_3\epsilon^{\frac{1}{2}}\langle t+r\rangle^{-2},\quad |J|\leq1,
       \end{equation*}
       provided that we use the first and third conditions of \eqref{eq:C3peps_assumption}.
       
       $\bullet$ We conclude by proving \eqref{est:refined_ex3}. Let $|J|=0$ in~\eqref{equ:waveLL}. Then acting $L$ on both sides of~\eqref{equ:waveLL}, we obtain
       \begin{equation*}
       	\begin{aligned}
       		\underline{L} L^2 \phi &=L\left(\frac{1}{r}L\phi-\frac{1}{r}\underline{L}\phi+\frac{1}{r^2}\slashed{\Delta}_{\mathbb{S}^2}\phi\pm\mathcal{N}(\psi,\phi)\right)\\
       		&=\frac{1}{r}L^2\phi-\frac{1}{r}L\underline{L}\phi+\frac{1}{r^2}L\slashed{\Delta}_{\mathbb{S}^2}\phi-\frac{1}{r^2}L\phi+\frac{1}{r^2}\underline{L}\phi-\frac{2}{r^3}\slashed{\Delta}_{\mathbb{S}^2}\phi
            \pm L(\mathcal{N}(\psi,\phi)).
       	\end{aligned}
       \end{equation*}
       We compute, for $(t,x)\in\mathcal{D}^{ex}$,
       \begin{equation*}
       	|\underline{L} (rL^2 \phi)|\lesssim C_3\epsilon^{-\frac{1}{2}}(t+r)^{-2}.
       \end{equation*}
       Employing the fundamental theorem of calculus along the $u$-direction, we obtain
       \begin{equation*}
       \begin{aligned}
           |rL^2\phi(t,x)|&\leq\left\{\begin{aligned}
       	    &|rL^2\phi|_{\{t=t_1,\ 1-3\epsilon\leq r\leq 1+\epsilon\}}\\
            &|rL^2\phi|_{\{t-r=\epsilon\}}
       	\end{aligned}\right.+\int_\epsilon^{5\epsilon} |\partial_u(r L^2\phi)|\, du\\
        &\lesssim C_2\epsilon^{\frac{1}{2}}r^{-2}+\int_\epsilon^{5\epsilon}|\underline{L}(rL^2\phi)|\, du
        \lesssim C_3\epsilon^{\frac{1}{2}}(t+r)^{-2},
       \end{aligned}
       \end{equation*}
       where we used Lemma~\ref{lem:local-estimates} and the fact that $L^2\phi$ vanishes on $t-r=\epsilon$. Finally, in the exterior region since $t-r\in[\epsilon,5\epsilon]$ then $r\sim t\sim \langle t+r\rangle$, and thus
       \begin{equation*}
         	|L^2 \phi| \lesssim r^{-1}\,|rL^2\phi|\lesssim C_3 \epsilon^{\frac{1}{2}} \langle t+r\rangle^{-3}.
       \end{equation*}
    \end{proof}

\section{Interior global existence for short-pulse data}\label{sec:interior}
In this section, we establish the following:
        \begin{proposition}[Global in time existence in the interior region]\label{prop:global_in_existence} There exists $\epsilon>0$ sufficiently small such that \eqref{eq:model_short-pulse} admits a global in time solution in the interior region $$\mathcal{D}^{in}=\{(t,x): t\geq t_1=1+2\epsilon,\ 0\leq r\leq t-4\epsilon\}.$$ 
    \end{proposition}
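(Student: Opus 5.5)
The plan is a hyperboloidal bootstrap in $\mathcal{D}^{in}$, following the outline in the introduction. I will foliate $\mathcal{D}^{in}$ by the hyperboloids $\mathcal{H}_\tau=\{t^2-r^2=\tau^2\}$ truncated at the null boundary $\mathcal{B}_{4\epsilon}$. On each slice I use the hyperboloidal natural energy $\mathcal{E}^{in}(\tau,\varphi)$, which controls $\|(\tau/t)\partial\varphi\|_{L^2(\mathcal{H}_\tau)}$ and $\|G\varphi\|$, and a conformal energy $\mathcal{E}^{in}_{con}(\tau,\varphi)$ built from the multiplier $K_0=(t^2+r^2)\partial_t+2tr\partial_r$ plus the lower-order $2t\varphi$ correction, exactly as in the proof of Lemma~\ref{lem:energy_ex}. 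Both energy identities are obtained by integrating the divergence identities over the region between $\mathcal{H}_{\tau_0}$ (or $\{t=t_1\}$) and $\mathcal{H}_\tau$, cut off by $\mathcal{B}_{4\epsilon}$. This yields
\[\mathcal{E}^{in}(\tau,\varphi)^{1/2}\lesssim \text{data}+\text{flux on }\mathcal{B}_{4\epsilon}+\int\|\Box\varphi\|_{L^2(\mathcal{H}_s)}\,\d s,\]
and the analogous bound for $\mathcal{E}^{in}_{con}$ with weight $(s+r)$ on the source. The data at $t=t_1$ on $r\le 1-2\epsilon$ are controlled by Lemma~\ref{lem:local-estimates}. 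The boundary flux on $\mathcal{B}_{4\epsilon}$ is controlled by the exterior bounds \eqref{eq:E-ex-partialOmega2}, Proposition~\ref{prop:energy_ex}, and the pointwise estimates of Lemma~\ref{lem:refined_ex} and Proposition~\ref{prop:decay_ex}. In particular, \eqref{est:refined_ex3} handles the $L^2$-type boundary term that would otherwise lose $\epsilon$.

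The bootstrap assumptions, with a constant $C_4\gg C_3$, are
\[\mathcal{E}^{in}(\tau,\Omega^{\le1}\phi)^{1/2}+\mathcal{E}^{in}_{con}(\tau,\Omega^{\le1}\phi)^{1/2}\le C_4\epsilon^{1/2},\qquad \mathcal{E}^{in}(\tau,\partial\phi)^{1/2}+\mathcal{E}^{in}(\tau,L_a\phi)^{1/2}+\mathcal{E}^{in}_{con}(\tau,L_a\phi)^{1/2}\le C_4 .\]
From these I derive pointwise and $L^q$ decay. In the region $r\le t/8$ I would use the standard hyperboloidal Klainerman--Sobolev inequality, where $t\sim\tau$ and all vector fields are comparable to $t\partial$. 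In the region $r\ge t/8$ I would prove the $L^q$ estimate \eqref{est:KS_in2} with $q>6$. Its proof proceeds in three steps: Sobolev embedding $H^1(\mathbb S^2)\hookrightarrow L^q(\mathbb S^2)$, then the fundamental theorem of calculus along $\mathcal{H}_\tau$ in the radial variable from $\mathcal{B}_{4\epsilon}$ inward, producing the endpoint contribution, then Cauchy--Schwarz pairing $\|(\tau/t)\Omega^{\le1}\phi\|$ with $\|(\tau/t)Z\Omega^{\le1}\phi\|$. The endpoint contributions are read off from the exterior pointwise bounds \eqref{est:L2_ex3}. The conformal energy supplies $\|(\tau/t)\Omega^{\le1}\phi\|\lesssim \epsilon^{1/2}$ with $\tau$-decay after multiplication by the weight, and $Z$ derivatives of $\phi$ are controlled at size $1$ by the second line of the bootstrap.

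Next I estimate the nonlinearity $\mathcal{N}$ and its derivatives $\Omega\mathcal{N}$, $\partial\mathcal{N}$, $L_a\mathcal{N}$ in $L^2(\mathcal{H}_\tau)$, with and without the weight $(t+r)$. Using \eqref{eq:elementary} and the Leibniz expansions of Remark~\ref{rem:diff_nonlinearity}, each term carries at least one factor of $\phi$. The factors of $\psi$ go into $L^\infty$ via Lemma~\ref{lem:disp-psi}, which gives the gain $\epsilon^{1/2-\delta}$ or $t^{-1/2+\delta}$, and I convert $|\Omega\psi|\lesssim r|\partial\psi|$ in conjunction with \eqref{eq:disp-d-psi01}. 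For the $\phi$ factors I use H\"older with $L^q$ norms from \eqref{est:KS_in2} rather than $L^\infty$ of a derivative. For example, $|\phi|^{p-2}|\Omega\phi|^2$ is bounded by $\|\phi\|_{L^\infty}^{p-2}\|\Omega\phi\|_{L^q}\|\Omega\phi\|_{L^{2q/(q-2)}}$, interpolating the last factor between $L^2$ and $L^6$ by Lemma~\ref{lem:inter_inequ}. Counting exponents should give bounds of the form $C_4^p\epsilon^{1/2+\kappa}\tau^{-1-\kappa'}$ for the weighted source, with $\kappa,\kappa'>0$ depending on $p-5$ and $\delta$. Integrating in $\tau$ then closes the first line of the bootstrap with the improvement $C_3\epsilon^{1/2}+C_4^p\epsilon^{1/2+\kappa}$, and the second line follows similarly with $O(1)$ data coming from \eqref{eq:E-con-Lpphi}. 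This gives global existence in $\mathcal{D}^{in}$ by continuity.

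The main obstacle will be the exponent bookkeeping for the terms where $Z\Omega$ falls on $\phi$ and only $L^2$ control at size $1$ is available, together with the corresponding boundary flux terms. Both must still produce $\tau$-integrable decay and a positive power of $\epsilon$ using only $p>5$. I would first try putting all excess decay on the factor $|\psi|^{p-5}$ and on one $L^q$ factor of $\phi$ with $q$ slightly larger than $6$. If that fails, I would fall back on taking $\delta$ smaller relative to $p-5$, as in \eqref{eq:C3peps_assumption}.
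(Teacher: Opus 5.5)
There is a genuine gap. The structure of your proposal matches the paper: the same bootstrap \eqref{est:Boot_in}, the same $L^q$ estimate \eqref{est:KS_in2} on $\{r\ge t/8\}$, and boundary input from Proposition~\ref{prop:energy_ex} and Lemma~\ref{lem:refined_ex}. The problem is a step you treat as routine: bounding $\phi$ pointwise on $\{r\le t/8\}$ with the standard hyperboloidal Klainerman--Sobolev inequality. That gives no smallness in $\epsilon$, and without it the first line of \eqref{est:Boot_in} cannot be improved.

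The standard inequality reads $|\phi|\lesssim\tau^{-1}t^{-1/2}\sum_{|I|\le2}\|(\tau/t)Z^I\phi\|_{L^2(\mathcal{H}_\tau)}$. It involves second-order quantities such as $\|(\tau/t)L_aL_b\phi\|$ and $\|(\tau/t)\partial_t^2\phi\|$, which the bootstrap bounds only by $C_4$. Already on $\Sigma_{t_1}\cap\mathcal{D}^{in}$ only an $O(1)$ bound is available for them, cf.\ \eqref{eq:init-in}. So you only get $|\phi|\lesssim C_4\tau^{-1}t^{-1/2}$ there.

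The terms $|\phi|^p$ and $|\phi|^{p-1}\Omega\phi$ in $\mathcal{N}$ and $\Omega\mathcal{N}$ contain no factor of $\psi$. On $\{r\le t/8\}$, where $t\sim\tau$, your tools therefore only give
\[
\|(\tau/t)\Omega^{\le1}\phi\|_{L^2(\mathcal{H}_\tau)}\,\|(t/\tau)|\phi|^{p-1}\|_{L^\infty}\lesssim C_4^p\epsilon^{1/2}\tau^{-3(p-1)/2}.
\]
Integrating from $\tau_0\sim1$ leaves a contribution $C_4^p\epsilon^{1/2}$, which is not $\ll C_4\epsilon^{1/2}$. So your count $C_4^p\epsilon^{1/2+\kappa}$ with $\kappa>0$ fails exactly for these terms.

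The paper fixes this with the rescaled Sobolev inequality \eqref{est:KS_in1}. It applies $H^2(B_1)\hookrightarrow L^\infty$ on a ball of radius $t\epsilon^{\lambda}$ instead of $t$, so the term of order $|I|$ carries the factor $\epsilon^{(|I|-3/2)\lambda}$. This interpolates between the $\epsilon^{1/2}$-small norms of order at most one and the $O(1)$ norms of order two. With $\lambda=1/4$ it gives \eqref{est:pointwise_in1}, $|\phi|\lesssim C_4\epsilon^{1/8}\tau^{-1}t^{-1/2}$, and hence $C_4^p\epsilon^{1/2+(p-1)/8}$ for the terms above. Alternatively, you could place two more factors of $\phi$ in $L^6(\mathcal{H}_\tau)$, which is $\epsilon^{1/2}$-small by Sobolev embedding and the Hardy inequality \eqref{est:hardy_in}.

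A related problem is your sample estimate, which uses $\|\phi\|_{L^\infty}$ and $\|\Omega\phi\|_{L^q}$ on $\{r\ge t/8\}$. Neither follows from \eqref{est:Boot_in}: both need $\Omega^2\phi$ and $Z\Omega^2\phi$, which is exactly what the $L^q$ estimate is meant to avoid. That sample term, $|\phi|^{p-2}|\Omega\phi|^2$, does not even occur, since only one vector field is commuted at this level. On $\{r\ge t/8\}$ the paper never uses $L^\infty$ of $\phi$. It combines \eqref{est:Lq_in2} with $q=4p-2$, $4p-4$ or $2p-2$, together with an $L^4$ Gagliardo--Nirenberg bound on $(\tau/t)\Gamma\phi$.
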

    
A key point is that the boundary flux control on $\mathcal B_{4\epsilon}$ obtained in the exterior analysis (see Proposition~\ref{prop:energy_ex} and Lemma~\ref{lem:refined_ex}) provides the necessary boundary input for the interior energy inequalities. We use the hyperboloidal method and close the bootstrap by deriving refined bounds. Combining the interior and exterior arguments yields a global solution in $\mathcal D^{ex}\cup\mathcal D^{in}$ for all $t\ge t_1$.

Before presenting the proof, we introduce the geometric setup and several auxiliary definitions that are used throughout this section.

\subsection{Hyperboloidal setting and basic estimates}
In this subsection, we introduce the hyperboloidal foliation of the interior region
and collect several analytic tools that will be used in the proof of global existence. For $\tau\geq \tau_0:=\sqrt{3(1+4\epsilon)}$, we define (see also Figure \ref{fig:spacetime})
	\begin{equation*}
		\mathcal{H}_\tau:=\{(t,x)\in\mathcal{D}^{in}:(t+1)^2-r^2=\tau^2\}.
	\end{equation*}
We also decompose the hyperboloid into the two regions
  \begin{equation}\label{eq:Htau-split}
      \mathcal H_\tau=\big(\mathcal H_\tau\cap\{r\le t/4\}\big)\cup\big(\mathcal H_\tau\cap\{r\ge t/8\}\big) =:\Hcali\cup\Hcalf.
  \end{equation}
On each hyperboloid $\mathcal{H}_{\tau}$, the time coordinate satisfies $\tau\leq t+1\leq\tau^2$. 
Associated with this hyperboloidal foliation, we introduce the tangential vector fields
	\begin{align*}
		\slashed{\partial}_{\tau}=\frac{\tau}{t+1}\partial_t,\quad\slashed{\partial}_a=\frac{x_a}{t+1}\partial_t+\partial_a=-\frac{x_a}{t(t+1)}\partial_t+\frac{L_a}{t},
	\end{align*}
and define the radial tangential derivative by $\slashed{\partial}_r=\omega^a\slashed{\partial}_a$.

    For $\tau \ge \tau_0$, we define the $L^2$ norm of a function $f$ on the hyperboloid $\mathcal H_\tau$ by
    \begin{equation*}
        \|f\|_{L^2(\mathcal H_\tau)}^2
        :=\int_{\mathcal H_\tau} |f(t,x)|^2 \, dx .
    \end{equation*}
    Using the parametrization $(t,x)=\big(\sqrt{\tau^2+|y|^2}-1,\,y\big),$
    this norm can be equivalently written as
    \begin{equation*}
        \|f\|_{L^2(\mathcal H_\tau)}^2
        =\int_{r_*(\tau)\le |y|\le r^*(\tau)}
        |f(\sqrt{\tau^2+|y|^2}-1,y)|^2 \, dy, 
    \end{equation*}
    where the integration region is given by
\begin{equation}\label{eq:def_rstar}
        r_*(\tau)=
        \begin{cases}
            \sqrt{(2+2\epsilon)^2-\tau^2}, & \tau_0\le \tau \le 2+2\epsilon,\\[4pt]
            0, & \tau \ge 2+2\epsilon,
        \end{cases}
        \qquad
        r^*(\tau)=\dfrac{\tau^2-(1+4\epsilon)^2}{2(1+4\epsilon)},\quad \omega\in\mathbb{S}^2.
        \end{equation}

	\begin{lemma}[Hardy inequality]\label{lem:hardy_in}
		In the interior region $\mathcal{D}^{in}$, we have the following Hardy inequality
		\begin{equation}\label{est:hardy_in}
            \begin{aligned}
                \|r^{-1}f\|_{L^2(\mathcal{H}_\tau)}&\lesssim r_*^{\frac{1}{2}}\|f(\sqrt{\tau^2+r_*^2}-1,r_*\omega)\|_{L^2(\mathbb{S}^2)}+(r^*)^{\frac{1}{2}}\|f(r^*+4\epsilon,r^*\omega)\|_{L^2(\mathbb{S}^2)}+\|\slashed{\partial}_rf\|_{L^2(\mathcal{H}_\tau)},
            \end{aligned}
		\end{equation}
        where $r_*$ and $r^*$ are defined in \eqref{eq:def_rstar}.
	\end{lemma}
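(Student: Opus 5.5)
We parametrize $\mathcal H_\tau$ by $y\in\mathbb R^3$ with $|y|\in[r_*,r^*]$, via $(t,x)=(\sqrt{\tau^2+|y|^2}-1,y)$. In this chart the function $g(y):=f(\sqrt{\tau^2+|y|^2}-1,y)$ has radial derivative exactly $\slashed{\partial}_r f$. Indeed,
\[
\partial_{|y|}g=\frac{r}{t+1}\partial_t f+\partial_r f=\omega^a\slashed{\partial}_a f .
\]
So the claim reduces to a one–dimensional weighted Hardy inequality on each ray $\omega\in\mathbb S^2$, on the finite interval $[r_*,r^*]$, with boundary terms at both endpoints. The endpoint $r^*$ corresponds to the point $(r^*+4\epsilon,r^*\omega)$ on $\mathcal B_{4\epsilon}$, and $r_*$ corresponds to the point on $\{t=t_1\}$. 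Nothing forces $f$ to vanish at either endpoint, which is why both boundary terms must be kept.

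Fix $\omega$ and write $g(\rho)=g(\rho\omega)$. Since $\partial_\rho\rho=1$, we have the identity
\[
\int_{r_*}^{r^*}|g|^2\,\d\rho=\big[\rho|g|^2\big]_{r_*}^{r^*}-2\int_{r_*}^{r^*}\rho\, g\,\partial_\rho g\,\d\rho .
\]
By Cauchy--Schwarz, the last term is bounded by $2\big(\int|g|^2\d\rho\big)^{1/2}\big(\int\rho^2|\partial_\rho g|^2\d\rho\big)^{1/2}$. Absorbing half of it into the left side (or using Young's inequality) gives
\[
\int_{r_*}^{r^*}|g|^2\,\d\rho\lesssim r^*|g(r^*)|^2+r_*|g(r_*)|^2+\int_{r_*}^{r^*}|\partial_\rho g|^2\rho^2\,\d\rho .
\]
The boundary term at $r_*$ enters with a minus sign, so it could even be discarded; I keep it with an absolute value to match the statement. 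The left-hand side equals $\int|g|^2\rho^{-2}\rho^2\,\d\rho$, which is the radial part of $\|r^{-1}f\|^2_{L^2(\mathcal H_\tau)}$.

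Finally, we integrate over $\omega\in\mathbb S^2$ and take square roots. This turns the radial integrals into $\|r^{-1}f\|_{L^2(\mathcal H_\tau)}$ and $\|\slashed\partial_r f\|_{L^2(\mathcal H_\tau)}$, and the endpoint terms into $r_*^{1/2}\|f(\cdot,r_*\omega)\|_{L^2(\mathbb S^2)}$ and $(r^*)^{1/2}\|f(r^*+4\epsilon,r^*\omega)\|_{L^2(\mathbb S^2)}$.

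The only points needing care are the chain-rule identification of $\partial_{|y|}g$ with $\slashed\partial_r f$ and the identification of the endpoints. For the latter, $(t+1)^2-r^2=\tau^2$ with $t-r=4\epsilon$ gives $r=r^*$ and $t=r^*+4\epsilon$, as in \eqref{eq:def_rstar}. When $\tau\ge 2+2\epsilon$ we have $r_*=0$, and the inner boundary term vanishes since $\rho|g|^2\to0$ for bounded $g$. No real obstacle is expected: the argument is the same as that of Lemma~\ref{lem:hardy_ex}, adapted to the hyperboloidal chart.
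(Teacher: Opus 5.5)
Your proposal is correct and follows the paper's proof essentially line by line. The paper also writes $\|r^{-1}f\|^2_{L^2(\mathcal H_\tau)}=\int_{\mathbb S^2}\int_{r_*}^{r^*}f^2\,dr\,d\omega$ in the hyperboloidal chart, integrates by parts via $f^2=\partial_r(rf^2)-2rf\,\partial_r f$ with the radial derivative identified as $\slashed{\partial}_r f$, absorbs half of the cross term with Young's inequality, and then integrates over the sphere. Your observation that the inner boundary term at $r_*$ has a favorable sign, and could be dropped, is also correct; the paper simply keeps it.
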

	\begin{proof}
		Let $(t,x)=(t,r\omega)\in\mathcal{H}_{\tau}$ with $\omega\in\mathbb{S}^2$,  $\tau\geq\tau_0$. Then $f(t,x)=f(\sqrt{\tau^2+|x|^2}-1,x)$. By the definition, we derive
		\begin{equation}\label{equ:L2norm}
			\|r^{-1}f\|_{L^2(\mathcal{H}_\tau)}^2=\int_{\mathcal{H}_\tau}\frac{f^2}{r^2}\, dx=\int_{\mathbb{S}^2}\int_{r_*}^{r^*}f^2(\sqrt{\tau^2+|x|^2}-1,x)\, drd\omega,
		\end{equation}
		where $r_*$ and $r^*$ are given in \eqref{eq:def_rstar}. 
		Applying the fundamental theorem of calculus and Cauchy-Schwarz inequality, we derive
		\begin{equation*}
			\begin{aligned}
				\int_{r_*}^{r^*}f^2(\sqrt{\tau^2+|x|^2}-1,x)\, dr
				&=\int_{r_*}^{r^*}\partial_r(rf^2)(\sqrt{\tau^2+|x|^2}-1,x)\, dr-2\int_{r_*}^{r^*}rf\partial_{r}f(\sqrt{\tau^2+|x|^2}-1,x)\, dr\\
				&\leq (rf^2)\Big|_{r_*}^{r^*}+\frac{1}{2}\int_{r_*}^{r^*}f^2\, dr+2\int_{r_*}^{r^*}|\slashed{\partial}_rf|^2r^2\, dr.
			\end{aligned}
		\end{equation*}
		Integrating over the unit sphere $\mathbb{S}^2$ and using~\eqref{equ:L2norm} we obtain
		\begin{equation*}
            \begin{aligned}
                \|r^{-1}f\|_{L^2(\mathcal{H}_\tau)}^2&\lesssim r_*\|f(\sqrt{\tau^2+r_*^2}-1,r_*\omega)\|^2_{L^2(\mathbb{S}^2)}+r^*\|f(r^*+4\epsilon,r^*\omega)\|_{L^2(\mathbb{S}^2)}^2+\|\slashed{\partial}_rf\|_{L^2(\mathcal{H}_\tau)}^2.
            \end{aligned}
		\end{equation*}
	
	\end{proof}
\begin{definition}
    The natural and conformal energy functionals on constant hyperboloidal time slices read
    \begin{align*}
        \mathcal{E}^{in}(\tau,\varphi) &=\frac12\|\slashed{\partial}_{\tau}\varphi\|_{L^2(\mathcal{H}_\tau)}^2+\frac12 \sum_{a}\|\slashed{\partial}_a\varphi\|_{L^2(\mathcal{H}_\tau)}^2,  \\
        \mathcal{E}_{\mathrm{con}}^{in} (\tau,\varphi)&=\frac{1}{2}\|\tau\slashed{\partial}_\tau\varphi+2x^a\slashed{\partial}_a\varphi+2\varphi\|_{L^2(\mathcal{H}_\tau)}^2+\frac{1}{2}\sum_{a}\|\tau\slashed{\partial}_a\varphi\|_{L^2(\mathcal{H}_\tau)}^2.
    \end{align*}

\end{definition}
        \begin{lemma}[Hyperboloidal energy inequality \cite{KlainermanHyp, HormanderHyp, MaHu}]\label{lem:energy_in}
		Suppose that $\varphi$ is the solution to the wave equation, then we have the natural energy estimate
			\begin{equation}\label{est:energy_in1}
				\mathcal{E}^{in}(\tau,\varphi)^{\frac{1}{2}}\lesssim\|\partial\varphi\|_{L^2(\Sigma_{t_1}\cap\mathcal{D}^{in})}+\Big(\int_{\mathcal{B}_{4\epsilon}\cap \mathcal{D}_\tau^{in}}|G\varphi|^2\, dx\Big)^{\frac{1}{2}}+\int_{\tau_0}^{\tau}\|\Box \varphi \|_{L^2(\mathcal{H}_{\tilde{\tau}})}\, d\tilde{\tau},
			\end{equation}
		and the conformal energy estimate
		    \begin{equation}\label{est:energy_in2}
		    	\begin{aligned}
		    		\mathcal{E}_{\mathrm{con}}^{in}(\tau,\varphi)^{\frac{1}{2}}
		    		&\lesssim
		    		\|\partial\varphi\|_{L^2(\Sigma_{t_1}\cap\mathcal{D}^{in})}
		    		+\|\varphi\|_{L^2(\Sigma_{t_1}\cap\mathcal{D}^{in})}\\
		    		&+\left(
		    		\int_{\mathcal{B}_{4\epsilon}\cap \mathcal{D}_\tau^{in}}
		    		\Big[(t+r+1)L\varphi + 2\varphi\Big]^2
		    		+\Big|\frac{1}{r}\Omega\varphi\Big|^{2}
		    		\, dx
		    		\right)^{\frac{1}{2}}
		    		+\int_{\tau_0}^{\tau}
		    		\|\tilde{\tau}\Box \varphi \|_{L^{2}(\mathcal{H}_{\tilde{\tau}})}
		    		\, d\tilde{\tau}.
		    	\end{aligned}
		    \end{equation}
	\end{lemma}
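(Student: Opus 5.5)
The plan is to prove both inequalities with the standard multiplier (vector field) method, integrating a divergence identity over the truncated interior region $\mathcal{D}^{in}_\tau$. This region is bounded below by the slab $\Sigma_{t_1}\cap\mathcal{D}^{in}$, laterally by the null piece $\mathcal{B}_{4\epsilon}\cap\mathcal{D}^{in}_\tau$, and above by $\mathcal{H}_\tau$. Throughout we use the shifted hyperboloids $(t+1)^2-r^2=\tau^2$: translating in time, $\partial_t$ and the shifted conformal multiplier $K=((t+1)^2+r^2)\partial_t+2(t+1)r\partial_r$ keep their usual structure.

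For the natural energy we multiply $-\Box\varphi$ by $\partial_t\varphi$. This gives $\partial_t\big(\tfrac12|\partial\varphi|^2\big)-\partial^a(\partial_a\varphi\partial_t\varphi)=-\Box\varphi\,\partial_t\varphi$, and we apply the divergence theorem on $\mathcal{D}^{in}_\tau$. The flux through $\mathcal{H}_\tau$, with its normal $(1,-x/(t+1))$ weighted by $dx$, equals $\tfrac12\big(|\slashed{\partial}_\tau\varphi|^2+\sum_a|\slashed{\partial}_a\varphi|^2\big)$ after the usual algebra, so it is exactly $\mathcal{E}^{in}$. The flux through the null boundary $t-r=4\epsilon$ is $\tfrac12|G\varphi|^2$, computed as in Lemma~\ref{lem:energy_ex}, and it enters with the sign of incoming data because this boundary is past-directed for the interior. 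The bottom flux is bounded by $\|\partial\varphi\|^2_{L^2(\Sigma_{t_1}\cap\mathcal{D}^{in})}$. We foliate the spacetime term by hyperboloids with $dt\,dx=(\tau/(t+1))\,d\tau\,dx$. Since $(\tau/(t+1))\partial_t\varphi=\slashed{\partial}_\tau\varphi$, Cauchy–Schwarz bounds it by $\int\|\Box\varphi\|_{L^2(\mathcal{H}_{\tilde\tau})}\mathcal{E}^{in}(\tilde\tau)^{1/2}\,d\tilde\tau$. Writing $F(\tau)=\sup_{\tau'\le\tau}\mathcal{E}^{in}(\tau')$ then gives $F\le A^2+2F^{1/2}\int\|\Box\varphi\|$, which yields \eqref{est:energy_in1}.

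For the conformal estimate we use the multiplier $K\varphi+2(t+1)\varphi$, the shifted analogue of the one in Lemma~\ref{lem:energy_ex}, and the analogous divergence identity. The main obstacle is the flux computation on $\mathcal{H}_\tau$, which requires checking that it equals, up to a positive factor and a total divergence, $\tfrac12|\tau\slashed{\partial}_\tau\varphi+2x^a\slashed{\partial}_a\varphi+2\varphi|^2+\tfrac12\sum_a|\tau\slashed{\partial}_a\varphi|^2$. To do this we write $K=\tau^2\slashed{\partial}_\tau\cdot\frac{t+1}{\tau}+\ldots$ in the tangential frame, expand, and integrate the cross term $\varphi\,x^a\slashed{\partial}_a\varphi$ by parts on $\mathcal{H}_\tau$. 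This produces boundary terms at $r=r^*$ and $r=r_*$. We expect to absorb these by combining them with the null-boundary flux and the lower-order terms $4\varphi^2$ and $4r\varphi\partial_r\varphi$, exactly as the terms $4\varphi^2$ and $4r\varphi\partial_r\varphi$ were grouped in Lemma~\ref{lem:energy_ex}. The flux on $\mathcal{B}_{4\epsilon}$ evaluates to $[(t+r+1)L\varphi+2\varphi]^2+u^2|r^{-1}\Omega\varphi|^2$ with $u=4\epsilon+1$ bounded, giving the stated boundary term. The flux on $\Sigma_{t_1}$ is bounded by $\|\partial\varphi\|^2+\|\varphi\|^2$, because $t\sim1$ and $r\lesssim1$ there.

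The bulk term is $\int(-\Box\varphi)\big[(t+1)(L_0'\varphi+2\varphi)+x^aL_a'\varphi\big]$, where the primes denote the vector fields shifted by $t\mapsto t+1$. On $\mathcal{H}_\tau$ this is $\tau\,\tfrac{\tau}{t+1}$ times a combination of the quantities in $\mathcal{E}^{in}_{con}$. After the change of measure we obtain the bound $\int\|\tilde\tau\Box\varphi\|_{L^2(\mathcal{H}_{\tilde\tau})}\mathcal{E}^{in}_{con}(\tilde\tau)^{1/2}\,d\tilde\tau$. The same Gronwall-type argument as in the first step then concludes \eqref{est:energy_in2}.
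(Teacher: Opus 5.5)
Your argument is the paper's own proof: the same multipliers $\partial_t\varphi$ and $\mathcal{M}\varphi+2(t+1)\varphi$ with $\mathcal{M}=((t+1)^2+r^2)\partial_t+2(t+1)r\partial_r$, the divergence theorem on $\mathcal{D}^{in}_\tau$, positive null fluxes on $\mathcal{B}_{4\epsilon}$, the change of measure $dt\,dx=\frac{\tau}{t+1}\,d\tau\,dx$, and a Gronwall (or supremum) argument. The step that needs correcting is the one you call the main obstacle. There you leave the hyperboloidal flux identity as an expectation, and you plan to integrate $\varphi\,x^a\slashed{\partial}_a\varphi$ by parts on $\mathcal{H}_\tau$ and then absorb terms at $r=r^*,r_*$ into the null flux.

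No integration by parts is needed, so no corner terms arise: the identity holds pointwise.

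\emph{Setup.} Write $s=t+1$. The flux density on $\mathcal{H}_\tau$ is $\frac12P^0+\frac{x^a}{s}P^a$. Here $P^0,P^a$ are the components of the conformal current, including the lower-order pieces $4s\varphi\partial_t\varphi+4\varphi^2+4r\varphi\partial_r\varphi$ and $2s\varphi\partial_a\varphi+2x_a\varphi\partial_t\varphi$.

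\emph{Lower-order part.} It equals $2\varphi^2+4r\varphi\partial_r\varphi+\frac{2(s^2+r^2)}{s}\varphi\partial_t\varphi=2\varphi^2+2\varphi\,\mathcal{M}\varphi/s$. Since $\tau\slashed{\partial}_\tau+2x^a\slashed{\partial}_a=\mathcal{M}/s$, this is exactly the cross term in $\frac12|\tau\slashed{\partial}_\tau\varphi+2x^a\slashed{\partial}_a\varphi+2\varphi|^2$.

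\emph{Quadratic part.} It matches $\frac12|\mathcal{M}\varphi/s|^2+\frac12\sum_a|\tau\slashed{\partial}_a\varphi|^2$ term by term. On both sides the coefficients are $\frac{s^2+3r^2}{2}$ for $(\partial_t\varphi)^2$, $\frac{r(3s^2+r^2)}{s}$ for $\partial_r\varphi\,\partial_t\varphi$, $2r^2$ for $(\partial_r\varphi)^2$, and $\frac{\tau^2}{2}$ for $|\nabla\varphi|^2$.

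\emph{Null boundary.} Likewise the density on $\mathcal{B}_{4\epsilon}$ is $\frac12P^0+\omega^aP^a=\frac12[(s+r)L\varphi+2\varphi]^2+\frac12(s-r)^2|r^{-1}\Omega\varphi|^2$ pointwise, with $s-r=1+4\epsilon$.

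Getting this right matters, because the absorption you proposed would not be justified if corner terms did appear. They would be integrals over the two-spheres $\mathcal{H}_\tau\cap\mathcal{B}_{4\epsilon}$ and $\mathcal{H}_\tau\cap\Sigma_{t_1}$, not over $\mathcal{B}_{4\epsilon}$. Controlling them would require a separate sign or trace argument, not a regrouping with the null flux.
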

	\begin{proof}
		$\bullet$ We first prove \eqref{est:energy_in1}. 
        First, multiplying the equation by $\partial_t \varphi$, and integrating over the region $\mathcal{D}_\tau^{in}=\{(t,x)\in\mathcal{D}^{in}:(t+1)^2-|x|^2\leq \tau^2\}$, we deduce
		\begin{equation*}
			\begin{aligned}
				\int_{\mathcal{H}_\tau}\left(\frac{1}{2}|\partial\varphi|^2+\frac{x^a}{t+1}\partial_a\varphi\partial_t\varphi\right)\, dx=\frac{1}{2}\int_{\Sigma_{t_1}\cap \mathcal{D}^{in}}|\partial\varphi|^2\, dx
				+\frac{1}{2}\int_{\mathcal{B}_{4\epsilon}\cap \mathcal{D}_\tau^{in}}|G\varphi|^2\, dx+\int_{\mathcal{D}_\tau^{in}}\frac{\tilde{\tau}}{t+1}(-\Box \varphi)\partial_t\varphi\, dxd\tilde{\tau}.
			\end{aligned}
		\end{equation*}
	Note that
	\begin{equation*}
		\begin{aligned}
			\mathcal{E}^{in}(\tau,\varphi)&=\int_{\mathcal{H}_\tau}\big(\frac{1}{2}|\partial\varphi|^2+\frac{x^a}{t+1}\partial_a\varphi\partial_t\varphi\big)\, dx\\
		&=\frac{1}{2}\|\partial_t\varphi+\frac{x^a}   {t+1}\partial_a\varphi\|_{L^2(\mathcal{H}_\tau)}^2+\frac{1}{2}\sum_{a}\|\frac{\tau}{t+1}\partial_a\varphi\|_{L^2(\mathcal{H}_\tau)}^2+\frac{1}{2}\|\frac{\Omega\varphi}{t+1}\|_{L^2(\mathcal{H}_\tau)}^2\\
            &=\frac{1}{2}\|\slashed{\partial}_{\tau}\varphi\|_{L^2(\mathcal{H}_\tau)}^2+\frac{1}{2}\sum_{a}\|\slashed{\partial}_a\varphi\|_{L^2(\mathcal{H}_\tau)}^2.
		\end{aligned}
	\end{equation*}
        Then we deduce
    \begin{equation*}
    	\begin{aligned}
    		\mathcal{E}^{in}(\tau,\varphi)\lesssim\int_{\Sigma_{t_1}\cap \mathcal{D}^{in}}|\partial\varphi|^2\, dx
    		+\int_{\mathcal{B}_{4\epsilon}\cap \mathcal{D}_\tau^{in}}|G\varphi|^2\, dx+\int_{\tau_0}^{\tau}\|\Box\varphi\|_{L^2(\mathcal{H}_{\tilde{\tau}})}\mathcal{E}^{in}(\tilde{\tau},\varphi)^{\frac{1}{2}}\, d\tilde{\tau}.
    	\end{aligned}
    \end{equation*}
	An application of Gronwall inequality~\eqref{est:Gronwall} yields the desired result.
	
	$\bullet$ We next turn to \eqref{est:energy_in2}.  Define a multiplier as $\mathcal{M}=(r^2+(t+1)^2)\partial_t+2r(t+1)\partial_r$. 	
	Multiplying the equation by $\mathcal{M}\varphi+2(t+1)\varphi$, we obtain
	\begin{equation*}
		\begin{aligned}
			-\Box\varphi \big(\mathcal{M}\varphi+2(t+1)\varphi\big)&=\frac{1}{2}\partial_t\Big[(r^2+(t+1)^2)|\partial\varphi|^2+4r(t+1)\partial_r\varphi\partial_t\varphi+4(t+1)\varphi\partial_t\varphi+4\varphi^2+4r\varphi\partial_r\varphi\Big]\\
			&\quad -\partial^a\Big[(r^2+(t+1)^2)\partial_a\varphi\partial_t\varphi+(t+1)x_a(|\partial_t\varphi|^2-\sum_{b}|\partial_b\varphi|^2)\\
			&\quad\quad\ +2r(t+1)\partial_r\varphi\partial_a\varphi+2(t+1)\varphi\partial_a\varphi+2x_a\varphi\partial_t\varphi\Big].
		\end{aligned}
	\end{equation*}
	Integrating over the region $\mathcal{D}_\tau^{in}$, we have
	\begin{equation*}
		\begin{aligned}
			\mathcal{E}_{con}^{in}(\tau,\varphi)&\lesssim\int_{\Sigma_{t_1}\cap \mathcal{D}^{in}}\big(|\partial\varphi|^2+|\varphi|^2\big)\, dx
            +\int_{\mathcal{B}_{4\epsilon}}[(r+t+1)L\varphi+2\varphi]^2\, dx+\int_{\mathcal{B}_{4\epsilon}}\big|\frac{1}{r}\Omega\varphi|^2\, dx\\
			&+\int_{\mathcal{D}_\tau^{in}}\big|\frac{\tilde{\tau}}{t+1}(\Box \varphi)(M\varphi+2(t+1)\varphi)\big|\, dxd\tilde{\tau},
		\end{aligned}
	\end{equation*}
	where
	\begin{equation*}
		\begin{aligned}
			\mathcal{E}_{con}^{in}(\tau,\varphi)&=\frac{1}{2}\|\tau\slashed{\partial}_\tau\varphi+2x^a\slashed{\partial}_a\varphi+2\varphi\|_{L^2(\mathcal{H}_\tau)}^2+\frac{1}{2}\sum_{a}\|\tau\slashed{\partial}_a\varphi\|_{L^2(\mathcal{H}_\tau)}^2\\
			&=\frac{1}{2}\big\|\frac{\mathcal{M}\varphi}{t+1}+2\varphi\big\|_{L^2(\mathcal{H}_\tau)}^2+\frac{1}{2}\sum_{a}\|\tau\slashed{\partial}_a\varphi\|_{L^2(\mathcal{H}_\tau)}^2.
		\end{aligned}
	\end{equation*}
	Using Gronwall inequality we are done.
	\end{proof}
    
	Next, we introduce the Klainerman-Sobolev inequalities on truncated hyperboloids. For simplicity, we denote $Z=\{\partial_t, L_1, L_2,L_3\}.$ While the proof is straightforward, a key part of our analysis relies on the novel $L^q$ estimate, for $q>6$, given in \eqref{est:KS_in2}. Note that in this estimate we only need to take $\Omega^{\leq 1}$ as a vector field derivative, instead of $\Omega^{\leq 2}$, on the right hand side.

	\begin{lemma}[Klainerman-Sobolev inequalities on truncated hyperboloids]\label{lem:KS_in} \,
		\begin{enumerate}
			\item If $(t,x)\in\Hcali$, then
			\begin{equation}\label{est:KS_in1}
				|\varphi(t,x)|\lesssim\tau^{-1}t^{-\frac{1}{2}}\sum_{|I|\leq 2}\epsilon^{(|I|-\frac{3}{2})\lambda}\left\|\frac{\tau}{t+1}Z^I\varphi\right\|_{L^2(\mathcal{H}_\tau)},
			\end{equation}
            where $\lambda$ is an arbitrary nonnegative constant.
   
            \item If $(t,x)\in\Hcalf$, then
            \begin{equation}\label{est:KS_in3}
		    	\begin{aligned}
		    		|\varphi(t,x)|&\lesssim|\varphi(t^*,x^*)|
                    +\tau^{-1}|x|^{-\frac{1}{2}}\sum_{\substack{|J_1|\leq 2,\ |J_2|\leq2\\|I|=1}}\left\|\frac{\tau}{t}\Omega^{J_1}\varphi\right\|_{L^2(\mathcal{H}_\tau)}^{\frac{1}{2}}\left\|\frac{\tau}{t}\Omega^{J_2}Z^I\varphi\right\|_{L^2(\mathcal{H}_\tau)}^{\frac{1}{2}},
		    	\end{aligned}
		    \end{equation}
            where $(t^*,x^*)$ denotes the intersection point of $\mathcal{H}_\tau$ and $\mathcal{B}_{4\epsilon}$.
            
            \item Let $q>6$. If $(t_0,x_0)\in\Hcalf$, then
            \begin{equation}\label{est:KS_in2}
            	\begin{aligned}
            		\|\varphi\|_{L^q(\mathcal{H}_{\tau}\cap\{r_0\leq |x|\leq r^*\})}
            		&\lesssim 
            		(r^*)^{\frac{3}{q}}\sum_{|J|\leq1}\|\Omega^{J}\varphi(t^*,r^*\omega)\|_{L^2(\mathbb{S}^2)}\\
            		&+ \tau^{-1}r_0^{-\frac{1}{2}+\frac{3}{q}}
            		\sum_{\substack{|J|\leq 1,\ |I|=1}}\left\|\frac{\tau}{t}\Omega^{J}\varphi\right\|_{L^2(\mathcal{H}_\tau)}^{\frac{1}{2}}
            		\left\|\frac{\tau}{t}Z^I\Omega^{J}\varphi\right\|_{L^2(\mathcal{H}_\tau)}^{\frac{1}{2}},
            	\end{aligned}
            \end{equation}
            where $(t^*,x^*)$ denotes the intersection point of $\mathcal{H}_\tau$ and $\mathcal{B}_{4\epsilon}$.
		\end{enumerate}
	\end{lemma}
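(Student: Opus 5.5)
The three estimates follow one scheme. We parametrize $\mathcal{H}_\tau$ by $x\mapsto(\sqrt{\tau^2+|x|^2}-1,x)$ and work with the restriction $v(x)=\varphi(\sqrt{\tau^2+|x|^2}-1,x)$. Its flat gradient is the tangential derivative $\slashed{\partial}_a\varphi$, and the identity $t\slashed{\partial}_a=L_a-\frac{x_a}{t+1}\partial_t$ converts each $x$-derivative of $v$, multiplied by $t$, into a combination of $Z\varphi$ with bounded coefficients. Where $r\gtrsim t$ we also use the rotations $\Omega$, which are tangent to every hyperboloid and commute with restriction to it. The factors $\tau/t$ appear because $\tau$ and $t$ are comparable only in the interior part $\Hcali$.

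\emph{Part (i).} On $\Hcali$ we have $r\le t/4$, hence $\tau^2=(t+1)^2-r^2\ge\frac{15}{16}(t+1)^2$, so $\tau\sim t+1$ and $\tau/(t+1)\sim 1$. Fix $(t,x)\in\Hcali$ and set $w(y)=v(x+ty)$ for $|y|\le\rho:=\epsilon^\lambda\le 1$ (or a fixed small multiple, such as $1/16$, of that). For such $y$ the point $x+ty$ stays in $\{r\le t/2\}$, where $t$ is comparable to its value at the centre. Each $y$-derivative of $w$ equals $t\slashed{\partial}\varphi$, which is a bounded combination of $Z\varphi$ by the identity above. We then apply the scaled Sobolev inequality on a ball, $|w(0)|\lesssim\sum_{k\le2}\rho^{k-3/2}\|\nabla_y^kw\|_{L^2(B_\rho)}$, which follows from $H^2(B_1)\subset L^\infty$ by rescaling. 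Changing variables back, $dy=t^{-3}dx$, turns $\|Z^I\varphi\|_{L^2_y}$ into $t^{-3/2}\|Z^I\varphi\|_{L^2(\mathcal{H}_\tau)}$. Writing $t^{-3/2}\sim\tau^{-1}t^{-1/2}\cdot\frac{\tau}{t+1}$ gives \eqref{est:KS_in1} with the weights $\epsilon^{(|I|-\frac32)\lambda}$.

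\emph{Part (ii).} On $\Hcalf$ we have $r\ge t/8$. Fix $\omega$ and integrate along the radial curve on $\mathcal{H}_\tau$ from $r=|x|$ out to $r^*$, where $\mathcal{H}_\tau$ meets $\mathcal{B}_{4\epsilon}$:
\[
|f(r\omega)|^2\le|f(r^*\omega)|^2+2\int_r^{r^*}|f||\slashed{\partial}_rf|\,d\rho\le|f(r^*\omega)|^2+2r^{-2}\int_r^{r^*}|f||\slashed{\partial}_rf|\rho^2\,d\rho .
\]
We take $f=\Omega^{J}\varphi$ with $|J|\le 2$, integrate over $\mathbb{S}^2$, and use Cauchy--Schwarz. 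For the derivative factor we write $|\slashed{\partial}_rf|\lesssim t^{-1}|Zf|$ and use $\rho\sim t$ on the region, so that
\[
\|f\|\,\|t^{-1}Zf\|=\frac{t}{\tau^2}\Big\|\frac{\tau}{t}f\Big\|\Big\|\frac{\tau}{t}Zf\Big\|\lesssim \frac{r}{\tau^2}\Big\|\frac{\tau}{t}f\Big\|\Big\|\frac{\tau}{t}Zf\Big\|.
\]
Combined with the $r^{-2}$ prefactor this gives a bound $r^{-1}\tau^{-2}\|\frac{\tau}{t}f\|\|\frac{\tau}{t}Zf\|$ for $\|\Omega^J\varphi(r\cdot)\|^2_{L^2(\mathbb{S}^2)}$. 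We then apply the Sobolev embedding on $\mathbb{S}^2$, \eqref{est:Sobolev_unit}, commuting $\Omega$ past $Z$ at the cost of lower-order terms. The endpoint contribution is a quantity on the sphere $\mathcal{H}_\tau\cap\mathcal{B}_{4\epsilon}$, which we record as $|\varphi(t^*,x^*)|$.

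\emph{Part (iii).} The same radial argument with only $|J|\le1$ gives, for every $\rho\in[r_0,r^*]$,
\[
\sum_{|J|\le1}\|\Omega^J\varphi(\rho\,\cdot)\|_{L^2(\mathbb{S}^2)}\lesssim\sum_{|J|\le1}\|\Omega^J\varphi(r^*\cdot)\|_{L^2(\mathbb{S}^2)}+\rho^{-1/2}\tau^{-1}AB,
\]
where $A=\sum_{|J|\le1}\|\frac{\tau}{t}\Omega^J\varphi\|_{L^2(\mathcal{H}_\tau)}^{1/2}$ and $B=\sum_{|J|\le1,|I|=1}\|\frac{\tau}{t}Z^I\Omega^J\varphi\|_{L^2(\mathcal{H}_\tau)}^{1/2}$. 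The embedding $H^1(\mathbb{S}^2)\subset L^q(\mathbb{S}^2)$, valid for every $q<\infty$, bounds $\|\varphi(\rho\cdot)\|_{L^q(\mathbb{S}^2)}$ by the left-hand side. We then compute
\[
\|\varphi\|^q_{L^q}=\int_{r_0}^{r^*}\|\varphi(\rho\cdot)\|^q_{L^q(\mathbb{S}^2)}\rho^2\,d\rho .
\]
The endpoint term contributes at most $(r^*)^3$ times its $q$-th power, which after taking the $q$-th root gives $(r^*)^{3/q}$. The main term contributes $\int_{r_0}^\infty\rho^{2-q/2}\,d\rho\,(\tau^{-1}AB)^q$. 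This integral converges exactly when $q>6$ and equals a constant times $r_0^{3-q/2}$, whose $q$-th root is $r_0^{-1/2+3/q}$; this yields \eqref{est:KS_in2}. The step needing the most care is the conversion $\slashed{\partial}_r\to t^{-1}Z$ together with the comparison $\rho\sim t$. That comparison holds throughout $\{\rho\ge r_0\}$ only because $(t_0,x_0)\in\Hcalf$ forces $r_0\gtrsim t_0$, and both $t$ and $\rho$ increase along $\mathcal{H}_\tau$ as one moves outward.
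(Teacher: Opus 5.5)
Your parts (i) and (iii) follow the paper's proof. In (i) this is the rescaled $H^2$ Sobolev inequality on a ball of radius $\sim t\epsilon^{\lambda}$ with $\partial_y=t\slashed{\partial}$. In (iii) it is the radial integration of $\|\Omega^J\varphi\|_{L^2(\mathbb{S}^2)}$, then $H^1(\mathbb{S}^2)\hookrightarrow L^q(\mathbb{S}^2)$, then $\int_{r_0}^\infty\rho^{2-q/2}\,d\rho<\infty$ for $q>6$. Shrinking the ball by a fixed factor in (i) is a sensible precaution when $\lambda=0$.

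Part (ii), however, does not prove \eqref{est:KS_in3} as stated. You run the fundamental theorem of calculus for $\Omega^J\varphi$ in $L^2(\mathbb{S}^2)$ and only afterwards apply Sobolev on the sphere at radius $r$. Your boundary term is therefore $\sum_{|J|\le2}\|\Omega^J\varphi(t^*,r^*\cdot)\|_{L^2(\mathbb{S}^2)}$. This quantity dominates $|\varphi(t^*,x^*)|$ but is not bounded by it. So ``recording it as $|\varphi(t^*,x^*)|$'' is not a valid step, and you have proved a strictly weaker inequality. The later applications bound exactly the pointwise endpoint, via \eqref{est:L2_ex3}. The remedy is the paper's ordering: write $\varphi^2(t,x)=\varphi^2(t^*,x^*)-\int_r^{r^*}\partial_\rho(\varphi^2)\,d\rho$ along the single ray through $x$. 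Then use $H^2(\mathbb{S}^2)\hookrightarrow L^\infty(\mathbb{S}^2)$ only on the two factors $|\varphi|$ and $|Z\varphi|$ inside the $\rho$-integral.

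There is also an error in your weight conversion in (ii). After you pull out $r^{-2}$, the identity $\|f\|\,\|t^{-1}Zf\|=\frac{t}{\tau^2}\|\frac{\tau}{t}f\|\|\frac{\tau}{t}Zf\|$ treats $t$ as a constant. The subsequent bound $t\lesssim r$ is false on $\{\rho\ge r\}$, where $t$ reaches $t^*\sim\tau^2$. Taking the supremum honestly gives only $r^{-2}$, which is worse than $r^{-1}\tau^{-2}$ by a factor $\tau^2/r$; near the inner edge of $\Hcalf$ that factor is of order $\tau$.

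The fix is to keep the $\rho$-dependence inside the integral: write $\frac{1}{t}=\frac{t}{\tau^2\rho^2}\big(\frac{\tau}{t}\big)^2\rho^2$. Note that $t(\rho)\le 8\rho$ for all $\rho\ge r$. Indeed $\frac{dt}{d\rho}=\frac{\rho}{t+1}<1$ along $\mathcal{H}_\tau$, so $t-\rho$ decreases outward and $t(\rho)\le\rho+7r$. Hence $\sup_{\rho\ge r}\frac{t}{\tau^2\rho^2}\lesssim\tau^{-2}r^{-1}$, which is exactly the factor in \eqref{est:KS_in3}. The same computation with $r$ replaced by $\rho$ justifies your intermediate bound in (iii). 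The observation that $t$ and $\rho$ both increase outward does not by itself give $t\lesssim\rho$; it is the monotonicity of $t-\rho$ that does.
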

	\begin{proof}
		$\bullet$ We first prove \eqref{est:KS_in1}. Fix $(t_0,x_0)\in\mathcal{H}_\tau$ with $|x_0|\leq t_0/4$. For $|z|\leq1$, set
        \begin{equation*}
            x(z)=x_0+t_0\epsilon^{\lambda}z, \qquad t(z)=\sqrt{|x(z)|^2+\tau^2}-1,\qquad g(z)=\varphi(t(z),x(z)).
        \end{equation*}
        By Sobolev embedding on the unit ball $H^2(B_1)\hookrightarrow L^\infty(B_1)$,
        \begin{equation*}
            |g(0)|^2\lesssim\sum_{|I|\leq2}\int_{|z|\leq1}|\partial_z^I g(z)|^2\, dz.
        \end{equation*}
        Since $\frac{\partial t}{\partial x_a}=\frac{x_a}{t+1}$, we have the chain rule identity
        \begin{equation}\label{eq:chain1}
            \partial_{z_a}=t_0\epsilon^{\lambda}\Big(\partial_{x_a}+\frac{x_a}{t+1}\partial_t\Big)=t_0\epsilon^\lambda\slashed{\partial}_a.
        \end{equation}
		Then
        \begin{equation*}
            \begin{aligned}
                |g(0)|^2=|\varphi(t_0,x_0)|^2
                &\lesssim\sum_{|I|\leq2}\int_{|z|\leq1}|\partial_{z}^I\varphi(t(z),x(z))|^2\, dz\\
                &\lesssim\int_{|x-x_0|\leq t_0\epsilon^\lambda}|\varphi(t,x)|^2(t_0\epsilon^\lambda)^{-3}\, dx+\int_{|x-x_0|\leq t_0\epsilon^\lambda}\sum_a|\slashed{\partial}_a\varphi(t,x)|^2(t_0\epsilon^\lambda)^{-1}\, dx\\
                &\quad +\int_{|x-x_0|\leq t_0\epsilon^\lambda}\sum_{a,b}|\slashed{\partial}_a\slashed{\partial}_b\varphi(t,x)|^2(t_0\epsilon^\lambda)\, dx.
            \end{aligned}
        \end{equation*}
        Using~\eqref{eq:chain1}, we deduce
        \begin{equation*}
            \begin{aligned}
                |\slashed{\partial}_a\varphi|\lesssim\frac{1}{t}\sum_{|I|=1}|Z^I\varphi|,\qquad
                |\slashed{\partial}_a\slashed{\partial}_b\varphi|
                \lesssim\frac{1}{t^2}\sum_{1\leq|I|\leq2}|Z^I\varphi|,
            \end{aligned}
        \end{equation*}
        which implies
        \begin{equation*}
            \begin{aligned}
                |\varphi(t_0,x_0)|\lesssim\tau^{-1}t_0^{-\frac{1}{2}}\sum_{|I|\leq2}\epsilon^{(|I|-\frac{3}{2})\lambda}\Big\|\frac{\tau}{t+1}Z^I\varphi\Big\|_{L^2(\mathcal{H}_\tau)}.
            \end{aligned}
        \end{equation*}
        
		$\bullet$ We next prove \eqref{est:KS_in3}. 
        Let $(t,x)\in\mathcal{H}_\tau$ with $|x|\ge t/8$. By the fundamental theorem of calculus,
        \begin{equation}\label{eq:fundamental-varphi}
            \varphi^2(t,x)=\varphi^2(t^*,x^*)-\int_{r}^{r^*}\partial_\rho(\varphi^2(t,\rho\omega))\, d\rho,
        \end{equation}
        where $(t^*,x^*)$ denotes the intersection point of $\mathcal{H}_\tau$ and $\mathcal{B}_{4\epsilon}$, satisfying
        \begin{equation}\label{eq:intersection}
            t^*=|x^*|+4\epsilon,\qquad |x^*|=\frac{\tau^2-(1+4\epsilon)^2}{2(1+4\epsilon)}.
        \end{equation}
        Moreover, we have
        \begin{equation*}
            \begin{aligned}
                \partial_\rho(\varphi^2(t,\rho\omega))&=2\varphi\partial_\rho\varphi(t(\rho),\rho\omega),\qquad t(\rho)=\sqrt{\rho^2+\tau^2}-1,\\
                |\partial_\rho\varphi(t(\rho),\rho\omega)|&\lesssim\sum_a|\slashed{\partial}_a\varphi|\lesssim\frac{1}{t}\sum_{|I|=1}|Z^I\varphi|,\quad Z=\{\partial_t, L_1, L_2, L_3\}.
            \end{aligned}
        \end{equation*}
        Applying the above estimates to~\eqref{eq:fundamental-varphi}, we obtain
        \begin{equation*}
            \begin{aligned}
                |\varphi(t,x)|^2\leq|\varphi(t^*,x^*)|^2+2\sum_{|I|=1}\int_r^{r^*}\frac{1}{t}|\varphi||Z^I\varphi|\, d\rho,
            \end{aligned}
        \end{equation*}
        together with the Sobolev embedding on the unit sphere, $H^2(\mathbb{S}^2)\hookrightarrow L^\infty(\mathbb{S}^2)$,
        \begin{equation*}
            |\varphi(t,r\omega)|\lesssim \sum_{|J|\le 2}\|\Omega^J\varphi(t,r\omega)\|_{L^2(\mathbb{S}^2)},
        \end{equation*}
        giving
        \begin{equation*}
            \begin{aligned}
                |\varphi(t,x)|^2&\lesssim|\varphi(t^*,x^*)|^2+\sum_{\substack{|J_1|\leq2,\ |J_2|\leq2\\|I|=1}}\int_r^{r^*}\frac{1}{t}\|\Omega^{J_1}\varphi\|_{L^2(\mathbb{S}^2)}\|\Omega^{J_2}Z^I\varphi\|_{L^2(\mathbb{S}^2)}\, d\rho\\
                &\lesssim|\varphi(t^*,x^*)|^2+\tau^{-2}r^{-1}\sum_{\substack{|J_1|\leq2,\ |J_2|\leq2\\|I|=1}}\Big\|\frac{\tau}{t}\Omega^{J_1}\varphi\Big\|_{L^2(\mathcal{H}_\tau)}\Big\|\frac{\tau}{t}\Omega^{J_2}Z^I\varphi\Big\|_{L^2(\mathcal{H}_\tau)}.
            \end{aligned}
        \end{equation*}
        
        $\bullet$ Lastly we prove \eqref{est:KS_in2}. 
	    Let $q>6$ and assume $(t_0,x_0)\in\mathcal{H}_\tau$, $x_0=|x_0|\omega$ and $|x_0|\geq t_0/8,$ where $\omega\in\mathbb{S}^2$.  Fix $\omega$, $(t^*,x^*)$ is the intersection point of $\mathcal{H}_{\tau}$ and $\mathcal{B}_{4\epsilon}$, which satisfies~\eqref{eq:intersection}.
        For a fixed $\rho\in[r_0,r^*]$ we can apply the Sobolev embedding $H^1(\mathbb{S}^2)\hookrightarrow L^q(\mathbb{S}^2)$ to obtain
        \begin{equation*}
            \|\varphi(t(\rho),\rho\omega)\|_{L^q(\mathbb{S}^2)}\lesssim_q\sum_{|J|\leq1}\|\Omega^J\varphi(t(\rho),\rho\omega)\|_{L^2(\mathbb{S}^2)},\quad t(\rho)=\sqrt{\rho^2+\tau^2}-1.
        \end{equation*}
        Repeating the argument of \eqref{est:KS_in3} for $|J|\leq1$ yields
        \begin{equation*}
            \begin{aligned}
                \|\Omega^J\varphi(t(\rho),\rho\omega)\|^2_{L^2(\mathbb{S}^2)}
                &\lesssim \|\Omega^J\varphi(t^*,r^*\omega)\|^2_{L^2(\mathbb{S}^2)}
                +\tau^{-2}\rho^{-1}\sum_{|I|=1}\Big\|\frac{\tau}{t}\Omega^{J}\varphi\Big\|_{L^2(\mathcal{H}_\tau)}\Big\|\frac{\tau}{t}Z^I\Omega^{J}\varphi\Big\|_{L^2(\mathcal{H}_\tau)}.
            \end{aligned}
        \end{equation*}
        Summing over $|J|\leq1$ gives
        \begin{equation*}
            \begin{aligned}
                \|\varphi(t(\rho),\rho\omega)\|^2_{L^q(\mathbb{S}^2)}&\lesssim\sum_{|J|\leq1}\|\Omega^J\varphi(t^*,r^*\omega)\|^2_{L^2(\mathbb{S}^2)}
                +\tau^{-2}\rho^{-1}\sum_{|I|=1,\ |J|\leq1}\Big\|\frac{\tau}{t}\Omega^{J}\varphi\Big\|_{L^2(\mathcal{H}_\tau)}\Big\|\frac{\tau}{t}Z^I\Omega^{J}\varphi\Big\|_{L^2(\mathcal{H}_\tau)}.
            \end{aligned}
        \end{equation*}
        Integrating in $\rho$ we have
        \begin{equation*}
            \|\varphi\|_{L^q(\mathcal{H}_\tau\cap\{r_0\le r\le r^*\})}^q
            =\int_{r_0}^{r^*}\|\varphi(t(\rho),\rho\omega)\|_{L^q(\mathbb{S}^2)}^q\,\rho^2\,d\rho.
        \end{equation*}      
        Thus, combining with the above estimates, we obtain
        \begin{equation*}
        	\begin{aligned}
        		\|\varphi\|_{L^q(\mathcal{H}_{\tau}\cap\{r_0\le r\le r^*\})}^q
        		&\lesssim_q\sum_{|J|\leq1}\|\Omega^J\varphi(t^*,r^*\omega)\|_{L^2(\mathbb{S}^2)}^q\int_{r_0}^{r^*}\rho^2\, d\rho\\
        		&+\tau^{-q}\sum_{|I|=1,\ |J|\leq1}\Big\|\frac{\tau}{t}\Omega^{J}\varphi\Big\|^{\frac{q}{2}}_{L^2(\mathcal{H}_\tau)}\Big\|\frac{\tau}{t}Z^I\Omega^{J}\varphi\Big\|^{\frac{q}{2}}_{L^2(\mathcal{H}_\tau)}\int_{r_0}^{r^*}\rho^{-\frac{q}{2}+2}\, d\rho.
        	\end{aligned}
        \end{equation*}
        Since $q>6$, we have $-\frac{q}{2}+2<-1$, hence together with $r_0\leq r^*$,
        \begin{equation*}
            \int_{r_0}^{r^*}\rho^2\, d\rho\lesssim (r^*)^3,\qquad\int_{r_0}^{r^*}\rho^{-\frac{q}{2}+2}\, d\rho\lesssim r_0^{-\frac{q}{2}+3}.
        \end{equation*}
        Taking the $q$-th root gives~\eqref{est:KS_in2}.
	\end{proof}

    \subsection{Bootstrap assumptions}
In order to prove Proposition \ref{prop:global_in_existence}, we introduce the following bootstrap assumptions for $\phi$. For a sufficiently large constant $C_4>C_3\gg1$ to be determined later, we impose the following assumptions on $[\tau_0,\tau^*)$:
    \begin{equation}\label{est:Boot_in}
    	\begin{aligned}
    		\mathcal{E}^{in}(\tau, \phi)^{\frac{1}{2}}+\mathcal{E}^{in}(\tau, \Omega \phi)^{\frac{1}{2}}+ \mathcal{E}^{in}_{con}(\tau, \phi)^{\frac{1}{2}} +\mathcal{E}^{in}_{con}(\tau, \Omega \phi)^{\frac{1}{2}}&\le C_4 \epsilon^{\frac{1}{2}}, \\
    		\mathcal{E}^{in}(\tau, \partial \phi)^{\frac{1}{2}}+\sum_{a}\mathcal{E}^{in}(\tau, L_a \phi)^{\frac{1}{2}} + \sum_{a}\mathcal{E}^{in}_{con}(\tau, L_a \phi)^{\frac{1}{2}} &\le C_4.
    	\end{aligned}
    \end{equation}
    In the above, $\epsilon$ is the size of the initial data, and we define
   \begin{equation*}
   	\tau^{*}=\sup\{\tau\in(\tau_0,\infty): \eqref{est:Boot_in} \ holds \ on\ [\tau_0,\tau)\}.
   \end{equation*}
We first write down some immediate $L^2$ estimates based on our bootstrap assumptions. 
	\begin{proposition}[Preliminary interior energy estimates]\label{prop:L2estimates_in}
    Under the assumptions \eqref{est:Boot_in}, for $\tau\in[\tau_0,\tau^*)$, the following bounds hold
		\begin{align}
			\sum_{|J|\leq1}\Big\|\frac{\tau}{t+1} \partial\Omega^J \phi\Big\|_{L^2({\mathcal{H}_\tau})}
			&\lesssim C_4 \epsilon^{\frac{1}{2}},\label{est:L2_in1}\\
			\Big\|\frac{\tau}{t+1} \partial \partial \phi\Big\|_{L^2({\mathcal{H}_\tau})}
			+ \sum_{a=1}^{3}\Big\|\frac{\tau}{t+1} \partial L_a \phi\Big\|_{L^2({\mathcal{H}_\tau})}
			&\lesssim C_4, \label{est:L2_in2}\\
			\sum_{|J|\leq1}\sum_{a=1}^3\Big\|\frac{\tau}{t+1} L_a\Omega^J \phi\Big\|_{L^2({\mathcal{H}_\tau})}
			+ \sum_{|J|\leq2}\Big\|\frac{\tau}{t+1} \Omega^J \phi\Big\|_{L^2({\mathcal{H}_\tau})}
			&\lesssim C_4 \epsilon^{\frac{1}{2}},\label{est:L2_in3}\\
			\sum_{a=1}^3\Big\|\frac{\tau}{t+1} L_a \partial \phi\Big\|_{L^2({\mathcal{H}_\tau})}
			+ \sum_{a,b}\Big\|\frac{\tau}{t+1} L_a L_b \phi\Big\|_{L^2({\mathcal{H}_\tau})}
			&\lesssim C_4.\label{est:L2_in4}
		\end{align}
	\end{proposition}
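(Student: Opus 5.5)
The plan is to read off every bound in Proposition \ref{prop:L2estimates_in} directly from the definitions of $\mathcal{E}^{in}$ and $\mathcal{E}^{in}_{con}$ and the bootstrap \eqref{est:Boot_in}. We will use the vector field identities \eqref{eq:VF-identities}, the commutator estimates \eqref{est:commutators}, and the relations $\tau\le t+1\le\tau^2$ and $r\le t$ on $\mathcal H_\tau$. No nonlinear estimate is needed.

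\textbf{Step 1: recovering $\frac{\tau}{t+1}\partial\varphi$.} The first point is that the natural hyperboloidal energy controls all Cartesian derivatives with the weight $\tau/(t+1)$. Since $\slashed\partial_\tau=\frac{\tau}{t+1}\partial_t$, the time derivative is immediate. For spatial derivatives we write
\[
\partial_a=\slashed\partial_a-\frac{x_a}{t+1}\partial_t ,
\]
and use $|x_a|/(t+1)\le 1$. This gives
\[
\frac{\tau}{t+1}|\partial_a\varphi|\lesssim |\slashed\partial_a\varphi|+|\slashed\partial_\tau\varphi|,
\]
because $\tau/(t+1)\le1$. Hence $\|\frac{\tau}{t+1}\partial\varphi\|_{L^2(\mathcal H_\tau)}\lesssim \mathcal E^{in}(\tau,\varphi)^{1/2}$.

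\textbf{Step 2: the natural-energy bounds.} Apply Step 1 with $\varphi\in\{\phi,\Omega\phi\}$. This gives \eqref{est:L2_in1} with the bound $C_4\epsilon^{1/2}$. Apply it with $\varphi\in\{\partial\phi, L_a\phi\}$ to get \eqref{est:L2_in2}. For $\partial L_a\phi$ we also use $[\partial,L_a]\phi=\partial\phi$, which is bounded by \eqref{est:L2_in1}.

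\textbf{Step 3: recovering $L_a\varphi$ and $\Omega\varphi$.} The conformal energy controls $\|\tau\slashed\partial_a\varphi\|$. From $\slashed\partial_a=-\frac{x_a}{t(t+1)}\partial_t+\frac{L_a}{t}$ we get
\[
\frac{\tau}{t+1}|L_a\varphi|\lesssim \tau|\slashed\partial_a\varphi|+\frac{\tau}{t+1}|\partial_t\varphi| ,
\]
using $t/(t+1)\sim1$ and $r\le t$. The last term is controlled by Step 1. Rotations are handled through $\Omega_{ab}=x_a\partial_b-x_b\partial_a=x_a\slashed\partial_b-x_b\slashed\partial_a$, where the $\partial_t$ pieces cancel. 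Since $r\le t$, this gives
\[
\frac{\tau}{t+1}|\Omega\varphi|\lesssim\tau|\slashed\partial\varphi| .
\]
Combining these, $\|\frac{\tau}{t+1}(L_a,\Omega)\varphi\|\lesssim \mathcal E^{in}_{con}(\tau,\varphi)^{1/2}+\mathcal E^{in}(\tau,\varphi)^{1/2}$.

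\textbf{Step 4: the conformal-energy bounds.} Take $\varphi\in\{\phi,\Omega\phi\}$ in Step 3; this gives $\frac{\tau}{t+1}L_a\Omega^J\phi$ and $\frac{\tau}{t+1}\Omega^{J}\phi$ for $1\le|J|\le2$. The commutator $[L_a,\Omega]$ is again a boost, so it is harmless.

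The zeroth-order term $\|\frac{\tau}{t+1}\phi\|$ is the one genuinely delicate point. For it we extract $\phi$ from the first component $\tau\slashed\partial_\tau\phi+2x^a\slashed\partial_a\phi+2\phi$ of $\mathcal E^{in}_{con}$. Since $2x^a\slashed\partial_a\phi=\frac{2r}{\tau}\,\tau\slashed\partial_r\phi$ and $r/\tau$ is unbounded, we will instead multiply this component by $\tau/(t+1)$. This gives
\[
\frac{\tau}{t+1}|\phi|\lesssim|\text{first component}|+|\slashed\partial_\tau\phi|+\tau|\slashed\partial\phi| .
\]
This holds because $\frac{\tau}{t+1}\tau|\slashed\partial_\tau\phi|\lesssim\tau|\slashed\partial_\tau\phi|$ and $\frac{\tau r}{t+1}|\slashed\partial\phi|\lesssim\tau|\slashed\partial\phi|$. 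This yields \eqref{est:L2_in3}.

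Finally, Step 3 with $\varphi\in\{\partial\phi,L_b\phi\}$ gives \eqref{est:L2_in4}. Here the conformal energy of $L_b\phi$ is bounded by $C_4$ directly from \eqref{est:Boot_in}. For $L_a\partial\phi$ we commute, using $[L_a,\partial]\phi=\partial\phi$, and bound the resulting $\partial L_a\phi$ term by \eqref{est:L2_in2}.
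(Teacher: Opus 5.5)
\textbf{The genuine gap: the zeroth-order term in \eqref{est:L2_in3}.} Your argument breaks at $\|\frac{\tau}{t+1}\phi\|_{L^2(\mathcal H_\tau)}$. Solving the first component $X$ of $\mathcal E^{in}_{con}(\tau,\phi)$ for $\phi$ and multiplying by $\tau/(t+1)$ gives
\[
\frac{2\tau}{t+1}|\phi|\le |X|+\frac{\tau^2}{t+1}|\slashed{\partial}_\tau\phi|+\frac{2\tau r}{t+1}|\slashed{\partial}\phi| .
\]
Your display drops the factor $\tau^2/(t+1)$ on the middle term. On $\mathcal H_\tau$ this weight ranges from $O(1)$ at the outer edge up to $\tau$ at the tip $r=0$, where $t+1=\tau$. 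The bootstrap controls only $\|\slashed{\partial}_\tau\phi\|\lesssim C_4\epsilon^{1/2}$. The quantity $\tau\slashed{\partial}_\tau\phi$ (near the tip essentially $(t+1)\partial_t\phi$) enters $\mathcal E^{in}_{con}$ only through the combination $X$ and is never controlled on its own. So pointwise extraction gives at best $\|\frac{\tau}{t+1}\phi\|\lesssim C_4\epsilon^{1/2}\tau$, which is useless. Your own intermediate bound by $\tau|\slashed{\partial}_\tau\phi|$ already shows this.

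No algebraic rearrangement of the interior energies separates $\phi$ from $\tau\partial_\tau\phi$. You need an integration argument, and on the truncated hyperboloid that brings in boundary data. The paper first uses $r\le t+1$ to get $\|\frac{\tau}{t+1}\phi\|\le\|\frac{\tau}{r}\phi\|$, then applies the hyperboloidal Hardy inequality \eqref{est:hardy_in}:
\begin{itemize}
\item The bulk term $\|\tau\slashed{\partial}_r\phi\|$ is bounded by $\mathcal E^{in}_{con}(\tau,\phi)^{1/2}$.
\item The boundary term at $r_*$ lies on $\{t=t_1\}$ and is controlled by Proposition \ref{prop:estimates_local1}.
\item The boundary term at $r^*$ lies on $\mathcal B_{4\epsilon}$. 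There \eqref{est:L2_ex3} gives $|\phi|\lesssim C_3\epsilon^{1/2}r^{-1}$, so $\tau (r^*)^{1/2}\cdot C_3\epsilon^{1/2}(r^*)^{-1}\lesssim C_3\epsilon^{1/2}$ since $r^*\sim\tau^2$.
\end{itemize}
This input from the local and exterior regions is the ingredient your argument is missing.

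\textbf{The rest of the proposal.} Everything else is correct and follows the paper. Step 1 is how \eqref{est:L2_in1}--\eqref{est:L2_in2} are read off the natural energy. Your expressions for $L_a$ and $\Omega_{ab}$ in terms of $\slashed{\partial}_\tau,\slashed{\partial}_a$ are the paper's identities $\frac{\tau}{t+1}L_a=\frac{t}{t+1}\tau\slashed{\partial}_a+\frac{x_a}{t+1}\slashed{\partial}_\tau$ and $\frac{\tau}{t+1}\Omega_{ab}=\frac{x_a}{t+1}\tau\slashed{\partial}_b-\frac{x_b}{t+1}\tau\slashed{\partial}_a$. Two small slips:
\begin{itemize}
\item Step 3 with $\varphi=\partial\phi$ would need $\mathcal E^{in}_{con}(\tau,\partial\phi)$, which is not part of \eqref{est:Boot_in}. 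The commutator route you then give for $L_a\partial\phi$ is the right one, and it is what the paper does.
\item For $\partial L_a\phi$ no commutator is needed. Step 1 with $\varphi=L_a\phi$ handles it directly.
\end{itemize}
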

        \begin{proof}
		Estimates \eqref{est:L2_in1} and \eqref{est:L2_in2} follow directly from the interior bootstrap assumptions~\eqref{est:Boot_in} and the definition of the natural energy on hyperboloids. Then, the first term in~\eqref{est:L2_in4} follows from Lemma~\ref{lem:commutator2}, and \eqref{est:L2_in1}--\eqref{est:L2_in2}.

        Next, by applying the Hardy inequality \eqref{est:hardy_in} and noting that $r\leq t+1$ on $\mathcal{H}_\tau$, we have
        \begin{equation*}
            \begin{aligned}
                \Big\|\frac{\tau}{t+1} \phi\Big\|_{L^2({\mathcal{H}_\tau})}
                &\lesssim\Big\|\frac{\tau}{r}  \phi\Big\|_{L^2({\mathcal{H}_\tau})}
                \\& \lesssim\tau r_*^{\frac{1}{2}}\|\phi(\sqrt{\tau^2+r_*^2}-1,r_*\omega)\|_{L^2(\mathbb{S}^2)}+\tau (r^*)^{\frac{1}{2}}\|\phi(r^*+4\epsilon,r^*\omega)\|_{L^2(\mathbb{S}^2)}+\|\tau\slashed{\partial}_r\phi\|_{L^2(\mathcal{H}_\tau)},
            \end{aligned}
        \end{equation*}
        where $r_*, r^*$ are defined in \eqref{eq:def_rstar} and $\omega \in \mathbb{S}^2$. 
	Using \eqref{est:Boot_in}, Propositions~\ref{prop:estimates_local1} and~\ref{prop:estimates_ex1}, we get
    \begin{equation*}
        \Big\|\frac{\tau}{t+1} \phi\Big\|_{L^2({\mathcal{H}_\tau})}\lesssim C_2\epsilon^{\frac{1}{2}}+C_3\epsilon^{\frac{1}{2}}+C_4\epsilon^{\frac{1}{2}}\lesssim C_4\epsilon^{\frac{1}{2}}.
    \end{equation*}
        Using the identities
        \begin{align*}
		      \frac{\tau}{t+1}L_a=\frac{t}{t+1}\tau\slashed{\partial}_a+\frac{x_a}{t+1}\slashed{\partial}_\tau,\qquad
		      \frac{\tau}{t+1}\Omega_{ab}=\frac{x_a}{t+1}\tau\slashed{\partial}_b-\frac{x_b}{t+1}\tau\slashed{\partial}_a,
	    \end{align*}
        together with the energy bounds in~\eqref{est:Boot_in}, we obtain~\eqref{est:L2_in3} and the second term in~\eqref{est:L2_in4}.
    
	\end{proof}

	\begin{lemma}\label{lem:pointwise_in}
	Under the bootstrap assumptions~\eqref{est:Boot_in}, for any $(t,x)\in\Hcali$, we have
			\begin{equation}\label{est:pointwise_in1}
					|\phi(t,x)| \lesssim C_4\epsilon^{\frac{1}{8}}\tau^{-1}t^{-\frac{1}{2}}.
			\end{equation}
	On the other hand, for  any $(t,x)\in\Hcalf$,
	    \begin{equation}\label{est:Lq_in2}
		\|\phi(t,x)\|_{L^q(\Hcalf)}
	    		\lesssim  C_4 \epsilon^{\frac{1}{2}} \tau^{-1}r^{-\frac{1}{2}+\frac{3}{q}},\qquad q>6.
	    \end{equation} 
	\end{lemma}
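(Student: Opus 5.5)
The plan is to obtain both bounds by inserting the $L^2$ estimates of Proposition~\ref{prop:L2estimates_in} into the Klainerman--Sobolev inequalities on truncated hyperboloids (Lemma~\ref{lem:KS_in}). The only real choices are the free parameter $\lambda$ in \eqref{est:KS_in1} and the treatment of the boundary term in \eqref{est:KS_in2}. Throughout we use $t\sim t+1$ on $\mathcal{H}_\tau$, so the weights $\tau/t$ and $\tau/(t+1)$ are interchangeable.

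\emph{Estimate in $\Hcali$.} We apply \eqref{est:KS_in1} and sort the terms by $|I|$, using the bounds on $\|\frac{\tau}{t+1}Z^I\phi\|_{L^2(\mathcal{H}_\tau)}$:
\begin{itemize}
\item For $|I|=0$, \eqref{est:L2_in3} gives $\|\frac{\tau}{t+1}\phi\|\lesssim C_4\epsilon^{1/2}$, so this term contributes $C_4\epsilon^{\frac12-\frac32\lambda}$.
\item For $|I|=1$, we have $Z\in\{\partial_t,L_a\}$. Then \eqref{est:L2_in1} and \eqref{est:L2_in3} give $\lesssim C_4\epsilon^{1/2}$, and the term contributes $C_4\epsilon^{\frac12-\frac12\lambda}$.
\item For $|I|=2$, we only have the $O(C_4)$ bounds \eqref{est:L2_in2} and \eqref{est:L2_in4}, which cover $\partial\partial\phi$, $\partial L_a\phi$, $L_a\partial\phi$ and $L_aL_b\phi$. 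This term contributes $C_4\epsilon^{\frac12\lambda}$.
\end{itemize}
Balancing the first and last exponents, $\frac12-\frac32\lambda=\frac12\lambda$, gives $\lambda=\frac14$. With this choice all three terms are $\lesssim C_4\epsilon^{1/8}$, which proves \eqref{est:pointwise_in1}.

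\emph{$L^q$ estimate in $\Hcalf$.} We read \eqref{est:Lq_in2} as the $L^q$ norm over $\mathcal{H}_\tau\cap\{r\le|x|\le r^*\}$ for a point at radius $r\ge t/8$, and apply \eqref{est:KS_in2} with $r_0=r$.

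For the bulk term, the factors $\|\frac{\tau}{t}\Omega^{\le1}\phi\|$ are $\lesssim C_4\epsilon^{1/2}$ by \eqref{est:L2_in3}. The factors $\|\frac{\tau}{t}Z\Omega^{\le1}\phi\|$ are also $\lesssim C_4\epsilon^{1/2}$: for $Z=\partial_t$ this follows from \eqref{est:L2_in1}, and for $Z=L_a$ from \eqref{est:L2_in3}. Only one $\Omega$ appears here, which is exactly why \eqref{est:KS_in2} suffices where \eqref{est:KS_in3} would not. The bulk term is therefore $\lesssim C_4\epsilon^{1/2}\tau^{-1}r^{-\frac12+\frac3q}$.

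For the boundary term at $(t^*,x^*)\in\mathcal{B}_{4\epsilon}$, we use the exterior pointwise bound \eqref{est:L2_ex3}, which gives $|\Omega^{\le1}\phi|\lesssim C_3\epsilon^{1/2}(r^*)^{-1}$ there. This term is then $\lesssim C_3\epsilon^{1/2}(r^*)^{\frac3q-1}$.

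The step needing care is converting this boundary term into the claimed $\tau^{-1}r^{-\frac12+\frac3q}$ form. By \eqref{eq:intersection} we have $r^*\sim\tau^2$, so $(r^*)^{-1/2}\lesssim\tau^{-1}$. Since $q>6$, the exponent $\frac3q-\frac12$ is negative, so $r\le r^*$ implies $(r^*)^{\frac3q-\frac12}\le r^{\frac3q-\frac12}$. Hence
\[
(r^*)^{\frac3q-1}=(r^*)^{-\frac12}(r^*)^{\frac3q-\frac12}\lesssim\tau^{-1}r^{-\frac12+\frac3q}.
\]
Combining the two contributions and using $C_3<C_4$ yields \eqref{est:Lq_in2}.
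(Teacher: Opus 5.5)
Your proposal is correct and follows the paper's proof essentially verbatim. In $\Hcali$ you use \eqref{est:KS_in1} with $\lambda=\frac14$, fed by Proposition~\ref{prop:L2estimates_in}; in $\Hcalf$ you use \eqref{est:KS_in2}, with the boundary term controlled by \eqref{est:L2_ex3}, and your explicit conversion of that term via $r^*\sim\tau^2$ and $r\le r^*$ (using $\frac3q-\frac12<0$) just spells out a step the paper leaves implicit.
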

	\begin{proof}
	$\bullet$ We first prove \eqref{est:pointwise_in1}. Let $(t,x)\in\mathcal{H}_\tau$ and $r\leq t/4$. Applying estimate~\eqref{est:KS_in1} in Lemma~\ref{lem:KS_in}, we get
		\begin{equation*}
			\begin{aligned}
				|\phi(t,x)| &\lesssim \tau^{-1} t^{-\frac{1}{2}}
				\sum_{|I|\leq2}\epsilon^{(|I|-\frac{3}{2})\lambda}\Big\|\frac{\tau}{t+1}Z^I\phi\Big\|_{L^2(\mathcal{H}_\tau)}
                \lesssim \tau^{-1} t^{-\frac{1}{2}} 
				\bigl(C_4 \epsilon^{\frac{1}{2}-\frac{3}{2}\lambda}+ C_4 \epsilon^{\frac{1}{2}-\frac{1}{2}\lambda} + C_4 \epsilon^{\frac{1}{2}\lambda}\bigr),
			\end{aligned}
		\end{equation*}
        where we used the estimates in Proposition~\ref{prop:L2estimates_in}.
		Taking $\lambda=\frac{1}{4}$ in the above, we have
		\begin{equation*}
			|\phi(t,x)|\lesssim C_4\epsilon^{\frac{1}{8}}\tau^{-1}t^{-\frac{1}{2}}.
		\end{equation*}
		
	$\bullet$ Next we prove \eqref{est:Lq_in2}. Let $(t,x)\in\mathcal{H}_\tau$ and $r\geq t/8$. Then $r\leq r^*$. Using estimate~\eqref{est:KS_in2} we derive
		\begin{equation*}
			\begin{aligned}
				\|\phi(t,x)&\|_{L^q(\mathcal{H}_\tau \cap \{r \geq t/8\})}
                \\
				&\lesssim (r^*)^{\frac{3}{q}}\sum_{|J|\leq1}\|\Omega^{J} \phi(t^*, r^*\omega)\|_{L^2(\mathbb{S}^2)}
				+ \tau^{-1}r^{-\frac{1}{2}+\frac{3}{q}} 
				\sum_{|J|\leq1,\ |I|=1}\left\|\frac{\tau}{t}\Omega^{J} \phi\right\|_{L^2(\mathcal{H}_\tau)}^{\frac{1}{2}}
				\left\|\frac{\tau}{t}Z^I\Omega^{J} \phi\right\|_{L^2(\mathcal{H}_\tau)}^{\frac{1}{2}}\\
                &\lesssim C_3\epsilon^{\frac{1}{2}} \tau^{-1}(r^*)^{-\frac{1}{2}+\frac{3}{q}} + C_4 \epsilon^{\frac{1}{2}} \tau^{-1}r^{-\frac{1}{2}+\frac{3}{q}}\lesssim C_4\epsilon^{\frac{1}{2}}\tau^{-1}r^{-\frac{1}{2}+\frac{3}{q}},
			\end{aligned}
		\end{equation*}
        where we used Proposition~\ref{prop:L2estimates_in} and \eqref{est:L2_ex3}. 
	\end{proof}

   \subsection{Combining the boundary flux and interior estimates}
   In this subsection, we close the interior bootstrap assumptions \eqref{est:Boot_in} on $\phi$ up to the maximal hyperboloidal time $\tau^*$. The argument relies on the hyperboloidal energy estimates in the interior region, whose right-hand side contains three types of contributions: initial terms on $\Sigma_{t_1}\cap\mathcal{D}^{in}$, boundary flux terms on $\mathcal{B}_{4\epsilon}$ and spacetime integrals of the nonlinear source. Accordingly, we first derive uniform $L^2$ bounds for the boundary terms on $\mathcal{B}_{4\epsilon}$, and then establish $L^2$ bounds on $\mathcal{H}_\tau$ for the nonlinear difference
   \begin{equation*}
       \mathcal{N}(\psi,\phi)=|\psi+\phi|^{p-1}(\psi+\phi)-|\psi|^{p-1}\psi.
   \end{equation*}
   Combining these with the interior energy estimates yields improved bounds, which closes the bootstrap for $\tau\in[\tau_0,\tau^*)$ once $\epsilon$ is sufficiently small.

   \medskip

   We start with the boundary terms. Since the interior energy identities pick up flux contributions  involving good derivatives, rotations, and the modified conformal combinations, we record the following uniform bounds.

   \begin{lemma}[Boundary estimates]\label{lem:boundary}
   	The following boundary $L^2$ estimates: for good derivatives we have:
   		\begin{equation}\label{est:boundary1}
   			\begin{aligned}
   				\sum_{|J|\leq1}&\left(\int_{\mathcal{B}_{4\epsilon}} |G \Omega^J\phi|^2 dx \right)^{1/2}
   				+\left( \int_{\mathcal{B}_{4\epsilon}} |G \partial\phi|^2 dx \right)^{1/2}+\sum_a\left( \int_{\mathcal{B}_{4\epsilon}} |GL_a \phi|^2 dx \right)^{1/2}\lesssim C_3\epsilon^{1/2}.
   			\end{aligned}
   		\end{equation}
   	    For rotations, we have
   	    \begin{equation}\label{est:boundary2}
   	    	\begin{aligned}
   	    		\sum_{1\leq|J|\leq2}\left( \int_{\mathcal{B}_{4\epsilon}} \Big|\frac{1}{r} \Omega^J\phi\Big|^2 dx \right)^{1/2}+ \sum_{a}\left( \int_{\mathcal{B}_{4\epsilon}} \Big|\frac{1}{r} \Omega L_a \phi\Big|^2 dx \right)^{1/2}\lesssim C_3\epsilon^{1/2}.
   	    	\end{aligned}
   	    \end{equation}
        The following modified conformal boundary terms obey:
        \begin{equation}\label{est:boundary3}
        	\begin{aligned}
        		\sum_{|J|\leq1}\left( \int_{\mathcal{B}_{4\epsilon}} [(r+t+1)L \Omega^J \phi + 2\Omega^J\phi]^2 dx \right)^{1/2} +\sum_a\left( \int_{\mathcal{B}_{4\epsilon}} [(r+t+1)L L_a \phi + 2L_a \phi]^2 dx \right)^{1/2} \lesssim C_3\epsilon^{1/2}.
        	\end{aligned}
        \end{equation}
   \end{lemma}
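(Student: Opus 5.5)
The plan is to read every boundary integral on $\mathcal{B}_{4\epsilon}$ off the exterior analysis of Section \ref{sp:exterior}. The exterior energies $E^{ex}(\tilde u,t,\cdot)$ and $E^{ex}_{con}(\tilde u,t,\cdot)$ with $\tilde u=4\epsilon$ already contain exactly the fluxes $\int_{\mathcal{B}_{4\epsilon}}|G\varphi|^2$, $\int_{\mathcal{B}_{4\epsilon}}|(t+r)L\varphi+2\varphi|^2$ and $\int_{\mathcal{B}_{4\epsilon}}\tilde u^2|\Omega\varphi/r|^2$. So the approach is to apply Proposition \ref{prop:energy_ex}, the bound \eqref{eq:E-ex-partialOmega2}, and Lemma \ref{lem:refined_ex} with $\tilde u=4\epsilon$, and to translate them into the combinations that appear in \eqref{est:energy_in1}--\eqref{est:energy_in2}. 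The recurring difficulty is that the natural energy bounds are $O(1)$ or worse, not $O(\epsilon^{1/2})$. The gain will therefore come either from the conformal fluxes, which carry the weight $(t+r)$ or $\tilde u$, or from the pointwise bounds \eqref{est:refined_ex1}--\eqref{est:refined_ex3}, integrated against the finite area element $r^2\,dr\,d\omega$ of the null cone.

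\textbf{Good-derivative terms.} For $G\Omega^J\phi$ with $|J|\le 1$, $G\partial\phi$ and $GL_a\phi$, I would write $G_a=\omega_aL-\frac{\omega^b}{r}\Omega_{ab}$.
\begin{itemize}
\item The $L$-part: I would use \eqref{est:refined_ex1}--\eqref{est:refined_ex2} together with Proposition \ref{prop:decay_ex} pointwise on $\mathcal{B}_{4\epsilon}$. Since $|L\Omega^J\phi|\lesssim C_3\epsilon^{1/2}r^{-2}$, the integral $\int r^{-4}r^2\,dr$ converges and gives $C_3\epsilon^{1/2}$.
\item $G\partial\phi$: I would use $\partial=\frac12(L+\underline L)$ plus angular terms, and then $|L^2\phi|$ from \eqref{est:refined_ex3} and $|L\underline L\phi|$ from \eqref{est:refined_ex2}.
\item $GL_a\phi$: I would write $L_a$ via \eqref{eq:VF-identities}. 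Along $t-r=4\epsilon$ this gives $L_a=\frac{\omega_a}{2}\big((t+r)L-4\epsilon\underline L\big)-\frac tr\omega^b\Omega_{ab}$, and $L(t+r)=2$. Hence $LL_a\phi$ is a combination of $(t+r)L^2\phi$, $\epsilon L\underline L\phi$, $L\phi$ and $L\Omega\phi$. These have pointwise sizes $\epsilon^{1/2}r^{-2}$ and are square-integrable on $\mathcal{B}_{4\epsilon}$.
\item Angular pieces $r^{-1}\Omega(\cdot)$: these reduce to the rotation estimate below.
\end{itemize}

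\textbf{Rotation terms \eqref{est:boundary2}.} Here $r^{-1}\Omega^J\phi$ with $1\le|J|\le 2$. I would use the conformal flux of Proposition \ref{prop:energy_ex} for $\Omega^{J-1}\phi$, whose boundary piece is $\int_{\mathcal{B}_{4\epsilon}}(4\epsilon)^2|\Omega\Omega^{J-1}\phi/r|^2\lesssim C_3^2\epsilon^2$. This yields only $C_3$, not $C_3\epsilon^{1/2}$, so it is insufficient. I would instead use the pointwise bound \eqref{est:L2_ex3}, $|\Omega^J\phi|\lesssim C_3\epsilon^{1/2}r^{-1}$. Then $\int r^{-2}\cdot\epsilon r^{-2}\,r^2\,dr\lesssim\epsilon$, which gives $C_3\epsilon^{1/2}$. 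The same method handles $r^{-1}\Omega L_a\phi$ through Proposition \ref{prop:decay_ex} with $|J|=1$.

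\textbf{Modified conformal terms \eqref{est:boundary3}.} I would write $(r+t+1)L\Omega^J\phi+2\Omega^J\phi=(t+r)L\Omega^J\phi+2\Omega^J\phi+L\Omega^J\phi$. The first two terms are controlled by the conformal flux in $E^{ex}_{con}(4\epsilon,t,\Omega^J\phi)\lesssim C_3^2\epsilon^2$ from \eqref{eq:E-con-pOphi}. The remaining $L\Omega^J\phi$ is controlled by the pointwise bound above. For $L_a\phi$ I would use \eqref{eq:E-con-Lpphi}, $E^{ex}_{con}(\tilde u,t,L_a\phi)^{1/2}\lesssim C_3\epsilon$, plus the $LL_a\phi$ pointwise bound already obtained.

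\textbf{Main obstacle.} The delicate step is $GL_a\phi$ and $LL_a\phi$. The $(t+r)L^2\phi$ term only closes because of the gained $\epsilon^{1/2}\langle t+r\rangle^{-3}$ decay in \eqref{est:refined_ex3}, and I expect to have to check carefully that every piece has at least $r^{-3/2-}$ decay so the flux integral converges.
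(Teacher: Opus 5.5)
Your proposal is correct and is essentially the paper's proof. You establish pointwise bounds $|Q|\lesssim C_3\epsilon^{1/2}r^{-2}$ on $\mathcal{B}_{4\epsilon}$ from Proposition \ref{prop:decay_ex}, \eqref{est:L2_ex3} and Lemma \ref{lem:refined_ex}, integrate them against $r^2\,dr\,d\omega$, and use the conformal flux of Proposition \ref{prop:energy_ex} at $\tilde u=4\epsilon$ for the $(t+r)L\varphi+2\varphi$ part of \eqref{est:boundary3}; the only imprecision is that the angular piece $r^{-1}\Omega\partial\phi$ of $G\partial\phi$ does not reduce to the rotation estimate, since \eqref{est:L2_ex3} only gives $|\partial\Omega\phi|\lesssim C_3\epsilon^{-1/2}r^{-1}$, so there you must decompose $\partial$ in the null frame and bound $\underline L\Omega\phi$ and $\underline L\phi$ by \eqref{est:refined_ex1}, as the paper does.
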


   \begin{proof}
   Since we work on the boundary $\mathcal B_{4\epsilon}$ where $r=t-4\epsilon\ge t_1-4\epsilon=1-2\epsilon$, we note that $t=r+4\epsilon\sim r$.
   
$\bullet$ We first prove \eqref{est:boundary1} and \eqref{est:boundary2}. We divide the proof into two steps.
\textit{Step 1:} The first step is to derive pointwise estimates on $\mathcal B_{4\epsilon}$. Note the identities
\begin{equation}\label{eq:Gidentity}
    \begin{aligned}
        G_a
&=\frac{\omega_a}{t+r}L_0+\frac{1}{t}L_a-\frac{\omega_ax^b}{t(t+r)}L_b
=\omega_aL-\frac{\omega^b}{r}\Omega_{ab},  \quad
\partial_t=\frac{1}{2}(L+\underline{L}),\\
\partial_a&=\frac{\omega_a}{2}(L-\underline{L})-\frac{\omega^b}{r}\Omega_{ab}, \quad
L_a=\frac{\omega_a}{2}((t+r)L-(t-r)\underline{L})-\frac{t}{r}\omega^b\Omega_{ab}.
    \end{aligned}
\end{equation}
Hence, for $|J|\le 1$, using $t\sim r$ on $\mathcal B_{4\epsilon}$ and Proposition~\ref{prop:decay_ex}, we obtain
\begin{equation}\label{eq:G_OmegaJ_pw}
|G\Omega^J\phi|
\lesssim \frac1t\Big(|L_0\Omega^J\phi|+\sum_a|L_a\Omega^J\phi|\Big)
\lesssim C_3\epsilon^{\frac12}r^{-2}.
\end{equation}
Next, using the standard decomposition and estimates in Proposition~\ref{prop:decay_ex} and Lemma~\ref{lem:refined_ex}, we infer that
\begin{equation*}
           \begin{aligned}
               |G\partial\phi|&\lesssim|L^2\phi|+|L\underline{L}\phi|+\frac{1}{r}|L\Omega\phi|+\frac{1}{r}|L\phi|
               +\frac{1}{r}\sum_{1\leq|J|\leq2}|\Omega^{J}\phi|+\frac{1}{r}|\underline{L}\Omega\phi|+\frac{1}{r}|\underline{L}\phi|\lesssim C_3\epsilon^{\frac{1}{2}}r^{-2},
           \end{aligned}
       \end{equation*}
and similarly   
\begin{equation*}
    \begin{aligned}
        |GL_a\phi|&\lesssim (t+r)|L^2\phi|+|L\phi|+(t-r)|L\underline L\phi|+\frac{t}{r}|L\Omega\phi|
        +\frac{t-r}{r^2}|\Omega\phi|+\frac1r|L_a\Omega\phi|+\frac1r|L_a\phi|
        \lesssim C_3\epsilon^{\frac12}r^{-2}.
    \end{aligned}
\end{equation*}
Moreover, for $1\le |J|\le 2$, it follows from \eqref{est:L2_ex3} that
\begin{equation*}
\Big|\frac1r\Omega^J\phi\Big|\lesssim C_3\epsilon^{\frac12}r^{-2}.
\end{equation*}
Finally, we use \eqref{est:commutators} and Proposition~\ref{prop:decay_ex} to conclude
\begin{equation}\label{eq:rOmegaLa_pw}
\Big|\frac1r\Omega L_a\phi\Big|
\lesssim \frac1t\big(|L_b\Omega\phi|+|L_b\phi|\big)
\lesssim C_3\epsilon^{\frac12}r^{-2}.
\end{equation}

\textit{Step 2:} Our second step is to consider $L^2$-integrals over $\mathcal B_{4\epsilon}$.
Using $dx\sim r^2\,dr\,d\omega$ on $\mathcal B_{4\epsilon}$ and the bound
$|Q|\lesssim C_3\epsilon^{\frac12}r^{-2}$ (where $Q$ stands for any of the quantities in
\eqref{eq:G_OmegaJ_pw}--\eqref{eq:rOmegaLa_pw}), we compute
\begin{equation}\label{eq:int_template}
\begin{aligned}
\Big(\int_{\mathcal B_{4\epsilon}} |Q|^2\,dx\Big)^{\frac12}
&\lesssim \Big(\int_{r\ge 1-2\epsilon} (C_3^2\epsilon\,r^{-4})\,r^2\,dr\Big)^{\frac12}
\lesssim C_3\epsilon^{\frac12}.
\end{aligned}
\end{equation}
Applying \eqref{eq:int_template} to $Q \in \{ G\Omega^{\leq 1}\phi, G\partial\phi, GL_a\phi\}$, we obtain \eqref{est:boundary1}. Applying \eqref{eq:int_template} to
$Q \in \{\frac1r\Omega\Omega^{\leq 1}\phi, \frac1r\Omega L_a\phi\}$ gives \eqref{est:boundary2}.

$\bullet$ We finally turn to \eqref{est:boundary3}.  First, using~\eqref{eq:Gidentity}, we have
\begin{equation*}
    \begin{aligned}
        |LL_a\phi|&\lesssim(t+r)|L^2\phi|+|L\phi|+(t-r)|L\underline L\phi|+\frac{t}{r}|L\Omega\phi|
        +\frac{t-r}{r^2}|\Omega\phi|
        \lesssim C_3\epsilon^{\frac12}r^{-2}.
    \end{aligned}
\end{equation*}
Proposition~\ref{prop:decay_ex} gives us 
\begin{equation*}
    |L\Omega^J\phi|\lesssim C_3\epsilon^{\frac{1}{2}}r^{-2},\qquad|J|\leq1.
\end{equation*}
Applying \eqref{eq:int_template} to $Q \in \{LL_a\phi, L\Omega^{\leq 1}\phi\}$ we obtain
\begin{equation}\label{est:LL_a}
    \sum_a\Big(\int_{\mathcal B_{4\epsilon}}|LL_a\phi|^2\,dx\Big)^{\frac{1}{2}}+\sum_{|J|\leq1}\Big(\int_{\mathcal B_{4\epsilon}}|L\Omega^J\phi|^2\,dx\Big)^{\frac{1}{2}}\lesssim C_3\epsilon^{\frac{1}{2}}.
\end{equation}
Hence, for $|J|\le 1$, applying~\eqref{est:LL_a} and estimates in \eqref{eq:E-con-Lpphi} with $\tilde{u}=4\epsilon\in[2\epsilon,5\epsilon]$, we get
\begin{equation*}
\begin{aligned}
\Big(\int_{\mathcal B_{4\epsilon}}[(r+t+1)L\Omega^J\phi+2\Omega^J\phi]^2\,dx\Big)^{\frac12}
&\lesssim \Big(\int_{\mathcal B_{4\epsilon}}[(r+t)L\Omega^J\phi+2\Omega^J\phi]^2\,dx\Big)^{\frac12}
+\Big(\int_{\mathcal B_{4\epsilon}}|L\Omega^J\phi|^2\,dx\Big)^{\frac12}\\
&\lesssim C_3\epsilon+C_3\epsilon^{\frac12}\Big(\int_{1-2\epsilon}^{\infty} r^{-2}\,dr\Big)^{\frac12}
\lesssim C_3\epsilon^{\frac12}.
\end{aligned}
\end{equation*}

Similarly, for each $a$ we apply the same splitting:
\begin{equation*}
\begin{aligned}
\Big(\int_{\mathcal B_{4\epsilon}}[(r+t+1)LL_a\phi+2L_a\phi]^2\,dx\Big)^{\frac12}
&\lesssim \Big(\int_{\mathcal B_{4\epsilon}}[(r+t)LL_a\phi+2L_a\phi]^2\,dx\Big)^{\frac12}
+\Big(\int_{\mathcal B_{4\epsilon}}|LL_a\phi|^2\,dx\Big)^{\frac12}
\\&\lesssim C_3\epsilon^{\frac{1}{2}}.
\end{aligned}
\end{equation*}

\end{proof}

   Next, we estimate the nonlinear source term. The goal is to control $\mathcal{N}(\psi,\phi)$ and its derivatives in $L^2(\mathcal{H}_\tau)$, both with and without the hyperboloidal weight $\tau$, in a form that is integrable in $\tau$.
  
  \begin{lemma}\label{lem:nonlinear_in}
	Under the bootstrap assumptions~\eqref{est:Boot_in},  the following $L^2$ estimates  hold
          \begin{equation}\label{eq:int_L2-nonlinear}\begin{split}
              \|\partial^{\leq 1} \mathcal{N}(\psi,\phi)\|_{L^2(\mathcal{H}_{\tau})}
              + \|\Omega\mathcal{N}(\psi,\phi)\|_{L^2(\mathcal{H}_{\tau})}
              + \|L_a\mathcal{N}(\psi,\phi)\|_{L^2(\mathcal{H}_{\tau})}
              \lesssim C_4^p\epsilon^{\frac{1}{2}+(\frac{1}{2}-\delta)\frac{p-5}{2}}\tau^{-\frac{p+3}{4}+\frac{p+3}{2}\delta},
          \end{split}\end{equation}
and the following weighted $L^2$ estimates  hold
          \begin{equation}\label{eq:int_weightedL2-nonlinear}\begin{split}
              \|\tau\mathcal{N}(\psi,\phi)\|_{L^2(\mathcal{H}_{\tau})}+ \|\tau\Omega\mathcal{N}(\psi,\phi)\|_{L^2(\mathcal{H}_{\tau})}
              + \|\tau L_a\mathcal{N}(\psi,\phi)\|_{L^2(\mathcal{H}_{\tau})}
              \lesssim C_4^p\epsilon^{\frac{1}{2}+(\frac{1}{2}-\delta)\frac{p-5}{2}}\tau^{-\frac{p-1}{4}+\frac{p+3}{2}\delta}.
          \end{split}\end{equation}
  \end{lemma}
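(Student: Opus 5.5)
We estimate $\mathcal N(\psi,\phi)$ on $\mathcal H_\tau$ by splitting the hyperboloid as in \eqref{eq:Htau-split} into $\Hcali$ and $\Hcalf$. We use the elementary bound \eqref{eq:elementary} together with the Leibniz expansions of Remark~\ref{rem:diff_nonlinearity}, now with one vector field $\Gamma\in\{\partial,\Omega,L_a\}$. Every term carries at least one undifferentiated or differentiated factor of $\phi$. In each term we place exactly one $\phi$-factor in $L^2(\mathcal H_\tau)$ with weight $\tau/(t+1)$, and control it by Proposition~\ref{prop:L2estimates_in}. All remaining factors are placed in $L^\infty$ or $L^q$. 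Since $t+1\le\tau^2$, the weight costs at most a factor of $(t+1)/\tau$. The $\psi$-factors are controlled pointwise by Lemma~\ref{lem:disp-psi}: $|\Gamma^{\le2}\psi|\lesssim C_1\min(\epsilon^{1/2-\delta},\langle t+r\rangle^{-1/2+\delta})$. Interpolating these two bounds on the $p-5$ extra powers gives $\epsilon^{(\frac12-\delta)\frac{p-5}{2}}\langle t\rangle^{-\frac{p-5}{4}+\cdots}$, and the remaining four $\psi$ factors supply $t^{-2+4\delta}$. This reproduces the exponents in \eqref{eq:int_L2-nonlinear}, exactly as in the proof of Lemma~\ref{lem:nonlinearities-psi}. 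When $\Gamma=\Omega$ or $L_a$ hits $\psi$, we use $|\Omega\psi|+|L_a\psi|\lesssim t|\partial\psi|\lesssim C_1 t^{-1/2+\delta}$ (from \eqref{eq:disp-psi01} with $|I|\le3$), so no $\epsilon$ is lost.

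In $\Hcali$ all $\phi$-factors other than the $L^2$ one are put in $L^\infty$ via \eqref{est:pointwise_in1}, $|\phi|\lesssim C_4\epsilon^{1/8}\tau^{-1}t^{-1/2}$. This is strong enough, and there $t\sim\tau^2$ at worst, so $t$-decay converts to $\tau$-decay. The $\psi$-dominated terms $|\psi|^{p-1}|\Gamma\phi|$ and $|\phi||\psi|^{p-2}|\Gamma\psi|$ give the leading contribution: $\|\tfrac{\tau}{t}\Gamma\phi\|\cdot\|\tfrac{t}{\tau}|\psi|^{p-1}\|_{L^\infty}$. Here $\|\tfrac{\tau}{t}\Gamma\phi\|$ is $\lesssim C_4\epsilon^{1/2}$ for $\Gamma=\Omega$ and for the undifferentiated term, and $\lesssim C_4$ for $\Gamma=\partial,L_a$ by \eqref{est:L2_in2}–\eqref{est:L2_in4}. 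For those latter cases we would attach the $\epsilon^{1/2}$ instead to the undifferentiated $\phi$ factor, or absorb the loss into the $\epsilon^{(\frac12-\delta)\frac{p-5}{2}}$ gain. We expect this bookkeeping to be tight: the statement claims $\epsilon^{1/2}$ even for $\partial\mathcal N$ and $L_a\mathcal N$.

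In $\Hcalf$ there is no good $L^\infty$ bound for $\phi$ at this regularity, and this is the main obstacle. We use Hölder, $\|fg\|_{L^2}\le\|f\|_{L^{q}}\|g\|_{L^{2q/(q-2)}}$, with $q>6$. The undifferentiated $\phi$ factors are taken in $L^q$ via \eqref{est:Lq_in2}, which gives $\epsilon^{1/2}\tau^{-1}r^{-1/2+3/q}$ with $r\gtrsim t$. For powers $|\phi|^{k}$ with $k\ge2$ we use $L^{qk}$ norms by applying \eqref{est:KS_in2} with $q$ replaced by $kq$ (still $>6$). The differentiated factor $\Gamma\phi$ must then go in an $L^{s}$ space with $s>2$. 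To avoid this, we instead put the differentiated factor in weighted $L^2$, and put all other $\phi$ factors in $L^\infty$ using \eqref{est:KS_in3} only at the $\Omega^{\le1}$ level. If that fails, we fall back on the exterior boundary bound \eqref{est:L2_ex3} at $(t^*,x^*)$ combined with an $L^q$ bound for the remaining factors. Choosing $q$ large makes $r^{-1/2+3/q}$ nearly $r^{-1/2}$, which together with $\tau\le t+1$ gives enough $\tau$-decay once $\delta\ll p-5$.

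The weighted estimates \eqref{eq:int_weightedL2-nonlinear} follow from the same term-by-term bounds, multiplied by one extra factor of $\tau$. This is why the exponent there is exactly one power of $\tau$ worse, $-\frac{p-1}{4}$ versus $-\frac{p+3}{4}$, and still integrable since $p>5$. Summing the regions and terms, and using $C_1<C_4$, gives the claim.
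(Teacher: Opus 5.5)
\textbf{There is a genuine gap in your treatment of $\Hcalf$.} The problem is the terms $|\phi|^{p-1}\Gamma\phi$ and $\tau|\phi|^{p-1}\Gamma\phi$, with $\Gamma\in\{\partial,\Omega,L_a\}$.

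Your main route is to put $\Gamma\phi$ in weighted $L^2$ and bound all other $\phi$ factors in $L^\infty$ using \eqref{est:KS_in3} ``only at the $\Omega^{\le1}$ level''. No such estimate exists. \eqref{est:KS_in3} rests on $H^2(\mathbb{S}^2)\hookrightarrow L^\infty(\mathbb{S}^2)$, so it needs $\Omega^{\le 2}\phi$ and $\Omega^{\le 2}Z\phi$. The latter requires $\mathcal{E}^{in}(\tau,\Omega^2\phi)$, which is not in \eqref{est:Boot_in}, and the introduction explains why it cannot be added. With only $\Omega^{\le 1}$ you get $H^1(\mathbb{S}^2)\hookrightarrow L^q(\mathbb{S}^2)$ for $q<\infty$, which is exactly \eqref{est:KS_in2}. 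So inside the bootstrap there is no $L^\infty$ bound for $\phi$ on $\Hcalf$.

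Your fallback does not repair this. \eqref{est:L2_ex3} at $(t^*,x^*)$ controls only the boundary term of \eqref{est:KS_in3}, not the bulk term. An $L^q$ bound with $q<\infty$ on the remaining factors cannot be paired by H\"older with $\Gamma\phi\in L^2$ to land in $L^2$.

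\textbf{The missing idea is the option you discarded.} Put the differentiated factor in $L^4$ (or $L^6$) and control it by Lemmas~\ref{lem:Sobo_embedding}--\ref{lem:inter_inequ}:
\[
\Big\|\tfrac{\tau}{t+1}\Gamma\phi\Big\|_{L^4(\Hcalf)}\lesssim\Big\|\tfrac{\tau}{t+1}\Gamma\phi\Big\|_{L^2(\Hcalf)}^{1/4}\Big\|\tfrac{\tau}{t+1}\Gamma\phi\Big\|_{H^1(\Hcalf)}^{3/4}.
\]
The $H^1$ norm is bounded by $\mathcal{E}^{in}(\tau,\Gamma\phi)^{1/2}+\|\tfrac{\tau}{t+1}\Gamma\phi\|_{L^2}$. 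This is precisely why the first-order energies of $\Omega\phi$, $\partial\phi$ and $L_a\phi$ appear in \eqref{est:Boot_in}. The complementary factor satisfies $\|\tfrac{t+1}{\tau}|\phi|^{p-1}\|_{L^4(\Hcalf)}\lesssim\tau\|\phi\|_{L^{4p-4}(\Hcalf)}^{p-1}$. Since $4p-4>6$, \eqref{est:Lq_in2} applies and gives a large surplus $\epsilon^{(p-1)/2}$ and decay $\tau^{-\frac32 p+O(1)}$. This is the paper's argument, which it writes as an $L^1$ bound on the square. For $\Gamma=\partial$ or $L_a$ the $H^1$ norm is only $O(C_4)$, so this factor is $C_4\epsilon^{1/8}$, which is harmless against $\epsilon^{(p-1)/2}$. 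The $L^6\times L^3$ pairing with $\phi\in L^{3(p-1)}$ works equally well.

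\textbf{Your concern about $\Gamma=\partial$ and $L_a$ is unfounded.} \eqref{est:L2_in1} and \eqref{est:L2_in3} give $\|\tfrac{\tau}{t+1}\partial\phi\|+\|\tfrac{\tau}{t+1}L_a\phi\|\lesssim C_4\epsilon^{1/2}$. The $O(C_4)$ bounds in \eqref{est:L2_in2} and \eqref{est:L2_in4} concern second-order quantities. So $|\psi|^{p-1}\Gamma\phi$ is handled exactly as in the $\Omega$ case. The workarounds you propose would not have worked anyway: $|\psi|^{p-1}\partial\phi$ has no undifferentiated $\phi$ factor, and absorbing the loss would lower the $\epsilon$-power below what is claimed.

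A minor point: $|\Omega\psi|+|L_a\psi|\lesssim C_1t^{-1/2+\delta}$ follows directly from \eqref{eq:disp-psi01}. The intermediate step via $t|\partial\psi|$ does not give it, since that quantity is only $O(\langle t-r\rangle^{-1/2})$.
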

  
  \begin{proof}
  Recall that for any $\tilde{\Gamma}\in\{\Omega,\partial,L_a\}$
  \begin{align}\label{eq:N-alg}
      |\mathcal N(\psi,\phi)|&\lesssim |\phi|^p+|\phi||\psi|^{p-1},\\
      \label{eq:N-der}
      |\tilde{\Gamma}\mathcal N(\psi,\phi)|
      &\lesssim |\phi|^{p-1}|\tilde{\Gamma}\phi|+|\phi|^{p-1}|\tilde{\Gamma}\psi|+|\psi|^{p-1}|\tilde{\Gamma}\phi|
      +|\phi||\psi|^{p-2}|\tilde{\Gamma}\psi|.
  \end{align}
  We now decompose the $L^2$ integrals into two terms coming from an interior region $\Hcali =H_\tau\cap\{r\le t/4\}$ and a `far' or exterior region $\Hcalf = \mathcal H_\tau\cap\{r\ge t/8\}$. 

  \medskip
  $\bullet$ We first prove \eqref{eq:int_L2-nonlinear}. 
  Using~\eqref{eq:N-alg}, we obtain
  \begin{equation}\label{eq:N-L2}
      \|\mathcal N(\psi,\phi)\|_{L^2(\mathcal H_\tau)}
      \lesssim \||\phi|^p\|_{L^2(\mathcal H_\tau)}+\|\phi|\psi|^{p-1}\|_{L^2(\mathcal H_\tau)}
      =: \mathcal A_1+\mathcal A_2.
  \end{equation}

  \smallskip
  \noindent\emph{Interior contribution for $\mathcal A_1$.}
  On $\{r\le t/4\}$ we use the weighted $L^2$ control of $\phi$ together with the $L^\infty$ decay of $\phi$,
  obtaining
  \begin{equation}\label{eq:A11-mid}
      \begin{aligned}
          \||\phi|^p\|_{L^2(\Hcali)}
          &\lesssim
          \Big\|\frac{\tau}{t+1}\phi\Big\|_{L^2(\mathcal{H}_\tau)}
          \Big\|\frac{t+1}{\tau}|\phi|^{p-1}\Big\|_{L^\infty(\Hcali)}
          \lesssim C_4^p\epsilon^{\frac12+\frac{p-1}{8}}\tau^{-\frac32(p-1)}.
      \end{aligned}
    \end{equation}
    Here we used \eqref{est:L2_in3} and \eqref{est:pointwise_in1} for the $L^\infty$ bound.

    \smallskip
    \noindent\emph{Exterior contribution for $\mathcal A_1$.}
    On $\{r\ge t/8\}$ we apply Hölder inequality to obtain
    \begin{equation*}
        \begin{aligned}
            \||\phi|^p\|_{L^2(\Hcalf)} ^2 =\||\phi|^{2p}\|_{L^1(\Hcalf)}
            &\lesssim
            \Big\|\frac{\tau}{t+1}\phi\Big\|_{L^2(\mathcal{H}_\tau)}
            \Big\|\frac{t+1}{\tau}|\phi|^{2p-1}\Big\|_{L^2(\Hcalf)}.
        \end{aligned}
    \end{equation*}
    Since $\tau\leq t+1\leq\tau^2$ on $\mathcal H_\tau$, the second factor can be bounded by
    \[
      \Big\|\frac{t+1}{\tau}|\phi|^{2p-1}\Big\|_{L^2(\Hcalf)}
      \lesssim \tau \|\phi\|_{L^{4p-2}(\Hcalf)}^{2p-1}.
    \]
    We then apply the $L^q$ estimate~\eqref{est:Lq_in2} from Lemma~\ref{lem:pointwise_in} to obtain
    \begin{equation*}
        \|\phi\|_{L^{4p-2}(\Hcalf)}\lesssim C_4\epsilon^{\frac12}\tau^{-\frac32+\frac{3}{4p-2}}\,.
    \end{equation*}
    All together, this implies
    \begin{equation}\label{eq:A12-mid-fin}
        \||\phi|^p\|_{L^2(\Hcalf)}\lesssim (C_4\epsilon^{\frac12})^{p}\tau^{-\frac32p+2}.
    \end{equation}

    \smallskip
    \noindent\emph{Estimate of $\mathcal A_2$.}
    From~\eqref{eq:disp-psi01}, we derive
    \begin{equation}\label{est:psi_H}
        \begin{aligned}
            |\psi|\lesssim C_1\epsilon^{\frac{1}{2}-\delta},\qquad
            |\psi|\lesssim C_1\langle t+r\rangle^{-\frac{1}{2}+\delta}.
        \end{aligned}
    \end{equation}
    Using the Hölder inequality again, we find
    \begin{equation*}
        \mathcal A_2
        \lesssim
        \Big\|\frac{\tau}{t+1}\phi\Big\|_{L^2(\mathcal H_\tau)}
        \Big\|\frac{t+1}{\tau}|\psi|^{p-1}\Big\|_{L^\infty(\mathcal H_\tau)}.
    \end{equation*}
    Since $\tau\leq t+1\leq\tau^2$ on $\mathcal H_\tau$, using~\eqref{est:psi_H}, the second factor can be bounded by
    \begin{equation*}
        \Big\|\frac{t+1}{\tau}|\psi|^{p-1}\Big\|_{L^\infty(\mathcal H_\tau)}\lesssim C_1^{p-1}\epsilon^{(\frac{1}{2}-\delta)\frac{p-5}{2}}\tau^{-\frac{p+3}{4}+\frac{p+3}{2}\delta},
    \end{equation*}
    which implies
    \begin{equation}\label{eq:A2-mid}
        \mathcal{A}_2\lesssim C_4^p\epsilon^{\frac{1}{2}+(\frac{1}{2}-\delta)\frac{p-5}{2}}\tau^{-\frac{p+3}{4}+\frac{p+3}{2}\delta}.
    \end{equation}
    Combining \eqref{eq:N-L2}, \eqref{eq:A11-mid}, \eqref{eq:A12-mid-fin} and \eqref{eq:A2-mid} gives the first bound in \eqref{eq:int_L2-nonlinear}.

    Next, we illustrate the estimate for $\tilde{\Gamma}=\Omega$; the cases $\tilde{\Gamma}=\partial$ and $\tilde{\Gamma}=L_a$ are similar.
    From \eqref{eq:N-der}, we decompose
    \begin{equation*}
        \begin{aligned}
            \|\Omega\mathcal N(\psi,\phi)\|_{L^2(\mathcal{H}_\tau)}
            &\lesssim \||\phi|^{p-1}\Omega\phi\|_{L^2(\mathcal{H}_\tau)}+\||\phi|^{p-1}\Omega\psi\|_{L^2(\mathcal{H}_\tau)}
            +\||\psi|^{p-1}\Omega\phi\|_{L^2(\mathcal{H}_\tau)}+\|\phi|\psi|^{p-2}\Omega\psi\|_{L^2(\mathcal{H}_\tau)}.
        \end{aligned}
    \end{equation*}
    The last two terms are estimated in the same way as $\mathcal A_2$ using $L^\infty$ decay for $\psi$ or $\Omega\psi$ and $L^2$ estimates for $\phi$ or $\Omega\phi$.
    For the first term $\||\phi|^{p-1}\Omega\phi\|_{L^2}$, we again split it into two parts. On $\{r\le t/4\}$, combining \eqref{est:L2_in3} and \eqref{est:pointwise_in1}, we deduce
    \begin{equation*}
        \begin{aligned}
            \||\phi|^{p-1}\Omega\phi\|_{L^2(\Hcali)}
            \lesssim
            \Big\|\frac{\tau}{t+1}\Omega\phi\Big\|_{L^2(\mathcal{H}_\tau)}
            \Big\|\frac{t+1}{\tau}|\phi|^{p-1}\Big\|_{L^\infty(\Hcali)}
            \lesssim C_4^p\epsilon^{\frac12+\frac{p-1}{8}}\tau^{-\frac32(p-1)}.
        \end{aligned}
    \end{equation*}
    On $\{r\ge t/8\}$, the Hölder inequality implies
    \begin{equation*}
        \begin{aligned}
            \||\phi|^{p-1}\Omega\phi\|_{L^2(\Hcalf)}
            &=\||\phi|^{2p-2}|\Omega\phi|^2\|^{\frac{1}{2}}_{L^1(\Hcalf)}
            \lesssim\Big\||\frac{\tau}{t+1}\Omega\phi|^2\Big\|_{L^2(\Hcalf)}^{\frac{1}{2}}\left\|\frac{t^2}{\tau^2}|\phi|^{2p-2}\right\|_{L^2(\Hcalf)}^{\frac{1}{2}}.
        \end{aligned}
    \end{equation*}
    Since $\tau\leq t+1\leq\tau^2$ on $\mathcal{H}_\tau$, the second factor can be bounded by
    \begin{equation}\label{eq:GN_mid1}
        \left\|\frac{t^2}{\tau^2}|\phi|^{2p-2}\right\|_{L^2(\Hcalf)}\lesssim\tau^2\|\phi\|_{L^{4p-4}(\Hcalf)}^{2p-2}\lesssim C_4^{2p-2}\epsilon^{p-1}\tau^{-3p+\frac{11}{2}},
    \end{equation}
    where we used~\eqref{est:Lq_in2} in the last inequality.
    For the first factor, applying Lemmas~\ref{lem:Sobo_embedding} and \ref{lem:inter_inequ}, we derive
    \begin{equation}\label{eq:GN-mid2}
        \begin{aligned}
            \Big\||\frac{\tau}{t+1}\Omega\phi|^2\Big\|_{L^2(\Hcalf)}^{\frac{1}{2}}=\Big\|\frac{\tau}{t+1}\Omega\phi\Big\|_{L^4(\Hcalf)}
            \lesssim\Big\|\frac{\tau}{t+1}\Omega\phi\Big\|_{L^2(\Hcalf)}^{\frac{1}{4}}\Big\|\frac{\tau}{t+1}\Omega\phi\Big\|_{H^1(\Hcalf)}^{\frac{3}{4}}.
        \end{aligned}
    \end{equation}
    Combining~\eqref{eq:GN_mid1} and~\eqref{eq:GN-mid2} and using Proposition~\ref{prop:L2estimates_in}, we obtain
    \begin{equation*}
        \||\phi|^{p-1}\Omega\phi\|_{L^2(\Hcalf)}\lesssim C_4^p\epsilon^{\frac{p}{2}-\frac{3}{8}}\tau^{-\frac{3}{2}p+\frac{11}{4}}.
    \end{equation*}
    For the second term $\||\phi|^{p-1}\Omega\psi\|_{L^2}$, we bound it by
    \begin{equation*}
        \||\phi|^{p-1}\Omega\psi\|_{L^2(\mathcal{H}_\tau)}\lesssim\||\phi|^{p-1}\Omega\psi\|_{L^2(\Hcali)}+\||\phi|^{p-1}\Omega\psi\|_{L^2(\Hcalf)}.
    \end{equation*}
    On $\{r\leq t/4\}$, using~\eqref{est:L2_in3},~\eqref{est:pointwise_in1}, and~\eqref{eq:disp-psi01}, we obtain
    \begin{equation*}
        \begin{aligned}
            \||\phi|^{p-1}\Omega\psi\|_{L^2(\Hcali)}&\lesssim\left\|\frac{\tau}{t+1}\phi\right\|_{L^2(\mathcal{H}_{\tau})}\left\|\frac{t+1}{\tau}|\phi|^{p-2}\Omega\psi\right\|_{L^\infty(\Hcali)}
            \lesssim C_4^p\epsilon^{1-\delta+\frac{p-2}{8}}\tau^{-\frac{3}{2}p+3}.
        \end{aligned}
    \end{equation*}
    On $\{r\geq t/8\}$, the H\"{o}lder inequality yields
    \begin{equation*}
        \begin{aligned}
            \||\phi|^{p-1}\Omega\psi\|_{L^2(\Hcalf)}
            \lesssim\left\||\phi|^{p-1}\right\|_{L^2(\Hcalf)}\|\Omega\psi\|_{L^\infty(\mathcal{H}_{\tau})}.
        \end{aligned}
    \end{equation*}
    From \eqref{est:Lq_in2}, we derive
    \begin{equation*}
        \left\||\phi|^{p-1}\right\|_{L^2(\Hcalf)}=\|\phi\|^{p-1}_{L^{2p-2}(\Hcalf)}\lesssim C_4^{p-1}\epsilon^{\frac{p-1}{2}}\tau^{-\frac{3}{2}p+3}.
    \end{equation*}
    Thus, combining it with~\eqref{eq:disp-psi01}, we get
    \begin{equation*}
        \||\phi|^{p-1}\Omega\psi\|_{L^2(\Hcalf)}\lesssim C_4^p\epsilon^{\frac{p}{2}-\delta}\tau^{-\frac{3}{2}p+3}.
    \end{equation*}
 Therefore, we conclude that
    \begin{equation*}
        \|\Omega\mathcal N(\psi,\phi)\|_{L^2(\mathcal H_\tau)}
        \lesssim C_4^p\epsilon^{\frac{1}{2}+(\frac{1}{2}-\delta)\frac{p-5}{2}}\tau^{-\frac{p+3}{4}+\frac{p+3}{2}\delta}.
    \end{equation*}

    The bounds for $\partial\mathcal N$ and $L_a\mathcal N$ follow in the same way by replacing $\Omega$ with $\partial$ or $L_a$. This completes the proof of \eqref{eq:int_L2-nonlinear}.

    $\bullet$ We now turn to \eqref{eq:int_weightedL2-nonlinear}.
    Multiplying \eqref{eq:N-alg} and \eqref{eq:N-der} by $\tau$, we repeat the above argument.
    For the first bound in \eqref{eq:int_weightedL2-nonlinear}, using~\eqref{eq:N-alg}, we get
    \begin{equation*}
        \|\tau\mathcal N(\psi,\phi)\|_{L^2(\mathcal H_\tau)}
      \lesssim \|\tau|\phi|^p\|_{L^2(\mathcal H_\tau)}+\|\tau\phi|\psi|^{p-1}\|_{L^2(\mathcal H_\tau)}
      =: \mathcal A_3+\mathcal A_4.
    \end{equation*}
    \emph{Estimate of $\mathcal{A}_3$.} Splitting $\mathcal{A}_3$ into two parts
    \begin{equation*}
        \mathcal{A}_3\lesssim\|\tau|\phi|^p\|_{L^2(\Hcalf)}+\|\tau|\phi|^p\|_{L^2(\Hcali)}.
    \end{equation*}
    On $\{r\leq t/4\}$, applying~\eqref{est:L2_in3} and~\eqref{est:pointwise_in1}, we get
    \begin{equation}\label{est:A2-mid1}
        \begin{aligned}
            \|\tau|\phi|^p\|_{L^2(\Hcali)}&\lesssim\left\|\frac{\tau}{t+1} \phi\right\|_{L^2(\Hcali)}
   			\left\|(t+1)|\phi|^{p-1} \right\|_{L^\infty(\Hcali)}
            \lesssim C_4^p\epsilon^{\frac{1}{2}+\frac{p-1}{8}}\tau^{-\frac{3}{2}p+\frac{5}{2}}.
        \end{aligned}
    \end{equation}
    On $\{r\geq t/8\}$, using the H\"{o}lder inequality, we have
    \begin{equation*}
        \begin{aligned}
            &\|\tau|\phi|^p\|_{L^2(\Hcalf)}=\left\|\tau^2
            |\phi|^{2p}\right\|_{L^1(\Hcalf)}^{\frac{1}{2}}
            \lesssim\left\|\frac{\tau}{t+1} \phi\right\|_{L^2(\Hcalf))}^{\frac{1}{2}}
   			\left\|\tau(t+1) |\phi|^{2p-1} \right\|_{L^2(\Hcalf)}^{\frac{1}{2}}.
        \end{aligned}
    \end{equation*}
    For the second factor, combining with~\eqref{est:Lq_in2}, we see
    \begin{equation*}
        \left\|\tau(t+1) |\phi|^{2p-1} \right\|_{L^2(\Hcalf)}\lesssim\tau^3\|\phi\|^{2p-1}_{L^{4p-2}(\Hcalf)}\lesssim C_4^{2p-1}\epsilon^{p-\frac{1}{2}}\tau^{-3p+6},
    \end{equation*}
    where we used the fact that $\tau\leq t+1\leq\tau^2$ on $\mathcal{H}_\tau$.
    Together with~\eqref{est:L2_in3}, we conclude
    \begin{equation}\label{est:A2-mid2}
        \|\tau|\phi|^p\|_{L^2(\Hcalf)}\lesssim C_4^p\epsilon^{\frac{p}{2}}\tau^{-\frac{3}{2}p+3}.
    \end{equation}
    \emph{Estimate of $\mathcal{A}_4$.} Employing~\eqref{est:L2_in3} and~\eqref{eq:disp-psi01}, we have
    \begin{equation}\label{est:A2-mid3}
        \mathcal{A}_4\lesssim
   			\left\|\frac{\tau}{t+1} \phi\right\|_{L^2(\mathcal{H}_{\tau} )}
   			\left\|(t+1) |\psi|^{p-1} \right\|_{L^\infty(\mathcal{H}_{\tau} )}
   			\lesssim C_4^p\epsilon^{\frac{1}{2}+(\frac{1}{2}-\delta)\frac{p-5}{2}}\tau^{-\frac{p-1}{4}+\frac{p+3}{2}\delta}.
    \end{equation}
    Combining \eqref{est:A2-mid1}, \eqref{est:A2-mid2}, and \eqref{est:A2-mid3} gives the first bound in \eqref{eq:int_weightedL2-nonlinear}.

    Next, we illustrate the estimate for $\tilde{\Gamma}=\Omega$; the case $\tilde{\Gamma}=L_a$ is similar.
    From \eqref{eq:N-der}, we decompose
    \begin{equation}\label{est:tau_omega}
        \begin{aligned}
            \|\tau\Omega\mathcal N(\psi,\phi)\|_{L^2(\mathcal{H}_\tau)}
            &\lesssim \|\tau|\phi|^{p-1}\Omega\phi\|_{L^2(\mathcal{H}_\tau)}+\|\tau|\phi|^{p-1}\Omega\psi\|_{L^2(\mathcal{H}_\tau)}\\
            &+\|\tau|\psi|^{p-1}\Omega\phi\|_{L^2(\mathcal{H}_\tau)}+\|\tau\phi|\psi|^{p-2}\Omega\psi\|_{L^2(\mathcal{H}_\tau)}.
        \end{aligned}
    \end{equation}
    The last two terms are estimated in the same way as $\mathcal A_4$, so we omit the details.
    For the first term in~\eqref{est:tau_omega}, we split it into two parts
    \begin{equation*}
        \|\tau|\phi|^{p-1}\Omega\phi\|_{L^2(\mathcal{H}_{\tau})}\lesssim\|\tau|\phi|^{p-1}\Omega\phi\|_{L^2(\Hcali)}+\|\tau|\phi|^{p-1}\Omega\phi\|_{L^2(\Hcalf)}.
    \end{equation*}
    On $\{r\leq t/4\}$, using the H\"{o}lder inequality, we have
   	\begin{equation*}
   		\begin{aligned}
   			\|\tau|\phi|^{p-1}\Omega\phi\|_{L^2(\Hcali)}
            &\lesssim
   			\Big\|\frac{\tau}{t+1}\Omega\phi\Big\|_{L^2(\mathcal{H}_{\tau})}
   			\left\|(t+1)|\phi|^{p-1}\right\|_{L^\infty(\Hcali)}
            \lesssim
   			C_4^p \epsilon^{\frac{1}{2}+\frac{p-1}{8}}\tau^{-\frac{3}{2}p+\frac{5}{2}},
   		\end{aligned}
   	\end{equation*}
    where we used~\eqref{est:L2_in3} and~\eqref{est:pointwise_in1}.
    On $\{r\geq t/8\}$, using the H\"{o}lder inequality again, we deduce
    \begin{equation*}
        \begin{aligned}
           &\|\tau|\phi|^{p-1}\Omega\phi\|_{L^2(\Hcalf)}=\left\|\tau^2|\Omega\phi|^2|\phi|^{2p-2}\right\|_{L^1(\Hcalf)}^{\frac{1}{2}}
           \lesssim\left\||\frac{\tau}{t+1}\Omega\phi|^2\right\|_{L^2(\Hcalf)}^{\frac{1}{2}}\left\|t^2|\phi|^{2p-2}\right\|_{L^2(\Hcalf)}^{\frac{1}{2}}.
        \end{aligned}
    \end{equation*}
    For the first factor, applying Lemmas~\ref{lem:Sobo_embedding}-\ref{lem:inter_inequ} and Proposition~\ref{prop:L2estimates_in}, we obtain
    \begin{equation*}
        \begin{aligned}
            \left\||\frac{\tau}{t+1}\Omega\phi|^2\right\|_{L^2(\Hcalf)}^{\frac{1}{2}}&=\left\|\frac{\tau}{t+1}\Omega\phi\right\|_{L^4(\Hcalf)}
            \lesssim
   			\left\|\frac{\tau}{t+1}\Omega\phi\right\|_{L^2(\Hcalf)}^{1\over4}\left\|\frac{\tau}{t+1}\Omega\phi\right\|_{H^1(\Hcalf)}^{3\over4}\lesssim C_4\epsilon^{\frac{1}{2}}.
        \end{aligned}
    \end{equation*}

    Since $\tau\leq t+1\leq \tau^2$ on $\mathcal{H}_\tau$, we apply~\eqref{est:Lq_in2} to the second factor
    \begin{equation*}
        \left\|t^2|\phi|^{2p-2}\right\|_{L^2(\Hcalf)}^{\frac{1}{2}}\lesssim\tau^2\|\phi\|^{p-1}_{L^{4p-4}(\Hcalf)}\lesssim C_4^{p-1}\epsilon^{\frac{p-1}{2}}\tau^{-\frac{3}{2}p+\frac{17}{4}}.
    \end{equation*}
    Combining the above estimates, we conclude that
    \begin{equation*}
        \|\tau|\phi|^{p-1}\Omega\phi\|_{L^2(\Hcalf)}\lesssim C_4^p\epsilon^{\frac{p}{2}}\tau^{-\frac{3}{2}p+\frac{17}{4}}.
    \end{equation*}
   	For the second term in~\eqref{est:tau_omega}, we decompose it into two parts
    \begin{equation*}
        \begin{aligned}
        \|\tau|\phi|^{p-1}\Omega\psi\|_{L^2(\mathcal{H}_\tau)}&\lesssim\Big\|\frac{\tau}{t+1}\phi\Big\|_{L^2(\mathcal{H}_{\tau})}\left\|(t+1)|\phi|^{p-2}\Omega\psi\right\|_{L^\infty(\Hcali)}
        +\left\||\phi|^{p-1}\right\|_{L^2(\Hcalf)}\|\tau\Omega\psi\|_{L^\infty(\mathcal{H}_{\tau})}=:\mathcal{A}_5+\mathcal{A}_6.
        \end{aligned}
    \end{equation*}
    Employing~\eqref{est:L2_in3},~\eqref{est:pointwise_in1} and~\eqref{eq:disp-001}, we obtain
    \begin{equation*}
        \mathcal{A}_5\lesssim C_4^p\epsilon^{1-\delta+\frac{p-2}{8}}\tau^{-\frac{3}{2}p+4}.
    \end{equation*}
    Using~\eqref{est:Lq_in2}, we have
    \begin{equation*}
        \mathcal{A}_6\lesssim\|\phi\|^{p-1}_{L^{2p-2}(\Hcalf)}\|\tau\Omega\psi\|_{L^\infty(\mathcal{H}_{\tau})}\lesssim C_4^p\epsilon^{\frac{p}{2}-\delta}\tau^{-\frac{3}{2}p+4},
    \end{equation*}
    where we used~\eqref{eq:disp-001} in the last inequality.
    The above estimates yield
    \begin{equation*}
        \|\tau|\phi|^{p-1}\Omega\psi\|_{L^2(\mathcal{H}_\tau)}\lesssim C_4^p\epsilon^{1-\delta+\frac{p-2}{8}}\tau^{-\frac{3}{2}p+4}.
    \end{equation*}
    Therefore, we conclude that
    \begin{equation*}
        \|\tau\Omega\mathcal N(\psi,\phi)\|_{L^2(\mathcal H_\tau)}
        \lesssim C_4^p\epsilon^{\frac{1}{2}+(\frac{1}{2}-\delta)\frac{p-5}{2}}\tau^{-\frac{p-1}{4}+\frac{p+3}{2}\delta}.
    \end{equation*}

    The bound for $L_a\mathcal N$ follows in the same way by replacing $\Omega$ with $L_a$. This proves \eqref{eq:int_weightedL2-nonlinear}.
    \end{proof}

With the boundary estimates and nonlinear estimates at hand, we can now apply the interior natural energy and conformal energy inequalities to $\phi$ and its derivatives. This will improve the bootstrap assumptions made in \eqref{est:Boot_in} and thus close our global existence proof in the interior region. 

   \begin{proof}[Proof of Proposition \ref{prop:global_in_existence}.] \,
   
\noindent\textbf{Step 1: Initial data at $t=t_1$.}
By Proposition~\ref{prop:estimates_local1}, we have the $L^2$ bounds on the initial slice
$\Sigma_{t_1}\cap\mathcal D^{in}$:
\begin{equation}\label{eq:init-in}
\begin{aligned}
\sum_{|J|\le1}\|\Omega^J\phi\|_{L^2(\Sigma_{t_1}\cap\mathcal D^{in})}
+\sum_a\|L_a\phi\|_{L^2(\Sigma_{t_1}\cap\mathcal D^{in})}
+\sum_{|J|\le1}\|\partial\Omega^J\phi\|_{L^2(\Sigma_{t_1}\cap\mathcal D^{in})}
&\lesssim C_2\epsilon,\\
\|\partial\partial\phi\|_{L^2(\Sigma_{t_1}\cap\mathcal D^{in})}
+\sum_a\|\partial L_a\phi\|_{L^2(\Sigma_{t_1}\cap\mathcal D^{in})}
&\lesssim C_2.
\end{aligned}
\end{equation}
\textbf{Step 2: Natural energy for $\Omega^J\phi$ ($|J|\le1$).}
Applying $\Omega^J$ to the wave equation for $\phi$ and using the natural energy estimate
\eqref{est:energy_in1}, we obtain, for $|J|\le1$,
\begin{equation*}
\begin{aligned}
\mathcal E^{in}(\tau,\Omega^J\phi)^{\frac12}
&\lesssim \|\partial\Omega^J\phi\|_{L^2(\Sigma_{t_1}\cap\mathcal D^{in})}
+\Big(\int_{\mathcal B_{4\epsilon}}|G\Omega^J\phi|^2\,dx\Big)^{\frac12}
+\int_{\tau_0}^{\tau}
\|\Omega^J\mathcal N(\psi,\phi)\|_{L^2(\mathcal H_{\tilde\tau})}\,d\tilde\tau.
\end{aligned}
\end{equation*}
Employing~\eqref{eq:init-in},~\eqref{est:boundary1} and Lemma~\ref{lem:nonlinear_in}, we obtain
\begin{equation}\label{eq:Ein-Omega}
    \begin{aligned}
        \mathcal E^{in}(\tau,\Omega^J\phi)^{\frac12}&\lesssim C_2\epsilon+C_3\epsilon^{\frac{1}{2}}+\int_{\tau_0}^\tau C_4^p\epsilon^{\frac{1}{2}+(\frac{1}{2}-\delta)\frac{p-5}{2}}\tilde{\tau}^{-\frac{p+3}{4}+\frac{p+3}{2}\delta}\, d\tilde{\tau}
        \lesssim C_3\epsilon^{\frac{1}{2}}+C_4^p\epsilon^{\frac{1}{2}+(\frac{1}{2}-\delta)\frac{p-5}{2}}.
    \end{aligned}
\end{equation}
\textbf{Step 3: Natural energy for $\partial\phi$ and $L_a\phi$.}
Next, we apply the natural energy estimate~\eqref{est:energy_in1} to the differentiated equations. For $\partial\phi$, we obtain
\begin{equation*}
   	\begin{aligned}
   		\mathcal{E}^{in}(\tau,\partial\phi)^{\frac{1}{2}}&\lesssim\|\partial\partial\phi\|_{L^2(\Sigma_{t_1}\cap\mathcal{D}^{in})}+\big(\int_{\mathcal{B}_{4\epsilon}}|G\partial\phi|^2\, dx\big)^{\frac{1}{2}}
        +\int_{\tau_0}^{\tau}\|\partial\mathcal N(\psi,\phi)\|_{L^2(\mathcal{H}_{\tilde{\tau}})}\, d\tilde{\tau}.
   	\end{aligned}
   \end{equation*}
   Combining~\eqref{eq:init-in},~\eqref{est:boundary1} and Lemma~\ref{lem:nonlinear_in}, we conclude
   \begin{equation}\label{eq:Ein-partial}
       \begin{aligned}
           \mathcal{E}^{in}(\tau,\partial\phi)^{\frac{1}{2}}
           &\lesssim C_2+C_3\epsilon^{\frac{1}{2}}+\int_{\tau_0}^{\tau}C_4^p\epsilon^{\frac{1}{2}+(\frac{1}{2}-\delta)\frac{p-5}{2}}\tilde{\tau}^{-\frac{p+3}{4}+\frac{p+3}{2}\delta}\, d\tilde{\tau}
           \lesssim C_3+C_4^p\epsilon^{\frac{1}{2}+(\frac{1}{2}-\delta)\frac{p-5}{2}}.
       \end{aligned}
   \end{equation}
   Similarly, applying \eqref{est:energy_in1} to $L_a\phi$ $(a=1,2,3)$, we obtain
\begin{equation}\label{eq:Ein-La-final}
    \begin{aligned}
        \mathcal E^{in}(\tau,L_a\phi)^{\frac12}
        &\lesssim \|\partial L_a\phi\|_{L^2(\Sigma_{t_1}\cap\mathcal D^{in})}
        +\Big(\int_{\mathcal B_{4\epsilon}}|GL_a\phi|^2\,dx\Big)^{\frac12}
        +\int_{\tau_0}^{\tau}\|L_a\mathcal N(\psi,\phi)\|_{L^2(\mathcal{H}_{\tilde{\tau}})}\,d\tilde\tau\\
        &\lesssim C_3 + C_4^p\epsilon^{\frac{1}{2}+(\frac{1}{2}-\delta)\frac{p-5}{2}},
    \end{aligned}
\end{equation}
where we used \eqref{eq:init-in}, Lemma~\ref{lem:boundary} and Lemma \ref{lem:nonlinear_in}.

\noindent\textbf{Step 4: Conformal energy for $\Omega^J\phi$ ($|J|\le1$).}
Using the conformal energy estimate \eqref{est:energy_in2} in Lemma~\ref{lem:energy_in}, we have
\begin{equation*}
   \begin{aligned}
   		\mathcal{E}^{in}_{con}(\tau,\Omega^J\phi)^{\frac{1}{2}}&\lesssim\|\partial\Omega^J\phi\|_{L^2(\Sigma_{t_1}\cap\mathcal{D}^{in})}+\|\Omega^J\phi\|_{L^2(\Sigma_{t_1}\cap\mathcal{D}^{in})}\\
        &+\left(\int_{\mathcal{B}_{4\epsilon}}\Big[(t+r+1)L\Omega^J\phi + 2\Omega^J\phi\Big]^2+\Big|\frac{1}{r}\Omega\Omega^J\phi\Big|^{2}\, dx\right)^{\frac{1}{2}}
        +\int_{\tau_0}^{\tau}\|\tilde{\tau}\Omega^J\mathcal N(\psi,\phi)\|_{L^2(\mathcal{H}_{\tilde{\tau}})}\, d\tilde{\tau}.
   	\end{aligned}
   \end{equation*}
Using~\eqref{eq:init-in} and Lemmas~\ref{lem:boundary}-\ref{lem:nonlinear_in}, we have
\begin{equation}\label{eq:Econ-Omega}
    \begin{aligned}
        \mathcal{E}^{in}_{con}(\tau,\Omega^J\phi)^{\frac{1}{2}}
        &\lesssim C_2\epsilon+C_3\epsilon^{\frac{1}{2}}+\int_{\tau_0}^{\tau}C_4^p\epsilon^{\frac{1}{2}+(\frac{1}{2}-\delta)\frac{p-5}{2}}\tilde{\tau}^{-\frac{p-1}{4}+\frac{p+3}{2}\delta}\, d\tilde{\tau}
        \lesssim C_3\epsilon^{\frac{1}{2}}+C_4^p\epsilon^{\frac{1}{2}+(\frac{1}{2}-\delta)\frac{p-5}{2}}.
    \end{aligned}
\end{equation}
\textbf{Step 5: Conformal energy for $L_a\phi$ $(a=1,2,3)$.} Similarly, applying the conformal energy estimate~\eqref{est:energy_in2} to $L_a\phi$ yields
\begin{equation}\label{eq:Econ-La}
\begin{aligned}
\mathcal E^{in}_{con}(\tau,L_a\phi)^{\frac12}
&\lesssim \|\partial L_a\phi\|_{L^2(\Sigma_{t_1}\cap\mathcal D^{in})}
+\|L_a\phi\|_{L^2(\Sigma_{t_1}\cap\mathcal D^{in})}\\
&\quad+\Big(\int_{\mathcal B_{4\epsilon}}\big[(t+r+1)LL_a\phi+2L_a\phi\big]^2
+\Big|\frac1r\Omega L_a\phi\Big|^2\,dx\Big)^{\frac12}
+\int_{\tau_0}^{\tau}\|\tilde\tau\,L_a\mathcal N(\psi,\phi)\|_{L^2(\mathcal H_{\tilde\tau})}\,d\tilde\tau\\
&\lesssim C_2+C_3\epsilon^{\frac{1}{2}}+\int_{\tau_0}^{\tau}C_4^p\epsilon^{\frac{1}{2}+(\frac{1}{2}-\delta)\frac{p-5}{2}}\tilde{\tau}^{-\frac{p-1}{4}+\frac{p+3}{2}\delta}\, d\tilde{\tau}\lesssim C_3+C_4^p\epsilon^{\frac{1}{2}+(\frac{1}{2}-\delta)\frac{p-5}{2}}.
\end{aligned}
\end{equation}
\textbf{Step 6: Closing the bootstrap argument.}
Combining \eqref{eq:Ein-Omega}, \eqref{eq:Ein-partial}, \eqref{eq:Ein-La-final}, \eqref{eq:Econ-Omega} and \eqref{eq:Econ-La}, we have
\begin{align*}
    \sum_{|J|\le1}\mathcal E^{in}(\tau,\Omega^J\phi)^{\frac12}
    +\sum_{|J|\le1}\mathcal E^{in}_{con}(\tau,\Omega^J\phi)^{\frac12}
    &\lesssim C_3\epsilon^{\frac12}+ C_4^p\epsilon^{\frac12+(\frac{1}{2}-\delta)\frac{p-5}{2}},\\
    \mathcal E^{in}(\tau,\partial\phi)^{\frac12}
    +\sum_a\mathcal E^{in}(\tau,L_a\phi)^{\frac12}
    +\sum_a\mathcal E^{in}_{con}(\tau,L_a\phi)^{\frac12}
    &\lesssim C_3 + C_4^p\epsilon^{\frac{1}{2}+(\frac{1}{2}-\delta)\frac{p-5}{2}}.
\end{align*}

Choosing $C_4$ large enough so that $C_3\ll C_4$ and taking $\epsilon$ sufficiently small so that
\begin{equation*}
    C_4^{p-1}\epsilon^{(\frac{1}{2}-\delta)\frac{p-5}{2}} \ll 1
\end{equation*}
yields improved bounds with the factor $\frac12 C_4\epsilon^{\frac{1}{2}}$ (and $\frac12 C_4$) compared to \eqref{est:Boot_in}. This closes the bootstrap and completes the proof.
   
   \end{proof}

\section{Global existence for classical solutions}\label{sec:classical}
In this section, we upgrade the global solution constructed in the previous sections to a global
classical solution. Recall that $\phi$ is already globally defined on
$\mathcal D^{ex}\cup\mathcal D^{in}$ for $t\ge t_1$.
In Sections \ref{subsec:step1}, \ref{subsec:step2}, \ref{subsec:step3} we will (respectively) derive pointwise control of $\phi$, $\partial\phi$ and
$\partial^2\phi$ in $\mathcal D^{in}$. Combined with the corresponding exterior regularity estimates from
Section~\ref{sp:exterior}, we can then conclude that $\phi$ is a global classical solution to the equation~\eqref{eq:model_short-pulse} for $t\ge t_1$.

\subsection{Pointwise bound for $\phi$}\label{subsec:step1}
In the previous section we have already obtained pointwise bound for $\phi$ in the interior part
$\{r\le t/4\}$ of $\mathcal D^{in}$ (see \eqref{est:pointwise_in1}). In this subsection we derive the remaining pointwise control on
$\{r\ge t/8\}$ by first establishing hyperboloidal energy bounds for $\Omega^2\phi$ and then applying
the Sobolev inequality.

\begin{lemma}[Energy bounds for $\Omega^2\phi$]\label{lem:Omega2-energy}
For all $\tau\in[\tau_0,+\infty)$,
\begin{equation}\label{eq:Omega2-energy}
\mathcal E^{in}(\tau,\Omega^2\phi)^{\frac12}+
\mathcal E^{in}_{con}(\tau,\Omega^2\phi)^{\frac12}\lesssim C_4^p.
\end{equation}
\end{lemma}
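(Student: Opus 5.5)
We propose to commute the equation \eqref{eq:model_short-pulse} with $\Omega^2$ and apply the hyperboloidal energy inequalities \eqref{est:energy_in1}--\eqref{est:energy_in2} of Lemma~\ref{lem:energy_in}. Since $\phi$ is already known to be global, this is not a bootstrap; it is a linear estimate driven by the bounds \eqref{est:Boot_in}, which now hold for all $\tau$ with constant $C_4$. The weaker target $C_4^p$, with no $\epsilon$-smallness required, gives a lot of room. Every term on the right-hand side must be shown bounded by $C_4^p$ uniformly in $\tau$. The initial and boundary terms cause no trouble. On $\Sigma_{t_1}\cap\mathcal D^{in}$, Proposition~\ref{prop:estimates_local1} gives $\|\partial\Omega^2\phi\|+\|\Omega^2\phi\|\lesssim C_2$. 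On $\mathcal B_{4\epsilon}$, the quantities $|G\Omega^2\phi|$, $|(t+r+1)L\Omega^2\phi+2\Omega^2\phi|$ and $|r^{-1}\Omega^3\phi|$ are controlled in $L^2(\mathcal B_{4\epsilon})$. For this we use the exterior flux bounds, namely the flux parts of $E^{ex}(4\epsilon,t,\Omega^2\phi)$ and $E^{ex}_{con}(4\epsilon,t,\Omega^2\phi)$ from \eqref{eq:E-ex-partialOmega2} and \eqref{eq:E-con-pOphi} with $|J|=2\le 3$. Where a plain weighted $L^2$ norm is needed, we fall back on pointwise bounds from \eqref{est:L2_ex3} and Proposition~\ref{prop:decay_ex}, integrated as in \eqref{eq:int_template}. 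These contributions give at most $C_3$.

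The main work is the source term. We must bound $\int_{\tau_0}^\infty\|\tilde\tau\,\Omega^2\mathcal N(\psi,\phi)\|_{L^2(\mathcal H_{\tilde\tau})}d\tilde\tau$ (the unweighted version follows similarly). By Remark~\ref{rem:diff_nonlinearity}, $\Omega^2\mathcal N$ splits into three kinds of terms.

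\emph{Top-order $\phi$ terms.} These are $(|\phi|^{p-1}+|\psi|^{p-1})|\Omega^2\phi|$, which are linear in the unknown. We put $\frac{\tau}{t+1}\Omega^2\phi$ in $L^2$ and the remaining factors $(t+1)(|\phi|^{p-1}+|\psi|^{p-1})$ in $L^\infty$. This produces the integrable rate $\tau^{-\frac{p-1}{4}+}$, as in \eqref{est:A2-mid3}, times $\mathcal E^{in}_{con}(\tau,\Omega^2\phi)^{1/2}$, which Gronwall (Lemma~\ref{lem:Gronwall}) absorbs with an $O(1)$ multiplicative constant.

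On $\Hcalf$, however, we have no $L^\infty$ bound on $\phi$, since that is what this subsection aims to produce. There we instead use Hölder with the $L^q$ bound \eqref{est:Lq_in2}. Concretely, $\Omega^2\phi$ goes in $L^6$, and $L^6$ is controlled by Lemma~\ref{lem:Sobo_embedding} through the conformal energy of $\Omega^2\phi$, or equivalently through $L^2$ of $\frac{\tau}{t}\Omega^2\phi$ and $\frac{\tau}{t}\slashed\partial\Omega^2\phi$ on $\mathcal H_\tau$. The factor $|\phi|^{p-1}$ then goes in $L^{3}$, via $\|\phi\|_{L^{3(p-1)}}^{p-1}$ with $3(p-1)>6$. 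The resulting decay $\tau^{-\frac32(p-1)+O(1)}$ is ample.

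\emph{Quadratic $\Omega\phi$ terms.} The genuinely nonlinear term $|\phi|^{p-2}|\Omega\phi|^2$ is the step I expect to be the main obstacle, because no $L^\infty$ bound on $\Omega\phi$ is available. I would handle it by Hölder as
\[
\|\tau|\phi|^{p-2}|\Omega\phi|^2\|_{L^2}\le \tau\,\|\phi\|^{p-2}_{L^{q(p-2)}}\,\|\Omega\phi\|_{L^{r}}^2,
\]
with $\Omega\phi$ placed in $L^4$ through the interpolation already used in \eqref{eq:GN-mid2}. This costs $\|\frac{\tau}{t}\Omega\phi\|_{H^1}$, which is controlled by Proposition~\ref{prop:L2estimates_in}, since $H^1$ involves $\Omega^{\le2}\phi$ and $\partial\Omega\phi$. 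The factor $\phi$ is then placed in $L^\infty$ on $\Hcali$ via \eqref{est:pointwise_in1}, and in a high $L^q$ on $\Hcalf$ via \eqref{est:Lq_in2}. The powers of $t/\tau$ lost in removing the hyperboloidal weights are at most $\tau^2$, and they are beaten by the factor $\tau^{-\frac32(p-2)}$ because $p>5$.

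\emph{Terms with derivatives on $\psi$.} These are $|\phi|^{p-1}|\Omega^{\le2}\psi|$, $|\phi||\psi|^{p-2}|\Omega^2\psi|$ and mixed terms. Here $\Omega^{\le2}\psi$ and $\Omega\psi$ are placed in $L^\infty$ using \eqref{eq:disp-psi01}, while $\phi$ or $\Omega\phi$ goes in weighted $L^2$, as in the treatment of $\mathcal A_5,\mathcal A_6$ and $\mathcal A_4$.

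Collecting everything gives
\[
\mathcal E^{in}_{con}(\tau,\Omega^2\phi)^{1/2}\lesssim C_3+C_4^p\int\tilde\tau^{-1-}\,d\tilde\tau+\int\tilde\tau^{-1-}\mathcal E^{in}_{con}(\tilde\tau,\Omega^2\phi)^{1/2}\,d\tilde\tau,
\]
and Gronwall yields \eqref{eq:Omega2-energy}. The natural energy $\mathcal E^{in}(\tau,\Omega^2\phi)$ is bounded in the same way with the unweighted source.
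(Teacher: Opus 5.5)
Your overall architecture is the paper's. You commute with $\Omega^2$, bound the $\Sigma_{t_1}$ data by Proposition~\ref{prop:estimates_local1} and the $\mathcal B_{4\epsilon}$ fluxes by the exterior bounds, split $\mathcal H_\tau=\Hcali\cup\Hcalf$ using \eqref{est:pointwise_in1} on $\Hcali$ and \eqref{est:Lq_in2} on $\Hcalf$, put $\frac{\tau}{t+1}\Omega^2\phi$ in $L^6$ on $\Hcalf$, and close with Gronwall.

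The gap is that your list of terms in $\Omega^2\mathcal N$ omits $|\psi|^{p-2}|\Omega\phi|^2$, and none of your three mechanisms handles it in the conformal estimate. There is no $L^\infty$ bound for $\Omega\phi$, so both factors must go in a weighted $L^4$, and two weights $\frac{t+1}{\tau}$ must be removed:
\[
\big\|\tau|\psi|^{p-2}|\Omega\phi|^2\big\|_{L^2(\mathcal H_\tau)}\le\Big\|\frac{\tau}{t+1}\Omega\phi\Big\|_{L^4(\mathcal H_\tau)}^2\Big\|\frac{(t+1)^2}{\tau}|\psi|^{p-2}\Big\|_{L^\infty(\mathcal H_\tau)}.
\]
Suppose you only use \eqref{eq:disp-psi01} for $\psi$. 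The bound $\epsilon^{\frac12-\delta}$ gives no $\tau$-decay, and $t^{-\frac12+\delta}$ is too slow. The last factor is then only bounded by $\tau^{5-p+}$ near $t\sim\tau^2$, and already by $\tau^{\frac{4-p}{2}+}$ on $\Hcali$. Neither is integrable for $5<p\le 6$.

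The missing idea is to spend the $\epsilon$-smallness of $\Omega\phi$ on the faster Klainerman--Sobolev decay of $\psi$. Apply $|\psi|\lesssim C_1\epsilon^{-1}\langle t+r\rangle^{-1}\langle t-r\rangle^{-\frac12}$ from \eqref{eq:disp-0001} to one factor. This gives $\|\frac{(t+1)^2}{\tau}|\psi|^{p-2}\|_{L^\infty(\mathcal H_\tau)}\lesssim C_1^{p-2}\epsilon^{-1}\tau^{\frac{3-p}{2}+(p-3)\delta}$, which is integrable for $p>5$. The $\epsilon^{-1}$ is absorbed by $\|\frac{\tau}{t+1}\Omega\phi\|_{L^4}^2\lesssim C_4^2\epsilon$. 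This is why the paper includes the third bound in \eqref{eq:psi-decay-used} and treats $|\psi|^{p-2}\Omega\phi\Omega\phi$ explicitly. Alternatively, the conformal energy gives $\|\tau\slashed{\partial}\Omega\phi\|_{L^2(\mathcal H_\tau)}\lesssim C_4\epsilon^{\frac12}$. With Lemma~\ref{lem:Sobo_embedding} this yields the unweighted bound $\|\Omega\phi\|_{L^6(\mathcal H_\tau)}\lesssim C_4\epsilon^{\frac12}\tau^{-1}$, which also suffices.

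There is also a smaller slip in your treatment of $|\phi|^{p-2}|\Omega\phi|^2$ on $\Hcalf$. Two $L^4$ factors of $\Omega\phi$ force $|\phi|^{p-2}$ into $L^\infty$, which you do not have there. Instead, put $\frac{\tau}{t+1}\Omega\phi$ in $L^6$, using Lemma~\ref{lem:Sobo_embedding} with \eqref{est:L2_in1} and \eqref{est:L2_in3}. Then bound $\|\phi\|^{p-2}_{L^{6(p-2)}(\Hcalf)}$ by \eqref{est:Lq_in2}. With the conformal weight this gives $\tau^{\frac{13}{2}-\frac32 p}$, which is integrable precisely because $p>5$.

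The rest of your argument is fine: the top-order terms, the terms with derivatives on $\psi$, and the initial and boundary contributions.
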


\begin{proof}
Commuting $\Omega^2$ with \eqref{eq:model_short-pulse} gives $
-\Box\Omega^2\phi=\pm\Omega^2\mathcal N(\psi,\phi)$.
By a direct differentiation of $\mathcal N$ we obtain the pointwise bound
\begin{equation}\label{eq:N-Omega2}
\begin{aligned}
|\Omega^2\mathcal N(\psi,\phi)|
&\lesssim
\Big[
|\phi|^{p-1}|\Omega^2\phi|
+ |\phi|^{p-2}\big(|\Omega\phi|^2+|\Omega\phi||\Omega\psi|\big)
+ |\psi|^{p-2}|\Omega\phi|^2\Big]\\
&+ \Big[|\phi|^{p-1}|\Omega^2\psi| 
+|\phi|^{p-2}|\Omega\psi|^2\Big]\\
&+\Big[|\psi|^{p-1}|\Omega^2\phi|
+|\phi||\psi|^{p-2}|\Omega^2\psi|
+|\psi|^{p-2}|\Omega\phi||\Omega\psi|
+|\phi||\psi|^{p-3}|\Omega\psi|^2 \Big] =: \mathcal{G}_1 + \mathcal{G}_2 + \mathcal{G}_3.
\end{aligned}
\end{equation}

$\bullet$
 We start by applying the interior natural energy inequality \eqref{est:energy_in1} to the commuted equation:
\begin{equation}\label{eq:Omega2-energy-raw}
\begin{aligned}
\mathcal E^{in}(\tau,\Omega^2\phi)^{\frac12}
\lesssim\;&
\|\partial\Omega^2\phi\|_{L^2(\Sigma_{t_1}\cap\mathcal D^{in})}
+\Big(\int_{\mathcal B_{4\epsilon}}|G\Omega^2\phi|^2\,dx\Big)^{\frac12}
+\int_{\tau_0}^{\tau}\|\Omega^2\mathcal N(\psi,\phi)\|_{L^2(\mathcal H_{\tilde\tau})}\,d\tilde\tau.
\end{aligned}
\end{equation}
We now estimate each of these terms in turn. Firstly, the term involving initial data can be controlled by Proposition~\ref{prop:estimates_local1} as
\begin{equation*}
\|\partial\Omega^2\phi\|_{L^2(\Sigma_{t_1}\cap\mathcal D^{in})}\lesssim C_2.
\end{equation*}
Next the boundary flux term can be controlled using~\eqref{eq:E-ex-partialOmega2}, to get
\begin{equation*}
\Big(\int_{\mathcal B_{4\epsilon}}|G\Omega^2\phi|^2\,dx\Big)^{\frac12}\lesssim C_3.
\end{equation*}

We next turn to the spacetime intergral term in \eqref{eq:Omega2-energy-raw} which we will analyse using \eqref{eq:N-Omega2}. We make three groupings of the terms appearing in \eqref{eq:N-Omega2}  based on how they are estimated. 
We will also use repeatedly the decomposition $\mathcal H_\tau=\Hcali\cup\Hcalf$, 
the interior pointwise bound \eqref{est:pointwise_in1}, the $L^q$ bounds on $\{r\ge t/8\}$ from
Lemma~\ref{lem:pointwise_in} (in particular \eqref{est:Lq_in2}), and the following decay estimates for $\psi$,
\begin{equation}\label{eq:psi-decay-used}
|\Omega^{\le 2}\psi|\lesssim C_1\epsilon^{\frac12-\delta},
\qquad
|\Omega^{\le 2}\psi|\lesssim C_1 t^{-\frac12+\delta},
\qquad
|\Omega^{\le 2}\psi|\lesssim C_1\epsilon^{-1} \langle t+r\rangle^{-1}\langle t-r\rangle^{-\frac{1}{2}}.
\end{equation}

\emph{Group $\mathcal{G}_1$:} We start with the term $|\phi|^{p-1}\Omega^2\phi$ as the remainder in this group can be treated in the same way. Using \eqref{eq:Htau-split}, Hölder, and Lemma~\ref{lem:Sobo_embedding}, we find
\begin{equation*}
    \begin{aligned}
        \big\||\phi|^{p-1}\Omega^2\phi\big\|_{L^2(\mathcal H_\tau)}
        &\lesssim\big\|\frac{\tau}{t+1}\Omega^2\phi\big\|_{L^2(\mathcal H_\tau)}\big\|\frac{t+1}{\tau}|\phi|^{p-1}\big\|_{L^\infty(\Hcali)}\\
        &+\big\|\frac{\tau}{t+1}\Omega^2\phi\big\|_{L^6(\Hcalf)}\big\|\frac{t+1}{\tau}|\phi|^{p-1}\big\|_{L^3(\Hcalf)}.
    \end{aligned}
\end{equation*}
By \eqref{est:pointwise_in1} and $\tau\le t+1\le \tau^2$ on $\mathcal H_\tau$ we have
\begin{equation*}
    \big\|\frac{t+1}{\tau}|\phi|^{p-1}\big\|_{L^\infty(\Hcali)}\lesssim C_4^{p-1}\epsilon^{\frac{p-1}{8}}\tau^{-\frac{3}{2}p+\frac{3}{2}},
\end{equation*}
and the $L^q$ bound on $\{r\ge t/8\}$ gives
\begin{equation*}
    \big\|\frac{t+1}{\tau}|\phi|^{p-1}\big\|_{L^3(\Hcalf)}
\lesssim C_4^{p-1}\epsilon^{\frac{p-1}{2}}\tau^{-\frac{3}{2}p+\frac{7}{2}}.
\end{equation*}
Applying Lemma~\ref{lem:Sobo_embedding}, we obtain
\begin{equation*}
    \big\|\frac{\tau}{t+1}\Omega^2\phi\big\|_{L^6(\Hcalf)}\lesssim\big\|\frac{\tau}{t+1}\Omega^2\phi\big\|_{H^1(\Hcalf)}.
\end{equation*}
Combining the above bounds, we conclude
\begin{equation*}
    \big\||\phi|^{p-1}\Omega^2\phi\big\|_{L^2(\mathcal H_\tau)}\lesssim C_4^{p}\epsilon^{\frac{1}{2}+\frac{p-1}{8}}\tau^{-\frac{3}{2}p+\frac{7}{2}}+C_4^{p-1}\epsilon^{\frac{p-1}{2}}\tau^{-\frac{3}{2}p+\frac{7}{2}}\mathcal{E}^{in}(\tau,\Omega^2\phi)^{\frac{1}{2}}.
\end{equation*}
Similarly, we decompose
\begin{equation*}
    \begin{aligned}
        \big\||\psi|^{p-2}\Omega\phi\Omega\phi\big\|_{L^2(\mathcal H_\tau)}
        &\lesssim\big\|\frac{\tau}{t+1}\Omega\phi\big\|^2_{L^4(\mathcal H_\tau)}\big\|\frac{(t+1)^2}{\tau^2}|\psi|^{p-2}\big\|_{L^\infty(\mathcal H_\tau)}.
    \end{aligned}
\end{equation*}
Employing Lemmas~\ref{lem:Sobo_embedding} and~\ref{lem:inter_inequ}, we obtain
\begin{equation*}
    \begin{aligned}
        \big\|\frac{\tau}{t+1}\Omega\phi\big\|_{L^4(\mathcal H_\tau)}&\lesssim\big\|\frac{\tau}{t+1}\Omega\phi\big\|^{\frac{1}{4}}_{L^2(\mathcal H_\tau)}\big\|\frac{\tau}{t+1}\Omega\phi\big\|^{\frac{3}{4}}_{H^1(\mathcal H_\tau)}\lesssim C_4\epsilon^{\frac{1}{2}}.
    \end{aligned}
\end{equation*}
The pointwise bounds in~\eqref{eq:psi-decay-used} yield
\begin{equation*}
    \big\|\frac{(t+1)^2}{\tau^2}|\psi|^{p-2}\big\|_{L^\infty(\mathcal{H}_{\tau})}\lesssim C_1^{p-2}\epsilon^{-1}\tau^{-\frac{p}{2}+\frac{1}{2}+(p-3)\delta},
\end{equation*}
where we used that fact $\tau\leq t+1\leq \tau^2$ on $\mathcal{H}_\tau$.
Using the above estimates, we derive
\begin{equation*}
    \begin{aligned}
         \big\||\psi|^{p-2}\Omega\phi\Omega\phi\big\|_{L^2(\mathcal H_\tau)}\lesssim C_4^p\tau^{-\frac{p}{2}+\frac{1}{2}+(p-3)\delta}.
    \end{aligned}
\end{equation*}
The remaining terms in $\mathcal{G}_1$ are treated in the same way, using the same decomposition~\eqref{eq:Htau-split}, together with Lemmas~\ref{lem:Sobo_embedding}--\ref{lem:inter_inequ}, \eqref{est:pointwise_in1}, \eqref{est:Lq_in2} and \eqref{eq:psi-decay-used}.

\emph{Group $\mathcal{G}_2$:} We start with the term $|\phi|^{p-1}\Omega^2\psi$. Using the decomposition~\eqref{eq:Htau-split}, we have 
\begin{equation*}
    \begin{aligned}
        \||\phi|^{p-1}\Omega^2\psi\|_{L^2(\mathcal{H}_\tau)}
        &\lesssim\big\|\frac{\tau}{t+1}\phi\big\|_{L^2(\mathcal H_\tau)}\big\|\frac{t+1}{\tau}|\phi|^{p-2}\big\|_{L^\infty(\Hcali)}\big\|\Omega^2\psi\big\|_{L^\infty(\mathcal{H}_\tau)}\\
        &+\||\phi|^{p-1}\|_{L^2(\Hcalf)}\|\Omega^2\psi\|_{L^\infty(\mathcal{H}_\tau)}=:\mathcal{A}_1+\mathcal{A}_2.
    \end{aligned}
\end{equation*}
To estimate $\mathcal{A}_1$, we apply \eqref{est:pointwise_in1} and $\tau\le t+1\le \tau^2$ on $\mathcal H_\tau$ to find
\begin{equation*}
    \big\|\frac{t+1}{\tau}|\phi|^{p-2}\big\|_{L^\infty(\Hcali)}\lesssim C_4^{p-2}\epsilon^{\frac{p-2}{8}}\tau^{-\frac{3}{2}p+3}.
\end{equation*}
This implies
\begin{equation*}
    \mathcal{A}_1\lesssim C_4^p\epsilon^{\frac{p+6}{8}-\delta}\tau^{-\frac{3}{2}p+3}.
\end{equation*}
Next to estimate $\mathcal{A}_2$, we take the $L^q$ bound on $\{r\ge t/8\}$~\eqref{est:Lq_in2} with $q=2p-2$. This gives
\begin{equation*}
    \||\phi|^{p-1}\|_{L^2(\Hcalf)}=\|\phi\|^{p-1}_{L^{2p-2}(\Hcalf)}\lesssim C_4^{p-1}\epsilon^{\frac{p-1}{2}}\tau^{-\frac{3}{2}p+3}.
\end{equation*}
Thus, in combination with~\eqref{eq:psi-decay-used}, we have
\begin{equation*}
    \mathcal{A}_2\lesssim C_4^p\epsilon^{\frac{p}{2}-\delta}\tau^{-\frac{3}{2}p+3}.
\end{equation*}
In a similar way, we get
\begin{equation*}
    \begin{aligned}
        \||\phi|^{p-2}\Omega\psi\Omega\psi\|_{L^2(\mathcal{H}_\tau)}
        &\lesssim\big\|\frac{\tau}{t+1}\phi\big\|_{L^2(\mathcal H_\tau)}\big\|\frac{t+1}{\tau}|\phi|^{p-3}\big\|_{L^\infty(\Hcali)}\big\|\Omega\psi\big\|^2_{L^\infty(\mathcal{H}_\tau)}\\
        &+\||\phi|^{p-2}\|_{L^2(\Hcalf)}\|\Omega\psi\|^2_{L^\infty(\mathcal{H}_\tau)}
        \lesssim C_4^p\epsilon^{\frac{p+5}{8}-\delta}\tau^{-\frac{3}{2}p+\frac{9}{2}}.
    \end{aligned}
\end{equation*}

\emph{Group $\mathcal{G}_3$:}  Lastly, we consider $|\psi|^{p-1}\Omega^2\phi$. 
Using \eqref{eq:psi-decay-used},~\eqref{est:L2_in3} and $\tau\le t+1\le \tau^2$ on $\mathcal H_\tau$, we derive
\begin{equation*}
    \begin{aligned}
        \||\psi|^{p-1}\Omega^2\phi\|_{L^2(\mathcal{H}_\tau)}
        &\lesssim\big\|\frac{\tau}{t+1}\Omega^2\phi\big\|_{L^2(\mathcal H_\tau)}\big\|\frac{t+1}{\tau}|\psi|^{p-1}\big\|_{L^\infty(\mathcal H_\tau)}\lesssim C_4^p\epsilon^{1-\delta}\tau^{-\frac{p}{2}+1+(p-2)\delta}.
    \end{aligned}
\end{equation*}
The other terms in $\mathcal{G}_3$ are bounded analogously.

Collecting the above estimates yields
\begin{equation}\label{eq:Omega2N-L2}
\|\Omega^2\mathcal N(\psi,\phi)\|_{L^2(\mathcal H_{\tau})}
\ \lesssim
C_4^{p-1}\epsilon^{\frac{p-1}{2}}\tau^{-\frac{3}{2}p+\frac{7}{2}}\mathcal E^{in}(\tau,\Omega^2\phi)^{\frac12}
 + C_4^p\tau^{-\frac{p}{2}+1+(p-2)\delta}.
\end{equation}
Inserting \eqref{eq:Omega2N-L2} into the energy inequality \eqref{eq:Omega2-energy-raw} and applying Gronwall inequality, we obtain the desired bound in \eqref{eq:Omega2-energy}.

$\bullet$ We conclude by turning to the conformal estimate, which is obtained in a similar manner.  Applying \eqref{est:energy_in2} to the commuted equation gives
\begin{equation*}
\begin{aligned}
    \mathcal E^{in}_{con}(\tau,\Omega^2\phi)^{1/2}
&\lesssim
\|\partial\Omega^2\phi\|_{L^2(\Sigma_{t_1}\cap\mathcal D^{in})}
+\|\Omega^2\phi\|_{L^2(\Sigma_{t_1}\cap\mathcal D^{in})}\\
&+\left(\int_{\mathcal{B}_{4\epsilon}}\Big[(t+r+1)L\Omega^2\phi + 2\Omega^2\phi\Big]^2+\Big|\frac{1}{r}\Omega
^3\phi\Big|^{2}\, dx\right)^{1/2}+\int_{\tau_0}^{\tau}\|\tilde\tau\,\Omega^2\mathcal N(\psi,\phi)\|_{L^2(\mathcal H_{\tilde\tau})}\,d\tilde\tau.
\end{aligned}
\end{equation*}

The initial and boundary terms are controlled exactly as above, using Propositions~\ref{prop:estimates_local1}, \ref{prop:energy_ex} and Lemma~\ref{lem:refined_ex}. Finally the spacetime integral is estimated in the same way as \eqref{eq:Omega2N-L2}, with an additional factor
$\tilde\tau$, which remains integrable. 
\end{proof}

\begin{lemma}[Pointwise bound for $\phi$]\label{lem:phi-pt}
Let $(t,x)\in\mathcal H_\tau$. Then we have
\begin{equation}\label{eq:phi-pt}
    \begin{aligned}
        |\phi(t,x)|&\lesssim C_4\epsilon^{\frac{1}{8}}\tau^{-1}t^{-\frac{1}{2}},\qquad\qquad r\leq t/4,\\
        |\phi(t,x)|&\lesssim C_4^{\frac{p+1}{2}}\epsilon^{\frac{1}{4}}\tau^{-1}r^{-\frac{1}{2}},\qquad\ \ \  r\geq t/8.
    \end{aligned}
\end{equation}
\end{lemma}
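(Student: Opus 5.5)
The first bound in \eqref{eq:phi-pt}, on $\{r\le t/4\}$, is exactly \eqref{est:pointwise_in1} from Lemma~\ref{lem:pointwise_in}. That estimate used only the bootstrap bounds \eqref{est:Boot_in}, and these now hold for all $\tau\ge\tau_0$ since Proposition~\ref{prop:global_in_existence} has been established. So nothing new is needed there. The whole task is the second bound, on $\Hcalf$, which we plan to obtain from the Klainerman--Sobolev inequality on truncated hyperboloids \eqref{est:KS_in3}.

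\textbf{Applying \eqref{est:KS_in3}.} For $(t,x)\in\Hcalf$ it gives
\[
|\phi(t,x)|\lesssim|\phi(t^*,x^*)|+\tau^{-1}r^{-\frac12}\sum_{\substack{|J_1|,|J_2|\le2\\ |I|=1}}\Big\|\tfrac{\tau}{t}\Omega^{J_1}\phi\Big\|^{\frac12}_{L^2(\mathcal H_\tau)}\Big\|\tfrac{\tau}{t}\Omega^{J_2}Z^I\phi\Big\|^{\frac12}_{L^2(\mathcal H_\tau)} .
\]
We bound the two factors asymmetrically, so that the $\epsilon$-smallness of the first survives the possibly large second.

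\textbf{First factor.} For $|J_1|\le 1$ we use \eqref{est:L2_in3}, giving $C_4\epsilon^{1/2}$. For $|J_1|=2$ we write $\frac{\tau}{t}\Omega_{ab}=\frac{x_a}{t}\tau\slashed\partial_b-\frac{x_b}{t}\tau\slashed\partial_a$ (up to harmless $t$ versus $t+1$ factors). This gives $\|\frac{\tau}{t}\Omega^2\phi\|\lesssim \mathcal E^{in}_{con}(\tau,\Omega\phi)^{1/2}\lesssim C_4\epsilon^{1/2}$, again by \eqref{est:Boot_in}. So the first factor is always $\lesssim C_4\epsilon^{1/2}$.

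\textbf{Second factor.} Here we need $\|\frac{\tau}{t}\Omega^{J_2}Z\phi\|$ for $|J_2|\le2$ and $Z\in\{\partial_t,L_a\}$.
\begin{itemize}
\item For $Z=\partial_t$, commute $\Omega$ and $\partial_t$ and use $|\Omega f|\lesssim t|\slashed\partial f|$ on $\mathcal H_\tau$. This reduces $\frac{\tau}{t}\Omega^{\le2}\partial_t\phi$ to $\tau\slashed\partial\,\Omega^{\le1}\partial_t\phi$, which in turn is controlled by conformal energies of $\Omega^{\le 1}\phi$ and $\Omega^2\phi$ together with $\partial$-energies.
\item For $Z=L_a$, we use $[L_a,\Omega]\sim L$ and the identity $\frac{\tau}{t+1}L_a=\frac{t}{t+1}\tau\slashed\partial_a+\frac{x_a}{t+1}\slashed\partial_\tau$. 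This converts $\frac{\tau}{t}L_a\Omega^{J}\phi$ into $\mathcal E^{in}_{con}(\tau,\Omega^J\phi)^{1/2}+\mathcal E^{in}(\tau,\Omega^J\phi)^{1/2}$ with $|J|\le 2$.
\end{itemize}
By Lemma~\ref{lem:Omega2-energy} and \eqref{est:Boot_in}, all these energies are $\lesssim C_4^p$. Hence the second factor is $\lesssim C_4^{p}$, and its square root contributes $C_4^{p/2}$. The product term is therefore
\[
\tau^{-1}r^{-\frac12}\,(C_4\epsilon^{\frac12})^{\frac12}\,(C_4^p)^{\frac12}=C_4^{\frac{p+1}{2}}\epsilon^{\frac14}\,\tau^{-1}r^{-\frac12},
\]
which is exactly the claimed bound.

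\textbf{Boundary term.} It remains to estimate $|\phi(t^*,x^*)|$ on $\mathcal B_{4\epsilon}$. By \eqref{est:L2_ex3}, $|\phi(t^*,x^*)|\lesssim C_3\epsilon^{1/2}(r^*)^{-1}$. Since $r^*\sim\tau^2\gtrsim \tau r^{1/2}$ (because $r\le r^*$ and $r^*\gtrsim\tau^2$), this is bounded by $C_3\epsilon^{1/2}\tau^{-1}r^{-1/2}$, which is dominated by the main term.

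\textbf{Main obstacle.} The delicate step is the second factor with $|J_2|=2$. There one must check that $\frac{\tau}{t}\Omega^2 Z\phi$ is really controlled by the $\Omega^2$-energies of Lemma~\ref{lem:Omega2-energy} without requiring $\Omega^3$. If $\Omega^2 L_a\phi$ cannot be reduced to $\tau\slashed\partial\Omega^2\phi$ plus lower-order terms, we would first try to exploit $|L_a f|\lesssim t|\slashed\partial f|+|\slashed\partial_\tau f|$ combined with the commutator identities of Lemma~\ref{lem:commutator2}. The fallback is to accept a slightly worse power of $C_4$, which is harmless for the statement.
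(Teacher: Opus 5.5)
Your argument is the paper's: apply \eqref{est:KS_in3}, bound the first factor by $C_4\epsilon^{1/2}$ via Proposition~\ref{prop:L2estimates_in} (note \eqref{est:L2_in3} already covers all $|J_1|\le 2$), bound the second factor by $C_4^p$ via Lemma~\ref{lem:Omega2-energy}, and control the endpoint term with \eqref{est:L2_ex3}. The region $r\le t/4$ is just \eqref{est:pointwise_in1}.

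One sub-step needs repair. For $Z=\partial_t$ your reduction lands on $\tau\slashed\partial\,\partial_t\Omega^{\le1}\phi$. That quantity is part of $\mathcal{E}^{in}_{con}(\tau,\partial_t\Omega^{\le1}\phi)$, which is neither in \eqref{est:Boot_in} nor in Lemma~\ref{lem:Omega2-energy}. Nor does $\mathcal{E}^{in}(\tau,\partial\phi)$ help, since it lacks the $\tau$ weight. So that route does not close as written.

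It is also unnecessary. Since $\Omega$ commutes with $\partial_t$ and $\frac{\tau}{t+1}\partial_t=\slashed\partial_\tau$, you directly get
\[
\Big\|\tfrac{\tau}{t}\Omega^{J}\partial_t\phi\Big\|_{L^2(\mathcal H_\tau)}\lesssim\mathcal{E}^{in}(\tau,\Omega^J\phi)^{1/2}.
\]
This is $\lesssim C_4\epsilon^{1/2}$ for $|J|\le1$ and $\lesssim C_4^p$ for $|J|=2$.

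Likewise, the ``main obstacle'' you flag is already settled by your own $L_a$ bullet. The commutator $[\Omega,L_a]$ is a combination of boosts, so $\Omega^2L_a\phi$ reduces to terms $L_b\Omega^{\le2}\phi$. These are handled by the identity $\frac{\tau}{t+1}L_b=\frac{t}{t+1}\tau\slashed\partial_b+\frac{x_b}{t+1}\slashed\partial_\tau$ together with Lemma~\ref{lem:Omega2-energy}, and no $\Omega^3$ is needed.
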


\begin{proof}
On $\{r\geq t/8\}$, using~\eqref{est:KS_in3}
    \begin{equation*}
        \begin{aligned}
        |\phi(t,x)|\lesssim|\phi(t,x)|_{\mathcal{H}_\tau\cap\mathcal{B}_{4\epsilon}}+\sum_{\substack{|J_1|\leq 2,|J_2|\leq2\\|I|=1}}\tau^{-1}|x|^{-\frac{1}{2}}\left\|\frac{\tau}{t}\Omega^{J_1}\phi\right\|_{L^2(\mathcal{H}_\tau)}^{\frac{1}{2}}\left\|\frac{\tau}{t}\Omega^{J_2}Z^{I}\phi\right\|_{L^2(\mathcal{H}_\tau)}^{\frac{1}{2}},
        \end{aligned}
    \end{equation*}
where $Z=\{\partial_t, L_1, L_2, L_3\}.$
Applying Proposition~\ref{prop:L2estimates_in}, Lemma~\ref{lem:Omega2-energy} and~\eqref{est:L2_ex3}, we obtain
\begin{equation*}
    |\phi(t,x)|\lesssim C_4^{\frac{p+1}{2}}\epsilon^{\frac{1}{4}}\tau^{-1}r^{-\frac{1}{2}}.
\end{equation*}
Together with the already known pointwise bound in $\{r\le t/4\}$, we obtain the desired global pointwise control
of $\phi$ in $\mathcal D^{in}$.
\end{proof}

\subsection{Pointwise bound for $\partial\phi$}\label{subsec:step2}
In this subsection, we derive the pointwise estimate for $\partial\phi$.

\begin{proposition}[$L^2$ bounds for higher derivatives I]\label{prop:high-L2-1}
For all $\tau\in[\tau_0,+\infty)$, the following bounds hold:
    \begin{equation}\label{eq:high-L2-1}
        \begin{aligned}
            \mathcal{E}^{in}(\tau,\partial^2\phi)^{\frac{1}{2}}+\mathcal{E}_{con}^{in}(\tau,\partial^2\phi)^{\frac{1}{2}}&\lesssim C_3\epsilon^{-2}+C_4^{\frac{p^2+1}{2}}\epsilon^{\frac{p-1}{8}}+C_4^p\epsilon^{-\frac{3}{4}},\\
            \mathcal{E}^{in}(\tau,L_a\partial\phi)^{\frac{1}{2}}+\mathcal{E}_{con}^{in}(\tau,L_a\partial\phi)^{\frac{1}{2}}&\lesssim C_2\epsilon^{-1}+C_3+C_4^{\frac{p^2+1}{2}}\epsilon^{\frac{p-1}{8}}+C_4^p\epsilon^{-\frac{3}{4}}
            .
        \end{aligned}
    \end{equation}
\end{proposition}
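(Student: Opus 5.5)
The plan mirrors Lemma~\ref{lem:Omega2-energy}. We commute the equation \eqref{eq:model_short-pulse} with $\partial^2$ and with $L_a\partial$. Lemma~\ref{lem:commutator1} shows these commute with $\Box$, which gives
\[
-\Box\partial^2\phi=\pm\partial^2\mathcal N(\psi,\phi),\qquad -\Box L_a\partial\phi=\pm L_a\partial\mathcal N(\psi,\phi).
\]
We then apply the interior natural and conformal estimates \eqref{est:energy_in1}--\eqref{est:energy_in2}. The right-hand sides contain three kinds of terms: data on $\Sigma_{t_1}\cap\mathcal D^{in}$, fluxes through $\mathcal B_{4\epsilon}$, and spacetime integrals of the nonlinearity.

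\emph{Data and flux terms.} The data terms come from Proposition~\ref{prop:estimates_local1}. We have $\|\partial\partial^2\phi\|\lesssim C_2\epsilon^{-2}$, and $\|\partial L_a\partial\phi\|+\|L_a\partial\phi\|\lesssim C_2\epsilon^{-1}$, the latter using \eqref{est:conformal_local} with $|I|=1$ after commuting. For the flux of $\partial^2\phi$ we use the exterior energies: \eqref{eq:E-ex-partialOmega2} controls $\int_{\mathcal B_{4\epsilon}}|G\partial^2\phi|^2$ by $C_3^2\epsilon^{-4}$, and \eqref{eq:E-con-pOphi} with $|I|=2$ controls the conformal flux by $C_3\epsilon^{-1}$. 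For $L_a\partial\phi$ we invoke \eqref{eq:E-con-Lpphi} with $\tilde u=4\epsilon$, which bounds both fluxes by $C_2\epsilon^{-1}+C_3$. One mismatch must be handled: the interior conformal flux carries the weight $(t+r+1)$, whereas the exterior one carries $(t+r)$. We absorb the extra $L\varphi$ term through the natural flux, as in the proof of \eqref{est:boundary3}.

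\emph{Nonlinear terms.} We expand $\partial^2\mathcal N$ and $L_a\partial\mathcal N$ as in \eqref{eq:N-Omega2} and sort the terms into the groups $\mathcal G_1,\mathcal G_2,\mathcal G_3$.
\begin{itemize}
\item Terms with the top-order factor on $\phi$, such as $|\phi|^{p-1}\partial^2\phi$ and $|\psi|^{p-1}\partial^2\phi$, are estimated by $\|\frac{\tau}{t+1}\partial^2\phi\|$ (or its $H^1\to L^6$ version on $\Hcalf$) times the decay of $\phi$ and $\psi$. This produces a linear term $a(\tau)\,\mathcal E^{in}(\tau,\partial^2\phi)^{1/2}$ with $a$ integrable.
\item We bound $\phi$ pointwise using Lemma~\ref{lem:phi-pt}. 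This gives the factor $C_4^{\frac{p+1}{2}}\epsilon^{1/4}$ for $r\ge t/8$ and $C_4\epsilon^{1/8}$ for $r\le t/4$; together these are the source of the $C_4^{\frac{p^2+1}{2}}\epsilon^{\frac{p-1}{8}}$ contribution.
\item Terms with top-order derivatives on $\psi$ are bounded by the $\psi$-estimates of Lemma~\ref{lem:disp-psi} together with $\|\frac{\tau}{t+1}\phi\|\lesssim C_4\epsilon^{1/2}$.
\item Quadratic terms such as $|\psi|^{p-2}|\partial\phi|^2$ and $|\phi|^{p-2}|\partial\phi|^2$ are handled by placing $\frac{\tau}{t+1}\partial\phi$ in $L^4$ via Lemmas~\ref{lem:Sobo_embedding}--\ref{lem:inter_inequ}. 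The interpolation $\|\cdot\|_{L^2}^{1/4}\|\cdot\|_{H^1}^{3/4}$ combines $C_4\epsilon^{1/2}$ for $\partial\phi$ with $C_4\epsilon^{-1}$ for second derivatives. This leads to the $C_4^p\epsilon^{-3/4}$ contribution; we expect the cruder bound $\|\frac{\tau}{t+1}\partial\phi\|_{H^1}\lesssim\epsilon^{-1}$ to suffice here.
\end{itemize}
We then apply Gr\"onwall's inequality \eqref{est:Gronwall}. The conformal estimate is identical except for an extra factor $\tilde\tau$, which remains integrable because the decay rates are at least of order $\tau^{-\frac{p-1}{4}+}$ when $p>5$.

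\emph{Main obstacle.} The hard part is the term $|\psi|^{p-2}|\partial\phi|^2$ (and $|\psi|^{p-2}\partial\phi\,L_a\phi$) in the weighted conformal integral. Both factors carry no smallness at this level, and $\psi$ contributes only $t^{-\frac12+\delta}$ per power. We would first try the $L^4$--interpolation above combined with the three $\psi$ bounds \eqref{eq:psi-decay-used}, choosing the interpolation so that the exponent of $\tau$ stays below $-1$. If that fails, we would instead split $\mathcal H_\tau$ via \eqref{eq:Htau-split} and, on $\Hcalf$, use the $L^q$ bounds with $q>6$ on $\partial\phi$ after applying Lemma~\ref{lem:KS_in}(iii) to $\partial\phi$. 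That step would require the energy bound for $\Omega\partial\phi$, which is available from the $\mathcal E^{in}(\tau,\Omega\phi)$ and $\mathcal E^{in}(\tau,\partial\phi)$ bootstraps via commutators.
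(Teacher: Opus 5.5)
Your architecture matches the paper's. You commute with $\partial^2$ and $L_a\partial$, take data from Proposition~\ref{prop:estimates_local1} and fluxes from the exterior energies, control $\phi$ pointwise via Lemma~\ref{lem:phi-pt}, and split the nonlinearity into $L^2\times L^\infty$ and $L^4$ pieces. There is, however, a genuine gap in the step you flag as the main obstacle: you assign the wrong size to second derivatives of $\phi$ in the interior.

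You take $\|\frac{\tau}{t+1}\partial\phi\|_{H^1(\mathcal H_\tau)}\lesssim C_4\epsilon^{-1}$. At this stage, though, the bootstrap \eqref{est:Boot_in} is closed for all $\tau\ge\tau_0$ (Proposition~\ref{prop:global_in_existence}). So $\mathcal E^{in}(\tau,\partial\phi)^{1/2}\le C_4$ gives $\|\frac{\tau}{t+1}\partial\partial\phi\|_{L^2(\mathcal H_\tau)}\lesssim C_4$, which is \eqref{est:L2_in2}. The $\epsilon^{-1}$ loss per $\partial$ belongs to the local/exterior hierarchy, not the hyperboloidal one; already \eqref{eq:init-in} has $\|\partial\partial\phi\|_{L^2(\Sigma_{t_1}\cap\mathcal D^{in})}\lesssim C_2$.

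With your value, interpolation gives only $\|\frac{\tau}{t+1}\partial\phi\|_{L^4}^2\lesssim C_4^2\epsilon^{-5/4}$, so the claimed $C_4^p\epsilon^{-3/4}$ does not follow. The conformal estimate makes it worse. If all $p-2$ factors of $\psi$ are bounded by $C_1t^{-\frac12+\delta}$, the weight $\tilde\tau\frac{(t+1)^2}{\tilde\tau^2}|\psi|^{p-2}$ is of size $\tilde\tau^{5-p+2(p-2)\delta}$ near $t\sim\tilde\tau^2$, which is not integrable for $5<p\le 6$. One factor must therefore use the $\epsilon^{-1}$ bound \eqref{eq:disp-0001}. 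Even interpolating between the two $\psi$ bounds costs $\epsilon^{-\theta}$ with $\theta>6-p$. For $p$ near $5$ you are left with a loss close to $\epsilon^{-9/4}$, which the right-hand side of \eqref{eq:high-L2-1} does not contain.

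With the correct bound, $\|\frac{\tau}{t+1}\partial\phi\|_{L^4}\lesssim (C_4\epsilon^{1/2})^{1/4}C_4^{3/4}=C_4\epsilon^{1/8}$. The same holds for $\frac{\tau}{t+1}L_a\phi$, by \eqref{est:L2_in3} and $\mathcal E^{in}(\tau,L_a\phi)^{1/2}\le C_4$. Putting exactly one $\psi$ factor in the $\epsilon^{-1}\langle t+r\rangle^{-1}$ bound then gives $C_4^p\epsilon^{-3/4}\tau^{-\frac p2+\frac12+(p-3)\delta}$. This stays integrable after multiplication by $\tilde\tau$ precisely because $p>5$, and it is the paper's computation. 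It also makes your fallback unnecessary. That fallback is not available anyway: Lemma~\ref{lem:KS_in}(iii) applied to $\partial\phi$ needs $\|\frac{\tau}{t}Z\Omega\partial\phi\|$. This is a third-order quantity not controlled by \eqref{est:Boot_in}, contrary to your claim that it follows from the $\mathcal E^{in}(\tau,\Omega\phi)$ and $\mathcal E^{in}(\tau,\partial\phi)$ bounds.

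The same oversight appears in your top-order terms. The quantity $\|\frac{\tau}{t+1}\partial^2\phi\|_{L^2(\mathcal H_\tau)}$ is bounded by $\mathcal E^{in}(\tau,\partial\phi)^{1/2}\le C_4$, not by $\mathcal E^{in}(\tau,\partial^2\phi)^{1/2}$. Moreover, Lemma~\ref{lem:phi-pt} gives $L^\infty$ control of $\phi$ on all of $\mathcal H_\tau$. This differs from the $L^q$ control on $\Hcalf$ that forced the $L^6$ step in Lemma~\ref{lem:Omega2-energy}. The paper therefore bounds $|\phi|^{p-1}\partial^2\phi$ and $|\psi|^{p-1}\partial^2\phi$ directly as $L^2\times L^\infty$. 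It obtains $C_4^{\frac{p^2+1}{2}}\epsilon^{\frac{p-1}{8}}\tau^{-\frac32p+\frac32}$ and $C_4^p\tau^{-\frac p2+\frac12+(p-1)\delta}$, with no linear term and no Gr\"onwall step. Your Gr\"onwall variant would still close, so that part is harmless. Still, the whole argument should run off the closed bounds of Proposition~\ref{prop:L2estimates_in}.
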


\begin{proof}
    Commuting $\partial^2$ with \eqref{eq:model_short-pulse} gives $-\Box\partial^2\phi=\pm\partial^2\mathcal N(\psi,\phi)$.
    By a direct differentiation of $\mathcal N$ we obtain the pointwise bound
    \begin{equation}\label{eq:N-partial2}
        \begin{aligned}
        |\partial^2\mathcal N(\psi,\phi)|
        &\lesssim \Big[
        |\phi|^{p-1}|\partial^2\phi|
        +|\phi|^{p-1}|\partial^2\psi|
        +|\psi|^{p-1}|\partial^2\phi|
        +|\phi||\psi|^{p-2}|\partial^2\psi|\\
        &+|\phi|^{p-2}\big(|\partial\phi||\partial\psi|+|\partial\psi|^2\big)+|\psi|^{p-2}|\partial\phi||\partial\psi|
        +|\phi||\psi|^{p-3}|\partial\psi|^2 \Big]
        \\&+\Big[ |\phi|^{p-2}|\partial\phi|^2 + |\psi|^{p-2}|\partial\phi|^2\Big] =: \mathcal{G}_1 + \mathcal{G}_2.
        \end{aligned}
    \end{equation}

    $\bullet$ We start with the natural energy bound. 
    Applying the energy inequality \eqref{est:energy_in1} to the commuted equation, we get
    \begin{equation}\label{eq:partial2-energy-raw}
        \begin{aligned}
            \mathcal E^{in}(\tau,\partial^2\phi)^{\frac12}
            \lesssim\;&
            \|\partial^3\phi\|_{L^2(\Sigma_{t_1}\cap\mathcal D^{in})}
            +\Big(\int_{\mathcal B_{4\epsilon}}|G\partial^2\phi|^2\,dx\Big)^{\frac12}+\int_{\tau_0}^{\tau}\|\partial^2\mathcal N(\psi,\phi)\|_{L^2(\mathcal H_{\tilde\tau})}\,d\tilde\tau.
        \end{aligned}
    \end{equation}
    Again, we estimate these three components separately. Firstly, the initial term can be controlled by Proposition~\ref{prop:estimates_local1} as
    \begin{equation*}
        \|\partial^3\phi\|_{L^2(\Sigma_{t_1}\cap\mathcal D^{in})}\lesssim C_2\epsilon^{-2}.
    \end{equation*}
    Next, using Proposition~\ref{prop:decay_ex}, the boundary flux term is controlled by 
    \begin{equation*}
        \Big(\int_{\mathcal B_{4\epsilon}}|G\partial^2\phi|^2\,dx\Big)^{\frac12}\lesssim C_3\epsilon^{-\frac{3}{2}}.
    \end{equation*}
    Lastly we turn to the spacetime integral term, which we estimate by making two groupings of the terms appearing in \eqref{eq:N-partial2} (the top two lines, and the bottom line).
    We use the $L^2$ bounds in Proposition~\ref{prop:L2estimates_in}, the interior pointwise bounds in Lemma~\ref{lem:phi-pt} and the decay estimates for $\psi$,
        \begin{alignat}{3} \label{eq:partial-psi-decay-used}
            |\psi|&\lesssim C_1 t^{-\frac12+\delta},&\qquad&&|\psi|&\lesssim C_1\epsilon^{-1}\langle t+r\rangle^{-1}\langle t-r\rangle^{-\frac{1}{2}},\\
            |\partial\psi|&\lesssim C_1\langle t+r\rangle^{-1}\langle t-r\rangle^{-\frac{1}{2}},&\qquad
            &&|\partial\partial\psi|&\lesssim C_1\epsilon\langle t+r\rangle^{-1}\langle t-r\rangle^{-\frac{1}{2}}.
        \end{alignat}

    \emph{Group $\mathcal{G}_1$:} 
    For $|\phi|^{p-1}\partial^2\phi$, using Proposition~\ref{prop:L2estimates_in}
    and Lemma~\ref{lem:phi-pt},
    \begin{equation*}
    \begin{aligned}
        \big\||\phi|^{p-1}\partial^2\phi\big\|_{L^2(\mathcal H_\tau)}
        &\lesssim\big\|\frac{\tau}{t+1}\partial^2\phi\big\|_{L^2(\mathcal H_\tau)}\big\|\frac{t+1}{\tau}|\phi|^{p-1}\big\|_{L^\infty(\mathcal H_\tau)}\lesssim C_4^{\frac{p^2+1}{2}}\epsilon^{\frac{p-1}{8}}\tau^{-\frac{3}{2}p+\frac{3}{2}}.
    \end{aligned}
    \end{equation*}
    For $|\psi|^{p-1}\partial^2\phi$, Proposition~\ref{prop:L2estimates_in}
    and~\eqref{eq:partial-psi-decay-used} give
    \begin{equation*}
    \begin{aligned}
        \big\||\psi|^{p-1}\partial^2\phi\big\|_{L^2(\mathcal H_\tau)}
        &\lesssim\big\|\frac{\tau}{t+1}\partial^2\phi\big\|_{L^2(\mathcal H_\tau)}\big\|\frac{t+1}{\tau}|\psi|^{p-1}\big\|_{L^\infty(\mathcal H_\tau)}\lesssim C_4^{p}\tau^{-\frac{p}{2}+\frac{1}{2}+(p-1)\delta}.
    \end{aligned}
    \end{equation*}
    The remaining terms in $\mathcal{G}_1$ are treated in the same way, using  Proposition~\ref{prop:L2estimates_in}, together with Lemma~\ref{lem:phi-pt} and \eqref{eq:partial-psi-decay-used}.

    \emph{Group $\mathcal{G}_2$:} 
    For $|\phi|^{p-2}\partial\phi\partial\phi$, using the H\"{o}lder inequality, we obtain
    \begin{equation*}
    \begin{aligned}
        \big\||\phi|^{p-2}\partial\phi\partial\phi\big\|_{L^2(\mathcal H_\tau)}
        &\lesssim\big\|\frac{\tau}{t+1}\partial\phi\big\|^2_{L^4(\mathcal H_\tau)}\big\|\frac{(t+1)^2}{\tau^2}|\phi|^{p-2}\big\|_{L^\infty(\mathcal H_\tau)}.
    \end{aligned}
\end{equation*}
Employing Lemmas~\ref{lem:Sobo_embedding} and~\ref{lem:inter_inequ}, we obtain
\begin{equation*}
    \begin{aligned}
        \big\|\frac{\tau}{t+1}\partial\phi\big\|_{L^4(\mathcal H_\tau)}&\lesssim\big\|\frac{\tau}{t+1}\partial\phi\big\|^{\frac{1}{4}}_{L^2(\mathcal H_\tau)}\big\|\frac{\tau}{t+1}\partial\phi\big\|^{\frac{3}{4}}_{H^1(\mathcal H_\tau)}\lesssim C_4\epsilon^{\frac{1}{8}},
    \end{aligned}
\end{equation*}
together with Lemma~\ref{lem:phi-pt}, yielding
\begin{equation*}
    \big\||\phi|^{p-2}\partial\phi\partial\phi\big\|_{L^2(\mathcal H_\tau)}\lesssim C_4^{\frac{p}{2}(p-1)+1}\epsilon^{\frac{p}{8}}\tau^{-p+3}.
\end{equation*}
Similarly, we bound $|\psi|^{p-2}\partial\phi\partial\phi$ by
\begin{equation*}
    \begin{aligned}
        \big\||\psi|^{p-2}\partial\phi\partial\phi\big\|_{L^2(\mathcal H_\tau)}
        &\lesssim\big\|\frac{\tau}{t+1}\partial\phi\big\|^2_{L^4(\mathcal H_\tau)}\big\|\frac{(t+1)^2}{\tau^2}|\psi|^{p-2}\big\|_{L^\infty(\mathcal H_\tau)},
    \end{aligned}
\end{equation*}
together with~\eqref{eq:partial-psi-decay-used}, giving
\begin{equation*}
    \big\||\psi|^{p-2}\partial\phi\partial\phi\big\|_{L^2(\mathcal H_\tau)}\lesssim C_4^p\epsilon^{-\frac{3}{4}}\tau^{-\frac{p}{2}+\frac{1}{2}+(p-3)\delta}.
\end{equation*}

Collecting the $\mathcal{G}_1$ and $\mathcal{G}_2$ estimates together yields
\[
\|\partial^2\mathcal N(\psi,\phi)\|_{L^2(\mathcal H_{\tau})}
\ \lesssim (C_4^{\frac{p^2+1}{2}}\epsilon^{\frac{p-1}{8}}+C_4^p\epsilon^{-\frac{3}{4}})\tau^{-\frac{p}{2}+\frac{1}{2}+(p-1)\delta}.
\]
Inserting this into the energy inequality \eqref{eq:partial2-energy-raw}, we obtain
\begin{equation*}
    \mathcal{E}^{in}(\tau,\partial^2\phi)^{\frac{1}{2}}\lesssim C_3\epsilon^{-2}+C_4^{\frac{p^2+1}{2}}\epsilon^{\frac{p-1}{8}}+C_4^p\epsilon^{-\frac{3}{4}}.
\end{equation*}

$\bullet$ Finally we prove the conformal energy estimate, which is obtained in a similar way to the natural energy. Applying the energy inequality \eqref{est:energy_in2} to the equation \eqref{eq:model_short-pulse} commuted with $\partial^2$  gives
\begin{equation*}
\begin{aligned}
    \mathcal E^{in}_{con}(\tau,\partial^2\phi)^{1/2}
&\lesssim
\|\partial^3\phi\|_{L^2(\Sigma_{t_1}\cap\mathcal D^{in})}
+\|\partial^2\phi\|_{L^2(\Sigma_{t_1}\cap\mathcal D^{in})}\\
&+\left(\int_{\mathcal{B}_{4\epsilon}}\Big[(t+r+1)L\partial^2\phi + 2\partial^2\phi\Big]^2+\Big|\frac{1}{r}\Omega
\partial^2\phi\Big|^{2}\, dx\right)^{1/2}
+\int_{\tau_0}^{\tau}\|\tilde\tau\,\partial^2\mathcal N(\psi,\phi)\|_{L^2(\mathcal H_{\tilde\tau})}\,d\tilde\tau.
\end{aligned}
\end{equation*}

The initial and boundary terms are controlled exactly as above, using Propositions~\ref{prop:estimates_local1} and \ref{prop:energy_ex}. The spacetime integral is estimated in the same way as above, with an additional factor $\tilde\tau$, which remains integrable.
For $\mathcal{E}^{in}(\tau,L_a\partial\phi)$ and $\mathcal{E}_{con}^{in}(\tau,L_a\partial\phi)$, with $a=1,2,3$, applying the energy inequalities \eqref{est:energy_in1}--\eqref{est:energy_in2}, we obtain
\begin{equation*}
        \begin{aligned}
            \mathcal E^{in}(\tau,L_a\partial\phi)^{\frac12}
            &\lesssim
            \|\partial L_a\partial\phi\|_{L^2(\Sigma_{t_1}\cap\mathcal D^{in})}
            +\Big(\int_{\mathcal B_{4\epsilon}}|GL_a\partial\phi|^2\,dx\Big)^{\frac12}+\int_{\tau_0}^{\tau}\|L_a\partial\mathcal N(\psi,\phi)\|_{L^2(\mathcal H_{\tilde\tau})}\,d\tilde\tau,\\
            \mathcal E^{in}_{con}(\tau,L_a\partial\phi)^{\frac12}
            &\lesssim
            \|\partial L_a\partial\phi\|_{L^2(\Sigma_{t_1}\cap\mathcal D^{in})}
            +\|L_a\partial\phi\|_{L^2(\Sigma_{t_1}\cap\mathcal D^{in})}\\
            &\quad +\left(\int_{\mathcal{B}_{4\epsilon}}\Big[(t+r+1)LL_a\partial\phi + 2L_a\partial\phi\Big]^2+\Big|\frac{1}{r}\Omega
            L_a\partial\phi\Big|^{2}\, dx\right)^{\frac{1}{2}}
            +\int_{\tau_0}^{\tau}\|\tilde\tau\,L_a\partial\mathcal N(\psi,\phi)\|_{L^2(\mathcal H_{\tilde\tau})}\,d\tilde\tau.
        \end{aligned}
    \end{equation*}
By Proposition~\ref{prop:estimates_local1}, we deduce
\begin{equation*}
    \|\partial L_a\partial\phi\|_{L^2(\Sigma_{t_1}\cap\mathcal{D}^{in})}\lesssim C_2\epsilon^{-1},\qquad
    \|L_a\partial\phi\|_{L^2(\Sigma_{t_1}\cap\mathcal{D}^{in})}\lesssim C_2.
\end{equation*}
Using Proposition~\ref{prop:energy_ex}, we obtain
\begin{equation*}
    \begin{aligned}
        &\Big(\int_{\mathcal B_{4\epsilon}}|LL_a\partial\phi|^2\,dx\Big)^{\frac12} \lesssim
        \Big(\int_{\mathcal B_{4\epsilon}}|GL_a\partial\phi|^2\,dx\Big)^{\frac12}
        \lesssim C_2\epsilon^{-1}+C_3,\\
        &\left(\int_{\mathcal{B}_{4\epsilon}}\Big[(t+r)LL_a\partial\phi + 2L_a\partial\phi\Big]^2\, dx\right)^{\frac{1}{2}}
        +\epsilon\left(\int_{\mathcal{B}_{4\epsilon}}\Big|\frac{1}{r}\Omega
        L_a\partial\phi\Big|^{2}\, dx\right)^{\frac{1}{2}}
        \lesssim C_2\epsilon^{-1}+C_3.
    \end{aligned}
\end{equation*}

The bound for $L_a\partial\mathcal N(\psi,\phi)$ follows in the same way, except that one of the $\partial$ is replaced by an $L_a$. This completes the proof.
\end{proof}

\begin{proposition}[$L^2$ bounds for higher derivatives II]\label{prop:high-L2-2}
For all $\tau\in[\tau_0,+\infty)$, the following bounds hold:
    \begin{equation*}
        \begin{aligned}
            \mathcal{E}^{in}(\tau,\partial\Omega^2\phi)^{\frac{1}{2}}\lesssim C_4^{p^2}\epsilon^{-2},\qquad
            \mathcal{E}_{con}^{in}(\tau,\partial\Omega^2\phi)^{\frac{1}{2}}\lesssim \left\{\begin{aligned}
                C_4^{p^2}\epsilon^{-2},\quad p>\frac{17}{3},\\ C_4^{p^2}\epsilon^{-2}\log\tau ,\quad p=\frac{17}{3},\\
                C_4^{p^2}\epsilon^{-2}\tau^{\frac{17-3p}{2}},\quad p<\frac{17}{3}.
            \end{aligned}\right.
        \end{aligned}
    \end{equation*}
\end{proposition}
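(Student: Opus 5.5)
The plan is to follow the template of Proposition~\ref{prop:high-L2-1} and Lemma~\ref{lem:Omega2-energy}. We commute $\partial\Omega^2$ with \eqref{eq:model_short-pulse} to get $-\Box\partial\Omega^2\phi=\pm\partial\Omega^2\mathcal N(\psi,\phi)$. We then apply the natural and conformal hyperboloidal estimates \eqref{est:energy_in1}--\eqref{est:energy_in2} and bound the three contributions in turn.

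\emph{Initial and boundary terms.} The initial terms on $\Sigma_{t_1}\cap\mathcal D^{in}$ are controlled by Proposition~\ref{prop:estimates_local1}: $\|\partial\partial\Omega^2\phi\|\lesssim C_2\epsilon^{-1}$ and $\|\partial\Omega^2\phi\|\lesssim C_2$. The flux term $\int_{\mathcal B_{4\epsilon}}|G\partial\Omega^2\phi|^2$ is bounded by \eqref{eq:E-ex-partialOmega2} with $|I|=1$, $|J|=2$, giving $C_3\epsilon^{-1}$. The conformal flux terms, $(t+r+1)L\partial\Omega^2\phi+2\partial\Omega^2\phi$ and $r^{-1}\Omega\partial\Omega^2\phi$, are taken from \eqref{eq:E-con-pOphi} (the case $|I|+|J|\le3$, which gives $C_3$) together with \eqref{eq:E-ex-partialOmega2} for the extra $|L\partial\Omega^2\phi|$ piece. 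All of these sit well below $C_4^{p^2}\epsilon^{-2}$, so there is ample room.

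\emph{Nonlinear term.} We expand $\partial\Omega^2\mathcal N$ by Leibniz into terms with $p-3$ undifferentiated factors of $\phi$ or $\psi$ and up to three factors carrying derivatives, of which at most one carries $\partial$. Each term is split into $\Hcali\cup\Hcalf$ using the following bounds:
\begin{itemize}
\item pointwise bounds on $\phi$ from Lemma~\ref{lem:phi-pt}, and the $L^q$ bound \eqref{est:Lq_in2};
\item energy bounds for $\Omega^{\le2}\phi$ from Lemma~\ref{lem:Omega2-energy}, and for $\partial\phi$, $\partial^2\phi$, $L_a\partial\phi$ from Propositions~\ref{prop:L2estimates_in} and \ref{prop:high-L2-1};
\item $L^4$ and $L^6$ control on $\Hcalf$ via Lemmas~\ref{lem:Sobo_embedding}--\ref{lem:inter_inequ};
\item the $\psi$ bounds \eqref{eq:psi-decay-used} and \eqref{eq:partial-psi-decay-used}.
\end{itemize}

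The top-order terms are $|\phi|^{p-1}\partial\Omega^2\phi$ and $|\psi|^{p-1}\partial\Omega^2\phi$. We place $\frac{\tau}{t+1}\partial\Omega^2\phi$ in $L^2$, which is bounded by $\mathcal E^{in}(\tau,\partial\Omega^2\phi)^{1/2}$. The coefficient $\frac{t+1}{\tau}|\psi|^{p-1}\lesssim\tau^{-\frac{p}{2}+\frac12+(p-1)\delta}$ is integrable, so these terms close by Gronwall (Lemma~\ref{lem:Gronwall}). The remaining lower-order terms produce an inhomogeneous source of size roughly $C_4^{p^2}\epsilon^{-2}\tau^{-\gamma}$ for some $\gamma>1$. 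This yields $\mathcal E^{in}(\tau,\partial\Omega^2\phi)^{1/2}\lesssim C_4^{p^2}\epsilon^{-2}$.

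\emph{Conformal energy.} For the conformal energy the same decomposition is used, but the spacetime integrand carries an extra factor $\tilde\tau$. The decay rate is dictated by the worst mixed term. We expect this to be a product like $|\phi|^{p-2}\Omega\phi\,\partial\Omega\phi$ or $|\phi|^{p-3}|\Omega\phi|^2\partial\phi$ on $\Hcalf$. There the only available control is the $L^q$ bound for $\phi$, decaying like $\tau^{-\frac32+}$ per factor, together with $L^4$ or $L^6$ norms of the derivative factors; these cost powers of $\tau$ through $t\le\tau^2$.

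Counting exponents in the same way as \eqref{eq:GN_mid1} should give $\|\tilde\tau\,\partial\Omega^2\mathcal N\|_{L^2(\mathcal H_{\tilde\tau})}\lesssim C_4^{p^2}\epsilon^{-2}\tilde\tau^{\frac{15-3p}{2}}$. The $\tau$-integral then converges when $p>\frac{17}{3}$, grows like $\log\tau$ at $p=\frac{17}{3}$, and grows like $\tau^{\frac{17-3p}{2}}$ when $p<\frac{17}{3}$, which is exactly the trichotomy in the statement.

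\emph{Main obstacle.} The hardest step is identifying this worst term and verifying the exact exponent $\frac{15-3p}{2}$. This requires choosing the H\"older exponents on $\Hcalf$ carefully so that every other term decays at least as fast, while only using energies already controlled. I would first test the term $|\phi|^{p-2}|\Omega\phi||\partial\Omega\phi|$, placing $\partial\Omega\phi$ in weighted $L^2$, $\Omega\phi$ in $L^6$, and $|\phi|^{p-2}$ in $L^{3(p-2)}$ via \eqref{est:Lq_in2}. I would then check that the resulting power matches.
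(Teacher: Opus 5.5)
Your architecture is the paper's: commute with $\partial\Omega^2$, apply \eqref{est:energy_in1}--\eqref{est:energy_in2}, take the initial and flux terms from Proposition~\ref{prop:estimates_local1}, \eqref{eq:E-con-pOphi} and \eqref{eq:E-ex-partialOmega2}, expand $\partial\Omega^2\mathcal N$, and read the trichotomy off a worst weighted source $\tilde\tau^{\frac{15-3p}{2}}$.

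\textbf{Main gap.} You yourself call that exponent the crux, but it is asserted rather than derived, and the route you sketch for it does not work.

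Your test split for $|\phi|^{p-2}\Omega\phi\,\partial\Omega\phi$ is not a H\"older split for an $L^2$ norm. Once one factor sits in $L^2$, all the others must go in $L^\infty$, and your exponents sum to more than $\frac12$.

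More importantly, the threshold $\frac{17}{3}$ comes from the terms with three differentiated $\phi$-factors, e.g.\ $|\phi|^{p-3}\,\partial\phi\,\Omega\phi\,\Omega\phi$ (the group $\mathcal G_4$ in the paper). Suppose, as you propose, $\phi$ is put in $L^q$ via \eqref{est:Lq_in2} and the differentiated factors only in $L^4$ or $L^6$. Then these terms cannot be closed: three $L^6$ factors already use up the exponent $\frac12$, leaving only $L^\infty$ for $|\phi|^{p-3}$.

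Contrary to your remark, pointwise control of $\phi$ on $\Hcalf$ is available at this stage. That is exactly Lemma~\ref{lem:phi-pt}, and it is why Lemma~\ref{lem:Omega2-energy} is proved first. The paper estimates these terms as $L^6\times L^6\times L^6\times L^\infty$, bounding each $\|\frac{\tau}{t+1}\Gamma\phi\|_{L^6}$ by its $H^1$ norm via Lemma~\ref{lem:Sobo_embedding}. With $|\phi|\lesssim\tau^{-1}t^{-1/2}$ and $\tau\le t+1\le\tau^2$ one gets $\frac{(t+1)^3}{\tau^3}|\phi|^{p-3}\lesssim\tau^{-p}\,t^{2}\,t^{\frac{5-p}{2}}\lesssim\tau^{-\frac32p+\frac{13}{2}}$. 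After the conformal weight $\tilde\tau$ this is precisely $\tilde\tau^{\frac{15-3p}{2}}$.

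Your first candidate, done correctly as $L^4\times L^4\times L^\infty$, decays like $\tau^{3-p}$. That stays integrable with the extra $\tilde\tau$ for every $p>5$, so it is not the term responsible for $\frac{17}{3}$.

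\textbf{Two smaller corrections.}

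First, in $|\phi|^{p-1}\partial\Omega^2\phi$ and $|\psi|^{p-1}\partial\Omega^2\phi$, the factor $\frac{\tau}{t+1}\partial\Omega^2\phi$ is not controlled by $\mathcal E^{in}(\tau,\partial\Omega^2\phi)^{1/2}$, which bounds $\frac{\tau}{t+1}\partial\partial\Omega^2\phi$. It is controlled by $\mathcal E^{in}(\tau,\Omega^2\phi)^{1/2}\lesssim C_4^p$ from Lemma~\ref{lem:Omega2-energy}. Hence every term of $\partial\Omega^2\mathcal N$ is a pure source with already-known bounds, and no Gronwall step is needed; as written, yours rests on a false inequality.

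Second, \eqref{eq:E-con-pOphi} controls the flux $\int_{\mathcal B_{4\epsilon}}|r^{-1}\Omega\partial\Omega^2\phi|^2\,dx$ only with the weight $\tilde u^2\sim\epsilon^2$. So this piece is $C_3\epsilon^{-1}$, not $C_3$; equivalently, use $|r^{-1}\Omega\varphi|\le|G\varphi|$ with \eqref{eq:E-ex-partialOmega2}. It is still harmless against $C_4^{p^2}\epsilon^{-2}$.
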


\begin{proof}
    Applying the interior natural energy inequality \eqref{est:energy_in1} to the commuted equation $-\Box\partial\Omega^2\phi=\pm\partial\Omega^2\mathcal N(\psi,\phi)$, we get
    \begin{equation}\label{eq:partialOmega2-energy-raw}
        \begin{aligned}
            \mathcal E^{in}(\tau,\partial\Omega^2\phi)^{\frac12}
            \lesssim\;&
            \|\partial^2\Omega^2\phi\|_{L^2(\Sigma_{t_1}\cap\mathcal D^{in})}
            +\Big(\int_{\mathcal B_{4\epsilon}}|G\partial\Omega^2\phi|^2\,dx\Big)^{\frac12}
            +\int_{\tau_0}^{\tau}\|\partial\Omega^2\mathcal N(\psi,\phi)\|_{L^2(\mathcal H_{\tilde\tau})}\,d\tilde\tau.
        \end{aligned}
    \end{equation}
Firstly, the initial data term can be controlled by Proposition~\ref{prop:estimates_local1},
    \begin{equation*}
        \|\partial^2\Omega^2\phi\|_{L^2(\Sigma_{t_1}\cap\mathcal D^{in})}\lesssim C_2\epsilon^{-1},
    \end{equation*}
and the boundary flux term is estimated using \eqref{eq:E-ex-partialOmega2}, to obtain
    \begin{equation*}
        \Big(\int_{\mathcal B_{4\epsilon}}|G\partial\Omega^2\phi|^2\,dx\Big)^{\frac12}\lesssim C_3\epsilon^{-1}.
    \end{equation*}
    
    We now estimate the spacetime term $\|\Gamma^I\mathcal N(\psi,\phi)\|_{L^2(\mathcal H_\tau)}$, where $\Gamma^I=\partial\Omega^2$, by splitting it into four groups. The terms are grouped according to the number of differentiated $\phi$ factors, namely factors of the form $\Gamma^{I_k}\phi$ with $|I_k|\ge 1$: 
$\mathcal{G}_1$ and $\mathcal{G}_3$ contain at most one such factor, whereas $\mathcal{G}_2$ and $\mathcal{G}_4$ contain at least two.
    \begin{equation*}
        \begin{aligned}
            \mathcal{G}_1&:\ \{|\phi|^{p-1}\Gamma^I\phi,\ |\phi|^{p-1}\Gamma^I\psi,\ |\psi|^{p-1}\Gamma^I\phi,\ \phi|\psi|^{p-2}\Gamma^I\psi\},\\
            \mathcal{G}_2&:\ \{|\phi|^{p-2}\Gamma^{I_1}\phi\Gamma^{I_2}\phi,\ |\psi|^{p-2}\Gamma^{I_1}\phi\Gamma^{I_2}\phi\},\\
            \mathcal{G}_3&:\ \{|\phi|^{p-2}\Gamma^{I_1}\phi\Gamma^{I_2}\psi,\ |\phi|^{p-2}\Gamma^{I_1}\psi\Gamma^{I_2}\psi,\ |\psi|^{p-2}\Gamma^{I_1}\phi\Gamma^{I_2}\psi,\ \phi|\psi|^{p-3}\Gamma^{I_1}\psi\Gamma^{I_2}\psi, \\ &\qquad|\phi|^{p-3}\Gamma^{I_1}\phi\Gamma^{I_2}\psi\Gamma^{I_3}\psi,\ |\phi|^{p-3}\Gamma^{I_1}\psi\Gamma^{I_2}\psi\Gamma^{I_3}\psi, |\psi|^{p-3}\Gamma^{I_1}\phi\Gamma^{I_2}\psi\Gamma^{I_3}\psi,\ \phi|\psi|^{p-4}\Gamma^{I_1}\psi\Gamma^{I_2}\psi\Gamma^{I_3}\psi\},\\
            \mathcal{G}_4&:\ \{|\phi|^{p-3}\Gamma^{I_1}\phi\Gamma^{I_2}\phi\Gamma^{I_3}\phi,\ |\phi|^{p-3}\Gamma^{I_1}\phi\Gamma^{I_2}\phi\Gamma^{I_3}\psi, |\psi|^{p-3}\Gamma^{I_1}\phi\Gamma^{I_2}\phi\Gamma^{I_3}\phi,\ |\psi|^{p-3}\Gamma^{I_1}\phi\Gamma^{I_2}\phi\Gamma^{I_3}\psi\} \,.
        \end{aligned}
    \end{equation*}
    Throughout we use the $L^2$ bounds in Propositions~\ref{prop:L2estimates_in},~\ref{prop:high-L2-1}, Lemma~\ref{lem:Omega2-energy}, the interior pointwise bounds in Lemma~\ref{lem:phi-pt} and the decay estimates in Lemma~\ref{lem:disp-psi}.

\noindent
    \emph{Estimates for group $\mathcal{G}_1$.}
    For $|\phi|^{p-1}\partial\Omega^2\phi$, using Lemmas~\ref{lem:Omega2-energy}--\ref{lem:phi-pt}, we have
    \begin{equation*}
    \begin{aligned}
        \big\||\phi|^{p-1}\partial\Omega^2\phi\big\|_{L^2(\mathcal H_\tau)}
        &\lesssim\big\|\frac{\tau}{t+1}\partial\Omega^2\phi\big\|_{L^2(\mathcal H_\tau)}\big\|\frac{t+1}{\tau}|\phi|^{p-1}\big\|_{L^\infty(\mathcal H_\tau)}\lesssim C_4^{\frac{p^2+2p-1}{2}}\epsilon^{\frac{p-1}{8}}\tau^{-\frac{3}{2}p+\frac{3}{2}}.
    \end{aligned}
    \end{equation*}
    For $|\psi|^{p-1}\partial\Omega^2\phi$, Lemma~\ref{lem:Omega2-energy}
    and~\eqref{eq:disp-psi01} give
    \begin{equation*}
    \begin{aligned}
        \big\||\psi|^{p-1}\partial\Omega^2\phi\big\|_{L^2(\mathcal H_\tau)}
        &\lesssim\big\|\frac{\tau}{t+1}\partial\Omega^2\phi\big\|_{L^2(\mathcal H_\tau)}\big\|\frac{t+1}{\tau}|\psi|^{p-1}\big\|_{L^\infty(\mathcal H_\tau)}\lesssim C_4^{2p-1}\tau^{-\frac{p}{2}+\frac{1}{2}+(p-1)\delta}.
    \end{aligned}
    \end{equation*}
    The remaining terms in~$\mathcal{G}_1$ are treated in the same way.

\noindent
    \emph{Estimates for group  $\mathcal{G}_2$.}
    For $|\phi|^{p-2}\Gamma^{I_1}\phi\Gamma^{I_2}\phi$, using the H\"{o}lder inequality, we obtain
    \begin{equation*}
    \begin{aligned}
        \big\||\phi|^{p-2}\Gamma^{I_1}\phi\Gamma^{I_2}\phi\big\|_{L^2(\mathcal H_\tau)}
        &\lesssim\big\|\frac{\tau}{t+1}\Gamma^{I_1}\phi\big\|_{L^4(\mathcal H_\tau)}\big\|\frac{\tau}{t+1}\Gamma^{I_2}\phi\big\|_{L^4(\mathcal H_\tau)}\big\|\frac{(t+1)^2}{\tau^2}|\phi|^{p-2}\big\|_{L^\infty(\mathcal H_\tau)}.
    \end{aligned}
\end{equation*}
Employing Lemmas~\ref{lem:Sobo_embedding} and~\ref{lem:inter_inequ}, we obtain
\begin{equation*}
    \begin{aligned}
        \big\|\frac{\tau}{t+1}\Gamma^{I_1}\phi\big\|_{L^4(\mathcal H_\tau)}&\lesssim\big\|\frac{\tau}{t+1}\Gamma^{I_1}\phi\big\|^{\frac{1}{4}}_{L^2(\mathcal H_\tau)}\big\|\frac{\tau}{t+1}\Gamma^{I_1}\phi\big\|^{\frac{3}{4}}_{H^1(\mathcal H_\tau)},
    \end{aligned}
\end{equation*}
together with Proposition~\ref{prop:high-L2-1}, Lemmas~\ref{lem:Omega2-energy}-\ref{lem:phi-pt}, yielding
\begin{equation*}
    \big\||\phi|^{p-2}\Gamma^{I_1}\phi\Gamma^{I_2}\phi\big\|_{L^2(\mathcal H_\tau)}\lesssim C_4^{\frac{7}{8}p^2-\frac{p}{2}+\frac{5}{8}}\epsilon^{-\frac{1}{2}}\tau^{-p+3}.
\end{equation*}
Similarly, we bound $|\psi|^{p-2}\Gamma^{I_1}\phi\Gamma^{I_2}\phi$ by
\begin{equation*}
    \begin{aligned}
        \big\||\psi|^{p-2}\Gamma^{I_1}\phi\Gamma^{I_2}\phi\big\|_{L^2(\mathcal H_\tau)}
        &\lesssim C_4^{\frac{3}{8}p^2+p}\epsilon^{-\frac{15}{8}}\tau^{-\frac{p}{2}+\frac{1}{2}+(p-3)\delta}.
    \end{aligned}
\end{equation*}

\noindent
    \emph{Estimates for group $\mathcal{G}_3$.} For $|\phi|^{p-2}\Gamma^{I_1}\phi\Gamma^{I_2}\psi$, using the H\"{o}lder inequality, we obtain
    \begin{equation*}
    \begin{aligned}
        \big\||\phi|^{p-2}\Gamma^{I_1}\phi\Gamma^{I_2}\psi\big\|_{L^2(\mathcal H_\tau)}
        &\lesssim\big\|\frac{\tau}{t+1}\Gamma^{I_1}\phi\big\|_{L^2(\mathcal H_\tau)}\big\|\Gamma^{I_2}\psi\big\|_{L^\infty(\mathcal H_\tau)}\big\|\frac{t+1}{\tau}|\phi|^{p-2}\big\|_{L^\infty(\mathcal H_\tau)}.
    \end{aligned}
    \end{equation*}
    Using Proposition~\ref{prop:L2estimates_in}, Lemmas~\ref{lem:disp-psi} and~\ref{lem:phi-pt}, we derive
    \begin{equation*}
        \big\||\phi|^{p-2}\Gamma^{I_1}\phi\Gamma^{I_2}\psi\big\|_{L^2(\mathcal H_\tau)}\lesssim C_4^{\frac{p^2-p+2}{2}}\epsilon^{\frac{p+2}{8}}\tau^{-\frac{3}{2}p+5+\delta}.
    \end{equation*}
    For $\phi|\psi|^{p-3}\Gamma^{I_1}\psi\Gamma^{I_2}\psi$, we have
    \begin{equation*}
        \begin{aligned}
            \big\|\phi&|\psi|^{p-3}\Gamma^{I_1}\psi\Gamma^{I_2}\psi\big\|_{L^2(\mathcal H_\tau)}\\
            &\lesssim\big\|\frac{\tau}{t+1}\phi\big\|_{L^2(\mathcal H_\tau)}\big\|\Gamma^{I_1}\psi\big\|_{L^\infty(\mathcal H_\tau)}\big\|\Gamma^{I_2}\psi\big\|_{L^\infty(\mathcal H_\tau)}\big\|\frac{t+1}{\tau}|\psi|^{p-3}\big\|_{L^\infty(\mathcal H_\tau)}
            \lesssim C_4^{p}\epsilon^{\frac{1}{2}}\tau^{-\frac{p}{2}+(p-2)\delta}.
        \end{aligned}
    \end{equation*}
    The remaining terms in~$\mathcal{G}_3$ are treated in the same way.

\noindent
    \emph{Estimates for group $\mathcal{G}_4$.} For $|\phi|^{p-3}\Gamma^{I_1}\phi\Gamma^{I_2}\phi\Gamma^{I_3}\phi$, using the H\"{o}lder inequality, we obtain
    \begin{equation*}
    \begin{aligned}
        \big\|&|\phi|^{p-3}\Gamma^{I_1}\phi\Gamma^{I_2}\phi\Gamma^{I_3}\phi\big\|_{L^2(\mathcal H_\tau)}\\
        &\lesssim\big\|\frac{\tau}{t+1}\Gamma^{I_1}\phi\big\|_{L^6(\mathcal H_\tau)}\big\|\frac{\tau}{t+1}\Gamma^{I_2}\phi\big\|_{L^6(\mathcal H_\tau)}\big\|\frac{\tau}{t+1}\Gamma^{I_3}\phi\big\|_{L^6(\mathcal H_\tau)}\big\|\frac{(t+1)^3}{\tau^3}|\phi|^{p-3}\big\|_{L^\infty(\mathcal H_\tau)}.
    \end{aligned}
    \end{equation*}
    Employing Lemma~\ref{lem:Sobo_embedding}, we obtain
    \begin{equation*}
        \begin{aligned}
            \big\|\frac{\tau}{t+1}\Gamma^{I_1}\phi\big\|_{L^6(\mathcal H_\tau)}&\lesssim\big\|\frac{\tau}{t+1}\Gamma^{I_1}\phi\big\|_{H^1(\mathcal H_\tau)}.
        \end{aligned}
    \end{equation*}
    Applying Proposition~\ref{prop:L2estimates_in} and Lemma~\ref{lem:phi-pt}, we get
    \begin{equation*}
        \big\||\phi|^{p-3}\Gamma^{I_1}\phi\Gamma^{I_2}\phi\Gamma^{I_3}\phi\big\|_{L^2(\mathcal H_\tau)}\lesssim C_4^{3+\frac{(p+1)(p-3)}{2}}\epsilon^{\frac{p+5}{8}}\tau^{-\frac{3}{2}p+\frac{13}{2}}.
    \end{equation*}
    Similarly, for $|\phi|^{p-3}\Gamma^{I_1}\phi\Gamma^{I_2}\phi\Gamma^{I_3}\psi$, we obtain
    \begin{equation*}
    \begin{aligned}
        \big\|&|\phi|^{p-3}\Gamma^{I_1}\phi\Gamma^{I_2}\phi\Gamma^{I_3}\psi\big\|_{L^2(\mathcal H_\tau)}\\
        &\lesssim\big\|\frac{\tau}{t+1}\Gamma^{I_1}\phi\big\|_{L^4(\mathcal H_\tau)}\big\|\frac{\tau}{t+1}\Gamma^{I_2}\phi\big\|_{L^4(\mathcal H_\tau)}\big\|\Gamma^{I_3}\psi\big\|_{L^\infty(\mathcal H_\tau)}\big\|\frac{(t+1)^2}{\tau^2}|\phi|^{p-3}\big\|_{L^\infty(\mathcal H_\tau)},
    \end{aligned}
    \end{equation*}
    together with Lemmas~\ref{lem:Sobo_embedding} and~\ref{lem:inter_inequ}, \eqref{eq:disp-psi01}, giving
    \begin{equation*}
        \big\||\phi|^{p-3}\Gamma^{I_1}\phi\Gamma^{I_2}\phi\Gamma^{I_3}\psi\big\|_{L^2(\mathcal H_\tau)}\lesssim C_4^{3+\frac{(p+1)(p-3)}{2}}\epsilon^{\frac{p+2}{8}}\tau^{-\frac{3}{2}p+\frac{7}{2}}.
    \end{equation*}
    The remaining terms in~$\mathcal{G}_4$ are treated in the same way.

   Inserting all the above estimates into the energy inequality \eqref{eq:partialOmega2-energy-raw}, we obtain the desired bound.

$\bullet$ Next we turn to the conformal estimate, which proceeds in a very similar manner. Applying \eqref{est:energy_in2} to the equation \eqref{eq:model_short-pulse} commuted with $\partial \Omega^2$ gives
\begin{equation*}
\begin{aligned}
    \mathcal E^{in}_{con}&(\tau,\partial\Omega^2\phi)^{\frac12}
\lesssim
\|\partial^2\Omega^2\phi\|_{L^2(\Sigma_{t_1}\cap\mathcal D^{in})}
+\|\partial\Omega^2\phi\|_{L^2(\Sigma_{t_1}\cap\mathcal D^{in})}\\
&+\left(\int_{\mathcal{B}_{4\epsilon}}\Big[(t+r+1)L\partial\Omega^2\phi + 2\partial\Omega^2\phi\Big]^2+\Big|\frac{1}{r}\Omega\partial\Omega
^2\phi\Big|^{2}\, dx\right)^{\frac{1}{2}}
+\int_{\tau_0}^{\tau}\|\tilde\tau\,\partial\Omega^2\mathcal N(\psi,\phi)\|_{L^2(\mathcal H_{\tilde\tau})}\,d\tilde\tau.
\end{aligned}
\end{equation*}

The initial and boundary terms are controlled exactly as above, using
Propositions~\ref{prop:estimates_local1},~\ref{prop:energy_ex} and~\eqref{eq:E-ex-partialOmega2}. The spacetime term is estimated in the same way, except that the conformal energy inequality introduces an additional factor $\tilde\tau$ in the source term. Consequently, the integral is finite if and only if $p>\frac{17}{3}$. 
\end{proof}

\begin{lemma}[Pointwise bound for $\partial\phi$]\label{lem:der-pt}
For $(t,x)\in\mathcal D^{in}$, we have
\begin{equation}\label{eq:der-pt}
    \begin{aligned}
        |\partial\phi(t,x)|&\lesssim C_4^{\frac{p^2+1}{2}}\epsilon^{-\frac{11}{8}}\tau^{-1}t^{-\frac{1}{2}},\qquad \text{ for } r\leq t/4,\\
        |\partial\phi(t,x)|&\lesssim
        \left\{
	\begin{array}{ll}
	C_4^{\frac{p^2+p}{2}}\epsilon^{-1}\tau^{\frac{13-3p}{4}}r^{-\frac{1}{2}},  & p<\frac{17}{3}, \\
	C_4^{\frac{p^2+p}{2}}\epsilon^{-1}\tau^{-1}(\log\tau)^{\frac{1}{2}} r^{-\frac{1}{2}}, & p=\frac{17}{3},
    \\
    C_4^{\frac{p^2+p}{2}}\epsilon^{-1}\tau^{-1}r^{-\frac{1}{2}}, & p>\frac{17}{3},
	\end{array}
\right.\qquad \text{ for } r\geq t/8.
\end{aligned}
\end{equation}
\end{lemma}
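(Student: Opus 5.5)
The plan is to mimic the proof of Lemma~\ref{lem:phi-pt}, applying the Klainerman--Sobolev inequalities of Lemma~\ref{lem:KS_in} to $\varphi=\partial\phi$. In each region, the required weighted $L^2$ norms on $\mathcal H_\tau$ are supplied by Propositions~\ref{prop:high-L2-1} and~\ref{prop:high-L2-2}, Lemma~\ref{lem:Omega2-energy} and the bootstrap bounds of Proposition~\ref{prop:L2estimates_in}. The endpoint term on $\mathcal B_{4\epsilon}$ is supplied by the exterior estimates.

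\emph{Region $r\le t/4$.} Apply \eqref{est:KS_in1} to $\partial\phi$, keeping the free parameter $\lambda$. The right-hand side needs $\|\frac{\tau}{t+1}Z^I\partial\phi\|_{L^2(\mathcal H_\tau)}$ for $|I|\le2$.
\begin{itemize}
\item For $|I|=0$, \eqref{est:L2_in1} gives $C_4\epsilon^{1/2}$.
\item For $|I|=1$, \eqref{est:L2_in2} and \eqref{est:L2_in4} give $C_4$. One commutes $L_a$ past $\partial$ using Lemma~\ref{lem:commutator2}.
\item For $|I|=2$, the terms $\partial^3\phi$, $L_a\partial^2\phi$ and $L_aL_b\partial\phi$ appear. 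Commuting the derivatives via Lemma~\ref{lem:commutator2}, these are controlled by $\mathcal E^{in}(\tau,\partial^2\phi)^{1/2}$ and $\mathcal E^{in}(\tau,L_a\partial\phi)^{1/2}$ together with the conformal energy of $L_a\partial\phi$. The last of these is what controls $\tau\slashed\partial L_a\partial\phi$, and hence $\frac{\tau}{t}L_bL_a\partial\phi$.
\end{itemize}
By Proposition~\ref{prop:high-L2-1}, every $|I|=2$ contribution is bounded by roughly $C_4^{\frac{p^2+1}{2}}\epsilon^{-2}$. Inserting the three sizes into \eqref{est:KS_in1} gives
\[
|\partial\phi|\lesssim\tau^{-1}t^{-1/2}\bigl(\epsilon^{1/2-\frac32\lambda}+\epsilon^{-\frac12\lambda}+\epsilon^{\frac12\lambda-2}\bigr).
\]
Optimising over $\lambda$ (trading the worst term against the others, e.g.\ $\lambda=\frac54$) yields the factor $\epsilon^{-11/8}$ claimed in \eqref{eq:der-pt}. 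Only the bookkeeping of exponents has to be checked here.

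\emph{Region $r\ge t/8$.} Use \eqref{est:KS_in3} for $\varphi=\partial\phi$.
\begin{itemize}
\item \emph{Endpoint term.} $|\partial\phi(t^*,x^*)|\lesssim C_3\epsilon^{-1/2}r^{-1}$ by \eqref{est:L2_ex3}. Since $\tau^{2}\sim r$ near $\mathcal B_{4\epsilon}$, this is $\lesssim\epsilon^{-1}\tau^{-1}r^{-1/2}$.
\item \emph{First factor.} $\|\frac{\tau}{t}\Omega^{\le2}\partial\phi\|$ is bounded, after commuting, by $\mathcal E^{in}(\tau,\Omega^{\le2}\phi)^{1/2}\lesssim C_4^p$, using Lemma~\ref{lem:Omega2-energy} and the bootstrap bounds.
\item \emph{Second factor.} $\|\frac{\tau}{t}\Omega^{\le2}Z\partial\phi\|$ involves $\partial^2\Omega^{\le2}\phi$ and $L_a\partial\Omega^{\le2}\phi$. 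The former is controlled by $\mathcal E^{in}(\tau,\partial\Omega^2\phi)^{1/2}\lesssim C_4^{p^2}\epsilon^{-2}$, with Proposition~\ref{prop:high-L2-1} covering the lower-order pieces.
\end{itemize}
For the $L_a$ piece, the point is that $\frac{\tau}{t}L_a\partial\Omega^2\phi=\frac{\tau}{t}\cdot t\slashed\partial_a\partial\Omega^2\phi$ up to lower-order terms. It is therefore bounded by $\frac1\tau\|\tau\slashed\partial_a\partial\Omega^2\phi\|\lesssim\tau^{-1}\mathcal E^{in}_{con}(\tau,\partial\Omega^2\phi)^{1/2}$ (using $t\lesssim\tau^2$), and this is exactly where the three $p$-regimes of Proposition~\ref{prop:high-L2-2} enter. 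Taking geometric means of the two factors gives $\tau^{-1}r^{-1/2}$ times $\epsilon^{-1}$ and a power of $C_4$. For $p<\frac{17}{3}$ there is an extra factor $\tau^{\frac{17-3p}{4}}$, giving $\tau^{\frac{13-3p}{4}}$; for $p=\frac{17}{3}$ the extra factor is $(\log\tau)^{1/2}$.

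\emph{Main obstacle.} The delicate step is the $L_a\partial\Omega^2\phi$ term. One has to verify that the conformal energy, rather than the natural energy (which would lose a full factor $t/\tau$), really controls it with only the $\tau^{-1}$ loss stated above. One also has to verify that the commutators $[L_a,\partial]$ and $[\Omega,\partial]$ only produce terms already covered by Propositions~\ref{prop:high-L2-1}--\ref{prop:high-L2-2}. Given these, the remainder is exponent arithmetic.
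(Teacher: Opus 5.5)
Your proposal follows the paper's proof: \eqref{est:KS_in1} with $\lambda=\frac54$ on $\{r\le t/4\}$, with the norms supplied by Propositions~\ref{prop:L2estimates_in} and~\ref{prop:high-L2-1}, and \eqref{est:KS_in3} on $\{r\ge t/8\}$, with the endpoint term controlled by \eqref{est:L2_ex3} and the two factors by Lemma~\ref{lem:Omega2-energy} and Propositions~\ref{prop:high-L2-1}--\ref{prop:high-L2-2}.

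There is one slip in your ``main obstacle'' step, and it should be corrected. The identity $\frac{\tau}{t+1}L_a=\frac{t}{t+1}\tau\slashed\partial_a+\frac{x_a}{t+1}\slashed\partial_\tau$ only gives $\|\frac{\tau}{t}L_a\partial\Omega^2\phi\|_{L^2(\mathcal H_\tau)}\lesssim\mathcal E^{in}_{con}(\tau,\partial\Omega^2\phi)^{1/2}+\mathcal E^{in}(\tau,\partial\Omega^2\phi)^{1/2}$, with no extra factor $\tau^{-1}$. This corrected bound is exactly what produces the factor $\tau^{\frac{17-3p}{4}}$ (respectively $(\log\tau)^{1/2}$) in the exponents you state at the end. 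With the $\tau^{-1}$ you claimed, the $\tau$-powers would come out differently, so your final bookkeeping is only consistent with the corrected bound.
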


\begin{proof}
Recall that $Z=\{\partial_t, L_1, L_2, L_3\}.$
On $\{r\le t/4\}$, using~\eqref{est:KS_in1} with $\lambda=\frac{5}{4}$, we have
\begin{equation*}
    \begin{aligned}
        |\partial\phi(t,x)|
        \lesssim\tau^{-1}t^{-\frac{1}{2}}\sum_{|I|\leq2}\epsilon^{(|I|-\frac{3}{2})\lambda}\big\|\frac{\tau}{t+1}Z^{I}\partial\phi\big\|_{L^2(\mathcal H_\tau)}.
    \end{aligned}
\end{equation*}
 Applying Propositions~\ref{prop:L2estimates_in} and~\ref{prop:high-L2-1}, we get
 \begin{equation*}
     |\partial\phi(t,x)|\lesssim C_4^{\frac{p^2+1}{2}}\epsilon^{-\frac{11}{8}}\tau^{-1}t^{-\frac{1}{2}}.
 \end{equation*}
On $\{r\geq t/8\}$, using~\eqref{est:KS_in3}, we obtain
    \begin{equation*}
        \begin{aligned}
            &|\partial\phi(t,x)|\lesssim|\partial\phi(t,x)|_{\mathcal{H}_{\tau}\cap\mathcal{B}_{4\epsilon}}
            +\sum_{\substack{|J_1|\leq 2,|J_2|\leq2\\|I|=1}}\tau^{-1}|x|^{-\frac{1}{2}}\left\|\frac{\tau}{t}\Omega^{J_1}\partial\phi\right\|_{L^2(\mathcal{H}_\tau)}^{\frac{1}{2}}\left\|\frac{\tau}{t}\Omega^{J_2}Z^{I}\partial\phi\right\|_{L^2(\mathcal{H}_\tau)}^{\frac{1}{2}},
        \end{aligned}
    \end{equation*}
together with~\eqref{est:L2_ex3}, Propositions~\ref{prop:L2estimates_in},~\ref{prop:high-L2-1},~\ref{prop:high-L2-2}, giving the desired result. 
\end{proof}

\subsection{Pointwise bound for $\partial^2\phi$}\label{subsec:step3}
In this subsection, we derive the pointwise estimate for $\partial^2\phi$.

\begin{proposition}[$L^2$ bounds for higher derivatives III]\label{prop:high-L2-3}
For all $\tau\in[\tau_0,+\infty)$, the following bounds hold:
    \begin{equation}\label{eq:high-L2-3}
        \begin{split}
            \mathcal{E}^{in}(\tau,\partial^3\phi)^{\frac{1}{2}}+\mathcal{E}_{con}^{in}(\tau,\partial^3\phi)^{\frac{1}{2}}&\lesssim C_3\epsilon^{-3}+C_4^{p^2}\epsilon^{-\frac{17}{8}},\\
            \mathcal{E}^{in}(\tau,L_a\partial^2\phi)^{\frac{1}{2}}+\mathcal{E}_{con}^{in}(\tau,L_a\partial^2\phi)^{\frac{1}{2}}&\lesssim C_2\epsilon^{-3}+C_4^{p^2}\epsilon^{-\frac{17}{8}}.
        \end{split}
    \end{equation}
\end{proposition}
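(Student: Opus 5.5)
We follow the template of Proposition~\ref{prop:high-L2-1} one derivative higher. We commute \eqref{eq:model_short-pulse} with $\partial^3$ (respectively $L_a\partial^2$) to obtain $-\Box\partial^3\phi=\pm\partial^3\mathcal N(\psi,\phi)$. We then apply the interior energy inequalities \eqref{est:energy_in1}--\eqref{est:energy_in2}. Three types of contribution have to be bounded.

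\emph{Initial terms on $\Sigma_{t_1}\cap\mathcal D^{in}$.} By Proposition~\ref{prop:estimates_local1}, $\|\partial^4\phi\|\lesssim C_2\epsilon^{-3}$. Likewise $\|\partial L_a\partial^2\phi\|\lesssim C_2\epsilon^{-3}$, since $L_a=t\partial_a+x_a\partial_t$ with $t,r\sim1$ there.

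\emph{Boundary flux on $\mathcal B_{4\epsilon}$.} For $\partial^3\phi$ we use the exterior bound \eqref{eq:E-ex-partialOmega2}, which gives $\big(\int_{\mathcal B_{4\epsilon}}|G\partial^3\phi|^2\big)^{1/2}\lesssim C_3\epsilon^{-3}$. For the conformal flux we use \eqref{eq:E-con-pOphi} with $|I|=3$, at the cost of at most a factor $\epsilon^{-1}$, which is still within $C_3\epsilon^{-3}$. For $L_a\partial^2\phi$ we use the last estimate in \eqref{eq:E-con-Lpphi} with $|I|=2$, which gives $C_2\epsilon^{-2}+C_3$.

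\emph{Spacetime integral.} We expand $\partial^3\mathcal N$ by Leibniz and group terms as in the proof of Proposition~\ref{prop:high-L2-2}, according to how many factors $\partial^{k}\phi$ with $k\ge1$ appear.
\begin{itemize}
\item[(i)] \emph{Terms with one differentiated $\phi$ factor}, such as $|\phi|^{p-1}\partial^3\phi$, $|\psi|^{p-1}\partial^3\phi$ and $|\phi|^{p-1}\partial^3\psi$. We put the $\phi$-factor in weighted $L^2$ using $\|\frac{\tau}{t+1}\partial^3\phi\|\lesssim\mathcal E^{in}(\tau,\partial^2\phi)^{1/2}$ from Proposition~\ref{prop:high-L2-1}. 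The rest goes in $L^\infty$ via Lemma~\ref{lem:phi-pt} and Lemma~\ref{lem:disp-psi}. The term $|\psi|^{p-1}\partial^3\phi$ gives decay $\tau^{-\frac p2+\frac12+(p-1)\delta}$, which is integrable because $p>5$.
\item[(ii)] \emph{Terms with two or three differentiated $\phi$ factors}, such as $|\psi|^{p-2}\partial\phi\,\partial^2\phi$ and $|\phi|^{p-3}(\partial\phi)^3$. We take the lowest-order factor in $L^\infty$ from Lemma~\ref{lem:der-pt}, or, for interior points, use $L^4$/$L^6$ via Lemmas~\ref{lem:Sobo_embedding}--\ref{lem:inter_inequ}. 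The remaining factors go in weighted $L^2$ via Propositions~\ref{prop:high-L2-1} and \ref{prop:L2estimates_in}.
\item[(iii)] \emph{Terms involving $\partial^k\psi$.} These are handled with \eqref{eq:disp-d-psi01}--\eqref{eq:disp-dd-psi01}, or in $L^2$ via \eqref{eq:BA-disp03}.
\end{itemize}
Bookkeeping the $\epsilon$ powers should give at worst $\epsilon^{-17/8}$ times $C_4^{p^2}$. For example, $\partial\phi\in L^4$ costs $\epsilon^{1/8}$-type factors, while $\mathcal E^{in}(\partial^2\phi)^{1/2}\sim\epsilon^{-2}$ only enters linearly through the Gronwall-type structure. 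The top-order $\partial^3\phi$ factor with a $|\phi|^{p-1}$ or $|\psi|^{p-1}$ coefficient is absorbed by Lemma~\ref{lem:Gronwall}, exactly as in Lemma~\ref{lem:Omega2-energy}.

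\emph{Main obstacle.} The key difficulty is the conformal energy, whose integrand carries an extra $\tilde\tau$. In the region $r\ge t/8$, Lemma~\ref{lem:der-pt} yields for $\partial\phi$ only a decay rate that depends on $p$. Mixed terms like $\tilde\tau|\psi|^{p-2}\partial\phi\,\partial^2\phi$ must therefore be placed so as to retain integrability. Concretely, we use the $L^\infty$ bound on $\psi$ (rate $t^{-\frac12+\delta}$) together with $L^2$ norms of both $\phi$-factors, the latter via $L^4$ interpolation. This is the only place $\partial\phi$ is needed, and it avoids the pointwise rate of $\partial\phi$ on $r\ge t/8$. We expect this to give decay $\tilde\tau^{1-\frac{p-2}{4}+}$ at worst, which is integrable for $p>5$. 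Finally, the $L_a\partial^2\phi$ estimates follow identically, with one $\partial$ replaced by $L_a$ throughout. The commutators $[L_a,\partial]$ produce only $\partial^3\phi$-type terms, which are already controlled.
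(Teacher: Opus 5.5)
Your overall scheme is the paper's: commute, apply the interior energy inequalities, bound the initial data by Proposition~\ref{prop:estimates_local1}, bound the boundary fluxes by \eqref{eq:E-ex-partialOmega2} and Proposition~\ref{prop:energy_ex}, and group the source as in Proposition~\ref{prop:high-L2-2}. The gap is in the step you single out as the main obstacle, the conformal contribution $\tilde\tau|\psi|^{p-2}\partial\phi\,\partial^2\phi$.

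The rate you announce, $\tilde\tau^{1-\frac{p-2}{4}}$, is integrable only for $p>10$. The placement you describe does not even give that rate. You put both $\phi$-factors in weighted $L^4$, which the energies bound but with no $\tilde\tau$-decay, and you use only $|\psi|\lesssim C_1t^{-\frac12+\delta}$. The coefficient left in $L^\infty$ is then
\[
\tilde\tau\,\frac{(t+1)^2}{\tilde\tau^2}\,|\psi|^{p-2}\lesssim \tilde\tau^{-1}\,t^{3-\frac p2+(p-2)\delta}.
\]
On $\mathcal H_{\tilde\tau}$ near $\mathcal B_{4\epsilon}$ one has $t\sim\tilde\tau^2$, so this is of size $\tilde\tau^{5-p+}$, which is not integrable for $5<p\le 6$. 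The conformal estimate therefore does not close along your route.

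The worry that led you there is unfounded: the $p$-dependent rate in Lemma~\ref{lem:der-pt} is harmless. The paper puts $\partial\phi$ in $L^\infty$ via Lemma~\ref{lem:der-pt} and $\partial^2\phi$ in weighted $L^2$, bounded by $C_4$ through \eqref{est:L2_in2}. The relevant coefficient is then $(t+1)|\psi|^{p-2}|\partial\phi|$. Up to powers of $C_4$ and $C_1$, it is bounded by $\epsilon^{-1}\tilde\tau^{\frac{13-3p}{4}}\,t^{\frac{3-p}{2}+}\lesssim \epsilon^{-1}\tilde\tau^{\frac{19-5p}{4}+}$ on $\{r\ge t/8\}$ when $5<p<\frac{17}{3}$. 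Here the decreasing factor $t^{\frac{3-p}{2}}$ is evaluated at $t\sim\tilde\tau$. In the remaining cases it is bounded by $\epsilon^{-11/8}\tilde\tau^{\frac{1-p}{2}+}$, up to a logarithm at $p=\frac{17}{3}$. Both bounds are integrable for every $p>5$.

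Alternatively, you can keep your $L^4\times L^4$ placement, provided one factor of $\psi$ uses the Klainerman--Sobolev bound $|\psi|\lesssim C_1\epsilon^{-1}\langle t+r\rangle^{-1}\langle t-r\rangle^{-\frac12}$ from \eqref{eq:disp-0001}, together with $\langle t-r\rangle\gtrsim\tilde\tau^2/t$ on $\mathcal H_{\tilde\tau}$. This is exactly how the paper treats $|\psi|^{p-2}\partial\phi\,\partial\phi$ in Proposition~\ref{prop:high-L2-1}. It improves the coefficient to $\epsilon^{-1}\tilde\tau^{4-p+}$, which is integrable for $p>5$. The extra $\epsilon^{-1}$ is absorbed into the final bound once $\epsilon$ is small relative to the constants. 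The $\langle t-r\rangle^{-1/2}$ decay of $\psi$ near the cone is precisely what your version discards, and it is what produces integrability.
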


\begin{proof}
    Commuting $\Gamma^I=\partial^3\ (\mbox{or}\ L_a\partial^2)$ with \eqref{eq:model_short-pulse} and applying the interior natural energy inequality \eqref{est:energy_in1} produces
  \begin{equation*}
          \mathcal E^{in}(\tau,\Gamma^I\phi)^{\frac12}
          \lesssim\;
          \|\partial\Gamma^I\phi\|_{L^2(\Sigma_{t_1}\cap\mathcal D^{in})}
          +\Big(\int_{\mathcal B_{4\epsilon}}|G\Gamma^I\phi|^2\,dx\Big)^{\frac12}
          +\int_{\tau_0}^{\tau}\|\Gamma^I\mathcal N(\psi,\phi)\|_{L^2(\mathcal H_{\tilde\tau})}\,d\tilde\tau.
    \end{equation*}
Similarly the conformal estimate becomes
\begin{equation*}
\begin{aligned}
    \mathcal E^{in}_{con}(\tau,\Gamma^I\phi)^{\frac12}
&\lesssim
\|\partial\Gamma^I\phi\|_{L^2(\Sigma_{t_1}\cap\mathcal D^{in})}
+\|\Gamma^I\phi\|_{L^2(\Sigma_{t_1}\cap\mathcal D^{in})}\\
&+\left(\int_{\mathcal{B}_{4\epsilon}}\Big[(t+r+1)L\Gamma^I\phi + 2\Gamma^I\phi\Big]^2+\Big|\frac{1}{r}\Omega\Gamma^I\phi\Big|^{2}\, dx\right)^{\frac{1}{2}}
+\int_{\tau_0}^{\tau}\|\tilde\tau\,\Gamma^I\mathcal N(\psi,\phi)\|_{L^2(\mathcal H_{\tilde\tau})}\,d\tilde\tau.
\end{aligned}
\end{equation*}
In both inequalities, the initial and boundary terms are controlled exactly as above, using Propositions~\ref{prop:estimates_local1},~\ref{prop:energy_ex} and~\eqref{eq:E-ex-partialOmega2}.
The estimate for $\partial^3\mathcal N(\psi,\phi)$ is proved in the same way as for $\partial\Omega^2\mathcal N(\psi,\phi)$
in Proposition~\ref{prop:high-L2-2}, with the two $\Omega$--derivatives replaced by two $\partial$--derivatives.
The only difference is that, in the product estimates for $\partial^3\mathcal N(\psi,\phi)$, we may place one factor
$\partial\phi$ in $L^\infty$ (see \eqref{eq:der-pt}), and estimate the remaining
factors in $L^2$. The bound for $L_a\partial^2\mathcal N(\psi,\phi)$ is estimated similarly.
\end{proof}

\begin{proposition}[$L^2$ bounds for higher derivatives IV]\label{prop:high-L2-4}
For all $\tau\in[\tau_0,+\infty)$, the following bounds hold:
    \begin{equation}\label{eq:high-L2-4}
        \begin{aligned}
            \mathcal{E}^{in}(\tau,\Omega^2\partial^2\phi)^{\frac{1}{2}}&\lesssim C_4^{\frac{7}{4}p^2}\epsilon^{-\frac{41}{8}}+C_4^{p^2}\epsilon^{\frac{3}{2}-p},\\
            \mathcal{E}_{con}^{in}(\tau,\Omega^2\partial^2\phi)^{\frac{1}{2}}&\lesssim 
            \left\{
	\begin{array}{ll}
	C_4^{\frac{7}{4}p^2}\epsilon^{-\frac{41}{8}}+C_4^{p^2}\epsilon^{\frac{3}{2}-p}, & p>\frac{17}{3}, \\
	C_4^{\frac{7}{4}p^2}\epsilon^{-\frac{41}{8}}\log\tau,& p=\frac{17}{3}, \\
    C_4^{\frac{7}{4}p^2}\epsilon^{-\frac{41}{8}}\tau^{\frac{17-3p}{2}}, & p<\frac{17}{3}.
	\end{array}
\right.
        \end{aligned}
    \end{equation}
\end{proposition}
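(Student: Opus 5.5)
We follow the template of Propositions~\ref{prop:high-L2-2} and~\ref{prop:high-L2-3}. We commute $\Gamma^I=\Omega^2\partial^2$ with \eqref{eq:model_short-pulse}, so that $-\Box\Omega^2\partial^2\phi=\pm\Omega^2\partial^2\mathcal N(\psi,\phi)$, and apply the interior energy inequalities \eqref{est:energy_in1}--\eqref{est:energy_in2} of Lemma~\ref{lem:energy_in}. Three kinds of contribution then have to be controlled: the initial data, the boundary flux, and the spacetime source.

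\textbf{Initial and boundary terms.} The initial terms on $\Sigma_{t_1}\cap\mathcal D^{in}$ are bounded by Proposition~\ref{prop:estimates_local1}. This gives $\|\partial\Omega^2\partial^2\phi\|\lesssim C_2\epsilon^{-2}$ and $\|\Omega^2\partial^2\phi\|\lesssim C_2\epsilon^{-1}$, since $|I|+|J|=4\le 5$. The flux term $\int_{\mathcal B_{4\epsilon}}|G\Omega^2\partial^2\phi|^2$ comes from \eqref{eq:E-ex-partialOmega2} and is $\lesssim C_3^2\epsilon^{-4}$. The conformal boundary terms come from the case $|I|+|J|=4$, $|I|\ge1$ of \eqref{eq:E-con-pOphi} with $\tilde u=4\epsilon$, giving $\lesssim C_3\epsilon^{-1}$. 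The term $r^{-1}\Omega^3\partial^2\phi$ needs $|I|+|J|=5$ with $|I|=2$, and this is exactly the third case of \eqref{eq:E-con-pOphi}; its weight $\tilde u^2=16\epsilon^2$ costs a factor $\epsilon^{-1}$. All of these boundary contributions are dominated by the right-hand side of \eqref{eq:high-L2-4}.

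\textbf{Source term.} We expand $\Omega^2\partial^2\mathcal N$ by the Leibniz rule into the four groups $\mathcal G_1$--$\mathcal G_4$ of Proposition~\ref{prop:high-L2-2}, according to how many differentiated $\phi$-factors occur. The total order is now four, so up to four differentiated factors appear; the extra products with four such factors are placed with $|\phi|^{p-4}$ or $|\psi|^{p-4}$ in $L^\infty$. We use the following rules.
\begin{itemize}
\item The top-order factor $\frac{\tau}{t+1}\Gamma^{I_1}\phi$ with $|I_1|=4$ is placed in $L^2$, where it is the energy being estimated. This yields a linear Gronwall term of the form $\tau^{-\frac p2+\frac12+(p-1)\delta}\,\mathcal E^{in}(\tau,\Omega^2\partial^2\phi)^{\frac12}$, coming from $|\psi|^{p-1}$ and $|\phi|^{p-1}$ in $L^\infty$ through Lemma~\ref{lem:phi-pt} and \eqref{eq:disp-psi01}.
\item Factors of order three ($\partial^3\phi$, $\partial\Omega^2\phi$, $\Omega\partial^2\phi$) go in $L^2$. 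We use Propositions~\ref{prop:high-L2-2} and~\ref{prop:high-L2-3}, together with the bound for $\Omega\partial^2\phi$ obtained from $[\partial,\Omega]$ and $\mathcal E^{in}(\tau,L_a\partial\phi)$ via \eqref{eq:VF-identities}.
\item Second-order factors go in $L^4$ or $L^6$ via Lemmas~\ref{lem:Sobo_embedding}--\ref{lem:inter_inequ} and Propositions~\ref{prop:high-L2-1},~\ref{prop:high-L2-2}.
\item $\partial\phi$ is placed in $L^\infty$ using Lemma~\ref{lem:der-pt}.
\item $\psi$-derivatives use Lemma~\ref{lem:disp-psi} and \eqref{est:Omegad}. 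When four derivatives hit $\psi$, we put $\Omega^2\partial^2\psi$ in $L^2$ through \eqref{eq:BA-disp03}, which gives $\|\partial\Gamma^{\le4}\psi\|\lesssim\epsilon$, while $\phi$ and its powers are in $L^\infty$.
\end{itemize}
The worst $\epsilon$-powers arise from products like $|\psi|^{p-2}\,\partial^2\phi\,\Omega^2\phi$ and $|\phi|^{p-2}(\partial\Omega\phi)^2$. Tallying the constants, these produce $C_4^{\frac74p^2}\epsilon^{-\frac{41}{8}}$. The contribution $|\phi|^{p-1}\Omega^2\partial^2\psi$, together with the crude $|\psi|\lesssim\epsilon^{-1}$ bound, gives the $C_4^{p^2}\epsilon^{\frac32-p}$ term.

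\textbf{Conclusion and main obstacle.} Gronwall's inequality, Lemma~\ref{lem:Gronwall}, then gives the natural energy bound, since every $\tau$-power is integrable for $p>5$. For the conformal energy the source carries an extra factor $\tilde\tau$. The slowest-decaying terms are those inheriting the conformal bound of $\partial\Omega^2\phi$ from Proposition~\ref{prop:high-L2-2}, which decays like $\tau^{\frac{17-3p}{2}}$ when $p<\frac{17}3$. This factor is what produces the trichotomy $p\gtrless\frac{17}{3}$, with the borderline case $p=\frac{17}3$ integrating to $\log\tau$.

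The main obstacle is the bookkeeping in the mixed terms with two second-order $\phi$ factors on $\Hcalf$, such as $\Omega\partial\phi\cdot\Omega\partial\phi$. There we must interpolate in $L^4$ using $H^1$ norms of third-order quantities, whose bounds carry large negative $\epsilon$-powers and $\tau$-growth when $p<\frac{17}3$. The difficulty is to check that the decay of $|\phi|^{p-2}$ or $|\psi|^{p-2}$ in $L^\infty$ still leaves a power of $\tau$ that is integrable, or at least of the claimed growth.
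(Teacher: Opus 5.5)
Your overall architecture matches the paper: commute $\Omega^2\partial^2$, apply \eqref{est:energy_in1}--\eqref{est:energy_in2}, take the initial data from Proposition~\ref{prop:estimates_local1} and the fluxes from the exterior bounds, then expand $\Omega^2\partial^2\mathcal N$ by Leibniz and distribute the factors among $L^2$, $L^4$, $L^6$ and $L^\infty$. The gap is in the conformal part: your account of the trichotomy is wrong, and it leaves the real constraint unchecked.

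\emph{Where the threshold does not come from.} The threshold $p=\frac{17}{3}$ is not inherited from the conformal bound of $\partial\Omega^2\phi$ in Proposition~\ref{prop:high-L2-2}. Also, $\tau^{\frac{17-3p}{2}}$ grows rather than decays for $p<\frac{17}{3}$. In your scheme that bound enters only through the far-region estimate of Lemma~\ref{lem:der-pt}, as a factor $\tau^{\frac{17-3p}{4}}$ on terms that are integrable anyway. In fact no inherited factor can produce the stated trichotomy. For $p>\frac{17}{3}$ it is bounded, so convergence of $\int^\tau\|\tilde\tau\,\Omega^2\partial^2\mathcal N\|_{L^2(\mathcal H_{\tilde\tau})}\,d\tilde\tau$ is decided by the source's own decay. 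And an inherited factor that yields exactly $\tau^{\frac{17-3p}{2}}$ would force a $\log\tau$ divergence for $p>\frac{17}{3}$.

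\emph{Where it does come from.} In the paper the threshold is generated afresh, exactly as in Proposition~\ref{prop:high-L2-2}. The slowest-decaying source terms are the cubic ones, $|\phi|^{p-3}\Gamma^{I_1}\phi\,\Gamma^{I_2}\phi\,\Gamma^{I_3}\phi$ and $|\psi|^{p-3}\Gamma^{I_1}\phi\,\Gamma^{I_2}\phi\,\Gamma^{I_3}\phi$. There the three factors go in $L^6$ via $H^1$, and the remaining power goes in $L^\infty$ with weight $(t+1)^3/\tau^3$. These terms are of size $\tau^{-\frac32p+\frac{13}{2}}$. The extra $\tilde\tau$ turns this into $\tilde\tau^{-\frac32p+\frac{15}{2}}$, which is integrable iff $p>\frac{17}{3}$, gives $\log\tau$ at equality, and gives $\tau^{\frac{17-3p}{2}}$ below. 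You need to estimate these cubic terms explicitly, since that is where the conformal bound is decided.

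\textbf{Smaller corrections.}
\begin{enumerate}
\item The $\psi$-cubic term, with $|\psi|$ bounded by the crude \eqref{eq:disp-0001}, is what produces $C_4^{p^2}\epsilon^{\frac32-p}$. The term $|\phi|^{p-1}\Omega^2\partial^2\psi$ you cite has no undifferentiated $\psi$, so it cannot give an $\epsilon$-power that worsens with $p$.
\item The factor $\frac{\tau}{t+1}\Omega^2\partial^2\phi$ is not the energy being estimated, since $\mathcal E^{in}(\tau,\Omega^2\partial^2\phi)$ controls five derivatives. Up to commutators it is bounded by $\mathcal E^{in}(\tau,\partial\Omega^2\phi)^{1/2}$ from Proposition~\ref{prop:high-L2-2}, so the paper needs no Gronwall step there.
\item The bound \eqref{eq:BA-disp03} holds on constant-$t$ slices and gives no control on $\mathcal H_\tau$. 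Put $\Omega^2\partial^2\psi$ in $L^\infty$ via \eqref{eq:disp-dd-psi01} instead.
\item The flux of $r^{-1}\Omega^3\partial^2\phi$ comes from the $\tilde u^2$-weighted piece of the order-four case of \eqref{eq:E-con-pOphi}, or from $|r^{-1}\Omega\varphi|\lesssim|G\varphi|$ together with \eqref{eq:E-ex-partialOmega2}. It does not come from the order-five case, which controls one more rotation.
\end{enumerate}
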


\begin{proof}
    $\bullet$ We start with proving the natural energy estimate. Commuting $\Omega^2\partial^2$ with \eqref{eq:model_short-pulse} and applying the interior natural energy inequality \eqref{est:energy_in1} to this commuted equation gives 
  \begin{equation}\label{eq:partial2OMega2-energy-raw}
      \begin{aligned}
          \mathcal E^{in}(\tau,\Omega^2\partial^2\phi)^{\frac12}
          \lesssim\;&
          \|\partial\Omega^2\partial^2\phi\|_{L^2(\Sigma_{t_1}\cap\mathcal D^{in})}
          +\Big(\int_{\mathcal B_{4\epsilon}}|G\Omega^2\partial^2\phi|^2\,dx\Big)^{\frac12}
          +\int_{\tau_0}^{\tau}\|\Omega^2\partial^2\mathcal N(\psi,\phi)\|_{L^2(\mathcal H_{\tilde\tau})}\,d\tilde\tau.
      \end{aligned}
    \end{equation}
   We first estimate the initial data term using  Proposition~\ref{prop:estimates_local1} to find
    \begin{equation*}
        \|\partial\Omega^2\partial^2\phi\|_{L^2(\Sigma_{t_1}\cap\mathcal D^{in})}\lesssim C_2\epsilon^{-2}.
    \end{equation*}
 Next, the boundary flux term is controlled using the exterior bound~\eqref{eq:E-ex-partialOmega2}, giving 
    \begin{equation*}
        \Big(\int_{\mathcal B_{4\epsilon}}|G\Omega^2\partial^2\phi|^2\,dx\Big)^{\frac12}\lesssim C_3\epsilon^{-2}.
    \end{equation*}
    
Finally we turn to the spacetime term. 
We estimate $\|\Gamma^I\mathcal N(\psi,\phi)\|_{L^2(\mathcal H_\tau)}$, where  $\Gamma^I=\Omega^2\partial^2$,  by splitting it into two  groups according to the number of differentiated $\phi$ factors, namely factors of the form $\Gamma^{I_k}\phi$ with $|I_k|\ge 1$: 
$\mathcal{G}_1$ contains at most one such factor, whereas $\mathcal{G}_2$ contains at least two.
\begin{equation*}
    \begin{aligned}
        \mathcal{G}_1&:\{|\phi|^{p-1}\Gamma^I\phi,\ |\phi|^{p-1}\Gamma^I\psi,\ |\psi|^{p-1}\Gamma^I\phi,\ \phi|\psi|^{p-2}\Gamma^I\psi,
        \ |\phi|^{p-2}\Gamma^{I_1}\phi\Gamma^{I_2}\psi,\ |\phi|^{p-2}\Gamma^{I_1}\psi\Gamma^{I_2}\psi,\
        \\&\qquad|\psi|^{p-2}\Gamma^{I_1}\phi\Gamma^{I_2}\psi,\ 
        \phi|\psi|^{p-3}\Gamma^{I_1}\psi\Gamma^{I_2}\psi,\
        |\phi|^{p-3}\Gamma^{I_1}\phi\Gamma^{I_2}\psi\Gamma^{I_3}\psi,\ |\phi|^{p-3}\Gamma^{I_1}\psi\Gamma^{I_2}\psi\Gamma^{I_3}\psi,\ 
        \\&\qquad|\psi|^{p-3}\Gamma^{I_1}\phi\Gamma^{I_2}\psi\Gamma^{I_3}\psi,\ 
        \phi|\psi|^{p-4}\Gamma^{I_1}\psi\Gamma^{I_2}\psi\Gamma^{I_3}\psi,\ |\phi|^{p-4}\Gamma^{I_1}\phi\Gamma^{I_2}\psi\Gamma^{I_3}\psi\Gamma^{I_4}\psi,\ 
        \\&\qquad|\phi|^{p-4}\Gamma^{I_1}\psi\Gamma^{I_2}\psi\Gamma^{I_3}\psi\Gamma^{I_4}\psi,\ |\psi|^{p-4}\Gamma^{I_1}\phi\Gamma^{I_2}\psi\Gamma^{I_3}\psi\Gamma^{I_4}\psi,\ \phi|\psi|^{p-5}\Gamma^{I_1}\psi\Gamma^{I_2}\psi\Gamma^{I_3}\psi\Gamma^{I_4}\psi \},\\
        \mathcal{G}_2&:\{|\phi|^{p-2}\Gamma^{I_1}\phi\Gamma^{I_2}\phi,\ |\psi|^{p-2}\Gamma^{I_1}\phi\Gamma^{I_2}\phi,\ |\phi|^{p-3}\Gamma^{I_1}\phi\Gamma^{I_2}\phi\Gamma^{I_3}\phi,\ |\psi|^{p-3}\Gamma^{I_1}\phi\Gamma^{I_2}\phi\Gamma^{I_3}\phi,\ 
        |\phi|^{p-3}\Gamma^{I_1}\phi\Gamma^{I_2}\phi\Gamma^{I_3}\psi,\
        \\&\qquad|\psi|^{p-3}\Gamma^{I_1}\phi\Gamma^{I_2}\phi\Gamma^{I_3}\psi,\ 
        |\phi|^{p-4}\Gamma^{I_1}\phi\Gamma^{I_2}\phi\Gamma^{I_3}\phi\Gamma^{I_4}\phi,\ |\phi|^{p-4}\Gamma^{I_1}\phi\Gamma^{I_2}\phi\Gamma^{I_3}\phi\Gamma^{I_4}\psi,\ 
        |\phi|^{p-4}\Gamma^{I_1}\phi\Gamma^{I_2}\phi\Gamma^{I_3}\psi\Gamma^{I_4}\psi,\ 
        \\&\qquad|\psi|^{p-4}\Gamma^{I_1}\phi\Gamma^{I_2}\phi\Gamma^{I_3}\phi\Gamma^{I_4}\phi,\ |\psi|^{p-4}\Gamma^{I_1}\phi\Gamma^{I_2}\phi\Gamma^{I_3}\phi\Gamma^{I_4}\psi,\ |\psi|^{p-4}\Gamma^{I_1}\phi\Gamma^{I_2}\phi\Gamma^{I_3}\psi\Gamma^{I_4}\psi\},\\
    \end{aligned}
\end{equation*}
    where in each term $\sum_k{I_k}=I$ and $|I_k|\geq1$.
    Throughout we use the $L^2$ bounds in Propositions~\ref{prop:L2estimates_in},~\ref{prop:high-L2-1}, Lemma~\ref{lem:Omega2-energy}, the interior pointwise bounds in Lemmas~\ref{lem:phi-pt},~\ref{lem:der-pt} and the decay estimates in Lemma~\ref{lem:disp-psi}.

    \emph{Estimates for group $\mathcal{G}_1$.} Start by considering  $|\phi|^{p-1}\Omega^2\partial^2\phi$. Using Proposition~\ref{prop:high-L2-2} and Lemma \ref{lem:phi-pt}, we have
    \begin{equation*}
    \begin{aligned}
        \big\||\phi|^{p-1}\Omega^2\partial^2\phi\big\|_{L^2(\mathcal H_\tau)}
        &\lesssim\big\|\frac{\tau}{t+1}\Omega^2\partial^2\phi\big\|_{L^2(\mathcal H_\tau)}\big\|\frac{t+1}{\tau}|\phi|^{p-1}\big\|_{L^\infty(\mathcal H_\tau)}\\
        &\lesssim C_4^{\frac{3p^2-1}{2}}\epsilon^{-2+\frac{p-1}{8}}\tau^{-\frac{3}{2}p+\frac{3}{2}}.
    \end{aligned}
    \end{equation*}
    For $|\psi|^{p-1}\Omega^2\partial^2\phi$, Lemma~\ref{lem:Omega2-energy}
    and~\eqref{eq:disp-psi01} give
    \begin{equation*}
    \begin{aligned}
        \big\||\psi|^{p-1}\Omega^2\partial^2\phi\big\|_{L^2(\mathcal H_\tau)}
        &\lesssim\big\|\frac{\tau}{t+1}\Omega^2\partial^2\phi\big\|_{L^2(\mathcal H_\tau)}\big\|\frac{t+1}{\tau}|\psi|^{p-1}\big\|_{L^\infty(\mathcal H_\tau)}\\
        &\lesssim C_4^{p^2+p-1}\epsilon^{-2}\tau^{-\frac{p}{2}+\frac{1}{2}+(p-1)\delta}.
    \end{aligned}
    \end{equation*}

    Next consider $|\phi|^{p-2}\Gamma^{I_1}\phi\Gamma^{I_2}\psi$. Using the H\"{o}lder inequality, we obtain
    \begin{equation*}
    \begin{aligned}
        \big\||\phi|^{p-2}\Gamma^{I_1}\phi\Gamma^{I_2}\psi\big\|_{L^2(\mathcal H_\tau)}
        &\lesssim\big\|\frac{\tau}{t+1}\Gamma^{I_1}\phi\big\|_{L^2(\mathcal H_\tau)}\big\|\Gamma^{I_2}\psi\big\|_{L^\infty(\mathcal H_\tau)}\big\|\frac{t+1}{\tau}|\phi|^{p-2}\big\|_{L^\infty(\mathcal H_\tau)}.
    \end{aligned}
    \end{equation*}
    Using Proposition~\ref{prop:L2estimates_in}, Lemmas~\ref{lem:disp-psi} and~\ref{lem:phi-pt}, we derive
    \begin{equation*}
        \big\||\phi|^{p-2}\Gamma^{I_1}\phi\Gamma^{I_2}\psi\big\|_{L^2(\mathcal H_\tau)}\lesssim C_4^{p^2}\epsilon^{-\frac{13}{8}}\tau^{-\frac{3}{2}p+\frac{5}{2}+\delta}.
    \end{equation*}
    The remaining terms in~$\mathcal{G}_1$ are treated in the same way.

\noindent
    \emph{Estimates for group $\mathcal{G}_2$.} Consider first the term $|\phi|^{p-2}\Gamma^{I_1}\phi\Gamma^{I_2}\phi$. Using the H\"{o}lder inequality, we obtain
    \begin{equation*}
    \begin{aligned}
        \big\||\phi|^{p-2}\Gamma^{I_1}\phi\Gamma^{I_2}\phi\big\|_{L^2(\mathcal H_\tau)}
        &\lesssim\big\|\frac{\tau}{t+1}\Gamma^{I_1}\phi\big\|_{L^4(\mathcal H_\tau)}\big\|\frac{\tau}{t+1}\Gamma^{I_2}\phi\big\|_{L^4(\mathcal H_\tau)}\big\|\frac{(t+1)^2}{\tau^2}|\phi|^{p-2}\big\|_{L^\infty(\mathcal H_\tau)}.
    \end{aligned}
\end{equation*}
Employing Lemmas~\ref{lem:Sobo_embedding} and~\ref{lem:inter_inequ}, we obtain
\begin{equation*}
    \begin{aligned}
        \big\|\frac{\tau}{t+1}\Gamma^{I_1}\phi\big\|_{L^4(\mathcal H_\tau)}&\lesssim\big\|\frac{\tau}{t+1}\Gamma^{I_1}\phi\big\|^{\frac{1}{4}}_{L^2(\mathcal H_\tau)}\big\|\frac{\tau}{t+1}\Gamma^{I_1}\phi\big\|^{\frac{3}{4}}_{H^1(\mathcal H_\tau)},
    \end{aligned}
\end{equation*}
together with Proposition~\ref{prop:high-L2-1}, Lemmas~\ref{lem:Omega2-energy}-\ref{lem:phi-pt}, yielding
\begin{equation*}
    \big\||\phi|^{p-2}\Gamma^{I_1}\phi\Gamma^{I_2}\phi\big\|_{L^2(\mathcal H_\tau)}\lesssim C_4^{\frac{7}{4}p^2}\epsilon^{-\frac{33}{8}}\tau^{-\frac{3}{2}p+\frac{7}{2}}.
\end{equation*}
Similarly, we bound $|\psi|^{p-2}\Gamma^{I_1}\phi\Gamma^{I_2}\phi$ by
\begin{equation*}
    \begin{aligned}
        \big\||\psi|^{p-2}\Gamma^{I_1}\phi\Gamma^{I_2}\phi\big\|_{L^2(\mathcal H_\tau)}
        &\lesssim C_4^{\frac{3}{2}p^2}\epsilon^{-\frac{41}{8}}\tau^{-\frac{p}{2}+\frac{1}{2}+(p-3)\delta}.
    \end{aligned}
\end{equation*}

    Finally consider $|\phi|^{p-3}\Gamma^{I_1}\phi\Gamma^{I_2}\phi\Gamma^{I_3}\phi$. Using the H\"{o}lder inequality, we find
    \begin{equation*}
    \begin{aligned}
        &\big\||\phi|^{p-3}\Gamma^{I_1}\phi\Gamma^{I_2}\phi\Gamma^{I_3}\phi\big\|_{L^2(\mathcal H_\tau)}\\
        &\lesssim\big\|\frac{\tau}{t+1}\Gamma^{I_1}\phi\big\|_{L^6(\mathcal H_\tau)}\big\|\frac{\tau}{t+1}\Gamma^{I_2}\phi\big\|_{L^6(\mathcal H_\tau)}\big\|\frac{\tau}{t+1}\Gamma^{I_3}\phi\big\|_{L^6(\mathcal H_\tau)}\big\|\frac{(t+1)^3}{\tau^3}|\phi|^{p-3}\big\|_{L^\infty(\mathcal H_\tau)}.
    \end{aligned}
    \end{equation*}
    Employing Lemma~\ref{lem:Sobo_embedding}, we obtain
    \begin{equation*}
        \begin{aligned}
            \big\|\frac{\tau}{t+1}\Gamma^{I_1}\phi\big\|_{L^6(\mathcal H_\tau)}&\lesssim\big\|\frac{\tau}{t+1}\Gamma^{I_1}\phi\big\|_{H^1(\mathcal H_\tau)}.
        \end{aligned}
    \end{equation*}
    Applying Proposition~\ref{prop:L2estimates_in} and Lemma~\ref{lem:phi-pt}, we get
    \begin{equation*}
        \big\||\phi|^{p-3}\Gamma^{I_1}\phi\Gamma^{I_2}\phi\Gamma^{I_3}\phi\big\|_{L^2(\mathcal H_\tau)}\lesssim C_4^{p^2}\epsilon^{-\frac{5}{4}}\tau^{-\frac{3}{2}p+\frac{13}{2}}.
    \end{equation*}
    Similarly, for $|\psi|^{p-3}\Gamma^{I_1}\phi\Gamma^{I_2}\phi\Gamma^{I_3}\phi$, using~\eqref{eq:disp-0001}, we obtain
    \begin{equation*}
        \big\||\psi|^{p-3}\Gamma^{I_1}\phi\Gamma^{I_2}\phi\Gamma^{I_3}\phi\big\|_{L^2(\mathcal H_\tau)}\lesssim C_4^{p^2}\epsilon^{\frac{3}{2}-p}\tau^{-\frac{3}{2}p+\frac{13}{2}}.
    \end{equation*}
    The remaining terms in the second group are treated in a similar way.

   Inserting the above estimates into the energy inequality \eqref{eq:partial2OMega2-energy-raw}, we obtain the desired bound.

  $\bullet$ Finally we turn to the conformal estimate, which is obtained in a similar manner. Applying the conformal energy inequality \eqref{est:energy_in2} to the commuted equation gives
\begin{equation*}
\begin{aligned}
    \mathcal E^{in}_{con}(\tau,\Omega^2\partial^2\phi)^{\frac12}
&\lesssim
\|\partial\Omega^2\partial^2\phi\|_{L^2(\Sigma_{t_1}\cap\mathcal D^{in})}
+\|\Omega^2\partial^2\phi\|_{L^2(\Sigma_{t_1}\cap\mathcal D^{in})}\\
&\quad +\left(\int_{\mathcal{B}_{4\epsilon}}\Big[(t+r+1)L\Omega^2\partial^2\phi + 2\Omega^2\partial^2\phi\Big]^2+\Big|\frac{1}{r}\Omega\Omega
^2\partial^2\phi\Big|^{2}\, dx\right)^{\frac{1}{2}}\\
&\quad +\int_{\tau_0}^{\tau}\|\tilde\tau\,\Omega^2\partial^2\mathcal N(\psi,\phi)\|_{L^2(\mathcal H_{\tilde\tau})}\,d\tilde\tau.
\end{aligned}
\end{equation*}

The initial and boundary terms are controlled exactly as above, using
Propositions~\ref{prop:estimates_local1} and~\ref{prop:energy_ex}. The spacetime term is estimated in the same way, except that the conformal energy inequality introduces an additional factor $\tilde\tau$ in the source term. Once again, the integral is finite if and only if $p>\frac{17}{3}$. 
\end{proof}

\begin{lemma}[Pointwise bound for $\partial^2\phi$]\label{lem:dd-phi}
    For $(t,x)\in\mathcal{D}^{in}$, we have
    \begin{equation}\label{est:dd-phi}
        \begin{aligned}
            |\partial^2\phi(t,x)|&\lesssim C_4^{p^2}\epsilon^{-\frac{5}{2}}\tau^{-1}t^{-\frac{1}{2}},\qquad r\leq t/4,\\
            |\partial^2\phi(t,x)|&\lesssim\left\{\begin{aligned}
            C_4^{\frac{11}{8}p^2}\epsilon^{-\frac{57}{16}}\tau^{\frac{-3p+13}{4}}r^{-\frac{1}{2}},\qquad p<17/3,\\
            C_4^{\frac{11}{8}p^2}\epsilon^{-\frac{57}{16}}\tau^{-1}(\log\tau)^{\frac{1}{2}} r^{-\frac{1}{2}},\qquad p=17/3,\\
            (C_4^{\frac{11}{8}p^2}\epsilon^{-\frac{57}{16}}+C_4^{p^2}\epsilon^{-\frac{1}{4}-\frac{p}{2}})\tau^{-1}r^{-\frac{1}{2}},\qquad p>17/3,
     \end{aligned}\qquad r\geq t/8. \right.
        \end{aligned}
    \end{equation}
\end{lemma}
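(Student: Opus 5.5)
The plan is to repeat the argument of Lemma~\ref{lem:der-pt} one derivative higher, now applied to $\partial^2\phi$. The two sub-regions $\Hcali$ and $\Hcalf$ of $\mathcal H_\tau$ are handled separately, using the two Klainerman--Sobolev inequalities of Lemma~\ref{lem:KS_in}. The energy inputs are those of Propositions~\ref{prop:high-L2-1}, \ref{prop:high-L2-3} and \ref{prop:high-L2-4}.

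\emph{Region $r\le t/4$.} Apply \eqref{est:KS_in1} to $\varphi=\partial^2\phi$:
\[
|\partial^2\phi|\lesssim\tau^{-1}t^{-\frac12}\sum_{|I|\le2}\epsilon^{(|I|-\frac32)\lambda}\Big\|\tfrac{\tau}{t+1}Z^I\partial^2\phi\Big\|_{L^2(\mathcal H_\tau)}.
\]
Each term is controlled as follows.
\begin{enumerate}
\item For $|I|=0$, use $\mathcal E^{in}(\tau,\partial\phi)^{1/2}$ together with Proposition~\ref{prop:high-L2-1}, which gives a bound of order $\epsilon^{-2}$.
\item For $|I|=1$, use $\mathcal E^{in}(\tau,\partial^2\phi)$ and $\mathcal E^{in}(\tau,L_a\partial\phi)$, again from Proposition~\ref{prop:high-L2-1}, after commuting $L_a$ past $\partial$ with Lemma~\ref{lem:commutator2}. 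These are of size $\epsilon^{-2}$.
\item For $|I|=2$, use $\mathcal E^{in}(\tau,\partial^3\phi)$ and $\mathcal E^{in}(\tau,L_a\partial^2\phi)$ from Proposition~\ref{prop:high-L2-3}, which are of size $\epsilon^{-3}$. The term $L_aL_b\partial^2\phi$ is rewritten via $\tfrac{\tau}{t+1}L_a=\tfrac{t}{t+1}\tau\slashed\partial_a+\cdots$ and the conformal energy of $L_a\partial\phi$, $L_a\partial^2\phi$.
\end{enumerate}
Weighting these three contributions by $\epsilon^{-\frac32\lambda}$, $\epsilon^{-\frac12\lambda}$ and $\epsilon^{\frac12\lambda}$, the choice $\lambda=1$ balances them. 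The result is a bound of the form $C_4^{p^2}\epsilon^{-5/2}\tau^{-1}t^{-1/2}$. The precise exponent depends on how Propositions~\ref{prop:high-L2-1} and \ref{prop:high-L2-3} are combined; only a fixed negative power of $\epsilon$ is needed.

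\emph{Region $r\ge t/8$.} Apply \eqref{est:KS_in3} to $\varphi=\partial^2\phi$. The boundary value $|\partial^2\phi(t^*,x^*)|$ on $\mathcal B_{4\epsilon}$ is bounded by \eqref{est:L2_ex3} with $|I|=2$, giving $C_3\epsilon^{-3/2}r^{-1}$. This is harmless, since $r^{-1}\lesssim\tau^{-1}r^{-1/2}$ up to the $\tau\le t+1\lesssim r$ comparisons valid on $\Hcalf$. The bulk term is the geometric mean
\[
\tau^{-1}r^{-\frac12}\,\Big\|\tfrac{\tau}{t}\Omega^{\le2}\partial^2\phi\Big\|^{\frac12}\Big\|\tfrac{\tau}{t}\Omega^{\le2}Z\partial^2\phi\Big\|^{\frac12}.
\]
\begin{enumerate}
\item The first factor is bounded by the natural energy $\mathcal E^{in}(\tau,\partial\Omega^2\partial\phi)$. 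After commuting, this is $\mathcal E^{in}(\tau,\partial\Omega^2\phi)$ from Proposition~\ref{prop:high-L2-2}, of size $C_4^{p^2}\epsilon^{-2}$, plus lower-order pieces.
\item The second factor splits into two parts.
\begin{enumerate}
\item For $Z=\partial_t$, it is controlled by $\mathcal E^{in}(\tau,\Omega^2\partial^2\phi)$ from Proposition~\ref{prop:high-L2-4}.
\item For $Z=L_a$, write $\tfrac{\tau}{t}L_a\Omega^{\le2}\partial^2\phi$ via the identities above in terms of $\tau\slashed\partial_a$ and $\slashed\partial_\tau$. It is then controlled by the conformal energy $\mathcal E^{in}_{con}(\tau,\Omega^{\le2}\partial^2\phi)^{1/2}$, which is where the trichotomy in Proposition~\ref{prop:high-L2-4} enters.
\end{enumerate}
\end{enumerate}
Multiplying out, with the square root of $\tau^{\frac{17-3p}{2}}$ (respectively $\log\tau$) times $\tau^{-1}$, gives $\tau^{\frac{13-3p}{4}}$ (respectively $\tau^{-1}(\log\tau)^{1/2}$), matching the three cases. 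The $\epsilon$-powers combine as $(\epsilon^{-2})^{1/2}(\epsilon^{-41/8}+\epsilon^{3/2-p})^{1/2}$, which produces the two terms in the case $p>17/3$.

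The main obstacle is the commutator bookkeeping. One must check that every $Z\Omega^J\partial^2\phi$ with $|J|\le2$ is genuinely dominated by the energies already proved, that is, by $\Omega^2\partial^2$, $\partial\Omega^2$, $\partial^3$, $L_a\partial^2$ and lower orders. Commuting $\Omega$ past $\partial$ via Lemma~\ref{lem:commutator2} generates only $\partial\Omega^{\le1}$ terms of lower order, so this should reduce to the listed propositions with no new energy estimate required. If $L_a\Omega^2\partial^2\phi$ is not directly covered, the fallback is the $\mathcal E^{in}_{con}$ bound of Proposition~\ref{prop:high-L2-4} via the $\tau\slashed\partial_a$ component. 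This is exactly what produces the $p$-dependent growth.
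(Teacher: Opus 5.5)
Your route is the paper's: \eqref{est:KS_in1} with $\lambda=1$ on $\{r\le t/4\}$, and \eqref{est:KS_in3} on $\{r\ge t/8\}$. There the first factor is controlled by Proposition~\ref{prop:high-L2-2} and the second by the natural and conformal energies of $\Omega^2\partial^2\phi$ from Proposition~\ref{prop:high-L2-4}. Your exponent arithmetic in the outer region reproduces all three cases of \eqref{est:dd-phi} exactly.

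\textbf{The inner-region bookkeeping needs correcting.} The lemma asserts a specific power of $\epsilon$, so this matters. The $|I|=0$ term $\|\frac{\tau}{t+1}\partial^2\phi\|_{L^2(\mathcal H_\tau)}$ is $O(C_4)$ by \eqref{est:L2_in2}, which is the closed bootstrap bound on $\mathcal E^{in}(\tau,\partial\phi)$. It is not $O(\epsilon^{-2})$.
\begin{itemize}
\item With the sizes you state ($\epsilon^{-2},\epsilon^{-2},\epsilon^{-3}$ for $|I|=0,1,2$), $\lambda=1$ gives $\epsilon^{-3/2}\cdot\epsilon^{-2}=\epsilon^{-7/2}$ from the $|I|=0$ term. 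No choice of $\lambda\ge0$ does better than $\epsilon^{-11/4}$ (attained at $\lambda=\frac12$).
\item With the correct $O(C_4)$ input, $\lambda=1$ gives $\epsilon^{-3/2}+\epsilon^{-5/2}+\epsilon^{-5/2}$, which is the stated $\epsilon^{-5/2}$.
\end{itemize}
So the hedge that ``only a fixed negative power of $\epsilon$ is needed'' does not suffice for the lemma as stated.

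\textbf{Two smaller corrections in the outer region.}
\begin{itemize}
\item The first factor is controlled by $\mathcal E^{in}(\tau,\Omega^2\partial\phi)$, i.e.\ by $\mathcal E^{in}(\tau,\partial\Omega^2\phi)$ after commuting. It is not controlled by $\mathcal E^{in}(\tau,\partial\Omega^2\partial\phi)$, which is one derivative too many.
\item The boundary term is harmless, but not for the reason you give. The comparison $\tau\lesssim r$ only yields $r^{-1}\lesssim\tau^{-1/2}r^{-1/2}$. The real reason is that the term is evaluated at $r^*\sim\tau^2$, so using $r\le r^*$,
\[
C_3\epsilon^{-3/2}(r^*)^{-1}\le C_3\epsilon^{-3/2}(r^*)^{-1/2}r^{-1/2}\lesssim C_3\epsilon^{-3/2}\tau^{-1}r^{-1/2}.
\]
\end{itemize}
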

\begin{proof}
   $\bullet$ We first consider the case when $r\leq t/4$. From~\eqref{est:KS_in1} it follows that
    \begin{equation*}
        \begin{aligned}
            |\partial^2\phi(t,x)|\lesssim\tau^{-1}t^{-\frac{1}{2}}\sum_{|I|\leq 2}\epsilon^{(|I|-\frac{3}{2})\lambda}\Big\|\frac{\tau}{t+1}Z^{I}\partial^2\phi\Big\|_{L^2(\mathcal{H}_\tau)}.
        \end{aligned}
    \end{equation*}
    Choosing $\lambda=1$ and applying Propositions~\ref{prop:L2estimates_in},~\ref{prop:high-L2-1} and~\ref{prop:high-L2-3}, we get the desired estimate
    \begin{equation*}
        |\partial^2\phi|\lesssim C_4^{p^2}\epsilon^{-\frac{5}{2}}\tau^{-1}t^{-\frac{1}{2}}.
    \end{equation*}
    
    $\bullet$ Next we turn to the case $r\geq t/8$. Using~\eqref{est:KS_in3}, we have
    \begin{equation*}
        \begin{aligned}
            &|\partial^2\phi(t,x)|\lesssim|\partial^2\phi(t,x)|_{\mathcal{H}_{\tau}\cap\mathcal{B}_{4\epsilon}}
            +\sum_{\substack{|J_1|\leq 2,|J_2|\leq2\\|I|=1}}\tau^{-1}|x|^{-\frac{1}{2}}\left\|\frac{\tau}{t}\Omega^{J_1}\partial^2\phi\right\|_{L^2(\mathcal{H}_\tau)}^{\frac{1}{2}}\left\|\frac{\tau}{t}\Omega^{J_2}Z^{I}\partial^2\phi\right\|_{L^2(\mathcal{H}_\tau)}^{\frac{1}{2}}.
        \end{aligned}
    \end{equation*}
    Utilising the bounds from  Propositions~\ref{prop:L2estimates_in},~\ref{prop:high-L2-1},~\ref{prop:high-L2-2},~\ref{prop:high-L2-3},~\ref{prop:high-L2-4}, gives the desired estimates.
\end{proof}

In conclusion, Lemmas~\ref{lem:phi-pt},~\ref{lem:der-pt} and~\ref{lem:dd-phi} provide control of $\phi, \partial \phi$ and $\partial^2 \phi$ in $\mathcal{D}^{in}$. Combined with our exterior regularity estimates, and the good pointwise control on $\psi$, we can conclude that $\mainfield$ is a global classical solution to \eqref{intro-eq}.



\end{document}